\documentclass[11pt, leqno]{amsart}
\usepackage{amsfonts,amssymb}

\usepackage{amsmath}
\usepackage{amsthm}
\usepackage{amsrefs}
\usepackage{qsymbols}
\usepackage{latexsym}
\usepackage{chngcntr}

\usepackage{paralist}

\usepackage{mathtools}

\usepackage{esint}

\usepackage[hidelinks]{hyperref}

\usepackage{tikz-cd}
\usepackage{import}

\usepackage{csquotes}
\usepackage[dvipsnames]{xcolor}
\usepackage{todonotes}
\usepackage{comment}
\usepackage{enumitem}

\newtheorem{theorem}{Theorem}[section]

\newtheorem{lemma}[theorem]{Lemma}
\newtheorem{proposition}[theorem]{Proposition}

\theoremstyle{definition}
\newtheorem{definition}[theorem]{Definition}
\newtheorem{remark}[theorem]{Remark}

\newcommand{\IR}{\mathbb{R}}
\newcommand{\IC}{\mathbb{C}}
\newcommand{\IN}{\mathbb{N}}
\newcommand{\IZ}{\mathbb{Z}}

\newcommand{\cM}{\mathcal{M}}

\newcommand{\cF}{\mathcal{F}}
\newcommand{\cS}{\mathcal{S}}
\newcommand{\cP}{\mathcal{P}}

\renewcommand{\L}{\mathrm{L}}

\newcommand{\B}{\mathrm{B}}

\newcommand{\F}{\mathrm{F}}

\newcommand{\G}{\mathrm{G}}

\newcommand{\T}{\mathrm{T}}
\newcommand{\Z}{\mathrm{Z}}

\newcommand{\D}{{\mathrm{D}}}

\renewcommand{\d}{\mathrm{d}}

\newcommand{\loc}{\mathrm{loc}}

\newcommand{\esssup}{\mathrm{ess\, sup}}

\DeclareMathOperator{\supp}{supp}

\DeclareMathOperator{\Rg}{\mathcal{R}}

\numberwithin{equation}{section}

\title{A coherent theory of tent spaces and homogeneous Triebel--Lizorkin spaces}

\author{Luca Haardt}

\address{Karlsruhe Institute of Technology, Department of Mathematics, 76131 Karlsruhe, Germany}
\email{luca.haardt@kit.edu}

\keywords{Tent spaces and $\Z$-spaces, Besov--Triebel--Lizorkin spaces, duality, real and complex interpolation, Hardy--Littlewood--Sobolev embeddings}
\subjclass[2020]{42B35 , 46E30, 46B70}
\date{\today}

\begin{document}
\begin{abstract}
    We introduce and systematically investigate a scale of tent spaces that characterizes homogeneous Triebel--Lizorkin spaces $\dot \F^{\beta}_{p,q}$. These spaces generalize the classical spaces of Coifman, Meyer, and Stein, and are shown to be equivalent to the weighted tent spaces with Whitney averages developed by Huang. We show that these tent spaces follow a functional analytic theory that mirrors that of Triebel--Lizorkin spaces, including duality, embeddings, discrete characterizations, John--Nirenberg-type properties, as well as real and complex interpolation. Furthermore, we provide a novel characterization of the endpoint spaces $\dot\F^\beta_{\infty,q}$, completing earlier work by Auscher, Bechtel, and the author.
\end{abstract}
\maketitle

\allowdisplaybreaks
\maketitle

\section{Introduction}
\label{sec: introduction}
$ $
In \cite{Auscher_Bechtel_Haardt}, the authors introduced new scales of function spaces that naturally arise in the characterization of homogeneous Besov and Triebel--Lizorkin spaces.
Characterizations of homogeneous Besov spaces $\dot \B^\beta_{p,q}$ and Triebel--Lizorkin spaces $\dot \F^\beta_{p,q}$ have attracted significant interest in recent decades due to their instrumentality in areas such as harmonic analysis, partial differential equations, and more.
Traditionally, these spaces are defined via discrete Littlewood--Paley blocks $(\Delta_k)_{k\in\IZ}$ that are, in fact, convolution operators with a kernel compactly supported away from zero in Fourier-space, see the monograph of Triebel \cite[Sec.\@ 5]{Triebel1}. In contrast, for applications to partial differential equations, it is of interest to have characterizations with respect to a continuous spectrum and using kernels without compactness of the Fourier-support. For example, kernels whose Fourier-transform vanishes sufficiently fast in zero and infinity, such as
\begin{align*}
    k_t(x) = \cF^{-1}\big(|t\xi|^N e^{-|t\xi|^2}\big)(x), \quad \text{for $t>0$ and some $N\in\IN$,}
\end{align*}
are of particular interest, because they lead to characterizations via the well-known Gauss--Weierstrass semigroup.\\

First continuous characterizations of Besov and Triebel--Lizorkin spaces for general kernels of this kind were given by Triebel in \cite[Sec.\@ 2.4.2 + 2.5.1]{Triebel2} in the range $1<p<\infty$, $1<q\leq \infty$ and $\beta\in\IR$ ($p=\infty$ is included for Besov spaces). They were later complemented by Ullrich \cite{Ullrich1} and Hui--Taibleson \cites{Bui_Taibleson} for the full range of parameters $0<p\leq\infty$ and $0<q\leq \infty$. Interestingly, these articles provide several characterizations for Triebel--Lizorkin spaces, which always have a corresponding counterpart for Besov spaces, except for one. More precisely, this additional characterization for Triebel--Lizorkin spaces uses (weighted) tent spaces $\T^{p,q}_\beta$ on $\IR^{d+1}_+$. They were first introduced by Coifman, Meyer and Stein \cite{Coifman_Meyer_Stein} in the unweighted case $\beta=0$, and later studied by many other authors in the weighted case $\beta\in \IR$, see for instance \cites{Amenta2, Hofmann_Mayboroda_McIntosh}. 
Roughly speaking, the tent space characterization reads
\begin{align*}
    f\in \dot \F^\beta_{p,q} \quad \Longleftrightarrow \quad     (\Phi_t\ast f)(x) \in \T^{p,q}_\beta,
\end{align*}
where $\Phi$ is a suitable kernel and $\Phi_t = t^{-d}\Phi(\cdot/t)$, see Section~\ref{sec: characterizations} for more details. In this construction, $(\Phi_t\ast f)(x)$ is an extension of $f$ to the upper half-space $\IR^{d+1}_+$, and $\T^{p,q}_\beta$ the extension space of $\dot \F^\beta_{p,q}$. However, a corresponding \enquote{tent space characterization} for the closely related Besov spaces $\dot \B^\beta_{p,q}$ has remained a notable gap in the theory. This gap was finally resolved in \cite{Auscher_Bechtel_Haardt}, where the authors introduced a new scale of function spaces, denoted by $\Z^{p,q,r}_\beta$ (see Definition~\ref{def: Z-space} below), which characterize homogeneous Besov spaces $\dot \B^\beta_{p,q}$ in the sense that
\begin{align*}
    f\in \dot \B^\beta_{p,q} \quad \Longleftrightarrow \quad   (\Phi_t\ast f)(x) \in \Z^{p,q,r}_\beta
\end{align*}
for any $0<r\leq \infty$. These spaces are extensions of weighted $\Z$-spaces, which were first introduced by Barton and Mayboroda to analyze boundary value problems with $\dot \B^\beta_{p,p}$-data, and later systematically studied by Amenta \cite{Amenta2} (see also \cite{AA}). In particular, Amenta showed that weighted $\Z$-spaces can be recovered by real interpolation of weighted tent spaces $\T^{p,q}_\beta$. To extend this behavior to the scale of $\Z^{p,q,r}_\beta$-spaces, the authors in \cite{Auscher_Bechtel_Haardt} introduced weighted tent spaces with an additional parameter $\T^{p,q,r}_\beta$ (see Definition~\ref{def: tent spaces}). Furthermore, they demonstrated that the scale of function spaces $\T^{p,q,r}_\beta$ is equivalent to the weighted tent spaces with Whitney average $\T^{p,r}_{q,\beta}$ developed by Huang \cite{Huang}, and characterizes homogeneous Triebel--Lizorkin spaces $\dot \F^\beta_{p,q}$. However, the characterization only covers the case $p<\infty$. We will complement this result by providing a corresponding endpoint characterization in the sense that
\begin{align*}
    f\in \dot \F^\beta_{\infty,q} \quad \Longleftrightarrow \quad   (\Phi_t\ast f)(x) \in \T^{\infty,q,r}_\beta
\end{align*}
for any $0<r\leq \infty$, see Theorem~\ref{thm: char of lifted Triebel data}. Here, the space $\T^{\infty,q,r}_\beta$ is defined as the set of all measurable functions $f$ on $\IR^{d+1}_+$ such that 
\begin{align*}
    &\|f\|_{\T^{\infty,q,r}_\beta}=\sup\limits_{\tau>0} \sup\limits_{y\in\IR^d} 
    \bigg(\int\limits_{0}^\tau  \fint\limits_{B(y,\tau)} \bigg(\fint\limits_{\frac{t}{2}}^t \fint\limits_{B(x,t)} |s^{-\beta}f(s,z)|^r \,\d z \d s\bigg)^\frac{q}{r}\,\frac{\d x \d t}{t}\bigg)^\frac{1}{q} < \infty.
\end{align*}
As an illustration, we obtain a Gauss--Weierstrass characterization of $\dot \F^\beta_{\infty,q}$ within the space $\T^{\infty,q,r}_\beta$ in the case $\beta<0$, see Proposition~\ref{prop: Gauss--Weierstrass char}.

\subsection{Tent spaces and Triebel--Lizorkin spaces}

In \cite{Auscher_Bechtel_Haardt}, a comprehensive theory for the spaces $\Z^{p,q,r}_\beta$ was developed, demonstrating a rigorous consistency with the theory of homogeneous Besov spaces $\dot \B^\beta_{p,q}$. This means that properties such as duality, embeddings and interpolation on the level of $\Z^{p,q,r}_\beta$-spaces possess one-to-one counterparts within the $\dot \B^\beta_{p,q}$ scale. By contrast, the function space theory for tent spaces still exhibits significant gaps when compared to the well-studied theory of homogeneous Triebel--Lizorkin spaces $\dot \F^\beta_{p,q}$. Our work aims to resolve these discrepancies. To highlight this parallel development, the subsequent sections are structured as follows: we first review the well-known theory of Triebel--Lizorkin spaces, then we present our new findings for $\T^{p,q,r}_\beta$-spaces in direct comparison, and finally identify the specific gaps in the existing literature that our results effectively close.

\subsubsection{Interpolation}
We introduce the following parameters. For $0< p_0,p_1,q_0,q_1, r_0,r_1\leq \infty$, $\beta_0,\beta_1\in\IR$ and $\theta\in (0,1)$ define
\begin{align*}
    \frac{1}{p_\theta} = \frac{1-\theta}{p_0} + \frac{\theta}{p_1},\quad \frac{1}{q_\theta} = \frac{1-\theta}{q_0} + \frac{\theta}{q_1},\quad \frac{1}{r_\theta} = \frac{1-\theta}{r_0} + \frac{\theta}{r_1}, \quad \beta_\theta = (1-\theta)\beta_0 + \theta \beta_1.
\end{align*}
The interpolation theory of homogeneous Triebel–Lizorkin spaces is well established; comprehensive treatments can be found in \cite{Triebel1}, \cite{Frazier_Jawerth}, and the references therein.
Combining \cite[Cor.\@ 6.7]{Frazier_Jawerth} and \cite[Sec.\@ 5.2.5]{Triebel1}, we can deduce the following identities:
\vspace{0.5cm}
\begin{itemize}
    \item{\makebox[4cm][l]{$[\dot \F_{p_0,q_0}^{\beta_0} , \dot\F_{p_1,q_1}^{\beta_1} ]_{\theta} = \dot\F_{p_\theta, q_\theta}^{\beta_\theta}$} if $\max\{q_0 , p_0\}<\infty$ or $\max\{q_1,p_1\}<\infty$,}\vspace{0.5cm}

    \item {\makebox[4cm][l]{$(\dot \F_{p_0,q}^{\beta} , \dot\F_{p_1,q}^{\beta} )_{\theta,p_\theta} = \dot\F_{p_\theta, q}^{\beta}$} if $\beta\in \IR$ and $ 0<q\leq \infty$,}\vspace{0.5cm}

    \item {\makebox[4cm][l]{$(\dot \F_{p,q_0}^{\beta_0} , \dot\F_{p,q_1}^{\beta_1} )_{\theta,q} = \dot\B_{p, q}^{\beta_\theta}\quad $} if $ \beta_0\neq \beta_1$ and $ 0<p,q\leq \infty$.}\vspace{0.5cm}
\end{itemize}
We establish analogous identities for the spaces $\T^{p,q,r}_\beta$ in Section~\ref{subsec: basic prop} and~\ref{sec: real interpolation}, which are summarized as follows:
\vspace{0.5cm}
\begin{enumerate}[label=(\roman*)]
    \item\label{item1} {\makebox[5.2cm][l]{$[\T^{p_0,q_0,r_0}_{\beta_0} , \T^{p_1,q_1,r_1}_{\beta_1} ]_{\theta} = \T^{p_\theta,q_\theta,r_\theta}_{\beta_\theta} $} if $\max\{q_0 , p_0, r_0\}<\infty$ or $\max\{q_1,p_1,r_1\}$,}\vspace{0.5cm}

    \item\label{item2} {\makebox[5.2cm][l]{$(\T^{p_0,q , r}_{\beta} , \T^{p_1,q,r}_{\beta} )_{\theta,p_\theta} = \T^{p_\theta, q,r}_{\beta}$} if $\beta\in \IR$ and $ 0<q,r\leq \infty$,}\vspace{0.5cm}

    \item\label{item3} {\makebox[5.2cm][l]{$(\T^{p,q_0,r}_{\beta_0} , \T^{p,q_1,r}_{\beta_1} )_{\theta,q} = \Z^{p, q,r}_{\beta_\theta}$} if $ \beta_0\neq \beta_1$ and $ 0<p,q,r\leq \infty$,}\vspace{0.5cm}
\end{enumerate}
see Proposition~\ref{prop: complex int}, Theorem~\ref{thm: real interpolation in p} and Proposition~\ref{prop: real interpolation tent spaces full range}, respectively.
We compare these with existing tent space results. First, complex interpolation \ref{item1} was already developed by Huang for weighted tent spaces with Whitney averages, denoted by $\T^{p,r}_{q,\beta}$, see \cite[Thm.\@ 3.6]{Huang}. Second, \ref{item2} is known for two parameter tent spaces $\T^{p,q}_\beta$, see for example \cite{Cao_et_al}. However, to the best of our knowledge, a similar result for $\T^{p,r}_{q,\beta}$-spaces is unknown. Finally, \ref{item3} was already proven in \cite[Prop.\@ 4.4]{Auscher_Bechtel_Haardt} but only in the range $0<p<\infty$.

\subsubsection{Discrete characterizations}

A powerful tool in the analysis of Triebel--Lizorkin spaces is the so-called $\varphi$-transform, see for example \cite{Frazier_Jawerth}. It connects the continuous space $\dot \F^\beta_{p,q}$ to the sequence space $\mathrm{\dot f}_{p,q}^\beta$, providing a discrete characterization of Triebel--Lizorkin spaces. Here, the space $\mathrm{\dot f}_{p,q}^\beta$ is defined as the set of all sequences $(s_Q)_{Q\in \square}\subset \IC$ such that
\begin{align*}
    &\|(s_Q)_Q\|_{\mathrm{\dot f}_{p,q}^\beta} = \Big\|\Big(\sum\limits_{Q\in\square} |Q|^{-\frac{\beta q}{d}} |s_Q|^q \mathbf{\tilde 1}_{Q}(\cdot) \bigg)^\frac{1}{q}\Big\|_{\L^p}<\infty\qquad (0<p<\infty),\\
    &\|(s_Q)_Q\|_{\mathrm{\dot f}_{\infty,q}^\beta} = \sup\limits_{P\in\square}\Big(\fint\limits_{P}\sum\limits_{Q\subset P} |Q|^{-\frac{\beta q}{d}} |s_Q|^q \mathbf{\tilde 1}_{Q}(\cdot) \,\d x \bigg)^\frac{1}{q}<\infty,
\end{align*}
where in the latter case the sum runs over all dyadic cubes $Q\in\square$ that are contained in $P$, and $\mathbf{\tilde 1}_Q(x) = |Q|^{-\frac{1}{2}}\mathbf{1}_Q$ is a $\L^2$-normalized cut-off function associated to $Q$. Similar discrete characterizations of $\T^{p,q,r}_\beta$ will be obtained in Proposition~\ref{prop: dyadic char} and~\ref{prop: dyadic char p=infty} and read as follows:
\begin{align*}
    \|f\|_{\T^{p,q,r}_\beta} \simeq \bigg\| \Big(\sum\limits_{Q\in\square} \mathbf{1}_{Q}(\cdot)|Q|^{-\frac{\beta q}{d}} \|f\|_{\L^r\big(\bar Q , \frac{\d y \d s}{s^{d+1}}\big)}^q \Big)^\frac{1}{q} \bigg\|_{\L^p}
\end{align*}
if $0<p<\infty$, and
\begin{align*}
    \|f\|_{\T^{\infty,q,r}_\beta} \simeq \sup\limits_{P\in\square} \bigg(\fint\limits_{P} \sum\limits_{Q\subset P} \mathbf{1}_{Q}(x)|Q|^{-\frac{\beta q}{d}} \|f\|_{\L^r\big(\bar Q , \frac{\d y \d s}{s^{d+1}}\big)}^q \,\d x \bigg)^\frac{1}{q}.
\end{align*}
Setting $s_Q = |Q|^{\frac{1}{2}}\|f\|_{\L^r\big(\bar Q , \frac{\d y \d s}{s^{d+1}}\big)}$ allows us to transfer many properties of the sequence spaces $\mathrm{\dot f}_{p,q}^\beta$ to our tent spaces $\T^{p,q,r}_\beta$. These include embeddings (Section~\ref{sec: embeddings}), duality (Section~\ref{sec: duality}) and real interpolation (Section~\ref{sec: real interpolation}), as well as a John--Nirenberg-type property for the endpoint spaces $\T^{\infty,q,r}_\beta$ (Proposition~\ref{prop: John-Nirenberg continuous}), which reads as follows:
\begin{align*}
    \|f\|_{\T^{\infty,q,r}_\beta} &\simeq  \sup\limits_{\tau>0} \sup\limits_{y\in\IR^d} 
        \bigg(  \fint\limits_{B(y,\tau)} \bigg(\int\limits_{0}^\tau\bigg(\fint\limits_{\frac{t}{2}}^t \fint\limits_{B(x,t)} |s^{-\beta}f(s,z)|^r \,\d z \d s\bigg)^\frac{q}{r}\,\frac{ \d t}{t}\bigg)^\frac{\alpha}{q}\d x\bigg)^\frac{1}{\alpha}
    \end{align*}
holds for any $0<\alpha<\infty$. To the best of our knowledge, the relationship between the sequence spaces $\mathrm{\dot f}_{p,q}^\beta$ and tent spaces has not been investigated before.

\subsubsection{Duality}
Triebel--Lizorkin spaces possess a rich duality theory, which can be found in the monograph of Triebel \cite[Sec.\@ 2.11]{Triebel1} and complemented by Frazier--Jawerth\cite[Sec.\@ 5]{Frazier_Jawerth}. Their duality theory can be summarized as follows:
\begin{align}
\label{eq: dual of F}
    (\dot \F^\beta_{p,q})' \simeq 
    \begin{cases}
       \dot \F^{-\beta}_{p',q'}  &\quad  \text{if } 1\leq p<\infty\text{ and } 0<q<\infty,\\
        \dot \B^{-\beta+d(\frac{1}{p}-1)}_{\infty,\infty}&\quad  \text{if }  0<p<1  \text{ and }  0<q<\infty.
    \end{cases}
\end{align}
Here, $q'$ denotes the Hölder conjugate exponent of $q$ defined by $1=\frac{1}{q}+\frac{1}{q'}$ in the case $q\in(1,\infty)$ and $q'=\infty$ if $q\in(0,1]$. Another central point of this article is to explore the corresponding duality theory for $\T^{p,q,r}_\beta$-spaces. Due to the powerful discrete characterizations of tent spaces, as described above, we obtain the following duality theory:
\begin{align*}
    (\T^{p,q,r}_\beta)' \simeq 
    \begin{cases}
        \T^{p',q',r'}_{-\beta}  &\quad  \text{if }  1\leq p<\infty \text{ and }  0<q<\infty,\\
        \Z^{\infty,\infty,r'}_{-\beta+d(\frac{1}{p}-1)}&\quad  \text{if } 0<p<1 \text{ and } 0<q<\infty,
    \end{cases}
\end{align*}
where $1\leq r<\infty$, see Theorem~\ref{thm: duality p>1 but q<1} and~\ref{thm: duality p<1}. We want to remark that duality results for $\T^{p,r}_{q,\beta}$-spaces are only known for parameters in the Banach range $1\leq p,q,r<\infty$. In addition, duality results for two-parameter tent spaces $\T^{p,q}_\beta$ spaces are only known for $1\leq q<\infty$.

\subsubsection{Embeddings}
\label{subsubsec: embeddings}
It is well-known that Triebel--Lizorkin spaces have a variety of function space embeddings. For instance, they possess a nesting property in the micro-local parameter $q$, that means
\begin{align*}
    \dot \F_{p,q_0}^{\beta} \subset \dot \F_{p,q_1}^{\beta} 
\end{align*}
for $0<p\leq \infty$, $0<q_0\leq q_1\leq \infty$ and $\beta\in\IR$. We will show in Lemma~\ref{lem: embedding in q}, that the nesting property
\begin{align*}
    \T^{p,q_0,r_0}_{\beta} \hookrightarrow \T^{p,q_1,r_1}_{\beta}
\end{align*}
holds for $0<r_1\leq r_0\leq \infty$ and the other parameters as above. To the best of our knowledge, these were unknown in this generality for tent spaces. Moreover, Triebel--Lizorkin spaces are known to satisfy so-called Hardy--Littlewood--Sobolev-type embeddings. These can be regarded as an extension of the usual Sobolev embedding to other function spaces. Roughly speaking, they describe a continuous embedding of a function space $X_0 =X(p_0,\beta_0)$ into another space $X_1=X(p_1,\beta_1)$, where $X_1$ has more integrability than $X_0$ in the sense that $p_0<p_1$ but loses regularity ($\beta_0>\beta_1$). The corresponding embeddings for Triebel--Lizorkin spaces are the following: for $0<p_0<p_1\leq\infty$, $0<q_0,q_1\leq \infty$ and $\beta_0,\beta_1\in \IR$ we have
\begin{align*}
    \dot \F_{p_0,q_0}^{\beta_0} \hookrightarrow \dot \F_{p_1,q_1}^{\beta_1} \quad \text{if} \quad \beta_0-\beta_1 = \frac{d}{p_0}-\frac{d}{p_1},
\end{align*}
see for example \cite[Thm.\@ 2.1]{Jawerth}. Remarkably, the micro-local parameters $q_0,q_1$ can be chosen arbitrary without any constrains. In contrast, on the level of tent spaces $\T^{p,q}_\beta$, Amenta \cite{Amenta2} proved the following Hardy--Littlewood--Sobolev-type embeddings:
\begin{align*}
    \T^{p_0,q_0}_{\beta_0} \hookrightarrow \T^{p_1,q_1}_{\beta_1}\quad \text{if} \quad \beta_0-\beta_1 = \frac{d}{p_0}-\frac{d}{p_1}, \quad q_1=q_0.
\end{align*}
They were later improved by Fraccaroli in \cite{Fraccaroli} to the case $q_1 \leq q_0$, but still do not resemble the Triebel--Lizorkin result. However, using our three-parameter tent spaces $\T^{p,q,r}_\beta$, we can prove an analogous Triebel--Lizorkin result on the level of tent spaces, which covers the known cases, and reads as follows: for $0<p_0<p_1\leq\infty$, $0<r_1\leq r_0\leq \infty$, $0<q_0,q_1\leq \infty$ and $\beta_0,\beta_1\in \IR$ we have
\begin{align*}
    \T^{p_0,q_0,r_0}_{\beta_0} \hookrightarrow \T^{p_1,q_1,r_1}_{\beta_1} \quad \text{if} \quad \beta_0-\beta_1 = \frac{d}{p_0}-\frac{d}{p_1},
\end{align*}
see Theorem~\ref{thm: Hardy-Sobolev embedding}. Finally, there are also mixed-type embeddings, which allow one to switch between the function scales $\dot \F_{p,q}^{\beta}$ and $\dot \B_{p,q}^{\beta}$. In particular, for $0<p_0<p_1\leq \infty$, $0<q \leq \infty$ and $\beta_0,\beta_1\in \IR$, we have
\begin{align*}
        \dot \F^{\beta_0}_{p_0,q} \hookrightarrow \dot \B^{\beta_1}_{p_1,p_0} \quad \text{if} \quad \beta_0-\beta_1 = \frac{d}{p_0}-\frac{d}{p_1},
\end{align*}
as well as
\begin{align*}
        \dot\B^{\beta_0}_{p_0,p_1} \hookrightarrow \dot \F^{\beta_1}_{p_1,q} \quad \text{if} \quad \beta_0-\beta_1 = \frac{d}{p_0}-\frac{d}{p_1}.
    \end{align*}
The first embedding was proven by Jawerth in \cite[Thm.\@ 2.1 (iii)]{Jawerth} and the second by Franke in \cite[Thm.\@ 1]{Franke}. We derive equivalent mixed-type embeddings for tent and $\Z$-spaces in Theorem~\ref{thm: mixed embedding}, that are summarized as follows: for $0<p_0<p_1\leq \infty$, $0<q\leq \infty$, $0<r_1\leq r_0\leq \infty$ and $\beta_0,\beta_1\in \IR$ we have
\begin{align*}
    \T^{p_0,q,r_0}_{\beta_0} \hookrightarrow \Z^{p_1,p_0,r_1}_{\beta_1}\quad \text{if} \quad \beta_0-\beta_1 = \frac{d}{p_0}-\frac{d}{p_1},
\end{align*}
as well as
\begin{align*}
    \Z^{p_0,p_1,r_0}_{\beta_0} \hookrightarrow \T^{p_1,q,r_1}_{\beta_1}\quad \text{if} \quad \beta_0-\beta_1 = \frac{d}{p_0}-\frac{d}{p_1}.
\end{align*}
We remark that mixed-type embeddings for two-parameter tent and $\Z$-spaces were proven in \cite[Thm.\@ 2.34]{AA}. However, they do not seem to be optimal with regard to the Besov--Triebel--Lizorkin theory.

\subsection{Roadmap and methods}

Our paper subdivides into two parts: First, Section~\ref{sec: characterizations} provides a characterization of $\dot \F^\beta_{\infty,q}$ in terms of the spaces $\T^{\infty,q,r}_\beta$. Second, we conduct a systematic study of functional-analytic properties of $\T^{p,q,r}_\beta$ in Section~\ref{sec: tent spaces}. This includes properties such as completeness, density, and complex interpolation (Section~\ref{subsec: basic prop}), discrete characterizations (Section~\ref{sec: dyadic char}), embeddings (Sections~\ref{sec: embeddings}), duality (Section~\ref{sec: duality}), as well as  real interpolation (Section~\ref{sec: real interpolation}).

\subsubsection*{Characterization of $\dot \F^\beta_{\infty,q}$}

This part relies heavily on the preliminary work by Ullrich and coauthors~\cite{Ullrich2} and a John--Nirenberg-type property for Triebel--Lizorkin and tent spaces at the endpoints. The latter property is used to lower the assumptions on the local means in the characterization, enabling Gauss-Weierstrass characterizations (Proposition~\ref{prop: Gauss--Weierstrass char}). In \cite{Ullrich2}, the authors have derived powerful characterizations for Besov--Triebel--Lizorkin--Hausdorff spaces using maximal functions. With them at hand, it is fairly straightforward to show the inclusion of $\dot \F^\beta_{\infty,q}$ into our four parameter tent space. The converse inclusion is harder as we have to dominate the pointwise maximal function by averages, which is unfortunately lengthy and technical. The characterization that we obtain in the end is Theorem~\ref{thm: char of lifted Triebel data}.

\subsubsection*{Functional-analytic properties of tent spaces}
In Section~\ref{subsec: basic prop}, we first use the equivalence of $\T^{p,q,r}_\beta$-spaces and weighted tent spaces with Whitney averages $\T^{p,r}_{q,\beta}$ to transfer known basic properties. These include completeness, dense subspaces, duality in the Banach range as well as complex interpolation theory. Additionally, we prove quantitative ``change of angle" formulas by combining basic integral estimates with covering arguments adapted to the geometry of Whitney boxes.

In Section~\ref{sec: dyadic char}, we develop discrete characterizations of tent spaces as a general tool, which follow from straightforward covering arguments (Proposition~\ref{prop: dyadic char} and~\ref{prop: dyadic char p=infty}). Along with the theory of sequence spaces, these lead to characterizations via ``discrete local means" (Proposition~\ref{prop: char with m}) and John--Nirenberg-type properties for $\T^{\infty,q,r}_\beta$ spaces (Proposition~\ref{prop: John-Nirenberg continuous}).

In Section~\ref{sec: embeddings}, we present Hardy--Littlewood--Sobolev-type embeddings (Theorem~\ref{thm: Hardy-Sobolev embedding}) and mixed-type embeddings of tent and $\Z$-spaces (Theorem~\ref{thm: mixed embedding}) that are consistent with the Besov--Triebel--Lizorkin theory described above. Their proofs rely on distributional descriptions of tent spaces combined with duality, convexity and real interpolation techniques.
This procedure is inspired by \cite[Thm.\@ 2.1]{Jawerth} and \cite[Thm.\@ 1]{Franke}, where the authors show analogous results for Besov--Triebel--Lizorkin spaces.

Section~\ref{sec: duality} develops a duality theory in the non-Banach range by embedding our tent spaces into simpler spaces for which duality results are already known. For the case $1\leq p<\infty$ and $0<q<\infty$, we embed them into vector-valued Lebesgue spaces (see Theorem~\ref{thm: duality p>1 but q<1}). For $0<p<1$ and $0<q<\infty$, we embed quasi-Banach tent spaces into quasi-Banach $\Z$-spaces or in tent spaces in the Banach range, for which duality theory is already known (see Theorem~\ref{thm: duality p>1 but q<1}).

Finally, in Section~\ref{sec: real interpolation}, we develop real interpolation results. There are two of them. The first (Theorem~\ref{thm: real interpolation in p}) interpolates in the parameter $p$ while fixing the parameters $q,r$ and $\beta$. It is based on the ``discrete local mean" characterization of $\T^{p,q,r}_\beta$ from Section~\ref{sec: dyadic char} and direct estimates of the ``best approximation" functional $E$. The second (Proposition~\ref{prop: real interpolation tent spaces full range}) fixes the parameters $p,r$ while allowing flexibility for the other parameters $q,\beta$. Its proof is based on a nesting property of tent and $\Z$-spaces and follows the procedure as in the proof of \cite[Prop.\@ 4.4]{Auscher_Bechtel_Haardt}.

\subsection{Acknowledgments}
The author would like to thank Pascal Auscher and Sebastian Bechtel for their enriching discussions, and Sebastian Bechtel in particular for his valuable feedback. Additionally, the author would like to thank Emiel Lorist for pointing out a gap in one of the arguments. The project was supported by \textit{Studienstiftung des deutschen Volkes}.

\subsection{Notation}
\label{subsec: notation}
We use the following notation throughout the paper.
We denote by $\IN,\IN_0,\IZ, \IR,\IC$ the set of all positive integers, all non-negative integers, all integers, all real numbers, and all complex numbers, respectively. Throughout, $d\geq 1$ denotes the dimension of the underlying Euclidean space. We write $\IR^{d+1}_+ = (0,\infty)\times \IR^{d}$ for the upper half-space. We denote the open ball centered at $x\in \IR^d$ with radius $t>0$ by $B(x,t)$. The Whitney box associated to $(t,x)\in \IR^{d+1}_+$ is denoted by $W(t,x)=(\frac{t}{2},t]\times B(x,t)$. For $k\in \IZ$, we define the set of all dyadic half-open cubes of generation $k$ as
\begin{align*}
    \square_k \coloneqq \{ 2^kx + [0,2^k)^d : x\in \IZ^d\},
\end{align*}
and denote by $\square = \cup_{k\in\IZ} \square_k$ the family of all dyadic cubes. Moreover, we denote by
\begin{align*}
    \bar Q \coloneqq \Big(\frac{\ell(Q)}{2} , \ell(Q) \Big] \times Q
\end{align*}
the associated Whitney box of a cube $Q$.
We use the symbol $\|\cdot\|_X$ for the (quasi-)norm of a (quasi-)normed space $X$, and $X'$ as its anti-dual space. For a measure space $\Omega$ and a Banach space $X$ we denote by $\L^0(\Omega;X)$ the space of all strongly measurable functions with the usual identification (that is, functions equal almost everywhere are identified). Moreover, for $0<p\leq \infty$, we write
\begin{align*}
    \L^p(\Omega; X) &= \bigg\{ f\in \L^0(\Omega;X): \|f\|_{\L^p} = \bigg(\int\limits_{\Omega} \|f(x)\|_X^p\,\d \mu\bigg)^\frac{1}{p}<\infty \bigg\}, 
\end{align*}
with the usual modification if $p=\infty$. For $p\in (1 ,\infty)$, we denote by $p'\in (1,\infty)$ the unique number such that $1 = \frac{1}{p} + \frac{1}{p'}$. Moreover, for $p\in(0,1]$, put $p'\coloneqq \infty$. Finally, if $p=\infty$, then set $p'\coloneqq 1$.
If $X$ is a quasi-Banach space, we also define for $0<p\leq \infty$ and $\beta\in \IR$ the weighted vector-valued sequence spaces $\ell^p_\beta(\IZ;X)$ as follows:
\begin{align*}
    \ell^p_\beta(\IZ;X)\coloneqq \Big\{ x: x=(x_k)_{k\in \IZ}\ \text{with}\ x_k\in X,\, \|x\|_{\ell^p_\beta(\IZ;X)} \coloneqq \Big(\sum\limits_{k\in\IZ} 2^{-k\beta p}\|x_k\|_X^p\Big)^\frac{1}{p}<\infty \Big\},
\end{align*}
again with the usual modification if $p=\infty$.
For $0<p,q\leq \infty$, we denote by $\L^p(\ell^q)$ the space of all sequences of complex-valued Borel measurable functions $(f_k)_{k\in\IZ}$ on $\IR^d$ such that
\begin{align*}
    \|(f_k)_{k\in\IZ}\|_{\L^p(\ell^q)} \coloneqq \bigg(\int\limits_{\IR^d} \Big(\sum\limits_{k\in\IZ} |f_k(x)|^q\Big)^\frac{p}{q}\,\d x \bigg)^\frac{1}{p}<\infty,
\end{align*}
with the usual modifications in the infinite cases.
For two real numbers $a,b$, we write $a\lesssim b$ if there exists a constant $C>0$, whose precise value may vary from line to line but depends at most on the structural constants, such that $a\leq Cb$. Conversely, we write $a\gtrsim b$ if $Ca\geq b$. Furthermore, we write $a\simeq b$ if both $a\lesssim b$ and $a\gtrsim b$ hold.

\section{Characterization of endpoint Triebel--Lizorkin spaces}

\label{sec: characterizations}

\noindent In this section, we provide a continuous characterization of endpoint Triebel--Lizorkin spaces with respect to general kernels with non-compact Fourier support. This complements the characterization \cite[Thm.\@ 2.4]{Auscher_Bechtel_Haardt}. We start with a brief introduction of the following objects from harmonic analysis as in \cite[Sec.\@ 2]{Auscher_Bechtel_Haardt}.

Let $\cS(\IR^d)$ denote the Schwartz space and $\cS'(\IR^d)$ its topological dual space, the space of tempered distributions. Furthermore, for $N\in\IN_0 \cup \{\infty\}$ we define
\begin{align*}
    \cS_N(\IR^d) \coloneqq \{ f \in \cS(\IR^d): [\D^\alpha \cF(f)](0) = 0, \quad \text{for all $\alpha\in \IN_0^d$ with $|\alpha|\leq N$}\},
\end{align*}
where $\cF$ denotes the Fourier transform and $\D^\alpha = \partial_1^{\alpha_1} \dots \partial_d^{\alpha_d}$. In this context, we also define $\cS_{-1}(\IR^d) \coloneqq \cS(\IR^d)$. Moreover, let $\cS_N'(\IR^d)$ denote the topological dual space of $\cS_N(\IR^d)$, where $\cS_N(\IR^d)$ is equipped with the subspace topology of $\cS(\IR^d)$. It can be identified with $\cS'(\IR^d)/\cP_N(\IR^d)$, where $\cP_N(\IR^d)$ is the space of all polynomials on $\IR^d$ of degree at most $N$ with complex coefficients, see \cite[Chap.\@ 5]{Triebel1}. If $\varphi$ and $\psi$ are integrable functions, then their convolution $\psi\ast \varphi$ is defined as
\begin{align*}
    (\psi\ast \varphi)(x) = \int\limits_{\IR^d} \psi(x-y)\varphi(y)\,\d y \quad (x\in\IR^d).
\end{align*}
If both $\psi$ and $\varphi$ belong to $\cS(\IR^d)$, then their convolution also belongs to $\cS(\IR^d)$. Furthermore, the convolution can be generalized to $(\psi, f ) \in \cS_N(\IR^d)\times \cS'_N(\IR^d)$ via $(\psi\ast f)(x) = f(\psi(x-\cdot))$, which is well-defined for $x\in\IR^d$ and has at most polynomial growth.\\
Finally, we introduce the definition of the Peetre maximal function for dilations. Fix $\Psi\in \cS_N(\IR^d)$ and $f\in \cS'_N(\IR^d)$. Then, we define for $a>0$ and $t>0$ the Peetre maximal function as
\begin{align}
    \label{eq:peetre_mf}
    (\Psi_t^*f)_a(x) = \sup\limits_{y\in\IR^d} \frac{|(\Psi_t\ast f) (x+y)|}{\Big(1+\frac{|y|}{t} \Big)^a} \quad (x\in \IR^d),
\end{align}
where we set $\Psi_t(\cdot) \coloneqq t^{-d}\Psi(\cdot/t)$.

The following definition of the endpoint Triebel--Lizorkin spaces $\dot\F^{\beta}_{\infty,q}(\IR^d)$ is due to Frazier and Jawerth~\cite{Frazier_Jawerth}. If $1<q<\infty$, this definition is equivalent to the one introduced by Triebel in \cite[Sec.\@ 5.1.4]{Triebel1}.
For further properties of these spaces, we refer to \cite{Bui_Taibleson}, \cite{Frazier_Jawerth} and \cite[Sec.\@ 5.1.4]{Triebel1}.\\

\begin{definition}
    Let $(\varphi_k)_{k\in\IZ}\subset \cS(\IR^d)$ be such that
    \begin{enumerate}
        \item $\varphi_k(x) = \varphi_0(2^{-k}x)$ for every $x\in\IR^d$ and $k\in\IZ$,

        \item $\supp \varphi_0 \subset \{x\in\IR^d: 2^{-1}\leq |x|\leq 2\}$,

        \item $\sum\limits_{k\in\IZ} \varphi_k(x) = 1$ for every $x\in\IR^d\setminus \{0\}$,
    \end{enumerate}
    and set $\Phi_k \coloneqq \mathcal{F}^{-1}(\varphi_k)\in \cS_\infty(\IR^d)$. For $0<q<\infty$ and $\beta\in \IR$ we define the endpoint homogeneous Triebel--Lizorkin space as
    \begin{align*}
        \dot\F^{\beta}_{\infty,q}(\IR^d) \coloneqq \Big\{ f\in \cS_\infty'(\IR^d): \|f\|_{\dot\F^{\beta}_{\infty,q}} <\infty \Big\},
    \end{align*}
    where
    \begin{align*}
        &\|f\|_{\dot\F^{\beta}_{\infty,q}} \coloneqq \sup\limits_{Q\in \square}\bigg(\fint\limits_Q \sum\limits_{k=-\log_2(\ell(Q))}^\infty 2^{k\beta q} |(\Phi_k\ast f)(x)|^q \,\d x \bigg)^\frac{1}{q}.
    \end{align*}
\end{definition}

\begin{remark}
    We excluded the case $q=\infty$ because the corresponding definition for $\dot\F^{\beta}_{\infty,\infty}(\IR^d)$ is equivalent to $\dot\B^{\beta}_{\infty,\infty}(\IR^d)$ (see for example \cite{Bui_Taibleson}), and the latter space was already characterized in \cite[Thm.\@ 2.2]{Auscher_Bechtel_Haardt}.
\end{remark}

\begin{definition}
\label{def: X norm}
    Let $0<q,\alpha<\infty$ and $(g_k)_{k\in\IZ}$ a family of measurable functions in $\IR^d$. Then, we define
    \begin{align*}
        \|(g_k)_{k\in\IZ}\|_{X^{q,\alpha}} \coloneqq \sup\limits_{Q\in \square}\bigg( \fint\limits_Q \bigg(\sum\limits_{k=-\log_2(\ell(Q))}^\infty |g_k(x)|^q\bigg)^\frac{\alpha}{q}\,\d x\bigg)^\frac{1}{\alpha}.
    \end{align*}
    Notice that $\|f\|_{\dot\F^{\beta}_{\infty,q}}  = \|(2^{k\beta} \Phi_k\ast f)_{k\in\IZ}\|_{X^{q,q}}$.
\end{definition}

As a technical tool for our characterization of $\dot\F^{\beta}_{\infty,q}(\IR^d)$, we recall the following convolution inequality from \cite[Lem.\@ 2.1]{Ullrich2}.

\begin{lemma}
\label{lem: convolution lemma}
    Let $0<q,\alpha<\infty$ and $\delta \in(\frac{d}{\alpha},\infty)$. Suppose that $(g_l)_{l\in\IZ}$ is a family of measurable functions in $\IR^d$. Then one has
    \begin{align*}
        \Big\|\Big(\sum\limits_{k\in\IZ} 2^{-|k-l|\delta} g_k\Big)_{l\in\IZ} \Big\|_{X^{q,\alpha}}\lesssim \|( g_l)_{l\in\IZ} \|_{X^{q,\alpha}}.
    \end{align*}
\end{lemma}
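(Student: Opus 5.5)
We fix a dyadic cube $Q$ with $\ell(Q)=2^{-m}$, so the inner sum runs over $l\ge m$, and write $h_l=\sum_{k\in\IZ}2^{-|k-l|\delta}g_k$. The strategy is to split $h_l=h_l^{(1)}+h_l^{(2)}$ with $h_l^{(1)}=\sum_{k\ge m}2^{-|k-l|\delta}g_k$ (fine scales, seen by $Q$) and $h_l^{(2)}=\sum_{k<m}2^{-|k-l|\delta}g_k$ (coarse scales). By the quasi-triangle inequality in $\ell^q$ and in $\L^\alpha(Q)$ it suffices to bound each part separately. Replacing $\min\{q,\alpha\}$-th powers if needed, we may use the $r$-triangle inequality with $r=\min\{1,q,\alpha\}$.

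\textbf{Fine scales.} For $l,k\ge m$ we apply a discrete Young/Schur-type estimate pointwise in $x$. If $q\le 1$, then $(\sum_l |h_l^{(1)}|^q)\le\sum_{l}\sum_{k\ge m}2^{-|k-l|\delta q}|g_k|^q\lesssim\sum_{k\ge m}|g_k|^q$. If $q>1$, Young's inequality on $\IZ$ with the summable kernel $2^{-|j|\delta}$ gives the same bound. Hence the fine-scale contribution to $\fint_Q(\cdot)^{\alpha/q}$ is controlled by $\fint_Q(\sum_{k\ge m}|g_k|^q)^{\alpha/q}\le\|(g_k)\|_{X^{q,\alpha}}^\alpha$. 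This part only needs $\delta>0$.

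\textbf{Coarse scales (main obstacle).} For $k<m\le l$ we have $2^{-|k-l|\delta}=2^{-(l-m)\delta}2^{-(m-k)\delta}$. Summing over $l\ge m$ gives
\begin{align*}
\Big(\sum_{l\ge m}|h_l^{(2)}|^q\Big)^{1/q}\lesssim\sum_{k<m}2^{-(m-k)\delta}|g_k|
\end{align*}
(with the usual $r$-power modification if $q$ or $\alpha$ is below $1$). For each $k<m$, let $P_k\supset Q$ be the dyadic ancestor with $\ell(P_k)=2^{-k}$. Since $|g_k|\le(\sum_{j\ge k}|g_j|^q)^{1/q}$ and the index range of $P_k$ starts at $k$,
\begin{align*}
\fint_Q|g_k|^\alpha\le\frac{|P_k|}{|Q|}\fint_{P_k}\Big(\sum_{j\ge k}|g_j|^q\Big)^{\alpha/q}\le 2^{(m-k)d}\|(g_j)\|_{X^{q,\alpha}}^\alpha.
\end{align*}
Thus $\|g_k\|_{\L^\alpha(Q,\d x/|Q|)}\lesssim 2^{(m-k)d/\alpha}\|(g_j)\|_{X^{q,\alpha}}$. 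Combining with the weights $2^{-(m-k)\delta}$ via the $\min\{1,\alpha\}$-triangle inequality in $\L^\alpha$ yields a geometric series $\sum_{k<m}2^{-(m-k)(\delta-d/\alpha)\rho}$ with $\rho=\min\{1,\alpha\}$. This series converges exactly because $\delta>d/\alpha$, and this is the point where that hypothesis enters. The delicate part is the bookkeeping of the quasi-norm exponents, so that the inner $\ell^q$ sum over $l$ and the outer $\L^\alpha$ average are swapped with the correct power. We handle this by first reducing the $l$-sum to a single coarse function as displayed, and only afterwards taking $\L^\alpha(Q)$ norms.

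Taking the supremum over $Q$ gives the claimed bound, with a constant depending only on $d,q,\alpha,\delta$.
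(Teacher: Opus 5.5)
Your argument is correct: the fine scales $k\ge m$ are handled by Young's inequality on $\IZ$, and the coarse scales $k<m$ factor exactly through $2^{-(l-m)\delta}$ (so the $r$-power caveat there is unnecessary). The dyadic-ancestor bound $\fint_Q|g_k|^\alpha\le 2^{(m-k)d}\fint_{P_k}\bigl(\sum_{j\ge k}|g_j|^q\bigr)^{\alpha/q}$ is then absorbed precisely by $\delta>d/\alpha$. The paper gives no proof of its own and simply cites \cite[Lem.~2.1]{Ullrich2}, and your splitting is essentially the standard argument behind that cited result.
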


In \cite{Auscher_Bechtel_Haardt}, homogeneous Besov and Triebel--Lizorkin spaces are characterized by continuous families of convolution operators $(\Psi_t)_{t\in\IR}$ where the kernels decay fast enough at zero and infinity. In particular, a scale of tent spaces $\T^{p,q,r}_\beta$ was introduced (see Section~\ref{sec: tent spaces} below) to characterize homogeneous Triebel--Lizorkin spaces $\dot\F^{\beta}_{p,q}(\IR^d)$ in the case $0<p<\infty$. However, a corresponding endpoint characterization for $\dot\F^{\beta}_{\infty,q}(\IR^d)$ with respect to this tent space scale is still missing. The following theorem provides this missing characterization.

\begin{theorem}
\label{thm: char of lifted Triebel data}
    Let $\beta\in \IR$, $0<q<\infty$, $0<r\leq \infty$ and $R\in\IN_0\cup \{-1\}$ such that $R+1>\beta$. Furthermore, let $\Phi\in \cS(\IR^d)$ be such that
    \begin{align}
    \label{property 1}
        |\cF\Phi (\xi)|>0 \quad \text{on} \quad \Big\{\frac{\varepsilon}{2}<|\xi|<\varepsilon \Big\}
    \end{align}
    for some $\varepsilon>0$, and
    \begin{align}
    \label{property 2}
        [D^\alpha \cF\Phi] (0) = 0 \quad \text{for all $\alpha\in \IN_0^d$ with $|\alpha|\leq R$.}
    \end{align}
    Then, the space $\dot \F^\beta_{\infty,q}(\IR^d)$ can be characterized as follows (with the usual modifications if $r$ is infinite):
    \begin{align*}
        \dot \F^\beta_{\infty,q}(\IR^d) = \bigg\{f\in \cS'_R(\IR^d): \|\Phi_s\ast f  \|_{\T^{\infty,q,r}_\beta}<\infty \bigg\},
    \end{align*}
    where
    \begin{align*}
        \|\Phi_s\ast f\|_{\T^{\infty,q,r}_\beta} \coloneqq \sup\limits_{y\in\IR^d}\sup\limits_{\tau>0} \bigg(\int\limits_0^{\tau} \fint\limits_{B(y,\tau)} \bigg(\fint\limits_{\frac{t}{2}}^t \fint\limits_{B(x,t)} |s^{-\beta}(\Phi_s\ast f)(z)|^r\,\d z \d s\bigg)^\frac{q}{r}\,\frac{\d x \d t}{t}\bigg)^\frac{1}{q}.
    \end{align*}
\end{theorem}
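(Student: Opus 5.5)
Both inclusions will be proved by comparing $\|\Phi_s\ast f\|_{\T^{\infty,q,r}_\beta}$ with a discretized quantity of the form $\|(2^{k\beta}\,\cdot)_k\|_{X^{q,\alpha}}$, and then using the maximal function characterizations of Ullrich and coauthors \cite{Ullrich2}. Two preliminary reductions come first. By a change of angle (or scale) argument, the tent-space norm is comparable to its dyadic version, where $\tau$ is replaced by $\ell(Q)$ and $B(y,\tau)$ by a dyadic cube $Q$. Writing $t\in(2^{-k-1},2^{-k}]$, the inner Whitney average $\fint_{t/2}^t\fint_{B(x,t)}|s^{-\beta}\Phi_s\ast f|^r$ is then controlled by a supremum over $s\in[2^{-k-2},2^{-k}]$ and $|z-x|<2^{-k}$. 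This supremum is bounded by $2^{k\beta}\sup_{s\sim 2^{-k}}(\Phi^*_s f)_a(x)$. Consequently
\begin{align*}
\|\Phi_s\ast f\|_{\T^{\infty,q,r}_\beta}\lesssim \Big\|\Big(2^{k\beta}\sup_{2^{-k-2}\le s\le 2^{-k}}(\Phi^*_sf)_a\Big)_{k}\Big\|_{X^{q,q}}.
\end{align*}
For the inclusion $\dot\F^\beta_{\infty,q}\subset\{\ldots\}$ I then invoke the Peetre-maximal characterization in \cite{Ullrich2}, valid for $a>d/q$ under the Tauberian condition \eqref{property 1} and the moment condition \eqref{property 2} with $R+1>\beta$. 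It bounds the right-hand side by $\|f\|_{\dot\F^\beta_{\infty,q}}$. The same argument yields $f\in\cS'_R$: since $R+1>\beta$, the polynomial ambiguity is harmless, which is why elements of $\dot \F^\beta_{\infty,q}\subset \cS'_\infty$ have a representative in $\cS'_R$.

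For the converse, take $f\in\cS'_R$ with finite tent norm. I want to bound $\|f\|_{\dot\F^\beta_{\infty,q}}$, or equivalently (again by \cite{Ullrich2}) the $X^{q,q}$-norm of $(2^{k\beta}(\Phi^*_{2^{-k}}f)_a)_k$. The key step is a sub-mean-value estimate: for $0<\rho<\infty$ (take $\rho\le r$), a pointwise bound in the spirit of Strömberg--Torchinsky/Rychkov. It should read
\begin{align*}
\big[(\Phi^*_{2^{-k}}f)_a(x)\big]^\rho\lesssim \sum_{l\ge k}2^{-(l-k)N\rho}\int_{2^{-l-1}}^{2^{-l}}\int_{\IR^d}\frac{|\Phi_s\ast f(z)|^\rho}{(1+2^{l}|x-z|)^{a\rho}}\,2^{ld}\,\d z\frac{\d s}{s},
\end{align*}
obtained from a local reproducing formula built from \eqref{property 1} and the cancellation \eqref{property 2}. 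Decomposing $\IR^d$ into annuli $|x-z|\sim 2^{m-l}$ and using Hölder ($\rho\le r$), I will dominate this by an average over Whitney boxes. That average should be comparable to $\sum_{l\ge k}\sum_m 2^{-(l-k)\delta}2^{-m(a\rho-d)}$ times translated Whitney averages $A_l(x+h)$ with $|h|\lesssim 2^{m-l}$. The tails in $m$ and in $l$ are then absorbed by Lemma~\ref{lem: convolution lemma}, which requires $\delta>d/\alpha$. In $X^{q,\alpha}$, translations by $|h|\lesssim 2^{m-l}$ cost only a factor $2^{md/\alpha}$, because cubes of side $2^{-l}$ are enlarged by $2^m$. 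This is compensated by choosing $a$ large.

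The main obstacle is the mismatch of exponents. The Peetre bound naturally yields an $X^{q,\alpha}$-norm with small $\alpha$ (to make $\delta>d/\alpha$ and $a\rho>d$ work), while the tent norm is an $X^{q,q}$-type quantity. To handle it, I will prove the John--Nirenberg property $\|\cdot\|_{X^{q,\alpha}}\simeq\|\cdot\|_{X^{q,q}}$ for all $0<\alpha<\infty$ for the relevant sequences. This can be proved by a good-$\lambda$/stopping-cube argument on dyadic cubes: the local sums over $k\ge-\log_2\ell(Q)$ have bounded mean oscillation in the sense of Frazier--Jawerth's sequence space $\mathrm{\dot f}_{\infty,q}$. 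With this equivalence in hand, I can pass freely between $\alpha$ and $q$ in both directions, and both inclusions close.
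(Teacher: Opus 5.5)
Your plan works and has the same architecture as the paper. The inclusion $\dot\F^\beta_{\infty,q}\subset X$ is exactly the paper's Step~2: Whitney averages are bounded by $\sup_{s\sim 2^{-k}}(\Phi^*_sf)_a$, and then \cite{Ullrich2} applies. The converse rests, as in the paper, on a Rychkov-type sub-mean-value bound followed by spatial localization, but you organize it differently.

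\emph{The paper's route.} It starts from \eqref{eq: F2}, which has an outer two-sided scale sum with the fixed decay $\delta=R+1-\beta$. Lemma~\ref{lem: convolution lemma} then forces a large exponent, $\delta>d/(\alpha\gamma)$. The spatial tails are handled by splitting into $8Q$ and the translates $Q+\ell(Q)z$, using the majorant property of $\cM$ and Fefferman--Stein. John--Nirenberg then returns to exponent $q$: Proposition~\ref{prop: John-Nirenberg continuous} on the tent side, and \cite{Ullrich2,Danchun_Yuan} on the $\dot\F$ side.

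\emph{Your route.} Your one-sided bound for $\Phi$ itself is essentially the inner $m$-sum of \eqref{eq: F2}; its scales may reach $2^{1-k}$, which is a harmless index shift. Since you take $\|f\|_{\dot\F^\beta_{\infty,q}}\lesssim\|(2^{k\beta}(\Phi^*_{2^{-k}}f)_a)_k\|_{X^{q,q}}$ directly from \cite{Ullrich2}, you never need the outer sum. Your decay $2^{-(l-k)N}$ is arbitrary, so you can stay at exponent $q$ throughout:
\begin{itemize}
\item raise to a power $\rho\le\min\{q,r\}$; for fixed $Q$, the quantity $\fint_Q\sum_{l\ge j_Q}|\cdot|^{q/\rho}$ is then linear in the $q/\rho$-th powers, so Minkowski and Fubini apply;
\item the one-sided $l$-sum is controlled by a discrete Young inequality;
\item a shift $x\mapsto x+2^{-l}v$ with $|v|\lesssim 2^m$ costs only a factor $2^{md}$ (before taking roots), because $Q+2^{-l}v$ lies in a $C2^m$-dilate of $Q$ whenever $l\ge j_Q$;
\item the $m$-sum then converges once $a$ is large.
\end{itemize}
So your route avoids Fefferman--Stein, Lemma~\ref{lem: convolution lemma} and John--Nirenberg in this direction. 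The paper's route buys the ability to use \eqref{eq: F2} as a black box.

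\emph{Corrections to your justifications (none is fatal).}
\begin{itemize}
\item The ``mismatch of exponents'' is not forced by your argument, and your reason for it is backwards. The condition $\delta>d/\alpha$ requires $\alpha$ large, not small. This is precisely why the paper, with $\delta=R+1-\beta$ fixed, needs John--Nirenberg.
\item Lemma~\ref{lem: convolution lemma} concerns sums over scales. It does not absorb the spatial $m$-tails.
\item If you do change exponents, $\|\cdot\|_{X^{q,\alpha}}\simeq\|\cdot\|_{X^{q,q}}$ fails for general sequences. A counterexample is a single nonzero level that is $\IZ^d$-periodic and lies in $\L^q_{\loc}\setminus\L^\alpha_{\loc}$ with $\alpha>q$. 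The equivalence must therefore be proved for the Peetre-maximal and Whitney-average sequences via their reduction to $\mathrm{\dot f}_{\infty,q}$. The paper does this with \cite[Cor.~5.7]{Frazier_Jawerth} through Lemma~\ref{lem: John-Nirenberg lemma} and Proposition~\ref{prop: John-Nirenberg continuous}, and with \cite{Ullrich2,Danchun_Yuan} on the Peetre side.
\item For $\alpha\neq q$, the estimate for scale-dependent shifts is no longer a Fubini argument and needs a maximal-function bound.
\end{itemize}
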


\begin{remark}
\label{rem:characterization}
We clarify how the characterization identity has to be understood.
\begin{itemize}
    \item In this theorem, the homogeneous space on the left-hand side is understood as a realization within $\cS_R'(\IR^d)$, where $R$ is as in the statement. This realization aligns with the framework presented in \cite[Lem.\@ 3.4]{Ullrich2} in the following sense. By \cite[Lem.\@ 3.4]{Ullrich2}, we can realize $\dot \F^\beta_{\infty,q}(\IR^d)$ within the space $\cS_M'(\IR^d)$, where $M=\max\{\lfloor \beta\rfloor,-1\}$ and $\lfloor \beta\rfloor$ denotes the integer part of $\beta\in\IR$. Because $M\leq R$ by the bullet point below, we have the embedding $\cS_M'(\IR^d)\subset \cS_R'(\IR^d)$ and can thus realize $\dot \F^\beta_{\infty,q}(\IR^d)$ within $\cS_R'(\IR^d)$. As a consequence, the identity in Theorem~\ref{thm: char of lifted Triebel data} is understood as a set-theoretic equality in $\cS'_R(\IR^d)$ with corresponding equivalence of norms.

    \item We claim that $M\leq R$, which implies $\cS_M'(\IR^d)\subset \cS'_R(\IR^d)$. Indeed, if $M=-1$ then $M\leq R$ holds trivially. In the case $M=\lfloor \beta\rfloor$, assume for the sake of contradiction that $M>R$. Since $M,R\in\IZ$, this implies $R+1\leq M$ and thus
    \begin{align*}
        \beta< R+1\leq M  \leq  \beta,
    \end{align*}
    which is impossible.
\end{itemize}
\end{remark}

\noindent Before proving Theorem~\ref{thm: char of lifted Triebel data}, we want to mention that the assumptions of the theorem agree with those of \cite[Thm.\@ 2.4]{Auscher_Bechtel_Haardt}.
This allows us to use their techniques and derived identities as a black box for our proof. However, the reader is advised to keep a copy of the mentioned references at hand.

\begin{proof}[Proof of Theorem~\ref{thm: char of lifted Triebel data}]

Let $X$ denote the space on the right-hand side in the statement. Then we have to show the equivalence $\dot \F^\beta_{\infty,q}(\IR^d) = X$. To do so, we will divide the proof into two steps. The first step proves the continuous embedding $\dot \F^\beta_{\infty,q}(\IR^d) \supset  X$ while the second step provides the reverse inclusion. For the rest of the proof, we fix $f \in \cS'_R(\IR^d)$.\\

\noindent \textbf{Step 1:  $\dot \F^\beta_{\infty,q}(\IR^d) \supset  X$.}
Assume that $f\in X$ and fix $a>\frac{d}{\min\{1,q,r\}}$ and $0<\gamma< \min\{1,q,r\}$ such that $ a\gamma >d$. Then we can use the identity (2.11) of \cite{Auscher_Bechtel_Haardt}, which reads as follows: for $\delta \coloneqq R+1-\beta$ we have
\begin{align}
\label{eq: F2}
    2^{l\beta\gamma}|(\Phi_{2^{-l}}^*f)_a(x)|^\gamma &\lesssim \sum\limits_{k\in\IZ} 2^{-|k-l|\delta\gamma} 2^{k\beta\gamma} \sum\limits_{m=0}^\infty 2^{-mN\gamma} 2^{(k+m)d}\int\limits_{\IR^d} \frac{|(\widetilde{\Phi}_{k+m}f)(y)|^\gamma}{(1+2^k|x-y|)^{a\gamma}}\,\d y \nonumber\\
    &=\sum\limits_{k\in\IZ} 2^{-|k-l|\delta\gamma} g_k(x)
\end{align}
for all $x\in \IR^d$ , $l\in \IZ$ and $N\geq a$, where we define
\begin{align}
\label{eq: def phi tilde}
    (\widetilde{\Phi}_{k+m}f)(y) \coloneqq  \bigg(\int\limits_{1}^2\bigg(\fint\limits_{\frac{\lambda}{2}}^\lambda \fint\limits_{B(y,2^{-(k+m)}t)} |(\Phi_{2^{-(k+m)}t}\ast f)(z) |^r\,\d z \d t\bigg)^\frac{q}{r}\,\frac{\d \lambda }{\lambda}\bigg)^\frac{1}{q}
\end{align}
and
\begin{align*}
    g_k(x)\coloneqq 2^{k\beta\gamma} \sum\limits_{m=0}^\infty 2^{-mN\gamma} 2^{(k+m)d}\int\limits_{\IR^d} \frac{|(\widetilde{\Phi}_{k+m}f)(y)|^\gamma}{(1+2^k|x-y|)^{a\gamma}}\,\d y.
\end{align*}
Now, since $R+1>\beta$, take $\alpha>1$ such that $ \delta=R+1-\beta>\frac{d}{\alpha\gamma}$. Then, we can use the estimate \eqref{eq: F2}, in conjunction with Lemma~\ref{lem: convolution lemma}, to get
\begin{align*}
     \Big\| \big(2^{l\beta\gamma}|(\Phi_{2^{-l}}^*f)_a(x)|^\gamma \big)_{l\in\IZ}\Big\|_{X^{\frac{q}{\gamma},\alpha}} &\lesssim \Big\| \Big(\sum\limits_{k\in\IZ} 2^{-|k-l|\delta\gamma} g_k\Big)_{l\in\IZ}\Big\|_{X^{\frac{q}{\gamma},\alpha}}\\
     &\lesssim \big\| (g_l)_{l\in\IZ}\big\|_{X^{\frac{q}{\gamma},\alpha}}\\ 
     &=\Big\| \Big(2^{l\beta \gamma}\sum\limits_{m=0}^\infty 2^{-mN\gamma} 2^{(l+m)d}\int\limits_{\IR^d} \frac{|(\widetilde{\Phi}_{l+m}f)(y)|^\gamma}{(1+2^l|x-y|)^{a\gamma}}\,\d y \Big)_{l\in\IZ}\Big\|_{X^{\frac{q}{\gamma},\alpha}}\\
     &=\Big\| \Big(\sum\limits_{m=l}^\infty 2^{-(m-l)(N+\beta)\gamma} 2^{m(d+\beta\gamma)}\int\limits_{\IR^d} \frac{|(\widetilde{\Phi}_{m}f)(y)|^\gamma}{(1+2^l|x-y|)^{a\gamma}}\,\d y \Big)_{l\in\IZ}\Big\|_{X^{\frac{q}{\gamma},\alpha}},
\end{align*}
where we used an index shift in the last line. Observe that the term on the left-hand side of the above estimate is equivalent to $\|f\|_{\dot \F^\beta_{\infty,q}}^\gamma$ by definition of $\|\cdot\|_{X^{\frac{q}{\gamma},\alpha}}$ and a John--Nirenberg-type property of $\dot \F^\beta_{\infty,q}(\IR^d)$, see \cite[Thm.\@ 3.2 + Rem.\@ 3.3]{Ullrich2} and \cite[Prop.\@ 4.1]{Danchun_Yuan}. Hence,
\begin{align}
\label{eq: rht0}
    \|f\|_{\dot \F^\beta_{\infty,q}}^\gamma&\lesssim  \Big\| \Big(\sum\limits_{m=l}^\infty 2^{-(m-l)(N+\beta)\gamma} 2^{m(d+\beta\gamma)}\int\limits_{\IR^d} \frac{|(\widetilde{\Phi}_{m}f)(y)|^\gamma}{(1+2^l|x-y|)^{a\gamma}}\,\d y \Big)_{l\in\IZ}\Big\|_{X^{\frac{q}{\gamma},\alpha}}\eqqcolon I
\end{align}
Let $Q\in\square$ be a dyadic cube and set $j_Q \coloneqq -\log_2(\ell(Q))$. Fix $l\geq j_Q$ and $x\in Q$. Then, we decompose the integral in \eqref{eq: rht0} into
\begin{align}
\label{eq: F3}
    \int\limits_{\IR^d} \frac{|(\widetilde{\Phi}_{m}f)(y)|^\gamma}{(1+2^l|x-y|)^{a\gamma}}\,\d y &\leq \int\limits_{8Q} \frac{|(\widetilde{\Phi}_{m}f)(y)|^\gamma}{(1+2^l|x-y|)^{a\gamma}}\,\d y + \sum\limits_{\substack{z\in\IZ^d \\ |z|_\infty\geq 2}} \int\limits_{Q+\ell(Q)z} \frac{|(\widetilde{\Phi}_{m}f)(y)|^\gamma}{(1+2^l|x-y|)^{a\gamma}}\,\d y,
\end{align}
where $|\cdot|_\infty$ is maximum norm on $\IR^d$. Notice that the functions $x\mapsto  \frac{2^{dl}}{(1+2^l|x|)^{a\gamma}}$ are radial and have uniform $\L^1$-bound with respect to $l\in\IZ$ because $a\gamma>d$. Using the majorant property of the Hardy--Littlewood maximal function, see \cite[Thm.\@ 2.1.10]{Grafakos}, we estimate the local term on the right-hand side of \eqref{eq: F3} by
\begin{align*}
     \int\limits_{8Q} \frac{|(\widetilde{\Phi}_{m}f)(y)|^\gamma}{(1+2^l|x-y|)^{a\gamma}}\,\d y\lesssim 2^{-ld} \cM\big(|\widetilde{\Phi}_{m}f|^\gamma \mathbf{1}_{8Q}\big)(x) \eqqcolon I_1^m(x).
\end{align*}
To bound the series in \eqref{eq: F3}, we use the observation
\begin{align*}
    1+2^l|x-y|\gtrsim 2^l\ell(Q)|z|_\infty
\end{align*}
for $x\in Q$ and $y\in Q+\ell(Q)z$. This yields
\begin{align*}
    &\sum\limits_{\substack{z\in\IZ^d \\ |z|_\infty\geq 2}} \int\limits_{Q+\ell(Q)z} \frac{|(\widetilde{\Phi}_{m}f)(y)|^\gamma}{(1+2^l|x-y|)^{a\gamma}}\,\d y\\&\leq \sum\limits_{\substack{z\in\IZ^d \\ |z|_\infty\geq 2}} 2^{-la\gamma}2^{j_Qa\gamma}|z|_\infty^{-a\gamma} \int\limits_{Q+\ell(Q)z} |(\widetilde{\Phi}_{m}f)(y)|^\gamma\,\d y)\\
    &\lesssim\sum\limits_{\substack{z\in\IZ^d \\ |z|_\infty\geq 2}}  2^{-la\gamma}2^{j_Q(a\gamma-d)}|z|_\infty^{-a\gamma}   \cM\big(|\widetilde{\Phi}_{m}f|^\gamma\mathbf{1}_{Q+\ell(Q)z} \big)(x+\ell(Q)z)\\
    &\lesssim\sum\limits_{\substack{z\in\IZ^d \\ |z|_\infty\geq 2}}  2^{-ld}|z|_\infty^{-a\gamma}   \cM\big(|\widetilde{\Phi}_{m}f|^\gamma\mathbf{1}_{Q+\ell(Q)z} \big)(x+\ell(Q)z),
\end{align*}
where we used $a\gamma>d$ and $l\geq j_Q$. Define
\begin{align*}
    I_2^m(x) \coloneqq \sum\limits_{\substack{z\in\IZ^d \\ |z|_\infty\geq 2}}  2^{-ld}|z|_\infty^{-a\gamma}   \cM\big(|\widetilde{\Phi}_{m}f|^\gamma\mathbf{1}_{Q+\ell(Q)z} \big)(x+\ell(Q)z).
\end{align*}
Then, we can bound $I$ from \eqref{eq: rht0} by
\begin{align}
\label{eq: F4}
      I&\lesssim  \Big\| \Big(\sum\limits_{m=l}^\infty 2^{-(m-l)(N+\beta)\gamma} 2^{m(d+\beta\gamma)} I_1^m \Big)_{l\in\IZ}\Big\|_{X^{\frac{q}{\gamma},\alpha}} \nonumber\\
      &\quad + \Big\| \Big(\sum\limits_{m=l}^\infty 2^{-(m-l)(N+\beta)\gamma} 2^{m(d+\beta\gamma)} I_2^m \Big)_{l\in\IZ}\Big\|_{X^{\frac{q}{\gamma},\alpha}}.
\end{align}
We estimate both terms on the right-hand side of \eqref{eq: F4} in two separate steps.\\

\noindent \textbf{Step 1.1: Estimate the first term in \eqref{eq: F4}}.
Fix a dyadic cube $Q\in \square$ and focus on the term
\begin{align*}
    \bigg(\fint\limits_Q \bigg(\sum\limits_{l=-\log_2(\ell(Q))}^\infty \Big|\sum\limits_{m=l}^\infty 2^{-(m-l)(N+\beta)\gamma} 2^{m(d+\beta\gamma)}I_1^m(x) \Big|^\frac{q}{\gamma} \bigg)^\frac{\alpha\gamma}{q}\,\d x\bigg)^\frac{1}{\alpha}.
\end{align*}
By Definition~\ref{def: X norm}, this is the first term in \eqref{eq: F4} up to a supremum. Using Hölder's inequality, we get
\begin{align*}
    &\bigg(\fint\limits_Q \bigg(\sum\limits_{l=-\log_2(\ell(Q))}^\infty \Big|\sum\limits_{m=l}^\infty 2^{-(m-l)(N+\beta)\gamma} 2^{m(d+\beta\gamma)}I_1^m(x) \Big|^\frac{q}{\gamma} \bigg)^\frac{\alpha\gamma}{q}\,\d x\bigg)^\frac{1}{\alpha}\\
    &\lesssim \bigg(\fint\limits_Q \bigg(\sum\limits_{l=-\log_2(\ell(Q))}^\infty \sum\limits_{m=l}^\infty 2^{-(m-l)(N-1+\beta)q} 2^{m(\frac{d}{\gamma}+\beta) q}|I_1^m(x)|^\frac{q}{\gamma} \bigg)^\frac{\alpha\gamma}{q}\,\d x\bigg)^\frac{1}{\alpha}\\
    &=\bigg(\fint\limits_Q \bigg(\sum\limits_{l=-\log_2(\ell(Q))}^\infty\sum\limits_{m=l}^\infty 2^{-(m-l)(N-1+\beta-\frac{d}{\gamma})q} 2^{m\beta q}\Big[\cM\big(|\widetilde{\Phi}_{m}f|^\gamma\mathbf{1}_{8Q}\big)(x)\Big]^\frac{q}{\gamma}\bigg)^{\frac{\alpha\gamma}{q}}\,\d x\bigg)^\frac{1}{\alpha}.
\intertext{Since $\gamma<q$ and $\alpha>1$, we can apply Fefferman--Stein's vector valued inequality, see for example \cite[Thm.\@ 2.1]{Ullrich1}, to obtain}
    &\lesssim\bigg(\fint\limits_{8Q} \bigg(\sum\limits_{l=-\log_2(\ell(Q))}^\infty\sum\limits_{m=l}^\infty 2^{-(m-l)(N-1+\beta-\frac{d}{\gamma})q} 2^{m\beta q}|\widetilde{\Phi}_{m}f|^q(x)\bigg)^{\frac{\alpha\gamma}{q}}\,\d x\bigg)^\frac{1}{\alpha}.
\intertext{Finally, we conclude with Fubini's theorem}
    &=\bigg(\fint\limits_{8Q} \bigg(\sum\limits_{m=-\log_2(\ell(Q))}^\infty\sum\limits_{l=-\log_2(\ell(Q))}^m 2^{-(m-l)(N-1+\beta-\frac{d}{\gamma})q} 2^{m\beta q}|\widetilde{\Phi}_{m}f|^q(x)\bigg)^{\frac{\alpha\gamma}{q}}\,\d x\bigg)^\frac{1}{\alpha}\\
    &\lesssim \bigg(\fint\limits_{8Q} \bigg(\sum\limits_{m=-\log_2(\ell(Q))}^\infty 2^{m\beta q}|\widetilde{\Phi}_{m}f|^q(x)\bigg)^{\frac{\alpha\gamma}{q}}\,\d x\bigg)^\frac{1}{\alpha},
\end{align*}
where we take $N>1-\beta+\frac{d}{\gamma}$. Now, by definition of $\widetilde{\Phi}_{m}$ in \eqref{eq: def phi tilde}, we substitute $2^{-m}t = s$ to see that
\begin{align*}
    &\bigg(\fint\limits_{8Q} \bigg(\sum\limits_{m=-\log_2(\ell(Q))}^\infty 2^{m\beta q}|\widetilde{\Phi}_{m}f|^q(x)\bigg)^{\frac{\alpha\gamma}{q}}\,\d x\bigg)^\frac{1}{\alpha}\\
    &=\bigg(\fint\limits_{8Q}\bigg(\sum\limits_{m=-\log_2(\ell(Q))}^\infty   2^{m\beta q} \int\limits_{1}^2\bigg(\fint\limits_{\frac{\lambda}{2}}^\lambda \fint\limits_{B(x,2^{-m}t)} |(\Phi_{2^{-m}t}\ast f)(z) |^r\,\d z \d t\bigg)^\frac{q}{r}\,\frac{\d \lambda }{\lambda} \bigg)^{\frac{\alpha\gamma}{q}}\,\d x\bigg)^\frac{1}{\alpha}\\
    &=\bigg(\fint\limits_{8Q}\bigg(\sum\limits_{m=-\log_2(\ell(Q))}^\infty 2^{m\beta q} \int\limits_{1}^2\bigg(\fint\limits_{\frac{2^{-m}\lambda}{2}}^{2^{-m}\lambda} \fint\limits_{B(x,s)} |(\Phi_s\ast f)(z) |^r\,\d z \d s\bigg)^\frac{q}{r}\,\frac{\d \lambda }{\lambda}\bigg)^{\frac{\alpha\gamma}{q}}\,\d x\bigg)^\frac{1}{\alpha}.
\intertext{Finally, use the substitution $\lambda=2^mt$ and the covering property $B(x,s)\subset B(x,t)$, to get}
    &=\bigg(\fint\limits_{8Q}\bigg(\sum\limits_{m=-\log_2(\ell(Q))}^\infty  2^{m\beta q} \int\limits_{2^{-m}}^{2^{-m+1}}\bigg(\fint\limits_{\frac{t}{2}}^{t} \fint\limits_{B(x,s)} |(\Phi_s\ast f)(z) |^r\,\d z \d s\bigg)^\frac{q}{r}\,\frac{\d t }{t} \bigg)^{\frac{\alpha\gamma}{q}}\,\d x\bigg)^\frac{1}{\alpha}\\
    &\lesssim \bigg(\fint\limits_{8Q} \bigg(\int\limits_{0}^{2\ell(Q)} \bigg(\fint\limits_{\frac{t}{2}}^{t} \fint\limits_{B(x,t)} |s^{-\beta}(\Phi_s\ast f)(z) |^r\,\d z \d s\bigg)^\frac{q}{r}\,\frac{\d t }{t} \bigg)^{\frac{\alpha\gamma}{q}}\,\d x\bigg)^\frac{1}{\alpha}\\
    &\lesssim\sup\limits_{\tau>0}\sup\limits_{y\in\IR^d}\bigg(\fint\limits_{B(y,\tau)} \bigg(\int\limits_{0}^{\tau} \bigg(\fint\limits_{\frac{t}{2}}^{t} \fint\limits_{B(x,t)} |s^{-\beta}(\Phi_s\ast f)(z) |^r\,\d z \d s\bigg)^\frac{q}{r}\,\frac{\d t }{t} \bigg)^{\frac{\alpha\gamma}{q}}\,\d x\bigg)^\frac{1}{\alpha}.
\end{align*}
Using the John--Nirenberg-type property for $\T^{\infty,q,r}_\beta$-spaces from Proposition~\ref{prop: John-Nirenberg continuous} below, we can bound the last term by $\|\Phi_s\ast f\|_{\T^{\infty,q,r}_\beta}^\gamma$. In summary, we have shown 
\begin{align*}
    \bigg(\fint\limits_Q \bigg(\sum\limits_{l=-\log_2(\ell(Q))}^\infty \Big|\sum\limits_{m=l}^\infty 2^{-(m-l)(N+\beta)\gamma} 2^{m(d+\beta\gamma)}I_1^m(x) \Big|^\frac{q}{\gamma} \bigg)^\frac{\alpha\gamma}{q}\,\d x\bigg)^\frac{1}{\alpha}
    \lesssim \|\Phi_s\ast f\|_{\T^{\infty,q,r}_\beta}^\gamma
\end{align*}
for each dyadic cube $Q\in \square$. Taking supremum in $Q$ yields boundedness of the first term in \eqref{eq: F4}, namely
\begin{align}
\label{eq: F6}
    \Big\| \Big(\sum\limits_{m=l}^\infty 2^{-(m-l)(N+\beta)\gamma} 2^{m(d+\beta\gamma)} I_1^m \Big)_{l\in\IZ}\Big\|_{X^{\frac{q}{\gamma},\alpha}}\lesssim\|\Phi_s\ast f\|_{\T^{\infty,q,r}_\beta}^\gamma.
\end{align}

\noindent \textbf{Step 1.2: Estimate the second term in \eqref{eq: F4}}.
Using a similar procedure, we want to bound the series in \eqref{eq: F4} by $\|\Phi_s\ast f\|_{\T^{\infty,q,r}_\beta}^\gamma$ likewise. First, we apply Minkowski's inequality to get
\begin{align}
\label{eq: F5}
    &\Big\| \Big(\sum\limits_{m=l}^\infty 2^{-(m-l)(N+\beta)\gamma} 2^{m(d+\beta\gamma)} I_2^m \Big)_{l\in\IZ}\Big\|_{X^{\frac{q}{\gamma},\alpha}}\\
    &=\Big\| \Big(\sum\limits_{m=l}^\infty 2^{-(m-l)(N+\beta-d)\gamma} 2^{m\beta\gamma}\sum\limits_{\substack{z\in\IZ^d \\ |z|_\infty\geq 2}} |z|_\infty^{-a\gamma}   \cM\big(|\widetilde{\Phi}_{m}f|^\gamma\mathbf{1}_{Q+\ell(Q)z} \big)(x+\ell(Q)z) \Big)_{l\in\IZ}\Big\|_{X^{\frac{q}{\gamma},\alpha}}\nonumber\\
    &\leq\sum\limits_{\substack{z\in\IZ^d \\ |z|_\infty\geq 2}} |z|_\infty^{-a\gamma}\Big\| \Big(\sum\limits_{m=l}^\infty 2^{-(m-l)(N+\beta-d)\gamma} 2^{m\beta\gamma}   \cM\big(|\widetilde{\Phi}_{m}f|^\gamma\mathbf{1}_{Q+\ell(Q)z} \big)(x+\ell(Q)z) \Big)_{l\in\IZ}\Big\|_{X^{\frac{q}{\gamma},\alpha}}.\nonumber
\end{align}
Observe that for a fixed $z\in\IZ^d$ the term
\begin{align*}
    \Big\| \Big(\sum\limits_{m=l}^\infty 2^{-(m-l)(N+\beta-d)\gamma} 2^{m\beta\gamma}   \cM\big(|\widetilde{\Phi}_{m}f|^\gamma\mathbf{1}_{Q+\ell(Q)z} \big)(x+\ell(Q)z) \Big)_{l\in\IZ}\Big\|_{X^{\frac{q}{\gamma},\alpha}}
\end{align*}
is of the same form as the term of Step~1.1. Thus, one can analogously repeat the same calculation as before to obtain the bound
\begin{align*}
    &\Big\| \Big(\sum\limits_{m=l}^\infty 2^{-(m-l)(N+\beta-d)\gamma} 2^{m\beta\gamma}   \cM\big(|\widetilde{\Phi}_{m}f|^\gamma\mathbf{1}_{Q+\ell(Q)z} \big)(x+\ell(Q)z) \Big)_{l\in\IZ}\Big\|_{X^{\frac{q}{\gamma},\alpha}}\lesssim \|\Phi_s\ast f\|_{\T^{\infty,q,r}_\beta}^\gamma
\end{align*}
for each $z\in \IZ^d$. Plugging this into \eqref{eq: F5} gives us
\begin{align*}
    \Big\| \Big(\sum\limits_{m=l}^\infty 2^{-(m-l)(N+\beta)\gamma} 2^{m(d+\beta\gamma)} I_2^m \Big)_{l\in\IZ}\Big\|_{X^{\frac{q}{\gamma},\alpha}}
    &\lesssim \sum\limits_{\substack{z\in\IZ^d \\ |z|_\infty\geq 2}} |z|^{-a\gamma}\|\Phi_s\ast f\|_{\T^{\infty,q,r}_\beta}^\gamma
    \lesssim\|\Phi_s\ast f\|_{\T^{\infty,q,r}_\beta}^\gamma,
\end{align*}
where we used $a\gamma>d$ in the last inequality. Combining this estimate with \eqref{eq: F6} of Step~1.1 and \eqref{eq: F4} yields
\begin{align*}
    I\lesssim \|\Phi_s\ast f\|_{\T^{\infty,q,r}_\beta}^\gamma.
\end{align*}
Finally, using this estimate in \eqref{eq: rht0} gives
\begin{align*}
     \|f\|_{\dot \F^\beta_{\infty,q}}\lesssim \|\Phi_s\ast f\|_{\T^{\infty,q,r}_\beta},
\end{align*}
which finishes Step~1.\\

\noindent \textbf{Step 2: $\dot \F^\beta_{\infty,q}(\IR^d)\subset X$.}
To attack this inclusion,  we first note that averages over Whitney boxes can be bounded by Peetre's maximal function, as discussed in Step~2 in the proofs of \cite[Thm.~2.2+2.4]{Auscher_Bechtel_Haardt}. More precisely, for fixed $t>0$ and $x\in \IR^d$, we have
\begin{align*}
     \bigg(\fint\limits_{\frac{t}{2}}^{t} \fint\limits_{B(x,t)} |(\Phi_{s}\ast f)(z) |^r \,\d z \d s \bigg)^\frac{1}{r} & \lesssim\sup\limits_{\frac{t}{2}<s \leq t} (\Phi_s^*f)_a(x).
\end{align*}
Thus, to show the inclusion $\dot \F^\beta_{\infty,q}(\IR^d)\subset X$, it suffices to show
\begin{align*}
     \sup\limits_{y\in\IR^d}\sup\limits_{\tau>0}\bigg(\int\limits_{0}^{\tau}\fint\limits_{B(y,\tau)}   t^{-q\beta}\Big(\sup\limits_{\frac{t}{2}<s \leq t} (\Phi_s^*f)_a(x)\Big)^q\,\frac{\d x\d t}{t}\bigg)^\frac{1}{q}\lesssim \|f\|_{\dot \F^\beta_{\infty,q}}
\end{align*}
or, equivalently by the John--Nirenberg-type property (Proposition~\ref{prop: John-Nirenberg continuous}),
\begin{align*}
     \sup\limits_{y\in\IR^d}\sup\limits_{\tau>0}\bigg(\fint\limits_{B(y,\tau)}\bigg(\int\limits_{0}^{\tau}   t^{-q\beta}\Big(\sup\limits_{\frac{t}{2}<s \leq t} (\Phi_s^*f)_a(x)\Big)^q\,\frac{\d t}{t}\bigg)^\frac{\alpha\gamma}{q}\,\d x \bigg)^\frac{1}{\alpha\gamma}\lesssim \|f\|_{\dot \F^\beta_{\infty,q}}
\end{align*}
for  $f\in \dot \F^\beta_{\infty,q}(\IR^d)$, where $\alpha>1$ is defined as in Step~1. However, this follows from \cite[Thm.\@ 3.2]{Ullrich2} together with the observation \cite[Rem.\@ 3.7]{Ullrich2}. 
\end{proof}

As an illustration of Theorem~\ref{thm: char of lifted Triebel data}, we characterize the space $\dot \F^\beta_{\infty,q}(\IR^d)$ for $\beta <0$ via the Gauss--Weierstrass semigroup. Specifically, by choosing $R=-1$, we see that the heat kernel $\Phi(x) = (4\pi)^{-\frac{d}{2}} e^{-\frac{|x|^2}{4}}$ satisfies both \eqref{property 1} and \eqref{property 2}. Moreover, for $\beta<0$, we realize the space $\dot \F^\beta_{\infty,q}(\IR^d)$ within $\cS'(\IR^d)$, see Remark~\ref{rem:characterization}. 

\begin{proposition}[Gauss--Weierstrass characterization of $\dot \F^\beta_{\infty,q}$]
\label{prop: Gauss--Weierstrass char}
Let $0<q< \infty$ and $\beta<0$. A tempered distribution $f\in\cS'(\IR^d)$ belongs to $\dot \F^\beta_{\infty,q}(\IR^d)$ if and only if $e^{t^2\Delta}f \in \T^{\infty,q,r}_\beta$ for any $0<r\leq \infty$, and there holds the equivalence of (quasi-)norms
\begin{align*}
    \|f\|_{\dot \F^\beta_{\infty,q}} \simeq \|e^{t^2\Delta}f \|_{\T^{\infty,q,r}_\beta}.
\end{align*}

\end{proposition}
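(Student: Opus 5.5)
The plan is to obtain the proposition as a direct specialization of Theorem~\ref{thm: char of lifted Triebel data} with $R=-1$ and the Gaussian kernel $\Phi(x)=(4\pi)^{-\frac{d}{2}}e^{-\frac{|x|^2}{4}}$. Three things need to be checked: the hypotheses of the theorem, the identification of $\Phi_t\ast f$ with $e^{t^2\Delta}f$, and the fact that the ambient distribution space $\cS'_R(\IR^d)$ becomes $\cS'(\IR^d)$.

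\textbf{Hypotheses.} The condition $R+1>\beta$ reads $0>\beta$, which is exactly our assumption. Condition \eqref{property 2} is vacuous for $R=-1$, since there is no multi-index with $|\alpha|\leq -1$. For \eqref{property 1}, we have $\cF\Phi(\xi)=e^{-|\xi|^2}$ with the normalization $\cF f(\xi)=\int f(x)e^{-ix\cdot\xi}\,\d x$. This is strictly positive everywhere, so any $\varepsilon>0$ works. Under a different Fourier normalization we only get $e^{-c|\xi|^2}$, which is still positive.

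\textbf{Identification with the heat semigroup.} Since $\Phi_t(x)=t^{-d}\Phi(x/t)=(4\pi t^2)^{-\frac{d}{2}}e^{-\frac{|x|^2}{4t^2}}$ is the Gauss--Weierstrass kernel at time $t^2$, we get $\Phi_t\ast f=e^{t^2\Delta}f$ for $f\in\cS'(\IR^d)$. The convolution is understood as $f(\Phi_t(x-\cdot))$, which is exactly the convolution used in the theorem with $\cS_{-1}=\cS$. Hence $\|\Phi_s\ast f\|_{\T^{\infty,q,r}_\beta}=\|e^{s^2\Delta}f\|_{\T^{\infty,q,r}_\beta}$.

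\textbf{Realization and conclusion.} For $\beta<0$ we have $M=\max\{\lfloor\beta\rfloor,-1\}=-1$, so by Remark~\ref{rem:characterization} the space $\dot\F^\beta_{\infty,q}$ is realized in $\cS'_{-1}(\IR^d)=\cS'(\IR^d)$, with no quotient by polynomials. Theorem~\ref{thm: char of lifted Triebel data} then says that $f\in\cS'(\IR^d)$ lies in $\dot\F^\beta_{\infty,q}$ if and only if $\|e^{t^2\Delta}f\|_{\T^{\infty,q,r}_\beta}<\infty$. The equivalence of quasi-norms comes from the two-sided estimates in Steps~1 and~2 of that proof. The statement holds for every $0<r\leq\infty$, since the theorem allows arbitrary $r$ and its constants depend only on $d,q,r,\beta$ and $\Phi$.

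\textbf{Expected obstacle.} I expect the only real difficulty to be bookkeeping rather than analysis: Step~1 of the theorem's proof relies on the identity (2.11) of \cite{Auscher_Bechtel_Haardt}, and we must confirm it applies to a kernel with $R=-1$, i.e.\ without vanishing moments. This is precisely the regime covered by \cite[Thm.\@ 2.4]{Auscher_Bechtel_Haardt}, whose assumptions coincide with ours. Note that $\delta=R+1-\beta=-\beta>0$, which is what Lemma~\ref{lem: convolution lemma} requires, so no further argument should be needed.
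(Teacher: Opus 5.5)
Your proposal is correct and follows the paper's argument. The paper likewise obtains the proposition by specializing Theorem~\ref{thm: char of lifted Triebel data} to $R=-1$ and the heat kernel $\Phi(x)=(4\pi)^{-\frac{d}{2}}e^{-\frac{|x|^2}{4}}$, checking \eqref{property 1}--\eqref{property 2}, identifying $\Phi_t\ast f=e^{t^2\Delta}f$, and using Remark~\ref{rem:characterization} to realize $\dot\F^\beta_{\infty,q}(\IR^d)$ in $\cS'(\IR^d)$ for $\beta<0$; your extra checks (Fourier normalization, $M=-1$, $\delta=-\beta>0$) just make explicit what the paper leaves implicit.
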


\section{Functional analytic properties of tent spaces}
\label{sec: tent spaces}

\noindent As we saw in the previous section, homogeneous endpoint Triebel--Lizorkin spaces $\dot\F^\beta_{\infty,q}(\IR^d)$ can be characterized via extensions to the upper half-space $\IR^{d+1}_+$ belonging to a certain extension space. In the case of homogeneous Triebel--Lizorkin spaces $\dot\F^\beta_{p,q}(\IR^d)$ for $0<p<\infty$, analogous characterizations were shown in \cite[Sec.\@ 2]{Auscher_Bechtel_Haardt}. More precisely, the extension spaces of $ \dot\F^{\beta}_{p,q}(\IR^d)$ are characterized in terms of the four-parameter tent spaces $\T^{p,q,r}_{\beta}$, see Definition~\ref{def: tent spaces} below. Thus, Theorem~\ref{thm: char of lifted Triebel data} can be seen as an extension of their characterization to the case $p=\infty$. The purpose of this section is to extend the scale $\T^{p,q,r}_{\beta}$ to its endpoint $p=\infty$ and study the full scale from a function space theoretical point of view. First, we provide concrete definitions of these spaces and compare them to known spaces from the literature and collect basic properties such as completeness, density, duality and complex interpolation (Section~\ref{subsec: basic prop}). Afterwards, we develop a discrete descriptions of these spaces (Section~\ref{sec: dyadic char}), which are powerful tools for duality theory in the quasi-Banach range (Section~\ref{sec: duality}), embedding theory (Section~\ref{sec: embeddings}) and real interpolation theory (Section~\ref{sec: real interpolation}).

\subsection{Definitions, relations to known spaces and basic properties}
\label{subsec: basic prop}

Here, we recall the definitions of the function spaces $\T^{p,q,r}_\beta$ and $\Z^{p,q,r}_\beta$ that appear in \cite[Sec.\@ 3]{Auscher_Bechtel_Haardt} and extend the scale of tent spaces to its endpoint $p=\infty$. Moreover, we provide a link to existing function spaces from the literature and collect basic properties, including completeness, density, duality and complex interpolation.

\begin{definition}[Tent spaces]
\label{def: tent spaces}
    For $0<p< \infty$, $0<q,r\leq \infty$ and $\beta \in\IR$ define the space $\T^{p,q,r}_\beta$ as the set of all measurable functions $f$ on $\IR^{d+1}_+$ such that
    \begin{align*}
        \|f\|_{\T^{p,q,r}_\beta} \coloneqq \bigg\|\bigg( \int\limits_0^\infty \bigg(\fint\limits_{\frac{t}{2}}^t \fint\limits_{B(\cdot,t)} |s^{-\beta}f(s,y)|^r \,\d y \d s\bigg)^\frac{q}{r}\,\frac{\d t}{t}\bigg)^\frac{1}{q} \bigg\|_{\L^p}< \infty
    \end{align*}
    with the usual replacement of an integral by an $\esssup$ if the corresponding parameters $q,r$ are infinite. Moreover, we define the space $\T^{\infty,q,r}_\beta$ as the set of all measurable functions $f$ on $\IR^{d+1}_+$ such that
    \begin{align*}
        \|f\|_{\T^{\infty,q,r}_\beta} \coloneqq \sup\limits_{\tau>0} \sup\limits_{y\in\IR^d} 
        \bigg(\int\limits_{0}^\tau  \fint\limits_{B(y,\tau)} \bigg(\fint\limits_{\frac{t}{2}}^t \fint\limits_{B(x,t)} |s^{-\beta}f(s,z)|^r \,\d z \d s\bigg)^\frac{q}{r}\,\frac{\d x \d t}{t}\bigg)^\frac{1}{q} < \infty,
    \end{align*}
    if the parameter $q$ is finite, and
    \begin{align*}
        \|f\|_{\T^{\infty,\infty,r}_\beta} \coloneqq \sup\limits_{t>0} \sup\limits_{x\in\IR^d} 
        \bigg(\fint\limits_{\frac{t}{2}}^t \fint\limits_{B(x,t)} |s^{-\beta}f(s,z)|^r \,\d z \d s\bigg)^\frac{1}{r}< \infty
    \end{align*}
    with the same replacements in both cases if $r=\infty$, as before.
\end{definition}

\begin{definition}[$\Z$-spaces]
\label{def: Z-space}
    For $0<p,q,r\leq \infty$ and $\beta\in\IR$ define the space $\Z^{p,q,r}_\beta$ as the set of all measurable functions $f$ on $\IR^{d+1}_+$ such that
    \begin{align*}
        \|f\|_{\Z^{p,q,r}_\beta} \coloneqq \bigg(\int\limits_{0}^\infty \bigg\|\bigg(\fint\limits_{\frac{t}{2}}^t \fint\limits_{B(\cdot,t)} |s^{-\beta}f(s,y)|^r \, \d y \d s\bigg)^\frac{1}{r} \bigg\|_{\L^p}^q\,\frac{\d t}{t}\bigg)^\frac{1}{q}<\infty,
    \end{align*}
    with the usual replacement of an integral by an $\esssup$ if the corresponding parameters $q,r$ are infinite. 
\end{definition}

\begin{remark}
    In \cite[Rem.\@ 3.2]{Auscher_Bechtel_Haardt}, the authors illustrated that the spaces $\Z^{p,q,r}_\beta$ and $\T^{p,q,r}_\beta$ (for $p<\infty$) are generalizations of the well-known tent spaces $\T^{p,q}_\beta$ and $\Z$-spaces $\Z^{p,q}_\beta$. In the case $p=\infty$, our introduced spaces $\T^{\infty,q,r}_\beta$ are generalizations of the endpoint tent spaces $\T^{\infty,q}_\beta$ (see \cite[Def.\@ 1.1]{Huang}) in the sense that $\T^{\infty,q,q}_\beta = \T^{\infty,q}_\beta$ with equivalence of (quasi-)norms.
\end{remark}

In \cite{Huang}, the author introduced weighted tent spaces with Whitney average, denoted by $\T^{p,r}_{q,\beta}$. They consists of all measurable functions $f$ on $\IR^{d+1}_+$ such that
\begin{align*}
    &\|f\|_{\T^{p,r}_{q,\beta}} \coloneqq \bigg(\int\limits_{\IR^d} \bigg( \int\limits_0^\infty\fint\limits_{B(x,t)} \bigg(\fint\limits_{\frac{t}{2}}^{2t} \fint\limits_{B(y,t)} |s^{-\beta}f(s,z)|^r \,\d z \d s\bigg)^\frac{q}{r}\,\frac{\d y\d t}{t}\bigg)^\frac{p}{q}\,\d x \bigg)^\frac{1}{p}\qquad \text{(if $p<\infty$),}\\
    &\|f\|_{\T^{\infty,r}_{q,\beta}} \coloneqq \sup\limits_{\tau>0} \sup\limits_{y\in\IR^d} 
    \bigg(\iint\limits_{\hat{B}(y,\tau)}\bigg(\fint\limits_{\frac{t}{2}}^{2t} \fint\limits_{B(x,t)} |s^{-\beta}f(s,z)|^r \,\d z \d s\bigg)^\frac{q}{r}\,\frac{\d x \d t}{t}\bigg)^\frac{1}{q} 
    \end{align*}
are finite, where $\hat{B}(y,\tau) \coloneqq  \{(z,s)\in\IR^{d+1}_+: B(z,s)\subset B(y,\tau)\}$ denotes the tent with base $B(y,\tau)$.
We observe that the spaces $\T^{p,r}_{q,\beta}$ have an additional average compared to the (quasi-)norms of $\T^{p,q,r}_\beta$. However, as demonstrated in \cite[Lem.\@ 3.3]{Auscher_Bechtel_Haardt}, these spaces are equivalent for $p<\infty$. In the endpoint case $p=\infty$, a standard covering argument allows for the comparison of tents $\hat{B}(y,\tau)$ with Carleson boxes $(0,\tau)\times B(y,\tau)$, and a change of parameters in the average integrals, establishing the equivalence of $\T^{\infty,r}_{q,\beta}$ and $\T^{\infty,q,r}_\beta$.

While the spaces $\T^{p,r}_{q,\beta}$ and $\T^{p,q,r}_\beta$ essentially describe the same scale of function spaces, Definition~\ref{def: tent spaces} is more natural in the context for extension spaces for the Besov and Triebel--Lizorkin scales. Just as Besov and Triebel--Lizorkin spaces are distinguished primarily by the order of their underlying norms, this same structural symmetry is reflected in our definitions of $\Z^{p,q,r}_\beta$ and $\T^{p,q,r}_\beta$. Beyond this conceptual consistency, our definition reduces notational overhead and simplifies practical calculations. 

However, we first leverage the established properties of $\T^{p,r}_{q,\beta}$ from \cite{Huang}, transferring them to our tent spaces $\T^{p,q,r}_\beta$ via the aforementioned equivalence. In particular, we present three results that follow as direct consequences. First, we summarize basic properties of $\T^{p,q,r}_\beta$, such as completeness, density and separability in the following statement. The corresponding properties for $\T^{p,r}_{q,\beta}$-spaces can be found in \cite[Sec.\@ 1]{Huang}.

\begin{proposition}
\label{prop: basic props}
    Let $0<p,q,r\leq \infty$ and $\beta \in\IR$. Then the space $\T^{p,q,r}_\beta$ is a (quasi-)Banach space. Moreover, $\T^{p,q,r}_\beta$ is separable and the space of compactly supported functions in $\L^r\big(\IR^{d+1}_+,\frac{\d y \d s}{s}\big)$ is a dense subspace of $\T^{p,q,r}_\beta$ if $\max\{p,q,r\}<\infty$. Finally, for $K\subset\IR^{d+1}_+$ compact and $f\in\L^0(\IR^{d+1}_+)$, we have the equivalence
    \begin{align*}
        \|f\|_{\L^r\big(K,\frac{\d y\d s}{s}\big)}\simeq \|\mathbf{1}_Kf\|_{\T^{p,q,r}_\beta}
    \end{align*}
    where the implicit constants depend on $p, q, r, \beta, d$ and $K$.
\end{proposition}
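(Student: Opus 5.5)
The plan is to transfer each property from Huang's spaces $\T^{p,r}_{q,\beta}$ via the norm equivalence $\T^{p,q,r}_\beta \simeq \T^{p,r}_{q,\beta}$. For $p<\infty$ this is \cite[Lem.\@ 3.3]{Auscher_Bechtel_Haardt}. For $p=\infty$ it follows from the covering argument sketched before the statement. First I will make that endpoint equivalence precise. The tent $\hat B(y,\tau)$ is contained in the Carleson box $(0,\tau)\times B(y,\tau)$. Conversely, the box $(0,\tau)\times B(y,\tau)$ is covered by the tents $\hat B(y_i,2\tau)$, whose number $N=N(d)$ is fixed. The Whitney averages $\fint_{t/2}^{2t}$ and $\fint_{t/2}^{t}$ are comparable after dyadically splitting $(t/2,2t]=(t/2,t]\cup(t,2t]$ and enlarging balls, $B(x,t)\subset B(x,2t)$. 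This is a change of aperture and height of Whitney boxes, and it costs only constants because $\d t/t$ and the averaged ball measures are doubling. With the equivalence in hand, the first two assertions follow directly: completeness (as a quasi-Banach space) and separability, and density of compactly supported $\L^r$ functions when $\max\{p,q,r\}<\infty$. These are stated for $\T^{p,r}_{q,\beta}$ in \cite[Sec.\@ 1]{Huang}, and equivalent quasi-norms preserve completeness, separability and dense subsets. If completeness is not stated verbatim for every parameter range there, I would argue directly instead. The quasi-norm is a mixed Lebesgue-type quasi-norm of a positive functional of $|f|$, so a Fatou/Riesz--Fischer argument applies: for a Cauchy sequence, take a fast subsequence, sum the $\min\{1,p,q,r\}$-powers to get an a.e.\ limit, and conclude by Fatou's lemma.

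The last assertion, $\|f\|_{\L^r(K,\frac{\d y\d s}{s})}\simeq\|\mathbf 1_Kf\|_{\T^{p,q,r}_\beta}$, I would prove directly. Compactness gives $K\subset[a,b]\times B(0,M)$ with $0<a<b$. On such a set the weights $s^{-\beta}$ and $1/s$ are bounded above and below, so we may take $\beta=0$ and measure $\d y\,\d s$. For $g=\mathbf 1_K f$, the inner average $A(t,x)=\big(\fint_{t/2}^t\fint_{B(x,t)}|g|^r\big)^{1/r}$ vanishes unless $t\in[a,2b]$ and $|x|<M+2b$. On that range we have $A(t,x)\lesssim_K\|g\|_{\L^r}$. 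Since $(t,x)$ ranges over a bounded set, taking the $\L^q(\d t/t)$ norm and then the $\L^p$ norm (or the Carleson supremum when $p=\infty$) gives $\|g\|_{\T}\lesssim_K\|g\|_{\L^r}$. For the reverse inequality, cover $K$ by finitely many Whitney regions $W(t_j,x_j)$; their number depends on $K$. Whenever $t\in[t_j,\tfrac32 t_j]$ and $|x-x_j|<t_j/4$, the region $(t/2,t]\times B(x,t)$ contains a fixed fraction of $K\cap W$. One therefore checks that $\|g\|_{\L^r(K\cap((3t_j/4,t_j]\times B(x_j,t_j/2)))}\lesssim A(t,x)$ on a set of $(t,x)$ of positive measure. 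Integrating over that set bounds each piece by the $\T$ norm, and summing the finitely many pieces with the quasi-triangle inequality gives $\|g\|_{\L^r}\lesssim_K\|g\|_{\T}$. For $r=\infty$ the same argument works with essential suprema. For $p=\infty$ one chooses $\tau$ large enough that the box contains the relevant set.

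The main obstacle is bookkeeping rather than a conceptual difficulty. The endpoint equivalence $\T^{\infty,q,r}_\beta\simeq\T^{\infty,r}_{q,\beta}$ requires comparing the different Whitney averages, $(t/2,2t]$ versus $(t/2,t]$. I would handle this by splitting the longer interval in two and shifting $t\mapsto 2t$ in the outer $\d t/t$ integral, which is harmless since the Carleson box $(0,\tau)$ only grows to $(0,2\tau)$ and is then absorbed by doubling.
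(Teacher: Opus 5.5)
Your proposal is correct and follows the paper's route. Completeness, separability and density are transferred from Huang's spaces $\T^{p,r}_{q,\beta}$ via the norm equivalence: \cite[Lem.\@ 3.3]{Auscher_Bechtel_Haardt} for $p<\infty$, and for $p=\infty$ the comparison of tents with Carleson boxes together with the Whitney-interval comparison. The local equivalence $\|f\|_{\L^r(K,\frac{\d y\d s}{s})}\simeq\|\mathbf{1}_Kf\|_{\T^{p,q,r}_\beta}$ comes from exactly the kind of direct localization argument the paper omits and delegates to \cite[Lem.\@ 3.8]{Auscher_Bechtel_Haardt}.

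One thing needs tidying. In the lower bound, cover $K$ directly (by compactness) with the shrunken open boxes $(\tfrac34 t_j,t_j)\times B(x_j,\tfrac{t_j}{2})$, not with full Whitney regions $W(t_j,x_j)$. Only the shrunken box lies in $(\tfrac t2,t]\times B(x,t)$ for every $(t,x)\in[t_j,\tfrac32 t_j]\times B(x_j,\tfrac{t_j}{4})$.
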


Although the last property of the above statement was not proven in \cite{Huang}, it follows the same reasoning as in  \cite[Lem.\@ 3.8]{Auscher_Bechtel_Haardt} and will be omitted here. Next, we present a duality result in the Banach range $1\leq p,q,r<\infty$. The corresponding result for $\T^{p,r}_{q,\beta}$-spaces was proven in \cite[Thm.\@ 5.4]{Huang}.

\begin{proposition}
\label{prop: duality Banach range}
    Let $1\leq p,q,r<\infty$ and $\beta\in \IR$. Then, we have
    \begin{align*}
        \int\limits_0^\infty \int\limits_{\IR^d} |f(s,y)| \cdot |g(s,y)| \,\frac{\d y \d s}{s} \leq  \|f\|_{\T^{\vphantom{p',q',r'}p,q,r}_\beta } \|g\|_{\T^{p',q',r'}_{-\beta} }
    \end{align*}
    for all measurable functions $f,g$. Moreover, we can identify $(\T^{p,q,r}_\beta)' \simeq \T^{p',q',r'}_{-\beta}$ with equivalent norms via the $\L^2$ duality pairing.
\end{proposition}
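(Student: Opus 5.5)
The plan is to transfer Huang's duality theorem for $\T^{p,r}_{q,\beta}$ (\cite[Thm.\@ 5.4]{Huang}) through the equivalence $\T^{p,q,r}_\beta\simeq\T^{p,r}_{q,\beta}$ from \cite[Lem.\@ 3.3]{Auscher_Bechtel_Haardt}. I would prove the Hölder-type inequality directly, since it is stated with constant one. The identification of the dual would then follow from Huang's result together with the norm equivalence.

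For the inequality, I would first average over Whitney regions. By Fubini, for nonnegative $F$ one has the identity
\begin{align*}
\int\limits_0^\infty\int\limits_{\IR^d} F(s,y)\,\frac{\d y\d s}{s} \simeq \int\limits_{\IR^d}\int\limits_0^\infty \fint\limits_{\frac t2}^t\fint\limits_{B(x,t)} F(s,z)\,\d z\d s\,\frac{\d t}{t}\,\d x,
\end{align*}
since $\int_{s}^{2s}\frac{\d t}{t}\cdot\frac{2}{t}\cdot s$ and $\int \mathbf 1_{|x-z|<t}|B(x,t)|^{-1}\d x=1$ produce a bounded factor. The factor is in fact an explicit constant $c=\int_1^2 \frac{2}{u^2}\,\d u\cdot$(normalization) that one may compute, or absorb. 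I would take $F=|f||g|=|s^{-\beta}f|\,|s^{\beta}g|$.

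Next I would apply Hölder three times. First apply it in the inner average with exponents $r,r'$, then in $\frac{\d t}{t}$ with $q,q'$, and finally in $\d x$ with $p,p'$. This gives the product of the $\T^{p,q,r}_\beta$ norm of $f$ and the $\T^{p',q',r'}_{-\beta}$ norm of $g$, with the cases where a conjugate exponent is $\infty$ handled by suprema.

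The main obstacle I expect is obtaining the constant exactly $1$ rather than merely $\lesssim$. Here I would check the Fubini weight. We have $\fint_{t/2}^t\d s=\frac2t\int_{t/2}^t$, and $\int_s^{2s}\frac{2}{t}\frac{\d t}{t}=\frac1s$. Hence the right-hand side equals $\int\int F\,\frac{\d z\d s}{s}$ exactly, because the $x$-integral of $|B(x,t)|^{-1}\mathbf 1_{B(x,t)}(z)$ equals $1$. So the identity is exact and the constant is $1$.

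For the duality statement, the inequality shows that each $g\in\T^{p',q',r'}_{-\beta}$ induces a bounded functional via the $\L^2$ pairing. Surjectivity and the reverse norm bound I would import from Huang's theorem, using that the equivalence of \cite[Lem.\@ 3.3]{Auscher_Bechtel_Haardt} preserves the underlying functions and that the pairings coincide. The shift in $\beta$ and the differing Whitney ranges $(t/2,2t]$ versus $(t/2,t]$ only change the norms by constants.
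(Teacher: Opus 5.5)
Your route to the identification (import \cite[Thm.~5.4]{Huang} through the equivalence with Huang's spaces $\T^{p,r}_{q,\beta}$) is exactly the paper's. Your Fubini computation is also correct: the averaging identity is exact, so the three Hölder steps give the inequality with constant $1$ whenever $p>1$, and also when $p=q=1$.

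The step that fails is ``the cases where a conjugate exponent is $\infty$ handled by suprema'' when $p=1<q<\infty$. There $p'=\infty$ but $q'<\infty$, and by Definition~\ref{def: tent spaces} the quantity $\|g\|_{\T^{\infty,q',r'}_{-\beta}}$ is the Carleson-type norm
\begin{align*}
\sup_{\tau>0}\sup_{y\in\IR^d}\bigg(\int\limits_0^\tau\fint\limits_{B(y,\tau)}\bigg(\fint\limits_{\frac{t}{2}}^t\fint\limits_{B(x,t)}|s^{\beta}g(s,z)|^{r'}\,\d z\,\d s\bigg)^{\frac{q'}{r'}}\,\frac{\d x\,\d t}{t}\bigg)^{\frac{1}{q'}},
\end{align*}
not a supremum in $x$ of the conical functional. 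Your final Hölder step in $x$ with exponents $(1,\infty)$ yields $\|f\|_{\T^{1,q,r}_\beta}$ times $\esssup_{x}\big(\int_0^\infty(\cdots)^{q'/r'}\,\frac{\d t}{t}\big)^{1/q'}$. This quantity dominates the Carleson norm but is not controlled by it. For example, take $q'=r'=2$ and $\beta=0$. The function $g=\sum_{k\geq 0}\mathbf{1}_{(2^{-k-1},2^{-k}]\times B(0,2^{-k})}$ has finite Carleson norm, while its conical functional grows like $(\log(1/|x|))^{1/2}$ as $x\to 0$ (the usual $\L^\infty$ versus $\mathrm{BMO}$ gap). So in this case you have only proved a strictly weaker inequality. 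Neither the claimed estimate nor the inclusion $\T^{\infty,q',r'}_{-\beta}\subset(\T^{1,q,r}_\beta)'$ that you derive from it is established.

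The missing estimate is the nontrivial half of the Coifman--Meyer--Stein $\T^1$--$\T^\infty$ duality \cite{Coifman_Meyer_Stein}, and it needs an additional idea. One option is a stopping-time truncation of the cones at the height where the truncated functional of $g$ is controlled by its Carleson maximal function. Another is an atomic decomposition of $\T^{1,q,r}_\beta$. With this paper's tools, you can instead apply Hölder on each Whitney box,
\begin{align*}
\int\limits_0^\infty\int\limits_{\IR^d}|f(s,y)|\,|g(s,y)|\,\frac{\d y\,\d s}{s}\leq\sum\limits_{Q\in\square}|Q|\,\|f\|_{\L^r\big(\bar Q,\frac{\d y\d s}{s^{d+1}}\big)}\,\|g\|_{\L^{r'}\big(\bar Q,\frac{\d y\d s}{s^{d+1}}\big)},
\end{align*}
and then use the Frazier--Jawerth duality between $\mathrm{\dot f}^{\beta}_{1,q}$ and $\mathrm{\dot f}^{-\beta}_{\infty,q'}$ (with $s_Q=|Q|^{1/2}\|f\|_{\L^r(\bar Q,\ldots)}$ and $t_Q=|Q|^{1/2}\|g\|_{\L^{r'}(\bar Q,\ldots)}$) together with Propositions~\ref{prop: dyadic char} and~\ref{prop: dyadic char p=infty}. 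None of these yields constant one. So for $p=1<q$ you should settle for $\lesssim$, for instance by taking the inequality from Huang's theorem, which is all the paper does.

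Relatedly, \cite[Lem.~3.3]{Auscher_Bechtel_Haardt} covers only $p<\infty$. For $p=1$ the dual is an endpoint space, so you also need the equivalence of $\T^{\infty,q',r'}_{-\beta}$ with Huang's $\T^{\infty,r'}_{q',-\beta}$ (tents versus Carleson boxes). The paper supplies this by a covering argument.
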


Finally, \cite[Thm.\@ 4.3]{Huang} provides a complete complex interpolation theory for $\T^{p,r}_{q,\beta}$-spaces. The corresponding result for $\T^{p,q,r}_\beta$-spaces reads as follows.

\begin{proposition}
\label{prop: complex int}
    Let $0< p_0,p_1,q_0,q_1, r_0,r_1\leq \infty$ be such that $\max\{p_0, q_0 , r_0\}<\infty$ or $\max\{p_1, q_1 ,r_1\}<\infty$. Furthermore, let $\beta_0,\beta_1\in\IR$ and $\theta\in (0,1)$. Define
    \begin{align*}
        \frac{1}{p_\theta} = \frac{1-\theta}{p_0} + \frac{\theta}{p_1},\quad \frac{1}{q_\theta} = \frac{1-\theta}{q_0} + \frac{\theta}{q_1},\quad \frac{1}{r_\theta} = \frac{1-\theta}{r_0} + \frac{\theta}{r_1}, \quad \beta_\theta = (1-\theta)\beta_0 + \theta \beta_1.
    \end{align*}
    Then, we have
    \begin{align*}
        [\T^{p_0,q_0,r_0}_{\beta_0} , \T^{p_1,q_1,r_1}_{\beta_1} ]_{\theta} = \T^{p_\theta, q_\theta,r_\theta}_{\beta_\theta}
    \end{align*}
    with equivalent (quasi-)norms.
\end{proposition}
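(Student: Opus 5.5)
The plan is to transfer Huang's complex interpolation theorem \cite[Thm.\@ 4.3]{Huang} for the spaces $\T^{p,r}_{q,\beta}$ to our scale via the equivalence $\T^{p,q,r}_\beta \simeq \T^{p,r}_{q,\beta}$. Since complex interpolation is a functor, an isomorphism between the compatible couples $(\T^{p_0,q_0,r_0}_{\beta_0},\T^{p_1,q_1,r_1}_{\beta_1})$ and $(\T^{p_0,r_0}_{q_0,\beta_0},\T^{p_1,r_1}_{q_1,\beta_1})$ will induce an isomorphism of the interpolation spaces. This requires the isomorphism to be the identity on $\L^0(\IR^{d+1}_+)$, which it is.

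First, I would record the equivalence of (quasi-)norms $\|f\|_{\T^{p,q,r}_\beta}\simeq \|f\|_{\T^{p,r}_{q,\beta}}$ for all $0<p,q,r\le\infty$ and $\beta\in\IR$. For $p<\infty$ this is \cite[Lem.\@ 3.3]{Auscher_Bechtel_Haardt}. For $p=\infty$ it is the covering argument sketched before Proposition~\ref{prop: basic props}, which has two parts.

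\emph{Tents versus Carleson boxes.} Use $\hat B(y,\tau)\subset (0,\tau)\times B(y,\tau)$ in one direction. Conversely, $(0,\tau)\times B(y,\tau)\subset \hat B(y,2\tau)$, and $|B(y,2\tau)|\simeq|B(y,\tau)|$.

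\emph{Averages.} The inner averages in the two definitions differ only in the $s$-interval, $(\tfrac t2,t]$ versus $(\tfrac t2,2t]$.
\begin{itemize}
\item In one direction, bound the average over $(\tfrac t2,t]$ by twice the average over $(\tfrac t2,2t]$.
\item In the other direction, split $(\tfrac t2,2t]$ into $(\tfrac t2,t]\cup(t,2t]$. The piece over $(t,2t]$ is controlled by the $\T^{p,q,r}$-integrand at height $2t$, using $B(x,t)\subset B(x,2t)$ and $|B(x,2t)|\simeq |B(x,t)|$.
\item After the change of variables $t\mapsto 2t$ in the outer $\frac{\d t}{t}$ integral, the Carleson box grows from $(0,\tau)$ to $(0,2\tau)$, which is absorbed by the supremum over $\tau$ and the doubling of balls.
\item The quasi-triangle inequality for $(a+b)^{q/r}$ costs only constants.
\end{itemize}

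Second, I would check that the interpolation parameters match. Huang's $\T^{p,r}_{q,\beta}$ has outer exponent $p$, Carleson or $\L^q$ exponent $q$, and Whitney-average exponent $r$, with weight $s^{-\beta}$. His theorem then gives $[\T^{p_0,r_0}_{q_0,\beta_0},\T^{p_1,r_1}_{q_1,\beta_1}]_\theta=\T^{p_\theta,r_\theta}_{q_\theta,\beta_\theta}$ under the hypothesis that one endpoint has all exponents finite. This is the same hypothesis as in the statement; I would verify that the parameter conventions coincide. Applying the equivalence on both sides then yields the claim with equivalent (quasi-)norms.

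The main obstacle is the quasi-Banach range, where the complex method on quasi-Banach spaces is delicate. There one has to confirm that Huang's result uses the same version of the method, for example the Kalton–Mitrea framework for analytically convex spaces. It also has to be checked that this version is invariant under passing to equivalent quasi-norms on the same underlying spaces. It is, since the space of admissible analytic functions and its quasi-norm change only up to constants. If Huang's theorem is stated only for a restricted range, I would instead argue through the Calderón-product description $X_0^{1-\theta}X_1^\theta$ for these lattices. Such lattices are $\L^0$-lattices of the form $\L^p(\ell^q(\L^r))$ after discretization, where complex interpolation equals the Calderón product by Kalton–Mitrea when the lattices are analytically convex and one of them is separable. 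Separability is guaranteed by Proposition~\ref{prop: basic props} and the finiteness assumption.
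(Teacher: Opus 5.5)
Your proposal is correct and takes the same route as the paper. The paper likewise obtains the proposition by transferring Huang's Theorem~4.3 through the equivalence $\T^{p,q,r}_\beta\simeq\T^{p,r}_{q,\beta}$ (\cite[Lem.~3.3]{Auscher_Bechtel_Haardt} for $p<\infty$, and the tent-versus-Carleson-box plus Whitney-parameter comparison for $p=\infty$), with the complex method read in the Kalton--Mitrea sense. The only slip is cosmetic: the average over $(\tfrac t2,t]$ is bounded by three times, not twice, the average over $(\tfrac t2,2t]$.
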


\begin{remark}
    Without introducing complex interpolation spaces properly, we would like to mention that the spaces in Proposition~\ref{prop: complex int} should be interpreted according to Kalton and Mitrea's complex interpolation method \cite{Kalton_Mitrea}. This method is well-defined for all quasi-Banach couples and agrees with the usual complex interpolation method of Calderón \cite{Calderón} (see also \cite[Chp.\@ 4]{Berg_Löfström}) on couples of Banach spaces.
\end{remark}

We continue with a convexity property for tent spaces, which will be utilized at several stages of our analysis.

\begin{lemma}
\label{lem: convex tent space}
    Let $f\in \L^0(\IR^{d+1}_+)$ and $M>0$. Then, for $0<p,q,r\leq \infty$ and $\beta\in \IR$, we have
\begin{align}
\label{eq: convex reduction}
    \|f\|_{\T^{p,q,r}_\beta} = \||f|^M\|^\frac{1}{M}_{\T^{p/M,q/M,r/M}_{M\beta}}.
\end{align}
In particular, $f\in \T^{p,q,r}_\beta$ iff $|f|^M \in \T^{p/M,q/M,r/M}_{M\beta}$.
\end{lemma}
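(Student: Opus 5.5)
The identity \eqref{eq: convex reduction} is a pure bookkeeping fact about iterated (quasi-)norms. I would verify it layer by layer, from the innermost average outward. At each layer the exponent of the integrand is divided by $M$ and the outer root is multiplied by $M$. The substantive point is that each layer transforms compatibly, so the identity holds with equality and not just up to constants. I treat the case $0<p<\infty$ and all parameters finite first. Then I handle $p=\infty$ and the $\esssup$ modifications.

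\textbf{Inner layer.} For fixed $(t,x)$, observe that $|s^{-M\beta}|f(s,y)|^M|^{r/M} = |s^{-\beta}f(s,y)|^r$. Hence the Whitney average for $|f|^M$ with exponent $r/M$ and weight $M\beta$ satisfies
\begin{align*}
\bigg(\fint\limits_{\frac t2}^t\fint\limits_{B(x,t)} |s^{-M\beta}|f|^M|^{\frac rM}\bigg)^{\frac{M}{r}} = \bigg[\bigg(\fint\limits_{\frac t2}^t\fint\limits_{B(x,t)} |s^{-\beta}f|^{r}\bigg)^{\frac1r}\bigg]^M =: A_f(t,x)^M.
\end{align*}

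\textbf{Middle and outer layers.} Write $A_{|f|^M}(t,x)$ for the inner average of $|f|^M$ in $\T^{p/M,q/M,r/M}_{M\beta}$. The middle layer then becomes $\big(\int_0^\infty A_{|f|^M}^{q/M}\,\frac{\d t}{t}\big)^{M/q}=\big(\int_0^\infty A_f^q\,\frac{\d t}{t}\big)^{M/q}$. The outer $\L^{p/M}$ quasi-norm of $G^M$ equals $\|G\|_{\L^p}^M$. Taking the $M$-th root gives \eqref{eq: convex reduction}. For $p=\infty$ with $q<\infty$, the same computation applies inside the Carleson-box integral: the expression $\big(\int_0^\tau\fint_{B(y,\tau)}A^{q/M}\big)^{M/q}$ is the $M$-th power of the corresponding expression for $f$. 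Since $u\mapsto u^M$ is increasing, suprema over $\tau,y$ commute with the power. In the infinite cases, $\esssup |g|^M=(\esssup|g|)^M$, and $\infty/M=\infty$. This covers $r=\infty$, $q=\infty$ and the definition of $\T^{\infty,\infty,r}_\beta$ in the same way.

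The only step needing any care, and it is mild, is checking that every case in Definition~\ref{def: tent spaces} is mapped to the matching case under $(p,q,r)\mapsto(p/M,q/M,r/M)$. Finiteness and infiniteness of each exponent are preserved, so the structure of the norm matches term by term. The final assertion ($f\in\T^{p,q,r}_\beta$ iff $|f|^M\in\T^{p/M,q/M,r/M}_{M\beta}$) is immediate from the equality. Measurability of $|f|^M$ is clear.
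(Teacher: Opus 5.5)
Your proposal is correct: the paper states this lemma without proof and treats it as an immediate consequence of the definitions. Your layer-by-layer verification is that omitted computation: $|s^{-M\beta}|f|^M|^{r/M}=|s^{-\beta}f|^r$ makes every layer of the (quasi-)norm of $|f|^M$ exactly the $M$-th power of the corresponding layer for $f$, suprema and essential suprema commute with $u\mapsto u^M$, and finite or infinite exponents stay finite or infinite under division by $M$.
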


To conclude this section, we present a \enquote{change of angle} formulas for $\T^{p,q,r}_\beta$-spaces. These formulas quantitatively describe the change of Whitney boxes in the norm of $\T^{p,q,r}_\beta$-spaces. They are motivated by the corresponding change of angle formulas for tent spaces $\T^{p,q}_\beta$, see \cite{Auscher}.

\begin{lemma}
\label{lem: change of angle p<infty}
    Let $0<p<\infty$, $0<q,r\leq \infty$ and $\beta\in\IR$. Then for $\lambda> 1$ and $0<M<\min\{p,q,r\}$ we have
    \begin{align*}
         \bigg\|\bigg( \int\limits_0^\infty \bigg(\fint\limits_{\frac{t}{2}}^t t^{-d}\int\limits_{B(\cdot,\lambda t)} |s^{-\beta}f(s,y)|^r \,\d y \d s\bigg)^\frac{q}{r}\,\frac{\d t}{t}\bigg)^\frac{1}{q}\bigg\|_{\L^p}\lesssim \lambda^{\frac{d}{M}}\|f\|_{\T^{p,q,r}_\beta},
    \end{align*}
    where the implicit constant depends on $d,M,p,q,r$.
\end{lemma}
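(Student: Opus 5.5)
First reduce to $r=M$-convexity: by Lemma~\ref{lem: convex tent space} applied with exponent $M$, replacing $f$ by $|f|^M$ and $(p,q,r,\beta)$ by $(p/M,q/M,r/M,M\beta)$, both sides transform consistently, and the claimed factor $\lambda^{d/M}$ becomes $\lambda^{d}$ after raising to the power $M$. So it suffices to prove the estimate with $M=1$, $\lambda^{d}$ on the right, under the assumption $1<\min\{p,q,r\}$ (i.e.\ all exponents strictly bigger than $1$, hence Banach-type Minkowski and Fefferman--Stein are available). Write $F(t,x)\coloneqq \big(\fint_{t/2}^t \fint_{B(x,t)}|s^{-\beta}f(s,y)|^r\,\d y\,\d s\big)^{1/r}$ for the Whitney average at aperture one.

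Next, a covering step: for fixed $t$, the ball $B(x,\lambda t)$ is covered by $N\lesssim \lambda^d$ balls $B(x_j,t)$ with centers $x_j\in B(x,\lambda t)$. Hence
\begin{align*}
    \bigg(\fint\limits_{\frac{t}{2}}^t t^{-d}\int\limits_{B(x,\lambda t)} |s^{-\beta}f(s,y)|^r \,\d y \d s\bigg)^\frac{1}{r}\lesssim \Big(\sum_j F(t,x_j)^r\Big)^{\frac1r}.
\end{align*}
Rather than discrete centers, it is cleaner to average: since each $B(x_j,t)\subset B(z,2t)$ for $z\in B(x_j,t)$, one gets the pointwise bound by $\big(\int_{B(x,(\lambda+1)t)} \widetilde F(t,z)^r\,\frac{\d z}{t^d}\big)^{1/r}\lesssim \lambda^{d/r}\big(\fint_{B(x,(\lambda+1)t)}\widetilde F(t,z)^r\d z\big)^{1/r}$, where $\widetilde F$ is the Whitney average with ball radius $2t$; the passage from $\widetilde F$ back to $F$ at aperture $1$ costs only a dimensional constant (again by covering $B(z,2t)$ by boundedly many unit-aperture balls and the same averaging trick with $\lambda=2$, which I would handle once as a fixed-aperture lemma).

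The main step is then to control $\big(\fint_{B(x,\lambda t)} G(t,z)^r\,\d z\big)^{1/r}$, $G=F$ or $\widetilde F$, in $\L^p(\L^q(\d t/t))$ with a factor $\lambda^{d(1-1/r)}$ at most, so that together with $\lambda^{d/r}$ one obtains $\lambda^{d}$. The naive bound by the Hardy--Littlewood maximal function, $\fint_{B(x,\lambda t)}G^r\le \cM(G(t,\cdot)^r)(x)$, is independent of $\lambda$, and Fefferman--Stein's vector-valued inequality in $\L^{p/r}(\ell^{q/r})$ applies precisely when $p/r>1$, $q/r>1$, which we do not have. This is the obstacle. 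I would resolve it by choosing the reduction exponent differently: apply Lemma~\ref{lem: convex tent space} with exponent $M$ so that the new inner exponent is $r/M>1$ but then pull the $r$-th power inside via Hölder, i.e.\ use $\big(\fint_B G^r\big)^{1/r}$ with $G^r$ replaced by $G^{M}$-averages: by Hölder on the covering sum $\sum_j F(t,x_j)^r\le (\sum_j F(t,x_j)^M)^{r/M}$ (as $M<r$), so the left side is $\lesssim \lambda^{d/M}\big(\fint_{B(x,(\lambda+1)t)}F(t,z)^M\d z\big)^{1/M}\le \lambda^{d/M}\,\cM(F(t,\cdot)^M)(x)^{1/M}$. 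Now Fefferman--Stein in $\L^{p/M}(\L^{q/M}(\d t/t))$ applies since $M<\min\{p,q\}$ (continuous-in-$t$ version, or discretize $t$ dyadically), giving $\lesssim \lambda^{d/M}\|F\|_{\L^p(\L^q)}=\lambda^{d/M}\|f\|_{\T^{p,q,r}_\beta}$. Thus the convexity lemma is in fact not needed; the key choices are the covering with Hölder exponent $M<r$ and Fefferman--Stein with $M<\min\{p,q\}$. The case $r=\infty$ or $q=\infty$ is handled identically with suprema.
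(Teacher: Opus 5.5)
Your covering step, the use of $\ell^M\hookrightarrow\ell^r$, and the closing Fefferman--Stein step in $\L^{p/M}(\L^{q/M}(\d t/t))$ are all fine. The gap is the step in between, where you pass from the point values $F(t,x_j)$ to $\lambda^{d/M}\big(\fint_{B(x,(\lambda+1)t)}F(t,z)^M\,\d z\big)^{1/M}$.

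The only justification you give goes through the aperture-two average $\widetilde F$, via $F(t,x_j)\lesssim\widetilde F(t,z)$ for $z\in B(x_j,t)$, followed by a ``fixed-aperture lemma'' $\|\widetilde F\|_{\L^p(\L^q)}\lesssim\|F\|_{\L^p(\L^q)}$. That lemma is exactly the statement you are proving, with $\lambda=2$. Your proposed proof of it is circular: covering $B(z,2t)$ by unit balls again gives point values $F(t,z_i)$, and ``the same averaging trick'' turns these into averages of $\widetilde F$ again, never of $F$. The fixed-aperture case is not easier by other means either. With $r$-th powers, Fubini does give $\widetilde F(t,z)^r\lesssim\fint_{B(z,3t)}F(t,w)^r\,\d w$ with no aperture loss, but then Fefferman--Stein needs $p,q>r$, which is the obstacle you identified yourself.

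The missing ingredient is a sub-mean-value inequality at the same aperture, which holds for $0<M\le r$.

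\textbf{Lens estimate.} For $y\in B(x_j,t)$ one has $B(\tfrac{x_j+y}{2},\tfrac t2)\subset B(x_j,t)\cap B(y,t)$, so $\fint_{B(x_j,t)}\mathbf 1_{B(y,t)}(z)\,\d z\ge 2^{-d}$.

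\textbf{Truncated average.} Set $A\coloneqq F(t,x_j)^r$ and
\[
\psi(z)\coloneqq\fint_{t/2}^{t}\frac{1}{|B(0,t)|}\int_{B(x_j,t)\cap B(z,t)}|s^{-\beta}f(s,y)|^r\,\d y\,\d s .
\]
Then $0\le\psi\le\min\{A,F(t,z)^r\}$, and Fubini with the lens estimate gives $\fint_{B(x_j,t)}\psi\ge 2^{-d}A$.

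\textbf{Concavity.} Since $\psi\le A$ and $M/r\le 1$, you have $\psi^{M/r}\ge A^{M/r-1}\psi$. Hence $F(t,x_j)^M\le 2^d\fint_{B(x_j,t)}F(t,z)^M\,\d z$.

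\textbf{Summing.} Take $(x_j)$ to be a maximal $t$-separated subset of $B(x,\lambda t)$, so the balls $B(x_j,t)$ also have bounded overlap; your argument needs this but does not state it. Then $\sum_j F(t,x_j)^M\lesssim t^{-d}\int_{B(x,(\lambda+1)t)}F(t,z)^M\,\d z\lesssim\lambda^d\,\cM(F(t,\cdot)^M)(x)$, and the rest of your argument closes. For $r=\infty$, run the same Fubini argument on level sets of $|s^{-\beta}f|$.

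Once repaired, your route is genuinely different from the paper's. The paper reduces by Lemma~\ref{lem: convex tent space} to $p,q,r>1$ and writes $f$ as an average over $(\tau,z)$ of $\lambda^d\mathbf 1_{W(\tau,z)}f$. It then uses Minkowski's integral inequality to reach a first-power average of aperture-one Whitney averages, and applies Fefferman--Stein in $\L^p(\ell^q)$. Yours avoids both the convexity reduction and Minkowski.
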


\begin{proof}
    For simplicity, we assume $q,r$ to be finite and $\beta=0$. We divide the proof into two steps.\\

    \noindent \textbf{Step 1:} Assume $ 1< p,q,r<\infty$. By an averaging trick, we have the identity
    \begin{align*}
        f(s,y) = \fint\limits_s^{2s} \fint\limits_{B(y,\tau)} f(s,y)\,\d z \d \tau= \fint\limits_s^{2s} \fint\limits_{B(y,\lambda\tau)} \lambda^d\mathbf{1}_{W(\tau,z)}(s,y) f(s,y)\,\d z \d \tau
    \end{align*}
    for all $(s,y)\in\IR^{d+1}_+$. Set 
    \begin{align*}
        g(\tau,z,s,y) \coloneqq \lambda^d\mathbf{1}_{W(\tau,z)}(s,y) f(s,y)
    \end{align*}
    and 
    \begin{align*}
        R_\lambda(g)(s,y) \coloneqq \fint\limits_s^{2s} \fint\limits_{B(y,\lambda\tau)} g(\tau,z,s,y)\,\d z \d \tau.
    \end{align*}
    Moreover, fix $(t,x)\in \IR^{d+1}_+$. Observe that for $\frac{t}{2}< s <t$ and $s<\tau <2s$ we have $(s,2s) \times B(y,\lambda \tau) \subset (\frac{t}{2}, 2t) \times B(x,3\lambda t)$ for all $y\in B(x,\lambda t)$. Minkowski's integral inequality then yields
    \begin{align*}
        \bigg(\fint\limits_{\frac{t}{2}}^{t} t^{-d}\int\limits_{B(x,\lambda t)} |f(s,y)|^{r} \,\d y\d s \bigg)^\frac{1}{{r}}&=\bigg(\fint\limits_{\frac{t}{2}}^{t} t^{-d}\int\limits_{B(x,\lambda t)} |R_\lambda(g)(s,y)|^{r} \,\d y\d s \bigg)^\frac{1}{{r}}\\
        &\lesssim \bigg(\fint\limits_{\frac{t}{2}}^{t} t^{-d}\int\limits_{B(x,\lambda t)} \bigg|\fint\limits_{\frac{t}{2}}^{2t} \fint\limits_{B(x,3\lambda t)} |g(\tau, z, s, y)| \, \d z \d \tau\bigg|^{r} \,\d y\d s \bigg)^\frac{1}{r}\\
        &\lesssim  \fint\limits_{\frac{t}{2}}^{2t} \fint\limits_{B(x,3\lambda t)}\bigg(\fint\limits_{\frac{t}{2}}^{t} t^{-d}\int\limits_{B(x,\lambda t)} | g(\tau, z, s, y)|^{r} \,\d y\d s  \,\bigg)^\frac{1}{r} \d z \d \tau\\
        &\lesssim \fint\limits_{\frac{t}{2}}^{2t} \fint\limits_{B(x,3\lambda t)}\bigg(\fint\limits_{\frac{\tau}{4}}^{2\tau} \tau^{-d}\int\limits_{B(z,8\lambda \tau)} |g(\tau, z, s, y)|^{r} \,\d y\d s  \,\bigg)^\frac{1}{r} \d z \d \tau\\
        &= \fint\limits_{\frac{t}{2}}^{2t} \fint\limits_{B(x,3\lambda t)} h(\tau,z)\, \d z \d \tau,
    \end{align*}
    where we set
    \begin{align}
        \label{eq:duality_def_h}
        h(\tau,z) \coloneqq \bigg(\fint\limits_{\frac{\tau}{4}}^{2\tau} \tau^{-d}\int\limits_{B(z,8\lambda \tau)} | g(\tau, z, s, y)|^{r} \,\d y\d s  \,\bigg)^\frac{1}{r}.
    \end{align}
    Taking first the $\L^q$ norm in $t$ and then the $\L^p$ norm in $x$ on both sides yields the estimate
    \begin{align*}
         \bigg\|\bigg( \int\limits_0^\infty \bigg(\fint\limits_{\frac{t}{2}}^t t^{-d}\int\limits_{B(\cdot,\lambda t)} |f(s,y)|^r \,\d y \d s\bigg)^\frac{q}{r}\,\frac{\d t}{t}\bigg)^\frac{1}{q}\bigg\|_{\L^p} \lesssim \bigg\| \bigg(\int\limits_0^\infty \bigg(\fint\limits_{\frac{t}{2}}^{2t} \fint\limits_{B(x,3\lambda t)} h(\tau,z)\, \d z \d \tau\bigg)^q\,\frac{\d t}{t} \bigg)^\frac{1}{q}\bigg\|_{\L^p}.
    \end{align*}
    Splitting the time integral dyadically on the right-hand side implies
    \begin{align*}
         &\bigg\|\bigg( \int\limits_0^\infty \bigg(\fint\limits_{\frac{t}{2}}^t t^{-d}\int\limits_{B(\cdot,\lambda t)} |f(s,y)|^r \,\d y \d s\bigg)^\frac{q}{r}\,\frac{\d t}{t}\bigg)^\frac{1}{q}\bigg\|_{\L^p} \\
         &\lesssim \bigg\| \bigg(\sum\limits_{k\in\IZ}\, \int\limits_{2^{-k-1}}^{2^{-k}} \bigg(\fint\limits_{\frac{t}{2}}^{2t} \fint\limits_{B(x,3\lambda t)} h(\tau,z)\, \d z \d \tau\bigg)^q\,\frac{\d t}{t} \bigg)^\frac{1}{q}\bigg\|_{\L^p}\\
        &\lesssim \bigg\| \bigg(\sum\limits_{k\in\IZ}\, \int\limits_{2^{-k-1}}^{2^{-k}} \bigg(\fint\limits_{2^{-k-2}}^{2^{-k+1}} \fint\limits_{B(x,3\lambda 2^{-k})} h(\tau,z)\, \d z \d \tau\bigg)^q\,\frac{\d t}{t} \bigg)^\frac{1}{q}\bigg\|_{\L^p}\\
        &\lesssim \bigg\| \bigg(\sum\limits_{k\in\IZ}\big(\cM(f_k)(x)\big)^q \bigg)^\frac{1}{q}\bigg\|_{\L^p},
    \end{align*}
    where $\cM$ is the Hardy-Littlewood maximal operator and
    \begin{align*}
        f_k(z)\coloneqq \fint\limits_{2^{-k-2}}^{2^{-k+1}}  h(\tau,z)\,  \d \tau.
    \end{align*}
    Since $1<p,q<\infty$, we may apply Fefferman--Stein's vector valued inequality (see for example \cite[Thm.\@ 2.1]{Ullrich1}) and Jensen's inequality to obtain
    \begin{align*}
        \bigg\|\bigg( \int\limits_0^\infty \bigg(\fint\limits_{\frac{t}{2}}^t t^{-d}\int\limits_{B(\cdot,\lambda t)} |f(s,y)|^r \,\d y \d s\bigg)^\frac{q}{r}\,\frac{\d t}{t}\bigg)^\frac{1}{q}\bigg\|_{\L^p} &\lesssim \bigg\| \bigg(\sum\limits_{k\in\IZ}|f_k(x)|^q \bigg)^\frac{1}{q}\bigg\|_{\L^p}\\
        &\lesssim \bigg\| \bigg(\sum\limits_{k\in\IZ}\bigg( \fint\limits_{2^{-k-2}}^{2^{-k+1}}  h(\tau,x)\,  \d \tau \bigg)^q \bigg)^\frac{1}{q}\bigg\|_{\L^p}\\
        &\lesssim \bigg\| \bigg(\sum\limits_{k\in\IZ}\, \fint\limits_{2^{-k-2}}^{2^{-k+1}}  |h(\tau,x)|^q\,  \d \tau \bigg)^\frac{1}{q}\bigg\|_{\L^p}\\
        &\lesssim \bigg\| \bigg(\int\limits_{0}^{\infty}  |h(\tau,x)|^q\, \frac{\d \tau }{\tau}\bigg)^\frac{1}{q}\bigg\|_{\L^p}.
    \end{align*}
    Finally, by the definition of the auxiliary functions $h$ and $g$, we conclude
    \begin{align*}
        &\bigg\|\bigg( \int\limits_0^\infty \bigg(\fint\limits_{\frac{t}{2}}^t t^{-d}\int\limits_{B(\cdot,\lambda t)} |f(s,y)|^r \,\d y \d s\bigg)^\frac{q}{r}\,\frac{\d t}{t}\bigg)^\frac{1}{q}\bigg\|_{\L^p} \\
        &\lesssim \bigg\| \bigg(\int\limits_{0}^{\infty}  \bigg(\fint\limits_{\frac{\tau}{4}}^{2\tau} \tau^{-d}\int\limits_{B(x,8\lambda \tau)} | g(\tau, x, s, y)|^{r} \,\d y\d s  \bigg)^\frac{q}{r}\, \frac{\d \tau }{\tau}\bigg)^\frac{1}{q}\bigg\|_{\L^p}\\
        &\lesssim \lambda^{d}\|f\|_{\T^{p,q,r}_0}
    \end{align*}
    for some implicit constant depending on $p,q,r$ and $d$.\\

    \noindent \textbf{Step 2:} Let $ 0< p,q,r<\infty$ and $0<M< \min\{p,q,r\}$. Then, by Lemma~\ref{lem: convex tent space} and Step~1, we obtain

    \begin{align*}
        &\bigg\|\bigg( \int\limits_0^\infty \bigg(\fint\limits_{\frac{t}{2}}^t t^{-d}\int\limits_{B(\cdot,\lambda t)} |f(s,y)|^r \,\d y \d s\bigg)^\frac{q}{r}\,\frac{\d t}{t}\bigg)^\frac{1}{q}\bigg\|_{\L^p}\\
        &=\bigg\|\bigg( \int\limits_0^\infty \bigg(\fint\limits_{\frac{t}{2}}^t t^{-d}\int\limits_{B(\cdot,\lambda t)} ||f(s,y)|^M|^\frac{r}{M} \,\d y \d s\bigg)^\frac{qM}{rM}\,\frac{\d t}{t}\bigg)^\frac{M}{q}\bigg\|_{\L^\frac{p}{M}}^\frac{1}{M}\\
        &\leq \Big(C\lambda^d\||f|^M\|_{\T^{p/M,q/M,r/M}_0}\Big)^\frac{1}{M}\\
        &= C^\frac{1}{M}\lambda^\frac{d}{M} \|f\|_{\T^{p,q,r}_0}
    \end{align*}
    for some constant $C>0$ depending on $d,M,p,q,r$.
\end{proof}

Similarly to the previous proof, the following change of angle formula holds for the endpoint space ${\T^{\infty,q,r}_\beta}$.

\begin{lemma}
\label{lem: change of angle p=infty}
     Let $0<q,r\leq \infty$ and $\beta\in\IR$. Then for $\lambda> 1$ and $0<M<\min\{q,r\}$ we have
    \begin{align*}
        \sup\limits_{\tau>0} \sup\limits_{y\in\IR^d} 
        \bigg(\int\limits_{0}^\tau  \fint\limits_{B(y,\tau)} \bigg(\fint\limits_{\frac{t}{2}}^t t^{-d}\int\limits_{B(x,\lambda t)} |s^{-\beta}f(s,z)|^r \,\d z \d s\bigg)^\frac{q}{r}\,\frac{\d x \d t}{t}\bigg)^\frac{1}{q} \lesssim \lambda^{\frac{d}{M}}\|f\|_{\T^{\infty,q,r}_\beta},
    \end{align*}
    where the implicit constant depends on $d,M,q,r$.
\end{lemma}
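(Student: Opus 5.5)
The plan is to mirror the proof of Lemma~\ref{lem: change of angle p<infty}: first a Banach-range step with $1<q,r<\infty$ and $\beta=0$, then a reduction of the general case through Lemma~\ref{lem: convex tent space}. The convexity identity~\eqref{eq: convex reduction} also holds for the left-hand side functional with $B(x,\lambda t)$. Hence, if Step~1 gives the bound $C\lambda^d$ for exponents $(q/M,r/M)$ with $q/M,r/M>1$, taking $M$-th roots yields $\lambda^{d/M}$. The weight $s^{-\beta}$ is absorbed into $f$, and infinite $q$ or $r$ are handled by the usual modifications. The case $q=\infty$ is elementary: cover $B(x,\lambda t)$ by $\lesssim\lambda^d$ balls $B(x_i,t)$ and take the supremum.

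For Step~1, I would fix a Carleson box $(0,\tau)\times B(y,\tau)$ and repeat the averaging trick verbatim. With $g$ and $h$ as in~\eqref{eq:duality_def_h}, Minkowski's inequality gives
\begin{align*}
\bigg(\fint\limits_{\frac{t}{2}}^t t^{-d}\int\limits_{B(x,\lambda t)}|f(s,z)|^r\,\d z\d s\bigg)^\frac1r\lesssim \fint\limits_{\frac t2}^{2t}\fint\limits_{B(x,3\lambda t)} h(\sigma,w)\,\d w\d\sigma .
\end{align*}
After dyadically splitting $t\in(2^{-k-1},2^{-k}]$, the right-hand side is bounded by $\cM(f_k)(x)$, with $f_k$ as before. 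The main obstacle is that we now need a Carleson-type (i.e.\ $\L^\infty$-endpoint) Fefferman--Stein inequality, since $\cM$ is not local.

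I would handle this by splitting $f_k=f_k\mathbf 1_{B(y,C\lambda\tau)}+f_k\mathbf 1_{B(y,C\lambda\tau)^c}$ for a large constant $C$.

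For the far part, $x\in B(y,\tau)$ and $2^{-k}\le\tau$. The balls $B(x,3\lambda 2^{-k})$ stay inside $B(y,4\lambda\tau)$, so the far part contributes nothing, provided $h$ is evaluated only at $w$ in these balls. One subtlety is that $\cM$ should be replaced by the local average over $B(x,3\lambda2^{-k})$, which is what actually appears, so the truncation is exact.

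For the local part, I would apply the vector-valued Fefferman--Stein inequality in $\L^q(\ell^q)$, which with $p=q$ is just the $\L^q$ boundedness of $\cM$ applied termwise. This gives
\begin{align*}
\int\limits_0^\tau\fint\limits_{B(y,\tau)}(\cdots)^q\,\frac{\d x\d t}{t}\lesssim \tau^{-d}\int\limits_{B(y,C\lambda\tau)}\int\limits_0^{2\tau} h(\sigma,w)^q\,\frac{\d\sigma}{\sigma}\d w .
\end{align*}
Jensen's inequality is used as in the earlier proof to pass from $f_k$ to $h$. By the definition of $h$, the factor $\lambda^{d}$ in $g$, and the fact that a $B(z,8\lambda\sigma)$-average is at most $\lambda^d$ times an average over a ball of radius $\sigma$, the double integral is $\lesssim\lambda^{dq}$ times the Carleson integral of $f$ over $(0,C'\tau)\times B(y,C\lambda\tau)$. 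This step uses the ordinary change of aperture inside Whitney boxes, paying a constant.

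It remains to compare $\tau^{-d}\int_{B(y,C\lambda\tau)}\cdots$ with the $\T^{\infty,q,r}$ norm. Covering $B(y,C\lambda\tau)$ by $\lesssim\lambda^d$ balls of radius $C'\tau$ costs a factor $\lambda^d$ inside the $q$-th power, i.e.\ $\lambda^{d/q}$ after roots. This extra factor is harmless, since it is still bounded by $\lambda^{d}$ in the Banach step and then becomes $\lambda^{d/M}$ after rescaling. Altogether the left-hand side is $\lesssim\lambda^{d+d/q}\|f\|_{\T^{\infty,q,r}_0}$.

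To get exactly $\lambda^{d/M}$, I would choose $M$ slightly smaller, or note that the admissible range $M<\min\{q,r\}$ is open, so any power $\lambda^{d(1+\varepsilon)/M'}$ with $M'<M$ can be rewritten as $\lambda^{d/M}$. Concretely, apply Step~1 with exponent $M'$ chosen so that $d(1+M'/q)/M'\le d/M$. The loss at $q=\infty$ disappears.
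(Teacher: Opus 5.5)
Your overall structure (averaging trick plus Minkowski, a Banach-range step, then Lemma~\ref{lem: convex tent space}) is what the paper has in mind. The paper gives no separate proof and only says the argument is analogous to Lemma~\ref{lem: change of angle p<infty}. Your observation that the averages over $B(x,3\lambda 2^{-k})$ only see $B(y,C\lambda\tau)$ correctly handles the non-locality of $\cM$.

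\textbf{The gap is the final exponent bookkeeping.} At exponents $(\tilde q,\tilde r)$ your Banach step gives $\lambda^{d+d/\tilde q}$. The extra factor does not shrink under the convex reduction: applied to $|f|^{M'}$ with $(\tilde q,\tilde r)=(q/M',r/M')$ it yields $(\lambda^{d+dM'/q})^{1/M'}=\lambda^{d/M'+d/q}$.
\begin{itemize}
\item Since $\lambda>1$ is arbitrary and the constant may not depend on $\lambda$, a larger power of $\lambda$ cannot be rewritten as a smaller one. So choosing $M'<M$ makes things worse, not better.
\item The requirement $d/M'+d/q\le d/M$ forces $M'\ge qM/(q-M)>M$. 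This is compatible with $M'<\min\{q,r\}$ only if $\frac1M>\frac1q+\frac1{\min\{q,r\}}$.
\item Hence for $M$ near $\min\{q,r\}$ (e.g.\ every $M\ge q/2$ when $q=r$) your argument only gives $\lambda^{d/M'+d/q}$. That is strictly weaker than the claimed $\lambda^{d/M}$.
\item The remark that $\lambda^{d/\tilde q}\le\lambda^{d}$ in the Banach step does not help either: $\lambda^{2d}$ becomes $\lambda^{2d/M'}$ after rescaling.
\end{itemize}

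\textbf{How to repair it.} The loss comes from applying the $\L^q$-bound for $\cM$ on the whole ball $B(y,C\lambda\tau)$ and then covering that ball by $\sim\lambda^d$ balls of radius $\tau$. Treat the scales separately instead.
\begin{itemize}
\item For $2^{-k}\le\tau/\lambda$, the balls $B(x,3\lambda 2^{-k})$ with $x\in B(y,\tau)$ lie in $B(y,4\tau)$. So the $\L^q$-bound for $\cM(f_k\mathbf 1_{B(y,4\tau)})$ costs only a constant and one Carleson box of radius $4\tau$.
\item There are $O(1+\log\lambda)$ scales with $\tau/\lambda<2^{-k}<2\tau$. For each, use Jensen to bound $\fint_{B(y,\tau)}\big(\fint_{t/2}^{2t}\fint_{B(x,3\lambda t)}h(\sigma,z)\,\d z\,\d\sigma\big)^q\,\d x$ by $\sup_x\fint_{t/2}^{2t}\fint_{B(x,3\lambda t)}h(\sigma,z)^q\,\d z\,\d\sigma$.
\item Since $h\simeq\lambda^d$ times the Whitney average and $2t\le 3\lambda t$, this is $\lesssim\lambda^{dq}\|f\|_{\T^{\infty,q,r}_0}^q$ via the single Carleson box $(0,3\lambda t)\times B(x,3\lambda t)$.
\end{itemize}
This gives $\lambda^{d}(1+\log\lambda)^{1/q}$ in the Banach step. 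Only a sub-polynomial loss like this one can be absorbed by the openness of the range: with $M<M'<\min\{q,r\}$, the reduction yields $\lambda^{d/M'}(1+\log\lambda)^{1/q}\lesssim_{M,M'}\lambda^{d/M}$.
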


\begin{remark}
    Unlike the classical change of angle formula for $\T^{p,q}_\beta$, the growth rates of Lemma~\ref{lem: change of angle p<infty} and \ref{lem: change of angle p=infty} are suboptimal. Specifically, our implicit constants blow up as $M$ approaches its upper limit due to a maximal operator argument.
\end{remark}

As an illustration, we can use Lemma~\ref{lem: change of angle p<infty} and \ref{lem: change of angle p=infty} to show that the definition of the $\T^{p,q,r}_\beta$-spaces is independent of the underlying Whitney box.
\begin{proposition}
\label{prop: independent Whitney cube}
    Let $0<p,q,r\leq \infty$ and $\beta\in \IR$. If $p<\infty$, then we have for all $f\in \T^{p,q,r}_\beta$, $0<a<b<\infty$ and $c>0$ that
    \begin{align*}
        \bigg\|\bigg(\int\limits_0^\infty\bigg(  \fint\limits_{at}^{bt} \fint\limits_{B(\cdot, ct)} |s^{-\beta}f(s,y)|^r \, \d y \d s\bigg)^\frac{q}{r} \,\frac{\d t}{t}\bigg)^\frac{1}{q}\bigg\|_{\L^p}\simeq \|f\|_{\T^{p,q,r}_\beta},
    \end{align*}
    where the implicit constants depend on $a, b, c, p, q, r$, and the dimension $d$. Furthermore, for $f\in \T^{\infty,q,r}_\beta$ we have
    \begin{align*}
        \sup\limits_{\tau>0} \sup\limits_{y\in\IR^d} 
        \bigg(\int\limits_{0}^\tau  \fint\limits_{B(y,\tau)}\bigg(  \fint\limits_{at}^{bt} \fint\limits_{B(x, ct)} |s^{-\beta}f(s,y)|^r \, \d y \d s\bigg)^\frac{q}{r}\,\frac{\d t \d x}{t}\bigg)^\frac{1}{q}\simeq \|f\|_{\T^{\infty,q,r}_\beta},
    \end{align*}
    where the implicit constants depend on $a, b, c, q, r$, and the dimension $d$. 
\end{proposition}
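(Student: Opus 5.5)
The plan is to prove both directions of each equivalence by sandwiching the modified Whitney averages between standard ones, using the change of angle Lemmas~\ref{lem: change of angle p<infty} and~\ref{lem: change of angle p=infty} to absorb large spatial balls. We first treat $p<\infty$ and then explain the modifications for $p=\infty$. Throughout we assume $q,r<\infty$; the cases $q=\infty$ or $r=\infty$ work the same way with suprema, since only set inclusions and measure comparisons are used.

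\textbf{Upper bound.} Fix $0<a<b$ and $c>0$, and choose $k\in\IN$ with $2^{-k}\leq a$ and $b\leq 2^{k}$. For $s\in(at,bt)$ we have $s\in(2^{-j}t/2,\,2^{-j}t]$ for some integer $j$ with $-k\leq j\leq k+1$, a number of indices independent of $t$. Write $u=2^{-j}t$. Then $B(x,ct)\subset B(x,\lambda u)$ with $\lambda:=\max\{c2^{k+1},2\}$. Since $|B(x,ct)|\simeq t^d\simeq u^d$ and $bt-at\simeq u$, we obtain
\[
\fint_{at}^{bt}\fint_{B(x,ct)}|s^{-\beta}f|^r\lesssim \sum_{j=-k}^{k+1}\fint_{u/2}^{u} u^{-d}\int_{B(x,\lambda u)}|s^{-\beta}f|^r.
\]
Raise this to the power $q/r$, using the quasi-triangle inequality for the finite sum. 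Integrate $\d t/t$; the substitution $u=2^{-j}t$ preserves $\d t/t$. Then take the $\L^p$ norm in $x$. Each of the finitely many resulting terms is exactly the left-hand side of Lemma~\ref{lem: change of angle p<infty} with this $\lambda$, so it is bounded by $\lambda^{d/M}\|f\|_{\T^{p,q,r}_\beta}$.

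\textbf{Lower bound.} We run the same argument with the roles reversed. Choose $\mu>1$ and an integer $N$ such that $\mu^{-N}\leq a$ and $\mu^{N}\geq b$, and additionally $\mu\leq b/a$. Then $(t/2,t]$ is covered by finitely many intervals of the form $(a\mu^{-i}t,\,b\mu^{-i}t)$, each of comparable length. Set $v=\mu^{-i}t$. Then $B(x,t)\subset B(x,\lambda' c v)$ for a fixed $\lambda'$ depending on $\mu,N,c$. This gives
\[
\fint_{t/2}^{t}\fint_{B(x,t)}|s^{-\beta}f|^r\lesssim \sum_i\fint_{av}^{bv}v^{-d}\int_{B(x,\lambda' cv)}|s^{-\beta}f|^r .
\]
We therefore need a change of angle lemma for the $(a,b,c)$-Whitney boxes. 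Lemma~\ref{lem: change of angle p<infty} is stated only for the standard boxes, but its proof uses nothing specific to the interval $(t/2,t]$ or the radius $t$: the averaging trick, Minkowski's inequality and Fefferman--Stein all go through with $(at,bt)\times B(x,ct)$ in place of $W(t,x)$. Doing so controls the right-hand side by the $(a,b,c)$-norm, which closes the argument.

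An alternative route avoids redoing the lemma. Pick intermediate parameters so that each $(a,b,c)$-box contains a fixed fraction of a rescaled standard box. When $c$ is small relative to $b-a$ this may be impossible in the spatial variable, which is why the general version of the change of angle lemma is the cleaner path.

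\textbf{The case $p=\infty$.} The same pointwise-in-$(t,x)$ comparisons apply inside the Carleson box integral $\int_0^\tau\fint_{B(y,\tau)}$. The substitution $u=2^{-j}t$ changes the upper limit of the $t$-integral to $2^{-j}\tau$, and this is handled by enlarging $\tau$ by the fixed factor $2^{k+1}$. Enlarging $\tau$ inflates $\fint_{B(y,\tau)}$ by at most a constant factor, and the supremum over $\tau,y$ absorbs it. We then conclude with Lemma~\ref{lem: change of angle p=infty}.

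The main obstacle is the lower bound, specifically justifying the change of angle estimate for general $(a,b,c)$-boxes rather than standard ones. We expect to handle it by the observation above that the proof of Lemma~\ref{lem: change of angle p<infty} is insensitive to the box shape. That proof also reduces the quasi-Banach case to exponents above $1$ through the convexity identity of Lemma~\ref{lem: convex tent space}, and this reduction also carries over unchanged.
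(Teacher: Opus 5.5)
Your proof is correct. The upper bound matches the route the paper indicates. The paper omits the proof; it refers to the covering argument of \cite[Prop.\@ 3.7]{Auscher_Bechtel_Haardt}, with Lemmas~\ref{lem: change of angle p<infty} and~\ref{lem: change of angle p=infty} replacing translation invariance of $\L^p$. Your dyadic covering of $(at,bt)$, the enlargement of $B(x,ct)$ to $B(x,\lambda 2^{-j}t)$, and the final application of those lemmas carry this out.

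You diverge from the paper in the lower bound, where you re-prove the change of angle lemma for general boxes $(at,bt)\times B(x,ct)$. This is legitimate. The averaging identity becomes $f(s,y)=\fint_{s/b}^{s/a}\fint_{B(y,\lambda c\tau)}\lambda^d\mathbf{1}_{(a\tau,b\tau)\times B(z,c\tau)}(s,y)f(s,y)\,\d z\,\d\tau$, and Minkowski, Fefferman--Stein and the convexity reduction go through. It also yields a more general lemma. The cost is a second maximal-function argument. For $p=\infty$ it rests on a proof the paper itself only asserts, and there the vector-valued maximal inequality has to be replaced by a localized $\L^q$ bound for $\cM$ on Carleson boxes.

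The shortcut you set aside does work, provided the small aperture is put on the \emph{standard} box rather than on the general one. Write $N_{a,b,c}(f)$ for the left-hand side in the statement.
\begin{itemize}
\item The anisotropic dilation $f_\kappa(s,y)=f(\kappa s,y)$ satisfies $\|f_\kappa\|_{\T^{p,q,r}_\beta}=\kappa^{\beta}N_{\frac12,1,\frac1\kappa}(f)$.
\item Applying Lemma~\ref{lem: change of angle p<infty} to $f_\kappa$ with $\kappa=\lambda=1/\epsilon$ therefore gives $\|f\|_{\T^{p,q,r}_\beta}\lesssim N_{\frac12,1,\epsilon}(f)$ for every $\epsilon\in(0,1)$.
\item Now take your windows $(a\mu^{-i}t,b\mu^{-i}t)$ covering $(t/2,t]$, and choose $\epsilon$ with $\epsilon\mu^{i}\le c$ for the finitely many $i$ involved. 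With $v=\mu^{-i}t$ one has $(a\mu^{-i}t,b\mu^{-i}t)\times B(x,\epsilon t)\subset(av,bv)\times B(x,cv)$.
\item Hence $N_{\frac12,1,\epsilon}(f)\lesssim N_{a,b,c}(f)$ by inclusion alone.
\end{itemize}
For $p=\infty$ the dilation only distorts the Carleson box to $(0,\kappa\tau)\times B(y,\tau)$, which the suprema over $\tau$ and $y$ absorb. This keeps both directions within the two lemmas as stated, which is presumably what the paper intends.
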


The proofs of both cases follow the same line of reasoning as in \cite[Prop.\@ 3.7]{Auscher_Bechtel_Haardt} for $\Z^{p,q,r}_\beta$-spaces. We will omit them here and leave the details to the reader.

\subsection{Discrete descriptions}
\label{sec: dyadic char}

In this section, we provide several equivalent characterizations of $\T^{p,q,r}_\beta$, which turn out to be very useful in the upcoming sections. We start with the following discrete characterization.

\begin{proposition}
\label{prop: dyadic char}
    Let $0<p<\infty$, $0<q,r\leq \infty$ and $\beta\in\IR$. Then, for $f\in \T^{p,q,r}_\beta$ we have 
    \begin{align*}
        \|f\|_{\T^{p,q,r}_\beta} \simeq \bigg\| \Big(\sum\limits_{Q\in\square} \mathbf{1}_{Q}(\cdot)|Q|^{-\frac{\beta q}{d}} \|f\|_{\L^r\big(\bar Q , \frac{\d y \d s}{s^{d+1}}\big)}^q \Big)^\frac{1}{q} \bigg\|_{\L^p}
    \end{align*}
    with implicit constants depending only on $d,p,q,r,\beta$.
\end{proposition}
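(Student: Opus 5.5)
We prove the two inequalities separately. First we reduce to a convenient range of exponents with Lemma~\ref{lem: convex tent space}, and we use Proposition~\ref{prop: independent Whitney cube} to pass freely between Whitney regions of comparable size. For $Q\in\square_k$ write $\ell(Q)=2^k$. Note that $\|f\|_{\L^r(\bar Q,\frac{\d y\d s}{s^{d+1}})}\simeq \big(\fint_{\bar Q}|f|^r\big)^{1/r}$ up to a factor $\ell(Q)^{-d/r}\cdot|\bar Q|^{1/r}\simeq 1$. Hence the right-hand side is
\[
\Big\|\Big(\sum_k 2^{-k\beta q}\,a_k(\cdot)^q\Big)^{1/q}\Big\|_{\L^p},
\]
where $a_k(x)=(\fint_{\bar Q}|f|^r)^{1/r}$ for $x\in Q\in\square_k$. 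Since $s\simeq \ell(Q)$ on $\bar Q$, the weight $s^{-\beta}$ may be replaced by $\ell(Q)^{-\beta}$. Likewise, splitting $\int_0^\infty \frac{\d t}{t}$ into the intervals $(2^{k},2^{k+1}]$ turns the left-hand side into a discrete $\L^p(\ell^q)$ norm of the functions
\[
b_k(x)=\sup\Big\{\Big(\fint_{t/2}^t\fint_{B(x,t)}|f|^r\Big)^{1/r} : t\in(2^k,2^{k+1}]\Big\},
\]
or an integral average over such $t$, up to harmless comparability.

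\textbf{Upper bound for the dyadic expression.} Fix $x\in Q\in\square_k$. Then $\bar Q\subset (2^{k-1},2^k]\times B(x,\sqrt d\,2^k)$, and for every $t\in(2^k,2^{k+1}]$ this set lies inside $(t/4,t]\times B(x,\sqrt d\,t)$, a region of volume $\simeq|\bar Q|$. Pointwise, therefore,
\[
a_k(x)\lesssim \Big(\fint_{t/4}^{t}\fint_{B(x,\sqrt d\,t)}|f|^r\Big)^{1/r}\quad\text{for all } t\in(2^k,2^{k+1}].
\]
Taking the $\ell^q$ sum and the $\L^p$ norm, and applying Proposition~\ref{prop: independent Whitney cube} with $a=1/4$, $b=1$, $c=\sqrt d$, bounds the dyadic expression by $\|f\|_{\T^{p,q,r}_\beta}$. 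This direction needs no maximal function.

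\textbf{Converse bound.} For $t\in(2^k,2^{k+1}]$ the Whitney box $(t/2,t]\times B(x,t)$ is covered by boundedly many (a number depending only on $d$) dyadic Whitney boxes $\bar P$ with $P\in\square_{k-1}\cup\square_k$ and $P\subset B(x,3\cdot 2^{k+1})$. This gives
\[
b_k(x)\lesssim \sum_{j\in\{k-1,k\}}\Big(\fint_{B(x,C2^k)} a_j^r\Big)^{1/r}
\]
when $r<\infty$, and the analogous bound with a supremum when $r=\infty$. The difficulty is that the right-hand side is a local average (or supremum) of $a_j$ over a ball of radius $\simeq 2^j$, whereas $a_j$ is constant on cubes of side $2^j$. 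So $b_k(x)$ is dominated by a maximum of $a_j$ over the boundedly many neighbouring cubes, $\max_{|z|_\infty\le C}a_j(x+2^jz)$. This is the step I expect to be the main obstacle. I would handle it via the Hardy--Littlewood maximal function with a small power: for $x\in Q$ and $P$ a neighbour of $Q$ at the same level,
\[
a_j(P)^{\eta}\lesssim \cM\big(a_j^{\eta}\big)(x),
\]
because $|P|\simeq|B(x,C2^j)|$. Hence $b_k\lesssim \sum_{j\in\{k-1,k\}}\big(\cM(a_j^\eta)\big)^{1/\eta}$. Choosing $0<\eta<\min\{p,q\}$, the Fefferman--Stein vector-valued inequality in $\L^{p/\eta}(\ell^{q/\eta})$ (\cite[Thm.~2.1]{Ullrich1}) removes $\cM$. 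If $q=\infty$, the $\ell^\infty$ case is fine as long as $p/\eta>1$.

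Finally, I would check that the case $r=\infty$ and the weight $s^{-\beta}\simeq 2^{-k\beta}$ pass through all steps unchanged, and that the constants depend only on $d,p,q,r,\beta$.
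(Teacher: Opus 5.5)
Your argument is correct. Your bound of the dyadic expression by $\|f\|_{\T^{p,q,r}_\beta}$ is the paper's Step~2 up to the choice of box parameters: $\bar Q$ sits inside an enlarged Whitney region centred at $x$ for every $t\simeq\ell(Q)$, and Proposition~\ref{prop: independent Whitney cube} finishes.

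For the converse bound, $\|f\|_{\T^{p,q,r}_\beta}\lesssim$ dyadic, you take a genuinely different route, and it is the more careful one.

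\emph{The paper's route.} The paper covers $(\ell(Q_k)/2,2\ell(Q_k)]\times 5Q_k$ by boundedly many Whitney boxes $\bar Q_k^j$. It then claims a \emph{pointwise} domination of the tent functional at $x$ by the dyadic functional at $x$, replacing $\mathbf{1}_{Q_k}(x)$ by $\mathbf{1}_{Q_k^j}(x)$. That replacement is not justified, because the neighbouring cubes $Q_k^j$ need not contain $x$. In fact no pointwise inequality can hold: for $f=\mathbf{1}_{\bar P}$, the dyadic functional vanishes off $P$, while the tent functional is positive at points adjacent to $P$. So the comparison holds only after taking $\L^p$ norms.

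\emph{Your route.} You bound the value of $a_j$ on a neighbouring cube of comparable generation by $(\cM(a_j^\eta)(x))^{1/\eta}$. You then remove $\cM$ with the Fefferman--Stein inequality in $\L^{p/\eta}(\ell^{q/\eta})$, $\eta<\min\{p,q\}$. This is exactly the ingredient the paper's Step~1 is missing. Equivalently, after the substitution $s_Q=|Q|^{1/2}\|f\|_{\L^r(\bar Q,\frac{\d y\d s}{s^{d+1}})}$, one could invoke Frazier--Jawerth's estimates for neighbour-shifted sequences. The price is a vector-valued maximal inequality instead of an elementary covering; what it buys is a valid proof of this direction.

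A few harmless slips:
\begin{itemize}
\item With $\bar Q=(\ell(Q)/2,\ell(Q)]\times Q$, the box $(t/2,t]\times B(x,t)$ for $t\in(2^k,2^{k+1}]$ is covered by cubes from $\square_k\cup\square_{k+1}$, not $\square_{k-1}\cup\square_k$.
\item Those cubes lie in $B(x,(1+\sqrt d)2^{k+1})$; for large $d$ this is not inside $B(x,3\cdot 2^{k+1})$.
\item The factor relating $\|f\|_{\L^r(\bar Q,\frac{\d y\d s}{s^{d+1}})}$ to $(\fint_{\bar Q}|f|^r)^{1/r}$ is $\ell(Q)^{-(d+1)/r}|\bar Q|^{1/r}$, not $\ell(Q)^{-d/r}|\bar Q|^{1/r}$.
\end{itemize}
Since $2^{-j\beta}\simeq 2^{-k\beta}$ for $|j-k|\leq 1$ and all radii are $\simeq 2^k$, none of this affects the argument.
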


\begin{proof}
    To simplify notation, we concentrate on the case $q,r<\infty$. The case of infinite parameters follows analogously.\\
    
    \noindent \textbf{Step 1: We show \enquote{$\lesssim$}.}
    Our starting point is the equation
    \begin{align*}
        \|f\|_{\T^{p,q,r}_\beta}  &\simeq \bigg\| \bigg( \int\limits_0^\infty \bigg(\int\limits_{\frac{t}{2}}^t \int\limits_{B(\cdot,t)} |s^{-\beta}f(s,y)|^r \,\frac{\d y \d s}{s^{d+1}}\bigg)^\frac{q}{r}\,\frac{\d t}{t}\bigg)^\frac{1}{q}\bigg\|_{\L^p}.
    \end{align*}
    Fix some $x\in\IR^d$. Then for each $k\in\IZ$, there exists a unique $Q_k\in\square_k$ with $x\in Q_k$. Hence, splitting the $\L^q$ integral dyadically, we get
    \begin{align*}
        &\bigg( \int\limits_0^\infty \bigg(\int\limits_{\frac{t}{2}}^{t} \int\limits_{B(x,t)} |s^{-\beta}f(s,y)|^r \,\frac{\d y \d s}{s^{d+1}}\bigg)^\frac{q}{r}\,\frac{\d t}{t}\bigg)^\frac{1}{q}\\
        &=\bigg( \sum\limits_{k\in\IZ} \mathbf{1}_{Q_k}(x)\int\limits_{\ell(Q_k)}^{2\ell(Q_k)} \bigg(\int\limits_{\frac{t}{2}}^{t} \int\limits_{B(x,t)} |s^{-\beta}f(s,y)|^r \,\frac{\d y \d s}{s^{d+1}}\bigg)^\frac{q}{r}\,\frac{\d t}{t}\bigg)^\frac{1}{q}.
    \intertext{Observe that, if $t \in (\ell(Q_k), 2 \ell(Q_k)]$ and $x\in Q_k$, then $B(x,t)\subset 5Q_k$. Thus, we further estimate}
        &\leq \bigg( \sum\limits_{k\in\IZ} \mathbf{1}_{Q_k}(x)\int\limits_{\ell(Q_k)}^{2\ell(Q_k)} \bigg(\int\limits_{\frac{\ell(Q_k)}{2}}^{2\ell(Q_k)} \int\limits_{5Q_k} |s^{-\beta}f(s,y)|^r \,\frac{\d y \d s}{s^{d+1}}\bigg)^\frac{q}{r}\,\frac{\d t}{t}\bigg)^\frac{1}{q}\\
        &\lesssim \bigg( \sum\limits_{k\in\IZ} \mathbf{1}_{Q_k}(x) \bigg(\int\limits_{\frac{\ell(Q_k)}{2}}^{2\ell(Q_k)} \int\limits_{5Q_k} |s^{-\beta}f(s,y)|^r \,\frac{\d y \d s}{s^{d+1}}\bigg)^\frac{q}{r}\bigg)^\frac{1}{q}.
    \end{align*}
    Next, we cover the dilated Whitney box
    \begin{align*}
        \Big(\frac{\ell(Q_k)}{2},2\ell(Q_k)\Big] \times 5Q_k 
    \end{align*}
    by a finite number of Whitney boxes $(\bar Q_k^j)_{j\in J}$ associated to certain dyadic cubes $( Q_k^j)_{j\in J}$, see Section~\ref{subsec: notation} for a review of notation. Indeed, there exist dyadic cubes $(Q_k^j)_{j\in J}$ with following the properties:
    \begin{enumerate}
        \item $|J|$ is bounded by a constant depending only on $d$,
        
        \item $Q^j_k \in \square_k\cup \square_{k+1}$,

        \item $\Big(\frac{\ell(Q_k)}{2},2\ell(Q_k)\Big] \times 5Q_k \subset \bigcup_{j\in J} \bar Q^j_k$. 
    \end{enumerate}
    Using this covering yields
    \begin{align*}
        &\bigg( \sum\limits_{k\in\IZ} \mathbf{1}_{Q_k}(x) \bigg(\int\limits_{\frac{\ell(Q_k)}{2}}^{2\ell(Q_k)} \int\limits_{5Q_k} |s^{-\beta}f(s,y)|^r \,\frac{\d y \d s}{s^{d+1}}\bigg)^\frac{q}{r}\bigg)^\frac{1}{q}\\
        &\leq \bigg( \sum\limits_{k\in\IZ} \mathbf{1}_{Q_k}(x) \bigg(\sum\limits_{j\in J}\iint\limits_{\bar Q^j_k} |s^{-\beta}f(s,y)|^r \,\frac{\d y \d s}{s^{d+1}}\bigg)^\frac{q}{r}\bigg)^\frac{1}{q}\\
        &\lesssim  \bigg( \sum\limits_{k\in\IZ}  \mathbf{1}_{Q_k}(x)\sum\limits_{j\in J}\bigg(\iint\limits_{\bar Q^j_k} |s^{-\beta}f(s,y)|^r \,\frac{\d y \d s}{s^{d+1}}\bigg)^\frac{q}{r}\bigg)^\frac{1}{q}\\
        &\lesssim \Big( \sum\limits_{k\in\IZ}\sum\limits_{j\in J}  \mathbf{1}_{Q^j_k}(x) \|s^{-\beta} f(s,y)\|_{\L^r\big(\bar Q_k^j , \frac{\d y \d s}{s^{d+1}}\big)}^q\Big)^\frac{1}{q}.
    \end{align*}
    Since $Q^j_k \in \square_k\cup \square_{k+1}$, we take the sum over all $Q\in \square_k\cup \square_{k+1}$ to get
    \begin{align*}
        &\leq \Big( \sum\limits_{k\in\IZ}\sum\limits_{Q\in \square_k\cup \square_{k+1}} \mathbf{1}_{Q}(x)\|s^{-\beta} f(s,y)\|_{\L^r\big(\bar Q , \frac{\d y \d s}{s^{d+1}}\big)}^q\Big)^\frac{1}{q}.
    \intertext{By splitting the second series into two, one running over $Q\in\square_k$ and the other over $Q\in\square_{k+1}$, we can use an index shift on the latter to obtain}
        &\lesssim\Big( \sum\limits_{k\in\IZ}\sum\limits_{Q\in \square_k} \mathbf{1}_{Q}(x)\|s^{-\beta} f(s,y)\|_{\L^r\big(\bar Q , \frac{\d y \d s}{s^{d+1}}\big)}^q\Big)^\frac{1}{q}\\
        &= \Big( \sum\limits_{Q\in\square} \mathbf{1}_{Q}(x)\|s^{-\beta} f(s,y)\|_{\L^r\big(\bar Q , \frac{\d y \d s}{s^{d+1}}\big)}^q\Big)^\frac{1}{q}\\
        &\simeq \Big( \sum\limits_{Q\in\square} \mathbf{1}_{Q}(x)|Q|^{-\frac{\beta q}{d}}\|f\|_{\L^r\big(\bar Q , \frac{\d y \d s}{s^{d+1}}\big)}^q\Big)^\frac{1}{q}.
    \end{align*}
    In summary, we have shown
    \begin{align*}
        \bigg( \int\limits_0^\infty \bigg(\int\limits_{\frac{t}{2}}^{t} \int\limits_{B(x,t)} |s^{-\beta}f(s,y)|^r \,\frac{\d y \d s}{s^{d+1}}\bigg)^\frac{q}{r}\,\frac{\d t}{t}\bigg)^\frac{1}{q} \lesssim \Big( \sum\limits_{Q\in\square} \mathbf{1}_{Q}(x)|Q|^{-\frac{\beta q}{d}}\|f\|_{\L^r\big(\bar Q , \frac{\d y \d s}{s^{d+1}}\big)}^q\Big)^\frac{1}{q}
    \end{align*}
    for every $x\in\IR^d$. Taking $\L^p$-norms on both sides yields the first direction.\\

    \noindent \textbf{Step 2: We show \enquote{$\gtrsim$}.} Fix  $x\in\IR^d$. Again, for each $k\in\IZ$, there exist a unique $Q_k\in\square_k$ with $x\in Q_k$. Hence, we have
    \begin{align*}
        \Big( \sum\limits_{Q\in\square} \mathbf{1}_{Q}(x)|Q|^{-\frac{\beta q}{d}}\|f\|_{\L^r\big(\bar Q , \frac{\d y \d s}{s^{d+1}}\big)}^q\Big)^\frac{1}{q}&\simeq \Big( \sum\limits_{Q\in\square} \mathbf{1}_{Q}(x)\|s^{-\beta} f(s,y)\|_{\L^r\big(\bar Q , \frac{\d y \d s}{s^{d+1}}\big)}^q\Big)^\frac{1}{q}\\
        &=  \Big( \sum\limits_{k\in\IZ} \mathbf{1}_{Q_k}(x)\|s^{-\beta} f(s,y)\|_{\L^r\big(\bar Q_k , \frac{\d y \d s}{s^{d+1}}\big)}^q\Big)^\frac{1}{q}.
    \end{align*}
    By introducing an extra integral, we get
    \begin{align}
    \label{eq: D11}
        &\Big( \sum\limits_{k\in\IZ} \mathbf{1}_{Q_k}(x)\|s^{-\beta} f(s,y)\|_{\L^r\big(\bar Q_k , \frac{\d y \d s}{s^{d+1}}\big)}^q\Big)^\frac{1}{q}\bigg\|_{\L^p} \nonumber\\
        &\simeq \bigg( \sum\limits_{k\in\IZ} \mathbf{1}_{Q_k}(x) \int\limits_{\frac{\ell(Q_k)}{2}}^{\ell(Q_k)} \bigg(\int\limits_{\frac{\ell(Q_k)}{2}}^{\ell(Q_k)}\int\limits_{ Q_k } |s^{-\beta} f(s,y)|^r\,\frac{\d y \d s}{s^{d+1}}\bigg)^\frac{q}{r}\,\frac{\d t}{t}\bigg)^\frac{1}{q}.
    \end{align}
    Note that for $x\in Q_k$ and $t\in (\frac{\ell(Q_k)}{2},\ell(Q_k)]$, we have
    \begin{align*}
        \Big(\frac{\ell(Q_k)}{2},\ell(Q_k)\Big] \times Q_k \subset \Big(\frac{t}{2},2t\Big] \times B(x,2\sqrt{d}t).
    \end{align*}
    Thus, we further estimate the last term in \eqref{eq: D11} by
    \begin{align*}
        &  \bigg( \sum\limits_{k\in\IZ} \mathbf{1}_{Q_k}(x) \int\limits_{\frac{\ell(Q_k)}{2}}^{\ell(Q_k)} \bigg(\int\limits_{\frac{\ell(Q_k)}{2}}^{\ell(Q_k)}\int\limits_{ Q_k } |s^{-\beta} f(s,y)|^r\,\frac{\d y \d s}{s^{d+1}}\bigg)^\frac{q}{r}\,\frac{\d t}{t}\bigg)^\frac{1}{q}\\
        &\lesssim \bigg( \sum\limits_{k\in\IZ} \mathbf{1}_{Q_k}(x) \int\limits_{\frac{\ell(Q_k)}{2}}^{\ell(Q_k)} \bigg(\int\limits_{\frac{t}{2}}^{2t}\int\limits_{B(x,2\sqrt{d}t)} |s^{-\beta} f(s,y)|^r\,\frac{\d y \d s}{s^{d+1}}\bigg)^\frac{q}{r}\,\frac{\d t}{t}\bigg)^\frac{1}{q}\\
        &\lesssim \int\limits_{0}^{\infty} \bigg(\fint\limits_{\frac{t}{2}}^{2t}\fint\limits_{B(x,2\sqrt{d}t)} |s^{-\beta} f(s,y)|^r\,\d y \d s\bigg)^\frac{q}{r}\,\frac{\d t}{t}\bigg)^\frac{1}{q}.
    \end{align*}
    In summary, we showed 
    \begin{align*}
        \Big( \sum\limits_{Q\in\square} \mathbf{1}_{Q}(x)|Q|^{-\frac{\beta q}{d}}\|f\|_{\L^r\big(\bar Q , \frac{\d y \d s}{s^{d+1}}\big)}^q\Big)^\frac{1}{q} \lesssim \bigg(\int\limits_{0}^{\infty} \bigg(\fint\limits_{\frac{t}{2}}^{2t}\fint\limits_{B(x,2\sqrt{d}t)} |s^{-\beta} f(s,y)|^r\,\d y \d s\bigg)^\frac{q}{r}\,\frac{\d t}{t}\bigg)^\frac{1}{q}.
    \end{align*}
    for every $x\in\IR^d$. Finally, by taking $\L^p$-norms in $x$ on both sides and using a change of Whitney parameters (Proposition~\ref{prop: independent Whitney cube}), we conclude the reverse inequality.
\end{proof}

\begin{proposition}
\label{prop: dyadic char p=infty}
    Let $0<q,r\leq \infty$ and $\beta\in \IR$. Then, for $f\in \T^{\infty,q,r}_\beta$ we have
    \begin{align}
    \label{eq: dyadic char for p=infty}
        \|f\|_{\T^{\infty,q,r}_\beta} \simeq \sup\limits_{P\in\square} \bigg(\fint\limits_{P} \sum\limits_{Q\subset P} \mathbf{1}_{Q}(x)|Q|^{-\frac{\beta q}{d}} \|f\|_{\L^r\big(\bar Q , \frac{\d y \d s}{s^{d+1}}\big)}^q \,\d x \bigg)^\frac{1}{q},
    \end{align}
    with the usual modification if $q,r$ are infinite. Here, the implicit constants depend only on $d,q,r,\beta$ and the sum runs over all dyadic cubes $Q\in \square$ with $Q\subset P$.
\end{proposition}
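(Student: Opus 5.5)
The plan is to mimic the proof of Proposition~\ref{prop: dyadic char}, now localized to Carleson boxes. I assume $q,r<\infty$; the infinite cases are analogous, and for $q=\infty$ both sides reduce to a supremum over Whitney boxes. Write
\begin{align*}
a_Q \coloneqq |Q|^{-\frac{\beta}{d}}\|f\|_{\L^r(\bar Q,\frac{\d y\d s}{s^{d+1}})}\simeq \|s^{-\beta}f\|_{\L^r(\bar Q,\frac{\d y \d s}{s^{d+1}})},
\end{align*}
so that the right-hand side of \eqref{eq: dyadic char for p=infty} equals $\sup_P \big(|P|^{-1}\sum_{Q\subset P}|Q|\,a_Q^q\big)^{1/q}$. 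The two key ingredients are a pointwise comparison between Whitney averages at $(t,x)$ and the quantities $a_Q$ for cubes $Q$ near $x$ with $\ell(Q)\simeq t$, together with a comparison of Carleson boxes $(0,\tau)\times B(y,\tau)$ with dyadic cubes $P$.

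\textbf{Step 1 (``$\lesssim$'').} Fix $y$ and $\tau$, and choose $k$ with $2^{k-1}<\tau\le 2^k$. The ball $B(y,\tau)$ is covered by a bounded number (depending on $d$) of cubes $P\in\square_k$. For $x\in P$ and $t\in(\ell(Q_j),2\ell(Q_j)]$ with $t<\tau$, where $Q_j\ni x$, $Q_j\in\square_j$ and $j<k$, the covering argument of Step~1 of Proposition~\ref{prop: dyadic char} bounds the Whitney average over $W(t,x)$ by $\sum_{i\in J}a_{Q^i_j}^q$. Here the $Q^i_j\in\square_j\cup\square_{j+1}$ are neighbours of $Q_j$, and they lie in $3P$ rather than in $P$. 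The range $t\in(\tau/2,\tau)$ needs one extra scale, which is handled by enlarging $P$ by a factor~$2$ in the same way.

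Integrating $\frac{\d t}{t}$ and $\frac{\d x}{|B(y,\tau)|}$ then gives a bound of the form
\begin{align*}
\frac{1}{|P|}\sum_{P'}\ \sum_{Q\subset P'}|Q|\,a_Q^q,
\end{align*}
where $P'$ ranges over the boundedly many dyadic cubes of generation $k$ or $k+1$ that make up $3P$. This uses that each $Q$ appears for a bounded number of neighbours $Q_j$. Since $|P'|\simeq|P|$, the expression is at most a constant multiple of the dyadic supremum.

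\textbf{Step 2 (``$\gtrsim$'').} Fix $P\in\square$ and set $\tau=2\sqrt d\,\ell(P)$. Take $y$ to be the centre of $P$, and note that $B(y,\tau)\supset P$ has measure comparable to $|P|$. As in Step~2 of Proposition~\ref{prop: dyadic char}, for $Q\subset P$ with $Q\ni x$ and $t\in(\ell(Q)/2,\ell(Q)]$ we have $\bar Q\subset(\tfrac t2,2t]\times B(x,2\sqrt d\,t)$. Hence $\sum_{Q\subset P}\mathbf 1_Q(x)a_Q^q$ is dominated by
\begin{align*}
\int_0^{\ell(P)}\bigg(\fint_{t/2}^{2t}\fint_{B(x,2\sqrt d\,t)}|s^{-\beta}f|^r\bigg)^{\frac qr}\frac{\d t}{t},
\end{align*}
and averaging over $x\in P$ bounds the dyadic quantity by the modified Carleson-box norm with Whitney parameters $a=\tfrac12$, $b=2$, $c=2\sqrt d$. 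The second part of Proposition~\ref{prop: independent Whitney cube}, which is available in the $p=\infty$ form, then returns this to $\|f\|_{\T^{\infty,q,r}_\beta}$.

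\textbf{Main obstacle.} The delicate point is Step~1. The Whitney boxes reached from a Carleson box spill slightly outside any single dyadic cube, both spatially (the neighbours of $Q_j$) and in scale (near $t\simeq\tau$). One has to check that the finitely many enlarged cubes $P'$ absorb this overlap with constants depending only on $d$, and that no summation over infinitely many generations arises.
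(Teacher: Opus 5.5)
Your proposal is correct and follows essentially the same route as the paper. In Step~1 you localize the Carleson box to boundedly many dyadic cubes of comparable size and bound the Whitney averages by neighbouring boxes $\bar Q'$; you also make explicit the spillover into $3P$, which the paper only calls a ``similar covering argument''. In Step~2 you use $\bar Q\subset(\tfrac t2,2t]\times B(x,2\sqrt d\,t)$ followed by Proposition~\ref{prop: independent Whitney cube}, exactly as in the paper.

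One minor point: the ``extra scale'' near $t\simeq\tau$ is unnecessary. The condition $t<\tau\le 2^k$ already forces $j\le k-1$, so every neighbouring cube has generation at most $k$ and lies in $3P$.
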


\begin{proof}
    We simplify notation by focusing on the case $0<q,r <\infty$. The case of infinite parameters follows in a similar way.\\
    
    \noindent \textbf{Step 1: We show \enquote{$\lesssim$}.}
    Fix $\tau>0$ and $y\in \IR^d$. Then, there exist $k\in \IZ$ and $P\in\square_k$ with $y\in P$ and $\frac{\ell(P)}{2}\leq \tau<\ell(P) $. It follows
    \begin{align} 
    \label{eq: D0}
        &\bigg(\int\limits_{0}^\tau  \fint\limits_{B(y,\tau)} \bigg(\fint\limits_{\frac{t}{2}}^t \fint\limits_{B(x,t)} |s^{-\beta}f(s,z)|^r \,\d z \d s\bigg)^\frac{q}{r}\,\frac{\d x \d t}{t}\bigg)^\frac{1}{q}\nonumber\\
        &\lesssim \bigg(\int\limits_{0}^{\ell(P)}  \fint\limits_{3P} \bigg(\fint\limits_{\frac{t}{2}}^t \fint\limits_{B(x,t)} |s^{-\beta}f(s,z)|^r \,\d z \d s\bigg)^\frac{q}{r}\,\frac{\d x \d t}{t}\bigg)^\frac{1}{q}
    \end{align}
    Pick a finite sequence $(P_i)_{i\in I}\subset \square_k$ with the properties
    \begin{align*}
        3P \subset \bigcup_{i\in I} P_i \quad \text{and} \quad |I| \leq C
    \end{align*}
    for some constant $C>0$ depending only on the dimension $d$. This covering allows us to estimate
    \begin{align}
    \label{eq: dyad0}
        &\bigg(\int\limits_{0}^{\ell(P)}  \fint\limits_{3P} \bigg(\fint\limits_{\frac{t}{2}}^t \fint\limits_{B(x,t)} |s^{-\beta}f(s,z)|^r \,\d z \d s\bigg)^\frac{q}{r}\,\frac{\d x \d t}{t}\bigg)^\frac{1}{q} \nonumber\\
        &\lesssim \sum\limits_{i\in I} \bigg(\int\limits_{0}^{\ell(P_i)}  \fint\limits_{P_i} \bigg(\fint\limits_{\frac{t}{2}}^t \fint\limits_{B(x,t)} |s^{-\beta}f(s,z)|^r \,\d z \d s\bigg)^\frac{q}{r}\,\frac{\d x \d t}{t}\bigg)^\frac{1}{q}.
    \end{align}
    Fix $i\in I$ and decompose the Carleson box $(0,\ell(P_i)]\times P_i$ into 
    \begin{align}
    \label{eq: Carleson decomp}
        (0,\ell(P_i)]\times P_i =\bigcup_{\substack{Q\in\square \\ Q\subset P_i}} \bar Q.
    \end{align}
    With this observation, we estimate a single term of the right side of \eqref{eq: dyad0} as follows:
    \begin{align*}
        &\bigg(\int\limits_{0}^{\ell(P_i)}  \fint\limits_{P_i} \bigg(\fint\limits_{\frac{t}{2}}^t \fint\limits_{B(x,t)} |s^{-\beta}f(s,z)|^r \,\d z \d s\bigg)^\frac{q}{r}\,\frac{\d x \d t}{t}\bigg)^\frac{1}{q}\\
        &=\bigg(\frac{1}{|P_i|}\sum\limits_{Q\subset P_i}\int\limits_{\frac{\ell(Q)}{2}}^{\ell(Q)}  \int\limits_{Q} \bigg(\fint\limits_{\frac{t}{2}}^t \fint\limits_{B(x,t)} |s^{-\beta}f(s,z)|^r \,\d z \d s\bigg)^\frac{q}{r}\,\frac{\d x \d t}{t}\bigg)^\frac{1}{q}\\
        &\simeq\bigg(\sum\limits_{Q\subset P_i}\int\limits_{\frac{\ell(Q)}{2}}^{\ell(Q)}  \fint\limits_{P_i} \mathbf{1}_{Q}(x)\bigg(\int\limits_{\frac{t}{2}}^t \int\limits_{B(x,t)} |s^{-\beta}f(s,z)|^r \,\frac{\d z \d s}{s^{d+1}}\bigg)^\frac{q}{r}\,\frac{\d x \d t}{t}\bigg)^\frac{1}{q}.
    \intertext{For $t\in (\ell(Q)/2 , \ell(Q)]$ and $x\in Q$, we have $(t/2,t]\times B(x,t)\subset(\ell(Q)/4,\ell(Q)]\times 3Q$. Hence, we further estimate}
        &\lesssim\bigg(\sum\limits_{Q\subset P_i}\int\limits_{\frac{\ell(Q)}{2}}^{\ell(Q)}  \fint\limits_{P_i}\mathbf{1}_{Q}(x) \bigg(\int\limits_{\frac{\ell(Q)}{4}}^{\ell(Q)} \int\limits_{3Q} |s^{-\beta}f(s,z)|^r \,\frac{\d z \d s}{s^{d+1}}\bigg)^\frac{q}{r}\,\frac{\d x \d t}{t}\bigg)^\frac{1}{q}\\
        &\lesssim\bigg(\sum\limits_{Q\subset P_i}  \fint\limits_{P_i}\mathbf{1}_{Q}(x)|Q|^{\frac{-\beta q}{d}} \bigg(\int\limits_{\frac{\ell(Q)}{4}}^{\ell(Q)} \int\limits_{3Q} |f(s,z)|^r \,\frac{\d z \d s}{s^{d+1}}\bigg)^\frac{q}{r}\,\d x \bigg)^\frac{1}{q}\\
        &\lesssim\sup\limits_{P\in\square}\bigg(\sum\limits_{Q\subset P}\fint\limits_{P} \mathbf{1}_{Q}(x)|Q|^{\frac{-\beta q}{d}} \bigg(\int\limits_{\frac{\ell(Q)}{4}}^{\ell(Q)} \int\limits_{3Q} |f(s,z)|^r \,\frac{\d z \d s}{s^{d+1}}\bigg)^\frac{q}{r}\,\d x\bigg)^\frac{1}{q}.
    \end{align*}
    Notice that this bound holds uniformly in $i$. Since the index set $I$ is finite, we can plug the uniform bound back in \eqref{eq: dyad0}, so that \eqref{eq: D0} yields
    \begin{align*}
        &\bigg(\int\limits_{0}^\tau  \fint\limits_{B(y,\tau)} \bigg(\fint\limits_{\frac{t}{2}}^t \fint\limits_{B(x,t)} |s^{-\beta}f(s,z)|^r \,\d z \d s\bigg)^\frac{q}{r}\,\frac{\d x \d t}{t}\bigg)^\frac{1}{q}\\
        &\lesssim\sup\limits_{P\in\square}\bigg(\sum\limits_{Q\subset P}\fint\limits_{P} \mathbf{1}_{Q}(x)|Q|^{\frac{-\beta q}{d}} \bigg(\int\limits_{\frac{\ell(Q)}{4}}^{\ell(Q)} \int\limits_{3Q} |f(s,z)|^r \,\frac{\d z \d s}{s^{d+1}}\bigg)^\frac{q}{r}\,\d x\bigg)^\frac{1}{q}
    \end{align*}
    for all $\tau >0$ and $y\in \IR^d$. Taking suprema in $\tau$ and $y$ yields
    \begin{align*}
        \|f\|_{\T^{\infty,q,r}_\beta} \lesssim\sup\limits_{P\in\square}\bigg(\sum\limits_{Q\subset P}\fint\limits_{P} \mathbf{1}_{Q}(x)|Q|^{\frac{-\beta q}{d}} \bigg(\int\limits_{\frac{\ell(Q)}{4}}^{\ell(Q)} \int\limits_{3Q} |f(s,z)|^r \,\frac{\d z \d s}{s^{d+1}}\bigg)^\frac{q}{r}\,\d x\bigg)^\frac{1}{q}.
    \end{align*}
    Finally, a similar covering argument as in Step~1 of the proof in Proposition~\ref{prop: dyadic char} allows us to recover our Whitney box $(\ell(Q)/2,\ell(Q)]\times Q$, resulting in
    \begin{align*}
        \|f\|_{\T^{\infty,q,r}_\beta} \lesssim\sup\limits_{P\in\square} \bigg(\fint\limits_{P} \sum\limits_{Q\subset P} \mathbf{1}_{Q}(x)|Q|^{-\frac{\beta q}{d}} \|f\|_{\L^r\big(\bar Q , \frac{\d y \d s}{s^{d+1}}\big)}^q \,\d x \bigg)^\frac{1}{q},
    \end{align*}
    which proves the first direction.\\

    \noindent \textbf{Step 2: We show \enquote{$\gtrsim$}.}
    This direction follows the same ideas as in Step~2 of the proof of Proposition~\ref{prop: dyadic char}.
    Fix a dyadic cube $P\in\square$. Then, we calculate
    \begin{align*}
        &\bigg(\fint\limits_{P} \sum\limits_{Q\subset P} \mathbf{1}_{Q}(x)|Q|^{-\frac{\beta q}{d}} \|f\|_{\L^r\big(\bar Q , \frac{\d y \d s}{s^{d+1}}\big)}^q \,\d x \bigg)^\frac{1}{q}\\
        &\lesssim \bigg(\frac{1}{|P|}\sum\limits_{Q\subset P}\int\limits_{Q}  \bigg(\int\limits_{\frac{\ell(Q)}{2}}^{\ell(Q)} \int\limits_{Q} |s^{-\beta}f(s,z)|^r \,\frac{\d z \d s}{s^{d+1}}\bigg)^\frac{q}{r} \,\d x \bigg)^\frac{1}{q}.
    \intertext{By introducing an extra integral, we get}
        &\lesssim\bigg(\frac{1}{|P|}\sum\limits_{Q\subset P}\int\limits_{\frac{\ell(Q)}{2}}^{\ell(Q)}\int\limits_{Q}  \bigg(\int\limits_{\frac{\ell(Q)}{2}}^{\ell(Q)} \int\limits_{Q} |s^{-\beta}f(s,z)|^r \,\frac{\d z \d s}{s^{d+1}}\bigg)^\frac{q}{r} \,\frac{\d x\d t}{t} \bigg)^\frac{1}{q}.
    \intertext{For $t\in(\ell(Q)/2, \ell(Q)]$ and $x\in Q$ we have $(\ell(Q)/2, \ell(Q)]\times Q \subset (t/2,2t]\times B(x,2\sqrt{d}t)$. Thus, we further estimate}
        &\lesssim\bigg(\frac{1}{|P|}\sum\limits_{Q\subset P}\int\limits_{\frac{\ell(Q)}{2}}^{\ell(Q)}\int\limits_{Q}  \bigg(\int\limits_{\frac{t}{2}}^{2 t} \int\limits_{B(x,2\sqrt{d}t)} |s^{-\beta}f(s,z)|^r \,\frac{\d z \d s}{s^{d+1}}\bigg)^\frac{q}{r} \,\frac{\d x\d t}{t} \bigg)^\frac{1}{q}.
    \intertext{Finally, the decomposition \eqref{eq: Carleson decomp} yields}
        &=\bigg(\frac{1}{|P|}\int\limits_{0}^{\ell(P)}\int\limits_{P}  \bigg(\int\limits_{\frac{t}{2}}^{2 t} \int\limits_{B(x,2\sqrt{d}t)} |s^{-\beta}f(s,z)|^r \,\frac{\d z \d s}{s^{d+1}}\bigg)^\frac{q}{r} \,\frac{\d x\d t}{t} \bigg)^\frac{1}{q}.\\
        &\lesssim \sup\limits_{\tau>0}\sup\limits_{y\in\IR^d}\bigg(\int\limits_{0}^{\tau}\fint\limits_{B(y,\tau)}  \bigg(\fint\limits_{\frac{t}{2}}^{2 t} \fint\limits_{B(x,2\sqrt{d}t)} |s^{-\beta}f(s,z)|^r \,\d z \d s\bigg)^\frac{q}{r} \,\frac{\d x\d t}{t} \bigg)^\frac{1}{q}.
    \end{align*}
    Since this estimate holds for all dyadic cubes $P\in\square$, it follows
    \begin{align*}
        &\sup\limits_{P\in\square} \bigg(\fint\limits_{P} \sum\limits_{Q\subset P} \mathbf{1}_{Q}(x)|Q|^{-\frac{\beta q}{d}} \|f\|_{\L^r\big(\bar Q , \frac{\d y \d s}{s^{d+1}}\big)}^q \,\d x \bigg)^\frac{1}{q}\\
        &\lesssim \sup\limits_{\tau>0}\sup\limits_{y\in\IR^d}\bigg(\int\limits_{0}^{\tau}\fint\limits_{B(y,\tau)}  \bigg(\fint\limits_{\frac{t}{2}}^{2 t} \fint\limits_{B(x,2\sqrt{d}t)} |s^{-\beta}f(s,z)|^r \,\d z \d s\bigg)^\frac{q}{r} \,\frac{\d x\d t}{t} \bigg)^\frac{1}{q}.
    \end{align*}
By a change of Whitney parameters (Proposition~\ref{prop: independent Whitney cube}), we conclude the reverse inequality.
\end{proof}

Next, we want to provide another equivalent characterization of $\T^{p,q,r}_\beta$ by ``discrete local square functions". To this end, for $0<q,r\leq \infty$, $\beta\in\IR$ and $f \in \L^0( \IR^{d+1}_+)$ we define
\begin{align*}
    G^{q,r}_{\beta,P}(f)(x) \coloneqq \Big(\sum\limits_{Q\subset P} \mathbf{1}_Q(x)|Q|^{-\frac{\beta q}{d}} \|f\|_{\L^r\big(\bar Q , \frac{\d y \d s}{s^{d+1}}\big)}^q \Big)^\frac{1}{q}
\end{align*}
for some fixed $P\in\square$ (with the usual modification in the infinite cases). Next, for $0<c<1$ define
\begin{align}
\label{eq: def of m1/4}
    m_{\beta,P,c}^{q,r}(f) \coloneqq \inf\Big\{t: |\{x\in P: G^{q,r}_{\beta,P}(f)(x)>t\}|<c|P| \Big\},
\end{align}
which is the so-called \enquote{$c$-median} of $G^{q,r}_{\beta,P}(f)$ in $P$, and 
\begin{align}
\label{eq: def of m}
     m^{q,r}_{\beta,c}(f)(x) \coloneqq \sup\limits_{P\in\square}  m^{q,r}_{\beta,P,c}(f)\mathbf{1}_P(x).
\end{align}
These three objects have a counterpart on the level of sequence spaces, see \cite[Sec.\@ 5]{Frazier_Jawerth}. To be more precise, for $0<q\leq \infty$, $\beta\in\IR$ and $s= (s_Q)_{Q\in \square}\subset \IC$ define
\begin{align*}
    G^{\beta q}_{P}(s)(x) \coloneqq \Big(\sum\limits_{Q\subset P} \mathbf{1}_Q(x)|Q|^{-\frac{q}{2}-\frac{\beta q}{d}} |s_Q|^q \Big)^\frac{1}{q}
\end{align*}
for some fixed $P\in\square$ with the usual modification in the infinite case. Furthermore, for $0<c<1$, we define
\begin{align*}
    m_{P,c}^{\beta q}(s) \coloneqq \inf\Big\{t: |\{x\in Q: G^{\beta q}_{P}(s)(x)>t|<c|P|\Big\},
\end{align*}
as well as
\begin{align*}
     m^{\beta q}_c(s)(x) \coloneqq \sup\limits_{P\in\square}  m^{\beta q}_{P,c}(s)\mathbf{1}_P(x).
\end{align*}
By setting $s_Q \coloneqq |Q|^\frac{1}{2}\|f\|_{\L^r\big(\bar Q , \frac{\d y \d s}{s^{d+1}}\big)}$ for every $Q\in\square$, we have for all $x\in \IR^d$
\begin{align}
\label{eq: equiv Gf = Gs}
   G^{q,r}_{\beta,P}(f)(x) = G^{\beta q}_{P}(s)(x),
\end{align}
as well as
\begin{align}
\label{eq: equiv mf = ms}
    m_{\beta,P,c}^{q,r}(f) = m_{P,c}^{\beta q}(s)   \quad \text{and}\quad  m^{q,r}_{\beta,c}(f)(x) =m_c^{\beta q}(s)(x).
\end{align}
Thus, the results of \cite[Sec.\@ 5]{Frazier_Jawerth} can be utilized to establish the following characterizations, which will play an important role in the duality theory (Section~\ref{sec: duality}) and real interpolation theory (Section~\ref{sec: real interpolation}).

\begin{lemma}
\label{lem: dyadic char with subset}
    Let $0<p,q,r\leq \infty$. Furthermore, let $\varepsilon>0$. Suppose that for each dyadic cube $Q\in\square$ there exists a measurable set $E_Q\subset Q$ with $|E_Q|>\varepsilon|Q|$.  Then, for $f\in\T^{p,q,r}_\beta$ we have
    \begin{align}
    \label{eq: dyad char with subset}
        \|f\|_{\T^{p,q,r}_\beta} \simeq \bigg\| \Big(\sum\limits_{Q\in\square} \mathbf{1}_{E_Q}(\cdot)|Q|^{-\frac{\beta q}{d}} \|f\|_{\L^r\big(\bar Q , \frac{\d y \d s}{s^{d+1}}\big)}^q \Big)^\frac{1}{q} \bigg\|_{\L^p}
    \end{align}
    if $p<\infty$, and
    \begin{align}
    \label{eq: dyad char with subset p=infty}
        \|f\|_{\T^{\infty,q,r}_\beta} \simeq \sup\limits_{P\in\square} \bigg(\fint\limits_{P} \sum\limits_{Q\subset P} \mathbf{1}_{E_Q}(x)|Q|^{-\frac{\beta q}{d}} \|f\|_{\L^r\big(\bar Q , \frac{\d y \d s}{s^{d+1}}\big)}^q \,\d x \bigg)^\frac{1}{q}.
    \end{align}
\end{lemma}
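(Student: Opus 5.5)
The plan is to reduce the statement to the corresponding sequence-space facts via the identification $s_Q \coloneqq |Q|^{\frac12}\|f\|_{\L^r(\bar Q, \frac{\d y\d s}{s^{d+1}})}$. After this substitution, Propositions~\ref{prop: dyadic char} and~\ref{prop: dyadic char p=infty} say that $\|f\|_{\T^{p,q,r}_\beta}\simeq \|(s_Q)_Q\|_{\mathrm{\dot f}^\beta_{p,q}}$ (with the $\mathbf{\tilde 1}_Q$ normalization), and the right-hand sides of \eqref{eq: dyad char with subset} and \eqref{eq: dyad char with subset p=infty} are the same expressions with $\mathbf{1}_Q$ replaced by $\mathbf{1}_{E_Q}$. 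The direction \enquote{$\gtrsim$} is immediate in both cases, since $\mathbf{1}_{E_Q}\leq \mathbf{1}_Q$ pointwise. The whole content is therefore the reverse inequality, which is exactly the sequence-space statement of \cite[Prop.\@ 2.7]{Frazier_Jawerth} for $p<\infty$ and its endpoint analogue in \cite[Sec.\@ 5]{Frazier_Jawerth} for $p=\infty$. To keep the argument self-contained, I would nevertheless sketch it as follows.

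\textbf{Case $p<\infty$.} Fix $0<A<\min\{p,q\}$. Since $|E_Q|>\varepsilon|Q|$, we have for every $x\in Q$
\begin{align*}
    \mathbf{1}_Q(x)\leq \varepsilon^{-1/A}\big(\cM(\mathbf{1}_{E_Q})(x)\big)^{1/A}.
\end{align*}
Hence
\begin{align*}
    \Big(\sum_Q \mathbf{1}_Q a_Q^q\Big)^{\frac1q}\leq \varepsilon^{-1/A}\Big(\sum_Q \big[\cM(a_Q^A\mathbf{1}_{E_Q})\big]^{q/A}\Big)^{\frac1q},
\end{align*}
where $a_Q \coloneqq |Q|^{-\beta/d}\|f\|_{\L^r(\bar Q)}$. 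Taking $\L^p$-norms and applying the Fefferman--Stein vector-valued inequality in $\L^{p/A}(\ell^{q/A})$, which is legitimate because $p/A,q/A>1$, yields the bound. The case $q=\infty$ is handled similarly, using the scalar maximal theorem with $\sup_Q$.

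\textbf{Case $p=\infty$.} Fix $P$ and consider the finite-depth truncations. I would use a John--Nirenberg/stopping-time argument in the style of Frazier--Jawerth. Set $K$ to be the right-hand side of \eqref{eq: dyad char with subset p=infty}. For a fixed $P$, apply the $p<\infty$ argument with $p=q$ localized to $P$, noting that all relevant $Q\subset P$ and that the maximal function estimate can be restricted to $P$ since $\cM(\mathbf{1}_{E_Q})\mathbf{1}_Q$ only needs $\mathbf{1}_{E_Q}$ with $E_Q\subset Q\subset P$. This gives
\begin{align*}
    \fint_P\sum_{Q\subset P}\mathbf{1}_Q a_Q^q\lesssim \fint_P\sum_{Q\subset P}\mathbf{1}_{E_Q}a_Q^q\leq K^q,
\end{align*}
provided we choose $A<q$ and use Fefferman--Stein in $\L^{q/A}(\ell^{q/A})$, i.e.\ with equal exponents, so that the localized estimate goes through on $\IR^d$ with functions supported in $P$.

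So $p=\infty$ actually reduces to the $p=q$ diagonal case on each $P$, and no stopping time is needed. The main point to check carefully is that the Fefferman--Stein step is applied to the family $(a_Q^A\mathbf{1}_{E_Q})_{Q\subset P}$ as a sequence indexed by generations. This requires grouping cubes of the same generation, which have disjoint supports, into a single function $g_k=\sum_{Q\in\square_k,Q\subset P}a_Q^A\mathbf{1}_{E_Q}$ and using $\mathbf{1}_Q\cM(\mathbf{1}_{E_Q}a_Q^A)\leq \cM(g_k)$. I expect this bookkeeping to be the only delicate step; the constants depend on $\varepsilon,d,p,q$.
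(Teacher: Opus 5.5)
Your proposal is correct and follows the paper's proof. The paper makes the same substitution $s_Q=|Q|^{\frac12}\|f\|_{\L^r(\bar Q,\frac{\d y\d s}{s^{d+1}})}$ and then cites \cite[Prop.~2.7]{Frazier_Jawerth} for $p<\infty$ and \cite[Prop.~5.4]{Frazier_Jawerth} for $p=\infty$. Your Fefferman--Stein sketch for $p<\infty$ is the argument behind the first citation; it can be run on the family indexed by $Q$ directly, so the regrouping by generations is unnecessary.

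For $p=\infty$ even less is needed. By Tonelli,
\[
\int_P\sum_{Q\subset P}\mathbf{1}_{E_Q}a_Q^q\,\d x=\sum_{Q\subset P}|E_Q|\,a_Q^q\geq \varepsilon\sum_{Q\subset P}|Q|\,a_Q^q=\varepsilon\int_P\sum_{Q\subset P}\mathbf{1}_{Q}a_Q^q\,\d x,
\]
so the reverse inequality holds with constant $\varepsilon^{-1/q}$ and no maximal function at all.
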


\begin{proof}
     For every $Q\in\square$, set $s_Q \coloneqq |Q|^\frac{1}{2}\|f\|_{\L^r\big(\bar Q , \frac{\d y \d s}{s^{d+1}}\big)}$. Then, we can rewrite the results of Proposition~\ref{prop: dyadic char} and ~\ref{prop: dyadic char p=infty} into
     \begin{align*}
         \|f\|_{\T^{p,q,r}_\beta} \simeq \bigg\| \Big(\sum\limits_{Q\in\square} \mathbf{1}_{Q}(\cdot)|Q|^{-\frac{q}{2}-\frac{\beta q}{d}} |s_Q|^q \Big)^\frac{1}{q} \bigg\|_{\L^p}
     \end{align*}
     if $0<p<\infty$, and
     \begin{align*}
         \|f\|_{\T^{\infty,q,r}_\beta} \simeq \sup\limits_{P\in\square} \bigg(\fint\limits_{P} \sum\limits_{Q\subset P} \mathbf{1}_{Q}(x)|Q|^{-\frac{q}{2}-\frac{\beta q}{d}} |s_Q|^q \,\d x \bigg)^\frac{1}{q}.
     \end{align*}
     In the case $p<\infty$, we use \cite[Prop.\@ 2.7]{Frazier_Jawerth} to get
     \begin{align*}
         \|f\|_{\T^{p,q,r}_\beta} \simeq \bigg\| \Big(\sum\limits_{Q\in\square} \mathbf{1}_{E_Q}|Q|^{-\frac{q}{2}-\frac{\beta q}{d}} |s_Q|^q \Big)^\frac{1}{q} \bigg\|_{\L^p}
         = \bigg\| \Big(\sum\limits_{Q\in\square} \mathbf{1}_{E_Q}|Q|^{-\frac{\beta q}{d}} \|f\|_{\L^r\big(\bar Q , \frac{\d y \d s}{s^{d+1}}\big)}^q \Big)^\frac{1}{q} \bigg\|_{\L^p},
     \end{align*}
     which proves \eqref{eq: dyad char with subset}. Similarly, if $p=\infty$, we use \cite[Prop.\@ 5.4]{Frazier_Jawerth} to obtain
     \begin{align*}
         \|f\|_{\T^{\infty,q,r}_\beta} &\simeq \sup\limits_{P\in\square} \bigg(\fint\limits_{P} \sum\limits_{Q\subset P} \mathbf{1}_{E_Q}(x)|Q|^{-\frac{q}{2}-\frac{\beta q}{d}} |s_Q|^q \,\d x \bigg)^\frac{1}{q}\\
         &=\sup\limits_{P\in\square} \bigg(\fint\limits_{P} \sum\limits_{Q\subset P} \mathbf{1}_{E_Q}(x)|Q|^{-\frac{\beta q}{d}} \|f\|_{\L^r\big(\bar Q , \frac{\d y \d s}{s^{d+1}}\big)}^q \,\d x \bigg)^\frac{1}{q},
     \end{align*}
     which proves \eqref{eq: dyad char with subset p=infty}.
\end{proof}

\begin{proposition}
\label{prop: char with m}
    Let $0<p,q,r\leq \infty$, $\beta\in\IR$ and $0<c<1$. Then, for $f\in \T^{p,q,r}_\beta$ we have
    \begin{align*}
        \|f\|_{\T^{p,q,r}_\beta} \simeq \| m^{q,r}_{\beta,c}(f) \|_{\L^p}
    \end{align*}
    with implicit constants depending only on $d,q,r,\beta$. 
\end{proposition}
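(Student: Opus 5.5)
The natural route is to reduce the statement to the sequence-space setting of Frazier--Jawerth and transfer their median characterization back to $\T^{p,q,r}_\beta$. For every $Q\in\square$ set $s_Q\coloneqq |Q|^{\frac12}\|f\|_{\L^r(\bar Q,\frac{\d y\d s}{s^{d+1}})}$. By Propositions~\ref{prop: dyadic char} and~\ref{prop: dyadic char p=infty} (as rewritten in the proof of Lemma~\ref{lem: dyadic char with subset}), $\|f\|_{\T^{p,q,r}_\beta}$ is equivalent to $\|s\|_{\mathrm{\dot f}^{\beta}_{p,q}}$ for all $0<p\leq\infty$. By \eqref{eq: equiv mf = ms}, $m^{q,r}_{\beta,c}(f)=m^{\beta q}_c(s)$ pointwise. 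The claim is therefore equivalent to
\begin{align*}
\|s\|_{\mathrm{\dot f}^{\beta}_{p,q}}\simeq \|m^{\beta q}_c(s)\|_{\L^p},
\end{align*}
which is the sequence-space statement in \cite[Sec.\@ 5]{Frazier_Jawerth}. If a clean citation is unavailable, I would prove the two inequalities directly as follows.

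\textbf{Estimate ``$\lesssim$'' for $p<\infty$.} For each $Q$, let $E_Q\coloneqq\{x\in Q: G^{q,r}_{\beta,Q}(f)(x)\leq m^{q,r}_{\beta,Q,c}(f)\}$. By definition of the $c$-median, $|E_Q|\geq(1-c)|Q|$. For $x\in E_Q$, the single term belonging to $Q$ is dominated by $G_{\beta,Q}^{q,r}(f)(x)\le m^{q,r}_{\beta,Q,c}(f)\le m^{q,r}_{\beta,c}(f)(x)$. This alone does not control the full $\ell^q$ sum. Instead I would invoke Lemma~\ref{lem: dyadic char with subset} with these $E_Q$ and then run the Frazier--Jawerth stopping argument. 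Choose the maximal cubes $Q$ on which $m_{\beta,Q,c}^{q,r}(f)>2^k$. Use that $x\in E_Q$ forces the partial sums over cubes $Q'\subset Q$ containing $x$ to be bounded by $m$. Summing the resulting distribution-function estimate over the levels $2^k$ gives $\|f\|_{\T^{p,q,r}_\beta}\lesssim\|m^{q,r}_{\beta,c}(f)\|_{\L^p}$.

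\textbf{Estimate ``$\gtrsim$'' for $p<\infty$.} Since $m^{q,r}_{\beta,P,c}(f)$ is a $c$-median of $G_{\beta,P}^{q,r}(f)\le G(f)\coloneqq G^{q,r}_{\beta,\IR^d}$, Chebyshev gives $m^{q,r}_{\beta,P,c}(f)\le \inf_{x\in P}\cM_c(G(f))(x)$-type bounds. More precisely, $m^{q,r}_{\beta,c}(f)(x)\le \sup_{P\ni x}(c^{-1}\fint_P G(f)^\varepsilon)^{1/\varepsilon}=c^{-1/\varepsilon}\cM(G(f)^\varepsilon)(x)^{1/\varepsilon}$ for any small $\varepsilon<p$. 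The boundedness of $\cM$ on $\L^{p/\varepsilon}$ then yields $\|m\|_{\L^p}\lesssim\|G(f)\|_{\L^p}\simeq\|f\|_{\T^{p,q,r}_\beta}$.

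\textbf{The case $p=\infty$.} The bound $\|m\|_{\L^\infty}\lesssim\|f\|_{\T^{\infty,q,r}_\beta}$ follows from Chebyshev on each $P$: the median is at most $(c^{-1}\fint_P G_{\beta,P}^{q,r}(f)^q)^{1/q}$, and by Proposition~\ref{prop: dyadic char p=infty} this is $\lesssim\|f\|_{\T^{\infty,q,r}_\beta}$. The converse is the John--Nirenberg step and is the main obstacle. Fix $\sup_P m_{\beta,P,c}^{q,r}(f)=A$. On each $P$, the set where $G_{\beta,P}^{q,r}(f)>A$ has measure $<c|P|$. I would take the maximal subcubes $P_j$ of $P$ with $G_{\beta,P}-G_{\beta,P_j}$ exceeding $A$ somewhere, using the additivity of $G^q$ across the tree. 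This lets me iterate to obtain the exponential decay $|\{x\in P: G^{q,r}_{\beta,P}(f)>kA\}|\lesssim c^{k}|P|$, which integrates to $\fint_P G_{\beta,P}^q\lesssim A^q$. Here the choice $c<1$ is essential and the constants must be tracked carefully; I expect this to follow \cite[Prop.\@ 5.5]{Frazier_Jawerth} essentially verbatim.
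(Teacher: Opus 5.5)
Your proposal is correct and is the paper's proof: with $s_Q=|Q|^{\frac12}\|f\|_{\L^r(\bar Q,\frac{\d y\d s}{s^{d+1}})}$, Propositions~\ref{prop: dyadic char} and~\ref{prop: dyadic char p=infty} turn $\|f\|_{\T^{p,q,r}_\beta}$ into the $\mathrm{\dot f}^{\beta}_{p,q}$-quasi-norm of $(s_Q)_Q$, \eqref{eq: equiv mf = ms} identifies $m^{q,r}_{\beta,c}(f)$ with $m^{\beta q}_c(s)$, and \cite[Prop.\@ 5.5]{Frazier_Jawerth} (valid for every $0<c<1$) concludes. In your optional direct route, the bound $\|f\|_{\T^{p,q,r}_\beta}\lesssim\|m^{q,r}_{\beta,c}(f)\|_{\L^p}$ needs neither a stopping argument nor a John--Nirenberg iteration: the cubes containing $x$ are nested, so $\sum_{Q}\mathbf{1}_{E_Q}(x)|Q|^{-\frac{\beta q}{d}}\|f\|^q_{\L^r(\bar Q,\frac{\d y\d s}{s^{d+1}})}\le m^{q,r}_{\beta,c}(f)(x)^q$, and Lemma~\ref{lem: dyadic char with subset} then settles all $0<p\le\infty$ at once (if you do iterate, work with $\big(G^{q,r}_{\beta,P}(f)\big)^q$ at heights $kA^q$, because the bound $|\{G^{q,r}_{\beta,P}(f)>kA\}|\lesssim c^k|P|$ fails for $q<1$).
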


\begin{proof}
    Define the sequence $s_Q=  |Q|^\frac{1}{2}\|f\|_{\L^r\big(\bar Q , \frac{\d y \d s}{s^{d+1}}\big)}$ for every $Q\in\square$. Then, Proposition~\ref{prop: dyadic char} and Proposition~\ref{prop: dyadic char p=infty} can be rewritten as
    \begin{align*}
        \|f\|_{\T^{p,q,r}_\beta} \simeq \bigg\| \Big(\sum\limits_{Q\in\square} \mathbf{1}_{Q}|Q|^{-\frac{q}{2}-\frac{\beta q}{d}} |s_Q|^q \Big)^\frac{1}{q} \bigg\|_{\L^p}
    \end{align*}
    and
    \begin{align*}
        \|f\|_{\T^{\infty,q,r}_\beta} \simeq \sup\limits_{P\in\square} \bigg(\fint\limits_{P} \sum\limits_{Q\subset P} \mathbf{1}_{Q}(x)|Q|^{-\frac{q}{2}-\frac{\beta q}{d}} |s_Q|^q \,\d x \bigg)^\frac{1}{q}.
    \end{align*}
    This characterization, together with \cite[Prop.\@ 5.5]{Frazier_Jawerth} (which holds for every $0<c<1$) and the observation \eqref{eq: equiv mf = ms}, yields
    \begin{align*}
        \|f\|_{\T^{p,q,r}_\beta} &\simeq \| m^{\beta q}_c(s)\|_{\L^p}=\| m^{q,r}_{\beta,c}(f)\|_{\L^p},
    \end{align*}
    for all $0<p,q,r\leq \infty$, which proves the claim.
\end{proof}

We end this section by proving the John--Nirenberg-type property for the endpoint spaces $\T^{\infty,q,r}_\beta$ used in the characterization of $\dot \F^\beta_{\infty,q}(\IR^d)$ in Theorem~\ref{thm: char of lifted Triebel data}. The proof is based on the discrete characterization in Proposition~\ref{prop: dyadic char p=infty} and a John--Nirenberg-type property for sequence spaces.

\begin{lemma}
\label{lem: John-Nirenberg lemma}
    Let $0<q,r\leq \infty$, $0<\alpha<\infty$ and $\beta\in \IR$. Then, for $f\in\T^{\infty,q,r}_\beta$ we have
    \begin{align*}
        \|f\|_{\T^{\infty,q,r}_\beta}
        &\simeq \sup\limits_{P\in\square} \bigg(\fint\limits_{P} \Big(\sum\limits_{Q\subset P} \mathbf{1}_{Q}(x)|Q|^{-\frac{\beta q}{d}} \|f\|_{\L^r\big(\bar Q , \frac{\d y \d s}{s^{d+1}}\big)}^q \Big)^\frac{\alpha}{q}\,\d x \bigg)^\frac{1}{\alpha}
    \end{align*}
    with implicit constants depending only on $d,q,r,\alpha,\beta$. 
\end{lemma}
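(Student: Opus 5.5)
The plan is to reduce the statement to the corresponding John--Nirenberg-type property for the sequence spaces $\mathrm{\dot f}^\beta_{\infty,q}$ in \cite[Sec.\@ 5]{Frazier_Jawerth}, in the same way as in Lemma~\ref{lem: dyadic char with subset} and Proposition~\ref{prop: char with m}. As before, set $s_Q \coloneqq |Q|^{\frac12}\|f\|_{\L^r(\bar Q,\frac{\d y\d s}{s^{d+1}})}$ for $Q\in\square$. By \eqref{eq: equiv Gf = Gs}, the inner expression on the right-hand side is exactly $\fint_P G^{q,r}_{\beta,P}(f)^\alpha\,\d x = \fint_P G^{\beta q}_P(s)^\alpha\,\d x$. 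By Proposition~\ref{prop: dyadic char p=infty}, the left-hand side is equivalent to $\sup_P (\fint_P G^{\beta q}_P(s)^q\,\d x)^{1/q}$, which is the $\mathrm{\dot f}^\beta_{\infty,q}$-quasi-norm of $s$. The claim therefore reduces to showing that, for any sequence $s$,
\begin{align*}
\sup_{P\in\square}\Big(\fint_P G^{\beta q}_P(s)^q\,\d x\Big)^{\frac1q}\simeq \sup_{P\in\square}\Big(\fint_P G^{\beta q}_P(s)^\alpha\,\d x\Big)^{\frac1\alpha}
\end{align*}
for every $0<\alpha<\infty$.

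To prove this, I would pass through the median functional of Proposition~\ref{prop: char with m}. With $p=\infty$ that proposition gives $\|f\|_{\T^{\infty,q,r}_\beta}\simeq \sup_P m^{q,r}_{\beta,P,c}(f)$ for any fixed $0<c<1$. It then suffices to show that $A_\alpha\coloneqq\sup_P(\fint_P G_P^\alpha)^{1/\alpha}$ is comparable to $\sup_P m_{P,c}$, uniformly in the admissible range of $\alpha$.

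The direction $\sup_P m_{P,c}\lesssim A_\alpha$ follows from Chebyshev's inequality. If $t>c^{-1/\alpha}(\fint_P G_P^\alpha)^{1/\alpha}$, then $|\{G_P>t\}|<c|P|$, so $m_{P,c}\le c^{-1/\alpha}A_\alpha$.

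The reverse direction $A_\alpha\lesssim\sup_P m_{P,c}$ is the John--Nirenberg step and is the main obstacle. First I would check whether \cite[Prop.\@ 5.5]{Frazier_Jawerth} or the discussion around it already states the sequence-space John--Nirenberg inequality for arbitrary exponents; if so, it can simply be cited. Otherwise I would prove a good-$\lambda$/stopping-time estimate. Let $K\coloneqq\sup_P m_{P,c}$, fix $P$, and select the maximal dyadic cubes $R\subset P$ on which $G_P$ restricted to the cubes strictly containing $R$ exceeds a threshold of order $K$. The crucial structural fact is that for $x\in R$,
\begin{align*}
G_P(x)^q = G_R(x)^q + \sum_{R\subsetneq Q\subset P}\mathbf{1}_Q(x)|Q|^{-\frac q2-\frac{\beta q}{d}}|s_Q|^q,
\end{align*}
where the second term is constant on $R$ and, by maximality, bounded by a multiple of $K$. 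The definition of the median then shows that the selected cubes cover at most a fraction $\theta<1$ of $P$. Iterating gives the exponential distribution bound $|\{x\in P: G_P(x)>\lambda K\}|\lesssim e^{-c'\lambda}|P|$, and integrating this bound yields $(\fint_P G_P^\alpha)^{1/\alpha}\lesssim_\alpha K$ for every $\alpha<\infty$.

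Combining the two directions with the cases $\alpha$ and $\alpha=q$ gives the claim. The infinite cases $q=\infty$ or $r=\infty$ are handled identically, with sums replaced by suprema.
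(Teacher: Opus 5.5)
Your reduction is the paper's: with $s_Q=|Q|^{1/2}\|f\|_{\L^r(\bar Q,\frac{\d y\d s}{s^{d+1}})}$ and Proposition~\ref{prop: dyadic char p=infty}, the claim becomes the John--Nirenberg property of $\mathrm{\dot f}^\beta_{\infty,q}$, which the paper simply cites as \cite[Cor.\@ 5.7]{Frazier_Jawerth}. Your median/Chebyshev/stopping-time argument is a valid self-contained substitute for that citation, subject to two small fixes: iterate on $G_P^q$ (which is additive along the dyadic tree), which gives the tail bound $|\{x\in P: G_P(x)>\lambda K\}|\lesssim e^{-c'\lambda^q}|P|$ rather than $e^{-c'\lambda}$ (the latter fails in general for $q<1$, but it is not needed), and bound the constant parent term on a stopping cube using the single-coefficient estimate $|Q|^{-\frac12-\frac{\beta}{d}}|s_Q|\le K$, which follows from $m_{Q,c}\le K$ because $G_Q\ge |Q|^{-\frac12-\frac{\beta}{d}}|s_Q|$ on all of $Q$.
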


\begin{proof}
    For every $Q\in\square$, set $s_Q \coloneqq |Q|^\frac{1}{2}\|f\|_{\L^r\big(\bar Q , \frac{\d y \d s}{s^{d+1}}\big)}$. Then, we can rewrite the result of Proposition~\ref{prop: dyadic char p=infty} into
     \begin{align*}
         \|f\|_{\T^{\infty,q,r}_\beta} \simeq \sup\limits_{P\in\square} \bigg(\fint\limits_{P} \sum\limits_{Q\subset P} \mathbf{1}_{Q}(x)|Q|^{-\frac{q}{2}-\frac{\beta q}{d}} |s_Q|^q \,\d x \bigg)^\frac{1}{q}
     \end{align*}
     with implicit constants depending only on $d,q,r,\beta$. Now, by a John--Nirenberg-type property of sequence spaces \cite[Cor.\@ 5.7]{Frazier_Jawerth}, it follows
     \begin{align*}
         \|f\|_{\T^{\infty,q,r}_\beta} &\simeq \sup\limits_{P\in\square} \bigg(\fint\limits_{P} \Big(\sum\limits_{Q\subset P} \mathbf{1}_{Q}(x)|Q|^{-\frac{q}{2}-\frac{\beta q}{d}} |s_Q|^q\Big)^\frac{\alpha}{q} \,\d x \bigg)^\frac{1}{\alpha}\\
         &= \sup\limits_{P\in\square} \bigg(\fint\limits_{P} \Big(\sum\limits_{Q\subset P} \mathbf{1}_{Q}(x)|Q|^{-\frac{\beta q}{d}} \|f\|_{\L^r\big(\bar Q , \frac{\d y \d s}{s^{d+1}}\big)}^q \Big)^\frac{\alpha}{q} \,\d x \bigg)^\frac{1}{\alpha}
     \end{align*}
    for all $\alpha\in(0,\infty)$, where the implicit constants depend only on $d,q,r,\alpha,\beta$. 
\end{proof}

\begin{proposition}[John--Nirenberg-type property]
\label{prop: John-Nirenberg continuous}
    Let $0<q,r\leq \infty$, $0<\alpha<\infty$ and $\beta\in \IR$. Then, for $f\in \T^{\infty,q,r}_\beta$ we have
    \begin{align*}
        \|f\|_{\T^{\infty,q,r}_\beta}
        &\simeq  \sup\limits_{\tau>0} \sup\limits_{y\in\IR^d} 
        \bigg(  \fint\limits_{B(y,\tau)} \bigg(\int\limits_{0}^\tau\bigg(\fint\limits_{\frac{t}{2}}^t \fint\limits_{B(x,t)} |s^{-\beta}f(s,z)|^r \,\d z \d s\bigg)^\frac{q}{r}\,\frac{ \d t}{t}\bigg)^\frac{\alpha}{q}\d x\bigg)^\frac{1}{\alpha}
    \end{align*}
    with implicit constants depending only on $d,q,r,\alpha,\beta$. 
\end{proposition}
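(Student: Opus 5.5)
The plan is to reduce the continuous statement to the discrete John--Nirenberg property in Lemma~\ref{lem: John-Nirenberg lemma}. Pointwise in $x$, we compare the inner truncated square function
\begin{align*}
    S_\tau f(x) \coloneqq \bigg(\int\limits_{0}^\tau\bigg(\fint\limits_{\frac{t}{2}}^t \fint\limits_{B(x,t)} |s^{-\beta}f(s,z)|^r \,\d z \d s\bigg)^\frac{q}{r}\,\frac{ \d t}{t}\bigg)^\frac{1}{q}
\end{align*}
with the localized dyadic function $G^{q,r}_{\beta,P}(f)(x)$. The comparison is exactly the one carried out pointwise in the two steps of the proof of Proposition~\ref{prop: dyadic char}, except that the $t$-integral and the dyadic sum are truncated at scale comparable to $\ell(P)$. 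Once this is in place, the right-hand side of the proposition is equivalent to $\sup_P (\fint_P G^{q,r}_{\beta,P}(f)^\alpha)^{1/\alpha}$, up to harmless enlargements. That quantity is $\simeq\|f\|_{\T^{\infty,q,r}_\beta}$ by Lemma~\ref{lem: John-Nirenberg lemma}.

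\textbf{Upper bound for the right-hand side.} Fix $\tau,y$ and choose $k$ with $2^{k-1}\le\tau<2^k$. Cover $B(y,\tau)$ by boundedly many cubes $P_i\in\square_k$. For $x\in P_i$, split $(0,\tau)$ dyadically and argue as in Step~1 of Proposition~\ref{prop: dyadic char}. Each piece $t\in(\ell(Q_j),2\ell(Q_j)]$ with $x\in Q_j$ is bounded by Whitney boxes $\bar Q'$ with $Q'\in\square_j\cup\square_{j+1}$ inside $5Q_j$. This yields
\[
S_\tau f(x)\lesssim \sum_{i'} G^{q,r}_{\beta,P_{i'}}(f)(x+v_{i'}),
\]
summed over a bounded family of neighbouring cubes $P_{i'}$ of generation $k+1$ or $k$ and translations. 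More simply, $S_\tau f(x)$ is bounded by $\big(\sum_{Q'}\mathbf{1}_{\tilde Q'}(x)\cdots\big)^{1/q}$, where $\tilde Q'$ is a fixed dilate of $Q'$ and $Q'$ ranges over dyadic subcubes of a bounded number of cubes $P'$ with $\ell(P')\simeq \tau$. The dilate $\mathbf{1}_{5Q'}$ replacing $\mathbf{1}_{Q'}$ is the one irritation. I would handle it by splitting $5Q'$ into its $5^d$ dyadic translates of $Q'$, which turns it into finitely many shifted copies of $G$. Alternatively, since $Q'\subset 5Q'$ with $|Q'|\gtrsim|5Q'|$, the sequence-space theory of \cite[Prop.\@ 2.7, 5.4]{Frazier_Jawerth}, in the form of Lemma~\ref{lem: dyadic char with subset}, lets one pass between comparable sets. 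Taking the $\L^\alpha$ average over $B(y,\tau)$ and using the $\alpha$-quasi-triangle inequality over the finitely many pieces then gives the bound $\lesssim\sup_P(\fint_P G_P^\alpha)^{1/\alpha}\simeq\|f\|_{\T^{\infty,q,r}_\beta}$.

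\textbf{Lower bound.} Fix $P\in\square$. For $x\in P$ and $Q\subset P$ with $x\in Q$, Step~2 of Proposition~\ref{prop: dyadic char} gives
\[
G^{q,r}_{\beta,P}(f)(x)\lesssim \bigg(\int_0^{\ell(P)}\Big(\fint_{t/2}^{2t}\fint_{B(x,2\sqrt d t)}|s^{-\beta}f|^r\Big)^{q/r}\frac{\d t}{t}\bigg)^{1/q}.
\]
Choose $y$ the center of $P$ and $\tau=\sqrt d\,\ell(P)$, so that $P\subset B(y,\tau)$ and $|P|\simeq|B(y,\tau)|$. Then $\fint_P G_P^\alpha$ is bounded by the right-hand side of the proposition, computed with the modified Whitney parameters $(t/2,2t)\times B(x,2\sqrt d t)$. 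It remains to return to the standard Whitney boxes inside this $\L^\alpha(\ell^q)$-type expression. I expect this to be the main obstacle, since Proposition~\ref{prop: independent Whitney cube} is stated only for the $q$-average form. I would avoid the issue by making the dyadic comparison directly with the standard boxes. Each box $(\ell(Q)/2,\ell(Q)]\times Q$ is covered by boundedly many standard Whitney regions $(t/2,t]\times B(x',t)$, with $t\in(\ell(Q)/2,2\ell(Q)]$ and $x'$ ranging over a fixed neighbour set of $Q$. This reduces again to shifted copies handled as in the upper bound. Combining the two bounds with Lemma~\ref{lem: John-Nirenberg lemma} proves the equivalence for every $0<\alpha<\infty$.
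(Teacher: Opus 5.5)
Your argument follows the paper's skeleton. You reduce to Lemma~\ref{lem: John-Nirenberg lemma}. You bound the truncated square function by finitely many localized dyadic functions after covering $B(y,\tau)$ by cubes of side $\simeq\tau$, which is the paper's Step~2. You bound $G^{q,r}_{\beta,P}(f)$ pointwise by a truncated square function, which is the paper's Step~1.

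The real divergence is in the bound for $G^{q,r}_{\beta,P}(f)$. The paper compares at the same point $x$ using the enlarged regions $(t/2,2t]\times B(x,2\sqrt{d}\,t)$. It then returns to standard Whitney regions by citing Proposition~\ref{prop: independent Whitney cube}. As you observe, that proposition is formulated only for the $q$-form, not for the $\L^\alpha$-averaged expression. Some device is genuinely needed here. For $d\geq 5$ and $x$ near a corner of $Q$, no region $(t/2,t]\times B(x,t)$ that meets $\bar Q$ (which forces $t<2\ell(Q)$) reaches the part of $\bar Q$ over the opposite corner. Your comparison at nearby points sidesteps this and is the more self-contained route, but two details must be made explicit.

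First, the covering has to hold for positive-measure families of parameters. Split $\bar Q$ into boundedly many pieces $\Omega^{(i)}_Q$. Each piece should be contained in $(t/2,t]\times B(x',t)$ for all $t$ in an interval of length $\simeq\ell(Q)$ inside $(\ell(Q)/2,2\ell(Q)]$, and for all $x'$ in a subcube $E^{(i)}_Q\subset Q$ at a fixed relative position. For fixed $i$, each $x'$ lies in at most one $E^{(i)}_Q$ per generation. This gives $\sum_{Q\subset P}\mathbf{1}_{E^{(i)}_Q}(x')\,|Q|^{-\beta q/d}\|f\|^q_{\L^r(\Omega^{(i)}_Q,\frac{\d y\,\d s}{s^{d+1}})}\lesssim S_\tau f(x')^q$ pointwise, with $\tau\simeq\ell(P)$.

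Second, you pass from $\mathbf{1}_{E^{(i)}_Q}$ to $\mathbf{1}_Q$ here, and from $\mathbf{1}_{5Q'}$ to $\mathbf{1}_{Q'}$ in your upper bound. Neither step is literally Lemma~\ref{lem: dyadic char with subset}, which needs $E_Q\subset Q$ and, for $p=\infty$, concerns only the $q$-form. Also, splitting $5Q'$ into dyadic translates does not give translates of $G$, because the shift $\ell(Q')z$ depends on the generation.

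What closes both directions is the pointwise bound $\mathbf{1}_A\lesssim(\cM\mathbf{1}_E)^{1/\delta}$ for sets $A,E$ in a common cube $\tilde Q$ with $|E|\gtrsim|\tilde Q|$. Follow it with the Fefferman--Stein inequality in $\L^{\alpha/\delta}(\ell^{q/\delta})$ with $\delta<\min\{q,\alpha\}$; all functions involved are supported in a fixed dilate of $P$. The paper's own recovery of $\bar Q$ through the covering of Proposition~\ref{prop: dyadic char} tacitly needs the same step.
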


\begin{proof}
    Due to Lemma~\ref{lem: John-Nirenberg lemma}, it remains to show that
     \begin{align*}
         &\sup\limits_{P\in\square} \bigg(\fint\limits_{P} \Big(\sum\limits_{Q\subset P} \mathbf{1}_{Q}(x)|Q|^{-\frac{\beta q}{d}} \|f\|_{\L^r\big(\bar Q , \frac{\d y \d s}{s^{d+1}}\big)}^q\Big)^\frac{\alpha}{q} \,\d x \bigg)^\frac{1}{\alpha}\\
         &\simeq \sup\limits_{\tau>0} \sup\limits_{y\in\IR^d} 
        \bigg(  \fint\limits_{B(y,\tau)} \bigg(\int\limits_{0}^\tau\bigg(\fint\limits_{\frac{t}{2}}^t \fint\limits_{B(x,t)} |s^{-\beta}f(s,z)|^r \,\d z \d s\bigg)^\frac{q}{r}\,\frac{ \d t}{t}\bigg)^\frac{\alpha}{q}\d x\bigg)^\frac{1}{\alpha}.
     \end{align*}
     The procedure is similar to the proof of the Proposition~\ref{prop: dyadic char p=infty} and consists of two steps.\\

     \noindent \textbf{Step 1: We show ``$\lesssim$"}. Fix a dyadic cube $P\in\square$ and $x\in P$. Recall the notation $j_P = -\log_2(\ell(P))$. Then, there exists a unique sequence $(Q_k)_{k=-\infty}^{-j_P}\subset P$ of dyadic cubes with $Q_k\in\square_k $ and $x\in Q_k$ for every $k\leq - j_P$. It follows
     \begin{align*}
         & \Big(\sum\limits_{Q\subset P} \mathbf{1}_{Q}(x)|Q|^{-\frac{\beta q}{d}} \|f\|_{\L^r\big(\bar Q , \frac{\d y \d s}{s^{d+1}}\big)}^q\Big)^\frac{1}{q}\\
         &= \Big(\sum\limits_{k=-\infty}^{-j_P} |Q_k|^{-\frac{\beta q}{d}} \|f\|_{\L^r\big(\bar Q_k , \frac{\d y \d s}{s^{d+1}}\big)}^q\Big)^\frac{1}{q}.
        \intertext{By introducing an extra integral, we get}
         &\simeq \bigg(\sum\limits_{k=-\infty}^{-j_P}\, \int\limits_{\frac{\ell(Q_k)}{2}}^{\ell(Q_k)} \bigg(\fint\limits_{\frac{\ell(Q_k)}{2}}^{\ell(Q_k)} \fint\limits_{Q_k} |s^{-\beta}f(s,z)|^r\,\d z \d s \bigg)^\frac{q}{r} \,\frac{\d t}{t}\bigg)^\frac{1}{q}.
     \end{align*}
    Note that for each $t\in (\frac{\ell(Q_k)}{2},\ell(Q_k)]$, we have
    \begin{align*}
        \Big(\frac{\ell(Q_k)}{2},\ell(Q_k)\Big] \times Q_k \subset \Big(\frac{t}{2},2t\Big] \times B(x,2\sqrt{d}t).
    \end{align*}
    Hence, we get
    \begin{align*}
        &\Big(\sum\limits_{Q\subset P} \mathbf{1}_{Q}(x)|Q|^{-\frac{\beta q}{d}} \|f\|_{\L^r\big(\bar Q , \frac{\d y \d s}{s^{d+1}}\big)}^q\Big)^\frac{1}{q}.\\
        &\lesssim\bigg(\sum\limits_{k=-\infty}^{-j_P}\, \int\limits_{\frac{\ell(Q_k)}{2}}^{\ell(Q_k)} \bigg(\fint\limits_{\frac{t}{2}}^{2t} \fint\limits_{B(x,2\sqrt{d}t)} |s^{-\beta}f(s,z)|^r\,\d z \d s \bigg)^\frac{q}{r} \,\frac{\d t}{t}\bigg)^\frac{1}{q}\\
        &=\bigg( \int\limits_{0}^{\ell(P)} \bigg(\fint\limits_{\frac{t}{2}}^{2t} \fint\limits_{B(x,2\sqrt{d}t)} |s^{-\beta}f(s,z)|^r\,\d z \d s \bigg)^\frac{q}{r} \,\frac{\d t}{t}\bigg)^\frac{1}{q}.
    \end{align*}
    Since this holds for all dyadic cubes $P\in\square$ and $x\in P$, we take $\L^\alpha$-averages of both sides and then supremum over all $P$ to obtain
    \begin{align*}                          &\sup\limits_{P\in\square}\bigg(\fint\limits_P\Big(\sum\limits_{Q\subset P} \mathbf{1}_{Q}(x)|Q|^{-\frac{\beta q}{d}} \|f\|_{\L^r\big(\bar Q , \frac{\d y \d s}{s^{d+1}}\big)}^q\Big)^\frac{\alpha}{q}\,\d x\bigg)^\frac{1}{\alpha}\\
    &\lesssim\sup\limits_{P\in\square}\bigg(\fint\limits_P\bigg( \int\limits_{0}^{\ell(P)} \bigg(\fint\limits_{\frac{t}{2}}^{2t} \fint\limits_{B(x,2\sqrt{d}t)} |s^{-\beta}f(s,z)|^r\,\d z \d s \bigg)^\frac{q}{r} \,\frac{\d t}{t}\bigg)^\frac{\alpha}{q}\,\d x\bigg)^\frac{1}{\alpha}\\
    &\lesssim\sup\limits_{\tau>0} \sup\limits_{y\in\IR^d} 
        \bigg(  \fint\limits_{B(y,\tau)} \bigg(\int\limits_{0}^\tau \bigg(\fint\limits_{\frac{t}{2}}^{2t} \fint\limits_{B(x,2\sqrt{d}t)} |s^{-\beta}f(s,z)|^r\,\d z \d s \bigg)^\frac{q}{r} \,\frac{\d t}{t}\bigg)^\frac{\alpha}{q}\,\d x\bigg)^\frac{1}{\alpha}.
    \end{align*}
    Finally, a straight forward change of Whitney parameters in the norm on the right-hand side (see Remark~\ref{prop: independent Whitney cube}) concludes
    \begin{align*}
         &\sup\limits_{P\in\square} \bigg(\fint\limits_{P} \Big(\sum\limits_{Q\subset P} \mathbf{1}_{Q}(x)|Q|^{-\frac{\beta q}{d}} \|f\|_{\L^r\big(\bar Q , \frac{\d y \d s}{s^{d+1}}\big)}^q\Big)^\frac{\alpha}{q} \,\d x \bigg)^\frac{1}{\alpha}\\
         &\lesssim \sup\limits_{\tau>0} \sup\limits_{y\in\IR^d} 
        \bigg(  \fint\limits_{B(y,\tau)} \bigg(\int\limits_{0}^\tau\bigg(\fint\limits_{\frac{t}{2}}^t \fint\limits_{B(x,t)} |s^{-\beta}f(s,z)|^r \,\d z \d s\bigg)^\frac{q}{r}\,\frac{ \d t}{t}\bigg)^\frac{\alpha}{q}\d x\bigg)^\frac{1}{\alpha}.
    \end{align*}

    \noindent \textbf{Step 2: We show ``$\gtrsim$"}.
    Fix $\tau>0$ and $y\in \IR^d$. Then, there exist $k\in \IZ$ and $P\in\square_k$ with $y\in P$ and $\frac{\ell(P)}{2}\leq \tau<\ell(P) $. It follows
    \begin{align*}
        &\bigg(\fint\limits_{B(y,\tau)}\bigg(\int\limits_{0}^\tau   \bigg(\fint\limits_{\frac{t}{2}}^t \fint\limits_{B(x,t)} |s^{-\beta}f(s,z)|^r \,\d z \d s\bigg)^\frac{q}{r}\,\frac{\d t}{t}\bigg)^\frac{\alpha}{q}\,\d x\bigg)^\frac{1}{\alpha}\\
        &\lesssim \bigg( \fint\limits_{3P} \bigg(\int\limits_{0}^{\ell(P)} \bigg(\fint\limits_{\frac{t}{2}}^t \fint\limits_{B(x,t)} |s^{-\beta}f(s,z)|^r \,\d z \d s\bigg)^\frac{q}{r}\,\frac{\d t}{t}\bigg)^\frac{\alpha}{q}\,\d x\bigg)^\frac{1}{\alpha}.
    \end{align*}
    Since $P\in\square_k$, we can pick a finite sequence $(P_i)_{i\in I}\subset \square_k$ with the properties
    \begin{align*}
        3P \subset \bigcup_{i\in I} P_i \quad \text{and} \quad |I| \leq C
    \end{align*}
    for some constant $C>0$ depending only on the dimension $d$. This covering allows us to estimate
    \begin{align}
    \label{eq: idk 1}
        &\bigg(\fint\limits_{B(y,\tau)}\bigg(\int\limits_{0}^\tau   \bigg(\fint\limits_{\frac{t}{2}}^t \fint\limits_{B(x,t)} |s^{-\beta}f(s,z)|^r \,\d z \d s\bigg)^\frac{q}{r}\,\frac{\d t}{t}\bigg)^\frac{\alpha}{q}\,\d x\bigg)^\frac{1}{\alpha} \nonumber\\
        &\lesssim \sum\limits_{i\in I} \bigg( \fint\limits_{P_i} \bigg(\int\limits_{0}^{\ell(P_i)} \bigg(\fint\limits_{\frac{t}{2}}^t \fint\limits_{B(x,t)} |s^{-\beta}f(s,z)|^r \,\d z \d s\bigg)^\frac{q}{r}\,\frac{\d t}{t}\bigg)^\frac{\alpha}{q}\,\d x\bigg)^\frac{1}{\alpha}.
    \end{align}
    We focus on the single terms
    \begin{align*}
        \bigg(\int\limits_{0}^{\ell(P_i)} \bigg(\fint\limits_{\frac{t}{2}}^t \fint\limits_{B(x,t)} |s^{-\beta}f(s,z)|^r \,\d z \d s\bigg)^\frac{q}{r}\,\frac{\d t}{t}\bigg)^\frac{1}{q}.
    \end{align*}
    Fix an index $i$ and $x\in P_i$. Then, there exists a sequence of dyadic cubes $(Q_k)_{k=-\infty}^{-j_{P_i}}\subset P_i$ with $Q_k\in\square_k$ and $x\in Q_k$ for every $k\leq - j_{P_i}$. It follows
    \begin{align*}
        &\bigg(\int\limits_{0}^{\ell(P_i)} \bigg(\fint\limits_{\frac{t}{2}}^t \fint\limits_{B(x,t)} |s^{-\beta}f(s,z)|^r \,\d z \d s\bigg)^\frac{q}{r}\,\frac{\d t}{t}\bigg)^\frac{1}{q}\\
        &= \bigg(\sum\limits_{k=-\infty}^{-j_{P_i}}\mathbf{1}_{Q_k}(x)\int\limits_{\frac{\ell(Q_k)}{2}}^{\ell(Q_k)} \bigg(\fint\limits_{\frac{t}{2}}^t \fint\limits_{B(x,t)} |s^{-\beta}f(s,z)|^r \,\d z \d s\bigg)^\frac{q}{r}\,\frac{\d t}{t}\bigg)^\frac{1}{q}.
    \intertext{For $t\in (\ell(Q_k)/2 , \ell(Q_k)]$ we have $(t/2,t]\times B(x,t)\subset(\ell(Q_k)/4,\ell(Q_k)]\times 3Q_k$. Hence,}
        &\lesssim \bigg(\sum\limits_{k=-\infty}^{-j_{P_i}}\mathbf{1}_{Q_k}(x)\int\limits_{\frac{\ell(Q_k)}{2}}^{\ell(Q_k)} \bigg(\fint\limits_{\frac{\ell(Q_k)}{4}}^{\ell(Q_k)} \fint\limits_{3Q_k} |s^{-\beta}f(s,z)|^r \,\d z \d s\bigg)^\frac{q}{r}\,\frac{\d t}{t}\bigg)^\frac{1}{q}\\
        &\lesssim  \bigg(\sum\limits_{k=-\infty}^{-j_{P_i}}\mathbf{1}_{Q_k}(x)|Q_k|^{-\frac{\beta q}{d}}\bigg( \fint\limits_{\frac{\ell(Q_k)}{4}}^{\ell(Q_k)} \fint\limits_{3Q_k} |f(s,z)|^r \,\d z \d s\bigg)^\frac{q}{r}\bigg)^\frac{1}{q}\\
        &\lesssim \bigg(\sum\limits_{Q\subset {P_i}}\mathbf{1}_{Q}(x)|Q|^{-\frac{\beta q}{d}} \bigg(\fint\limits_{\frac{\ell(Q)}{4}}^{\ell(Q)} \fint\limits_{3Q} |f(s,z)|^r \,\d z \d s\bigg)^\frac{q}{r}\bigg)^\frac{1}{q},
    \end{align*}
    where we took the sum over all dyadic cubes $Q\subset P_i$ in the last step. Since this estimate holds for all $x\in P_i$, taking the $\L^\alpha$-average of both sides yields
    \begin{align*}
         &\bigg(\fint\limits_{P_i} \bigg(\int\limits_{0}^{\ell(P_i)} \bigg(\fint\limits_{\frac{t}{2}}^t \fint\limits_{B(x,t)} |s^{-\beta}f(s,z)|^r \,\d z \d s\bigg)^\frac{q}{r}\,\frac{\d t}{t}\bigg)^\frac{\alpha}{q} \,\d x\bigg)^\frac{1}{\alpha}\\
         &\lesssim \bigg(\fint\limits_{P_i}\bigg(\sum\limits_{Q\subset {P_i}}\mathbf{1}_{Q}(x)|Q|^{-\frac{\beta q}{d}} \bigg(\fint\limits_{\frac{\ell(Q)}{4}}^{\ell(Q)} \fint\limits_{3Q} |f(s,z)|^r \,\d z \d s\bigg)^\frac{q}{r}\bigg)^\frac{\alpha}{q}\,\d x\bigg)^\frac{1}{\alpha}\\
         &\lesssim \sup\limits_{P\in\square}\bigg(\fint\limits_{P}\bigg(\sum\limits_{Q\subset {P}}\mathbf{1}_{Q}(x)|Q|^{-\frac{\beta q}{d}} \bigg(\fint\limits_{\frac{\ell(Q)}{4}}^{\ell(Q)} \fint\limits_{3Q} |f(s,z)|^r \,\d z \d s\bigg)^\frac{q}{r}\bigg)^\frac{\alpha}{q}\,\d x\bigg)^\frac{1}{\alpha}
    \end{align*}
    Plugging this estimate into \eqref{eq: idk 1} and using finiteness of the index set $I$, we obtain
    \begin{align*}
        &\bigg(\fint\limits_{B(y,\tau)}\bigg(\int\limits_{0}^\tau   \bigg(\fint\limits_{\frac{t}{2}}^t \fint\limits_{B(x,t)} |s^{-\beta}f(s,z)|^r \,\d z \d s\bigg)^\frac{q}{r}\,\frac{\d t}{t}\bigg)^\frac{\alpha}{q}\,\d x\bigg)^\frac{1}{\alpha}\\
        &\lesssim \sup\limits_{P\in\square}\bigg( \fint\limits_{P} \bigg(\sum\limits_{Q\subset P}\mathbf{1}_{Q}(x)|Q|^{-\frac{\beta q}{d}} \bigg(\fint\limits_{\frac{\ell(Q)}{4}}^{\ell(Q)} \fint\limits_{3Q} |f(s,z)|^r \,\d z \d s\bigg)^\frac{q}{r}\bigg)^\frac{\alpha}{q}\,\d x\bigg)^\frac{1}{\alpha}.
    \end{align*}
    Using a similar covering argument as in Step~1 of the proof in Proposition~\ref{prop: dyadic char}, we can recover the Whitney box $(\ell(Q)/2,\ell(Q)]\times Q$ in the inner average integral on the right-hand side to obtain
    \begin{align*}
        &\bigg(\fint\limits_{B(y,\tau)}\bigg(\int\limits_{0}^\tau   \bigg(\fint\limits_{\frac{t}{2}}^t \fint\limits_{B(x,t)} |s^{-\beta}f(s,z)|^r \,\d z \d s\bigg)^\frac{q}{r}\,\frac{\d t}{t}\bigg)^\frac{\alpha}{q}\,\d x\bigg)^\frac{1}{\alpha}\\
        &\lesssim \sup\limits_{P\in\square} \bigg(\fint\limits_{P} \Big(\sum\limits_{Q\subset P} \mathbf{1}_{Q}(x)|Q|^{-\frac{\beta q}{d}} \|f\|_{\L^r\big(\bar Q , \frac{\d y \d s}{s^{d+1}}\big)}^q\Big)^\frac{\alpha}{q} \,\d x \bigg)^\frac{1}{\alpha}.
    \end{align*}
    Because this holds for all $\tau>0$ and $y\in\IR^d$, we conclude by taking the suprema.
\end{proof}

\subsection{Embedding theory}
\label{sec: embeddings}

\noindent In this section, we show varies embeddings of $\T^{p,q,r}_{\beta}$-spaces. In particular, we show Hardy--Littlewood--Sobolev-type embeddings (Theorem~\ref{thm: Hardy-Sobolev embedding}) and mixed-type embeddings of $\T^{p,q,r}_{\beta}$ and $\Z^{p,q,r}_{\beta}$-spaces (Theorem~\ref{thm: mixed embedding}). They are in line with Besov--Triebel--Lizorkin theory as discussed in Section~\ref{sec: introduction}.

\begin{theorem}[Weighted tent space embeddings]
\label{thm: Hardy-Sobolev embedding}
    Let $0<p_0<p_1\leq \infty$, $0<q_0,q_1 \leq \infty$, $0<r_1\leq r_0\leq \infty$ and $\beta_0,\beta_1\in \IR$ satisfying  $\beta_0-\beta_1 = \frac{d}{p_0}-\frac{d}{p_1}$. Then, we have the continuous embedding
    \begin{align}
    \label{eq: HSE}
        \T^{p_0,q_0,r_0}_{\beta_0} \hookrightarrow \T^{p_1,q_1,r_1}_{\beta_1}.
    \end{align}
\end{theorem}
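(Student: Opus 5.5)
The plan is to reduce everything to the sequence spaces via the discrete characterizations. Propositions~\ref{prop: dyadic char} and~\ref{prop: dyadic char p=infty} show that $f\in\T^{p,q,r}_\beta$ if and only if the sequence $s_Q=|Q|^{\frac12}\|f\|_{\L^r(\bar Q,\frac{\d y\d s}{s^{d+1}})}$ belongs to $\mathrm{\dot f}^\beta_{p,q}$, with equivalent quasi-norms. The embedding then splits into two separate effects.

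\textbf{Step 1: reduce to $r_1=r_0=r$.} Since the measure $\frac{\d y\d s}{s^{d+1}}$ restricted to $\bar Q$ has total mass $\simeq 1$, uniformly in $Q$, Hölder's inequality gives $\|f\|_{\L^{r_1}(\bar Q)}\lesssim\|f\|_{\L^{r_0}(\bar Q)}$ for $r_1\le r_0$. Hence the sequence $(s_Q)$ built with $r_1$ is dominated termwise by the one built with $r_0$. Both discrete quasi-norms are monotone in $|s_Q|$, so it suffices to prove $\T^{p_0,q_0,r}_{\beta_0}\hookrightarrow\T^{p_1,q_1,r}_{\beta_1}$ for a fixed $r$. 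Equivalently, it suffices to prove the sequence space embedding
\[
\mathrm{\dot f}^{\beta_0}_{p_0,q_0}\hookrightarrow \mathrm{\dot f}^{\beta_1}_{p_1,q_1}, \qquad \beta_0-\beta_1=\tfrac{d}{p_0}-\tfrac{d}{p_1}.
\]

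\textbf{Step 2: the sequence space embedding.} This is Jawerth's theorem \cite[Thm.\@ 2.1]{Jawerth}, proved there at the level of sequences. It holds for arbitrary $q_0,q_1$, and the case $p_1=\infty$ is covered by the Frazier--Jawerth definition of $\mathrm{\dot f}_{\infty,q}$. If one wants a self-contained argument, I would proceed as follows.

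First, the embedding $\mathrm{\dot f}^{\beta_0}_{p_0,q_0}\hookrightarrow\mathrm{\dot f}^{\beta_1}_{p_1,q_1}$ only gets easier as $q_1$ grows. It also costs nothing in $q_0$, because of the Jawerth trick: the Hardy--Littlewood--Sobolev gain lets one pass to $q_0=\infty$ on the left and $q_1$ arbitrarily small on the right.

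Second, normalize with Lemma~\ref{lem: convex tent space}, i.e.\ raise to a power so that the relevant exponents are at least $1$.

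Third, use a distributional estimate. For the fixed $x$, split the sum over $Q\ni x$ at the scale $2^{-k}$ determined by the level $\lambda$. Large cubes are controlled by $\sup_Q|Q|^{-\beta_0/d-1/2}|s_Q|\le$ (a $\L^{p_0}$-maximal quantity)$\cdot|Q|^{-1/p_0}$. Small cubes are controlled by the $\ell^{q_0}$-sum, where the geometric factor $|Q|^{(\beta_0-\beta_1)/d}=|Q|^{1/p_0-1/p_1}$ makes the series summable for any $q_1$.

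Optimizing $k$ gives a weak-type bound, namely an $\L^{p_1,\infty}$ estimate in terms of $\|s\|_{\mathrm{\dot f}^{\beta_0}_{p_0,\infty}}$. Strong type then follows by real interpolation in $p$ with $\beta$ varying simultaneously, as in Jawerth's argument.

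\textbf{Step 3: the endpoint $p_1=\infty$.} Fix a dyadic cube $P$ and estimate $\fint_P\sum_{Q\subset P}\dots$ directly. Since every $Q\subset P$ satisfies $|Q|^{-\beta_1 q/d}=|Q|^{-\beta_0 q/d}|Q|^{q/p_0}\le|Q|^{-\beta_0q/d}|P|^{q/p_0}$, one gets
\[
\fint_P\sum_{Q\subset P}\mathbf 1_Q|Q|^{-\beta_1q/d}\cdots\le |P|^{q/p_0-1}\int_P\sum\cdots .
\]
This controls the left-hand side by $\|\cdot\|_{\mathrm{\dot f}^{\beta_0}_{p_0,q}}$ when $q=p_0$. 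The general $q_0,q_1$ are then reached by combining this with the finite-$p$ case and the nesting in $q$, going through an intermediate $p$.

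The main obstacle is the independence of the micro-parameters $q_0,q_1$, i.e.\ Step 2 with $q_1<q_0$. This is exactly where the earlier tent-space results of Amenta and Fraccaroli fall short. I would rely on citing Jawerth's discrete theorem through the identification above, rather than reproving it.
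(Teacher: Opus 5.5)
Your Step~1 matches the paper's reduction. Passing to sequences via Propositions~\ref{prop: dyadic char} and~\ref{prop: dyadic char p=infty} is also legitimate, since their proofs are pointwise and so apply to every measurable $f$.

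\textbf{The sequence-level embedding is not available by citation, and your sketch of it has a wrong step.} \cite[Thm.\@ 2.1]{Jawerth} is a theorem about the function spaces $\dot\F^\beta_{p,q}$ and predates the $\varphi$-transform. An embedding of $\dot\F$-spaces does not transfer to $\mathrm{\dot f}$-spaces, because the $\varphi$-transform only exhibits $\dot\F$ as a retract of $\mathrm{\dot f}$. So your Step~2 sketch carries the proof. Its last step is wrong: the cube splitting already gives strong type, not just weak type. Normalize $\|g\|_{\L^{p_0}}=1$, where $g\coloneqq\sup_Q\mathbf{1}_Q|Q|^{-\frac{\beta_0}{d}-\frac12}|s_Q|$. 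Then $|Q|^{-\frac{\beta_0}{d}-\frac12}|s_Q|\le|Q|^{-\frac{1}{p_0}}$, which is the trivial sequence analogue of Lemma~\ref{lem: endpoint HSE for fixed q}. Hence large cubes contribute $\lesssim 2^{kd/p_1}$ and small cubes $\lesssim 2^{-kd(\frac{1}{p_0}-\frac{1}{p_1})}g(x)$. Choosing $2^{kd}\simeq\lambda^{p_1}$ gives $|\{G_1>\lambda\}|\le|\{g>c\lambda^{p_1/p_0}\}|$. The substitution $\mu=\lambda^{p_1/p_0}$ in the layer-cake formula then yields the strong $\L^{p_1}$ bound directly. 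This is exactly the paper's Step~1 from \eqref{eq: H} on, in dyadic form.

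The upgrade you propose instead, real interpolation with $\beta$ varying, is Jawerth's argument for the \emph{mixed} embedding. Couples with different $\beta$ interpolate into $\Z$-type spaces (Proposition~\ref{prop: real interpolation tent spaces full range}). So it produces $\T^{p_0,q_0,r}_{\beta_0}\hookrightarrow\Z^{p_1,p_0,r}_{\beta_1}$. Through Lemma~\ref{lem: embedding tent into Z} this reaches $\T^{p_1,q_1,r}_{\beta_1}$ only when $q_1\ge p_0$, which misses exactly the case you call the main obstacle. A Marcinkiewicz-type upgrade would have to keep $\beta_0,\beta_1$ fixed and vary $p_0$ via Theorem~\ref{thm: real interpolation in p}, but none is needed.

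\textbf{The endpoint $p_1=\infty$.} Your Carleson-box estimate is a genuinely more elementary route than the paper's, which dualizes the finite case in the Banach range (Proposition~\ref{prop: duality Banach range}) and then applies Lemma~\ref{lem: convex tent space}. However, your chain $\mathrm{\dot f}^{\beta_0}_{p_0,q_0}\hookrightarrow\mathrm{\dot f}^{\tilde\beta}_{\tilde p,\tilde p}\hookrightarrow\mathrm{\dot f}^{\beta_1}_{\infty,\tilde p}\hookrightarrow\mathrm{\dot f}^{\beta_1}_{\infty,q_1}$ needs $p_0<\tilde p\le q_1$. So it fails for $q_1\le p_0$.

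The repair is one application of H\"older on $P$. Set $g_q\coloneqq(\sum_Q\mathbf{1}_Q|Q|^{-\frac{\beta q}{d}-\frac q2}|s_Q|^q)^{1/q}$. For $q\le p$ one has $|P|^{\frac{q}{p}-1}\int_P g_q^{\,q}\,\d x\le\big(\int_P g_q^{\,p}\,\d x\big)^{q/p}$. So your computation gives $\mathrm{\dot f}^{\beta}_{p,q}\hookrightarrow\mathrm{\dot f}^{\beta-d/p}_{\infty,q}$ for \emph{every} $q\le p$, not only $q=p$. Then route through $\mathrm{\dot f}^{\tilde\beta}_{\tilde p,q_1}$ with any $\tilde p\in(p_0,\infty)$ satisfying $\tilde p\ge q_1$, and use nesting in $q$ if $q_1=\infty$.

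With these two repairs the argument is complete; the convexity normalization in Step~2 can be dropped. Compared with the paper, it replaces the duality step at $p_1=\infty$ by a direct computation. This also avoids relying on the Banach-range duality imported from \cite{Huang}.
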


Before we prove Theorem~\ref{thm: Hardy-Sobolev embedding}, we need the following two lemmata. The first lemma reassembles a nesting property of $\T^{p,q,r}_\beta$ in the parameter $q$ and $r$. The second one is a specific endpoint embedding, which follows from the weighted $\Z$-space embeddings in \cite[Thm.\@ 3.14]{Auscher_Bechtel_Haardt}.

\begin{lemma}
\label{lem: embedding in q}
    Let $0<p\leq \infty$, $0<q_0\leq q_1\leq \infty$ , $0<r_1\leq r_0\leq \infty$ and $\beta\in\IR$. Then we have the continuous embedding
    \begin{align*}
        \T^{p,q_0,r_0}_{\beta} \hookrightarrow \T^{p,q_1,r_1}_{\beta}.
    \end{align*}
\end{lemma}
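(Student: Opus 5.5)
The plan is to reduce everything to the discrete characterizations of Proposition~\ref{prop: dyadic char} (for $p<\infty$) and Proposition~\ref{prop: dyadic char p=infty} (for $p=\infty$), and then handle the two parameters separately. Since both characterizations depend on $f$ only through the numbers
\begin{align*}
    a_Q \coloneqq |Q|^{-\frac{\beta}{d}}\|f\|_{\L^r\big(\bar Q , \frac{\d y \d s}{s^{d+1}}\big)}, \qquad Q\in\square,
\end{align*}
it suffices to show two facts. First, replacing $r_0$ by $r_1$ does not increase any $a_Q$, up to a uniform constant. Second, the expressions on the right-hand sides of these propositions are monotone in $q$.

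\textbf{Step 1: the parameter $r$.} The measure $\frac{\d y\d s}{s^{d+1}}$ restricted to $\bar Q$ has total mass comparable to $1$, uniformly in $Q$: $s\simeq \ell(Q)$ on $\bar Q$ and $|\bar Q|\simeq \ell(Q)^{d+1}$. So Hölder's (or Jensen's) inequality on a measure space of mass $\simeq 1$ gives, for $r_1\le r_0$,
\begin{align*}
    \|f\|_{\L^{r_1}(\bar Q,\frac{\d y\d s}{s^{d+1}})}\lesssim \|f\|_{\L^{r_0}(\bar Q,\frac{\d y\d s}{s^{d+1}})},
\end{align*}
with a constant independent of $Q$ (the case $r_0=\infty$ is trivial). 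Inserting this termwise into the discrete expression gives $\T^{p,q,r_0}_\beta\hookrightarrow\T^{p,q,r_1}_\beta$ for every $p,q$.

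\textbf{Step 2: the parameter $q$ for $p<\infty$.} Here I use the pointwise embedding $\ell^{q_0}\hookrightarrow\ell^{q_1}$ for $q_0\le q_1$ with constant $1$:
\begin{align*}
    \Big(\sum_Q \mathbf 1_Q(x) a_Q^{q_1}\Big)^{1/q_1}\le \Big(\sum_Q \mathbf 1_Q(x) a_Q^{q_0}\Big)^{1/q_0}.
\end{align*}
Taking $\L^p$-norms and applying Proposition~\ref{prop: dyadic char} on both sides finishes this case.

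\textbf{Step 3: the parameter $q$ for $p=\infty$.} The expression in Proposition~\ref{prop: dyadic char p=infty} takes an $\L^q$-average over $P$, and this average changes along with $q$, so a direct comparison does not work. This is the step I expect to be the main obstacle, and it is where the John--Nirenberg-type Lemma~\ref{lem: John-Nirenberg lemma} comes in. I apply that lemma with $\alpha$ fixed, say $\alpha=1$, on both sides, once with exponent $q_1$ and once with $q_0$. This gives
\begin{align*}
    \|f\|_{\T^{\infty,q_1,r}_\beta}\simeq\sup_P\fint_P\Big(\sum_{Q\subset P}\mathbf 1_Q a_Q^{q_1}\Big)^{1/q_1}\d x\le \sup_P\fint_P\Big(\sum_{Q\subset P}\mathbf 1_Q a_Q^{q_0}\Big)^{1/q_0}\d x\simeq \|f\|_{\T^{\infty,q_0,r}_\beta},
\end{align*}
again using the pointwise $\ell^{q_0}\subset\ell^{q_1}$ inequality in the middle. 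The case $q_1=\infty$ needs separate treatment, since the lemma requires $q$ finite in its stated form with the $\frac{\alpha}{q}$ exponent; there I bound the supremum definition directly. For each single $Q$, $a_Q\le \big(\fint_Q\sum_{Q'\subset Q}\mathbf 1_{Q'}a_{Q'}^{q_0}\big)^{1/q_0}$, because the term $Q'=Q$ alone contributes $a_Q^{q_0}$ on all of $Q$. The right-hand side is at most the $\T^{\infty,q_0,r}_\beta$ norm by Proposition~\ref{prop: dyadic char p=infty}.

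Combining Step 1 with Steps 2 and 3 yields the embedding of Lemma~\ref{lem: embedding in q}.
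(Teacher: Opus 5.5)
Your proof is correct, but it is organized differently from the paper's.

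The paper does not split into $p<\infty$ and $p=\infty$. It uses the median characterization $\|f\|_{\T^{p,q,r}_\beta}\simeq\|m^{q,r}_{\beta,c}(f)\|_{\L^p}$ (Proposition~\ref{prop: char with m}), which holds uniformly for $0<p\le\infty$. It then observes the pointwise bound $G^{q_1,r_1}_{\beta,P}(f)\lesssim G^{q_0,r_0}_{\beta,P}(f)$, which is exactly the comparison in your Steps~1 and~2 (Jensen in $r$, then $\ell^{q_0}\subset\ell^{q_1}$). Since $c$-medians are monotone under pointwise domination, this gives $m^{q_1,r_1}_{\beta,c}(f)\lesssim m^{q_0,r_0}_{\beta,c}(f)$ everywhere, and all $p$ are handled at once.

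You instead treat the two regimes separately:
\begin{itemize}
\item For $p<\infty$ you use only the elementary dyadic characterization (Proposition~\ref{prop: dyadic char}), which needs no Frazier--Jawerth input.
\item For $p=\infty$ you use Lemma~\ref{lem: John-Nirenberg lemma} with a fixed outer exponent $\alpha$.
\end{itemize}
The paper's median and your fixed $\alpha$ do the same job: they decouple the outer functional from $q$, which removes exactly the obstruction you identified in the $\L^q$-average over $P$. Both routes still rest on Frazier--Jawerth's sequence-space theory at the endpoint (their Prop.~5.5 versus Cor.~5.7).

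The paper's argument is shorter and uniform in $p$. Yours is more elementary away from the endpoint. It also treats $q_1=\infty$ at $p=\infty$ explicitly, rather than relying on the median characterization being valid for $q=\infty$. To close that last case you still need the covering bound $\|f\|_{\T^{\infty,\infty,r}_\beta}\lesssim\sup_Q a_Q$, i.e.\ the $q=\infty$ case of Proposition~\ref{prop: dyadic char p=infty}. It is routine but left implicit in your write-up.
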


\begin{proof}
    By Proposition~\ref{prop: char with m}, it is enough to show that
    \begin{align*}
        m^{q_1,r_1}_{\beta,c}(f)(x) \lesssim m^{q_0,r_0}_{\beta,c}(f)(x)
    \end{align*}
    for some $0<c<1$ and all $x\in\IR^d$. First, observe that for a fixed $P\in \square$, we have by Jensen's inequality and the embedding $\ell^{q_0}\subset \ell^{q_1}$ that
    \begin{align*}
        G^{q_1,r_1}_{\beta,P}(f)(x) &= \Big(\sum\limits_{Q\subset P} \mathbf{1}_Q(x)|Q|^{-\frac{\beta q_1}{d}} \|f\|_{\L^{r_1}\big(\bar Q , \frac{\d y \d s}{s^{d+1}}\big)}^{q_1} \Big)^\frac{1}{q_1}\\
         &\leq \Big(\sum\limits_{Q\subset P} \mathbf{1}_Q(x)|Q|^{-\frac{\beta q_0}{d}} \|f\|_{\L^{r_0}\big(\bar Q , \frac{\d y \d s}{s^{d+1}}\big)}^{q_0} \Big)^\frac{1}{q_0}=\G^{q_0,r_0}_{\beta,P}(f)(x).
    \end{align*}
    Thus, we have
    \begin{align*}
        \Big|\Big\{x\in P: G^{q_1,r_1}_{\beta,P}(f)(x)>t\Big\}\Big|\leq \Big|\Big\{x\in P: G^{q_0,r_0}_{\beta,P}(f)(x)>t\Big\}\Big|
    \end{align*}
    for all $t>0$. By definition \eqref{eq: def of m1/4}, it follows
    \begin{align*}
         m_{\beta,P,c}^{q_1,r_1}(f) \leq \,m_{\beta,P,c}^{q_0,r_0}(f),
    \end{align*}
    for all $0<c<1$ and thus
    \begin{align*}
        m^{q_1,r_1}_{\beta,c}(f)(x) =\sup\limits_{P\in\square}  m^{q_1,r_1}_{\beta,P,c}(f)\mathbf{1}_P(x) \leq \sup\limits_{P\in\square}  m^{q_0,r_0}_{\beta,P,c}(f)\mathbf{1}_P(x) = m^{q_0,r_0}_{\beta,c}(f)(x).
    \end{align*}
    This concludes the proof.
\end{proof}

\begin{lemma}
\label{lem: endpoint HSE for fixed q}
    Let $0<p<\infty$, $0<r\leq \infty$ and $\beta_0,\beta_1\in \IR$ satisfying $\beta_0-\beta_1 = \frac{d}{p}$. Then, we have the continuous embedding
    \begin{align*}
        \T^{p,\infty,r}_{\beta_0} \hookrightarrow \T^{\infty,\infty,r}_{\beta_1}.
    \end{align*}
\end{lemma}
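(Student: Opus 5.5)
The plan is to work directly from the definitions, since for $q=\infty$ both quasi-norms reduce to suprema over single Whitney boxes. Write, for $(t,x)\in\IR^{d+1}_+$,
\begin{align*}
    A_\beta f(t,x) \coloneqq \bigg(\fint\limits_{\frac{t}{2}}^t \fint\limits_{B(x,t)} |s^{-\beta}f(s,z)|^r \,\d z \d s\bigg)^\frac{1}{r},
\end{align*}
so that $\|f\|_{\T^{p,\infty,r}_{\beta_0}} = \|\esssup_{t>0} A_{\beta_0}f(t,\cdot)\|_{\L^p}$ and $\|f\|_{\T^{\infty,\infty,r}_{\beta_1}} = \sup_{t,x} A_{\beta_1}f(t,x)$. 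Since $s\simeq t$ on the Whitney box, we have $A_{\beta_1}f(t,x)\simeq t^{\beta_0-\beta_1}A_{\beta_0}f(t,x) = t^{d/p}A_{\beta_0}f(t,x)$. Hence it suffices to prove the pointwise bound
\begin{align*}
    t^{\frac{d}{p}} A_{\beta_0}f(t,x) \lesssim \|f\|_{\T^{p,\infty,r}_{\beta_0}}
\end{align*}
for every fixed $(t,x)$.

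To obtain this, fix $(t,x)$ and average over nearby base points. For $x'\in B(x,t)$ we have $B(x,t)\subset B(x',2t)$, and the time interval $(t/2,t]$ is contained in $(t'/2,t']$ whenever $t'\in(t,2t)$ up to adjusting; more cleanly, $(t/2,t]\times B(x,t)\subset (\tfrac{2t}{4},2t]\times B(x',2t)$. Thus, for all $x'\in B(x,t)$,
\begin{align*}
    A_{\beta_0}f(t,x) \lesssim \widetilde A_{\beta_0}f(2t,x') \le \sup_{\tau>0}\widetilde A_{\beta_0}f(\tau,x'),
\end{align*}
where $\widetilde A$ denotes the analogous average over the enlarged box $(\tau/4,\tau]\times B(x',\tau)$. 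Raising to the power $p$ and averaging over $x'\in B(x,t)$ then gives
\begin{align*}
    A_{\beta_0}f(t,x)^p \lesssim t^{-d}\int_{\IR^d}\Big(\sup_{\tau}\widetilde A_{\beta_0}f(\tau,x')\Big)^p\,\d x'.
\end{align*}
The right-hand side is $t^{-d}$ times the $p$-th power of the $\T^{p,\infty,r}_{\beta_0}$-norm taken with modified Whitney parameters. By Proposition~\ref{prop: independent Whitney cube}, applied with $q=\infty$, this is comparable to $t^{-d}\|f\|_{\T^{p,\infty,r}_{\beta_0}}^p$. Multiplying through by $t^{d}$ yields the pointwise bound, and taking the supremum over $(t,x)$ completes the argument.

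Two technical points need care, though both are minor. First, the "esssup in $t$" must be compatible with the pointwise sup over $x'$. One can avoid this entirely by passing to the dyadic characterizations: use Proposition~\ref{prop: dyadic char} for $p<\infty$ and Proposition~\ref{prop: dyadic char p=infty} with $q=\infty$, where the quasi-norms become $\|\sup_{Q\ni\cdot}|Q|^{-\beta_0/d}\|f\|_{\L^r(\bar Q)}\|_{\L^p}$ and $\sup_Q |Q|^{-\beta_1/d}\|f\|_{\L^r(\bar Q)}$, respectively. In that form the argument is immediate, since
\begin{align*}
    |Q|^{-\beta_1/d}\|f\|_{\L^r(\bar Q)} = |Q|^{1/p}\,|Q|^{-\beta_0/d}\|f\|_{\L^r(\bar Q)} \le \Big(\int_Q \sup_{Q'\ni x}\big(|Q'|^{-\beta_0/d}\|f\|_{\L^r(\bar Q')}\big)^p\,\d x\Big)^{1/p}.
\end{align*}
I would present this dyadic version as the main proof. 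Second, the case $r=\infty$ requires no change.
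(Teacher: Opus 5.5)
Your proof is correct, but it takes a different route from the paper. The paper does not argue directly. It factors the embedding through $\Z$-spaces,
\begin{align*}
    \T^{p,\infty,r}_{\beta_0}\hookrightarrow \Z^{p,\infty,r}_{\beta_0}\hookrightarrow \Z^{\infty,\infty,r}_{\beta_1}=\T^{\infty,\infty,r}_{\beta_1},
\end{align*}
using three ingredients:
\begin{itemize}
    \item Lemma~\ref{lem: embedding tent into Z}, with $\max\{p,\infty\}=\infty$;
    \item the Hardy--Littlewood--Sobolev embedding for $\Z$-spaces from \cite[Thm.~3.14]{Auscher_Bechtel_Haardt};
    \item the fact that $\Z^{\infty,\infty,r}_{\beta_1}$ and $\T^{\infty,\infty,r}_{\beta_1}$ coincide by definition.
\end{itemize}

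Your dyadic argument replaces the external $\Z$-space embedding by two things. The first is the discrete characterizations, Propositions~\ref{prop: dyadic char} and~\ref{prop: dyadic char p=infty} with $q=\infty$. The second is the trivial domination of a single term $|Q|^{1/p}c_Q$ by the $\L^p(Q)$-norm of $\sup_{Q'\ni x}c_{Q'}$.

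The paper's route gives brevity and a structural reading: the lemma is just the $q=\infty$ instance of the $\Z$-scale embedding, the same mechanism reused in Theorem~\ref{thm: mixed embedding}. Your route gives self-containedness within the paper and transparency, since it shows the embedding is a single-scale, one-Whitney-box estimate. Your first, continuous sketch (averaging over base points $x'\in B(x,t)$ at a fixed scale) is essentially an unpacking of the $\Z$-space embedding at $q=\infty$. It is combined with the elementary inequality $\esssup_t\|A(t,\cdot)\|_{\L^p}\le\|\esssup_t A(t,\cdot)\|_{\L^p}$, which underlies $\T^{p,\infty,r}\hookrightarrow\Z^{p,\infty,r}$.

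One small point you should state explicitly. Proposition~\ref{prop: dyadic char p=infty} is formulated for $f\in\T^{\infty,q,r}_\beta$, but you apply the bound $\|f\|_{\T^{\infty,\infty,r}_{\beta_1}}\lesssim\sup_Q|Q|^{-\beta_1/d}\|f\|_{\L^r(\bar Q,\frac{\d y\d s}{s^{d+1}})}$ to an $f$ not yet known to lie in that space. This is harmless, because that direction is a pure covering estimate valid for every measurable $f$: each $W(t,x)$ is covered by boundedly many $\bar Q$ with $\ell(Q)\simeq t$. The direction that relies on membership (via the change of Whitney parameters) is only used on the $\T^{p,\infty,r}_{\beta_0}$ side, where $f$ lies in the space by hypothesis.
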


\begin{proof}
    Combining Lemma~\ref{lem: embedding tent into Z} from below with the Hardy--Littlewood--Sobolev embedding for $\Z$-spaces \cite[Thm.\@ 3.14]{Auscher_Bechtel_Haardt}, we obtain
    \begin{align*}
        \T^{p,\infty,r}_{\beta_0} \hookrightarrow \Z^{p,\infty,r}_{\beta_0}\hookrightarrow\Z^{\infty,\infty,r}_{\beta_1}=\T^{\infty,\infty,r}_{\beta_1}
    \end{align*}
    where the last step follows by the Definitions~\ref{def: tent spaces} and~\ref{def: Z-space} of these spaces.
\end{proof}

\begin{proof}[Proof of Theorem~\ref{thm: Hardy-Sobolev embedding}]
    We divide the proof into two parts. In the first part, we prove the continuous embedding \eqref{eq: HSE} in the case $p_1<\infty$. The proof is similar to the one of \cite[Thm.\@ 2.1]{Jawerth}, where the author shows these type of embeddings for homogeneous Triebel--Lizorkin spaces. Afterwards, we show \eqref{eq: HSE} in the endpoint case $p_1=\infty$. For this, we first use a duality argument to prove the embedding for all parameters in the Banach range. Then, we apply a convex reduction argument (see Lemma ~\ref{lem: convex tent space}) to conclude the result for the full range of parameters.\\
    
    \noindent \textbf{Step 1: We show \eqref{eq: HSE} for $p_1<\infty$.} First, using Jensen's inequality, we have
    \begin{align*}
        \|f\|_{\T^{p_1,q_1,r_1}_{\beta_1}}\lesssim \|f\|_{\T^{p_1,q_1,r_0}_{\beta_1}}.
    \end{align*}
    Hence, it is sufficient to show
    \begin{align*}
        \|f\|_{\T^{p_1,q_1,r}_{\beta_1}} \lesssim\|f\|_{\T^{p_0,q_0,r}_{\beta_0}}
    \end{align*}
    for some fixed $0<r\leq\infty$. Second, it is enough to assume that $\beta_0 =0$ because otherwise we consider $s^{-\beta_0}f(s,\cdot)$ instead of $f(s,\cdot)$. Finally, due to the nesting property of $\T^{p,q,r}_\beta$-spaces in the parameter $q$, see Lemma~\ref{lem: embedding in q}, it is enough to assume $q_0=\infty$. Therefore, the embedding \eqref{eq: HSE} follows from
    \begin{align}
    \label{eq: H0}
        \|f\|_{\T^{p_1,q_1,r}_{\beta_1}} \lesssim\|f\|_{\T^{p_0,\infty,r}_{0}}
    \end{align}
    To show this embedding, we first fix $f\in \T^{p_0,\infty,r}_0 $ with $\|f\|_{\T^{p_0,\infty,r}_0} =1$. By Lemma~\ref{lem: endpoint HSE for fixed q}, we have
    \begin{align}
    \label{eq: embedding}
        \sup\limits_{x\in\IR^d}\sup\limits_{t>0}\bigg(\fint\limits_{\frac{t}{2}}^t\fint\limits_{B(x,t)} |s^{\frac{d}{p_0}}f(s,y)|^r\,\d y \d s\bigg)^\frac{1}{r} = \|f\|_{\T^{\infty,\infty,r}_{-\frac{d}{p_0}}} \lesssim \|f\|_{\T^{p_0,\infty,r}_0} =1.
    \end{align}
    Next, for fixed $x\in\IR^d$, we have
    \begin{align*}
        &\bigg(\int\limits_0^\infty \bigg(\fint\limits_{\frac{t}{2}}^t \fint\limits_{B(x,t)} |s^{-\beta_1} f(s,y)|^r \,\d y \d s\bigg)^\frac{q_1}{r} \,\frac{\d t}{t} \bigg)^\frac{1}{q_1}\\
        &\simeq\bigg(\int\limits_0^\infty t^{-\beta_1q_1} \bigg(\fint\limits_{\frac{t}{2}}^t \fint\limits_{B(x,t)} | f(s,y)|^r \,\d y \d s\bigg)^\frac{q_1}{r} \,\frac{\d t}{t} \bigg)^\frac{1}{q_1} \\
        &=\bigg(\int\limits_0^\infty t^{-\beta_1q_1} |F(t,x)|^{q_1} \,\frac{\d t}{t} \bigg)^\frac{1}{q_1},
    \end{align*}
    where we define
    \begin{align*}
        F(t,x) \coloneqq \bigg(\fint\limits_{\frac{t}{2}}^t \fint\limits_{B(x,t)} | f(s,y)|^r \,\d y \d s\bigg)^\frac{1}{r}.
    \end{align*}
    By \cite[Prop.\@ 1.1.4]{Grafakos}, we can use the description of the $\L^{p_1}$-norm in terms of the distribution function to obtain
    \begin{align}
    \label{eq: H}
        \|f\|^{p_1}_{\T^{p_1,q_1,r}_{\beta_1}} \simeq \int\limits_0^\infty \lambda^{p_1-1} \bigg| \bigg\{x\in\IR^d:  \bigg(\int\limits_0^\infty t^{-\beta_1q_1} |F(t,x)|^{q_1} \,\frac{\d t}{t} \bigg)^\frac{1}{q_1} >\lambda \bigg\} \bigg| \,\d \lambda.
    \end{align}
    Thus, we will further analyze the distribution function
    \begin{align}
    \label{eq: distr func}
        \lambda\mapsto \bigg| \bigg\{x\in\IR^d: \bigg(\int\limits_0^\infty t^{-\beta_1q_1} |F(t,x)|^{q_1} \,\frac{\d t}{t} \bigg)^\frac{1}{q_1} >\lambda \bigg\} \bigg|.
    \end{align}
    For some fixed but arbitrary $c>0$, we calculate
    \begin{align}
    \label{eq: H2}
        \bigg(\int\limits_0^\infty t^{-\beta_1q_1} |F(t,x)|^{q_1} \,\frac{\d t}{t} \bigg)^\frac{1}{q_1} \leq \bigg(\int\limits_0^c t^{-\beta_1q_1} |F(t,x)|^{q_1} \,\frac{\d t}{t} \bigg)^\frac{1}{q_1} + \bigg(\int\limits_c^\infty t^{-\beta_1q_1} |F(t,x)|^{q_1} \,\frac{\d t}{t} \bigg)^\frac{1}{q_1}.
    \end{align}
    Since we assumed $\beta_0=0$, we have $\beta_1=\beta_1-\beta_0 = \frac{d}{p_1}-\frac{d}{p_0}<0$. Thus, we estimate the first integral on the right-hand side
    \begin{align*}
        \bigg(\int\limits_0^c t^{-\beta_1q_1} |F(t,x)|^{q_1} \,\frac{\d t}{t} \bigg)^\frac{1}{q_1}&\leq \sup\limits_{t>0}|F(t,x)| \bigg(\int\limits_0^c t^{-\beta_1q_1-1}  \,\d t \bigg)^\frac{1}{q_1}\\
        &=A_1 c^{-\beta_1}\sup\limits_{t>0}|F(t,x)|
    \end{align*}
    for some generic constant $A_1>0$ which does not depend on $c$ and $f$. Furthermore, by \eqref{eq: embedding}, we can bound the second integral on the right-hand side of \eqref{eq: H2}
    \begin{align*}
        \bigg(\int\limits_c^\infty t^{-\beta_1q_1} |F(t,x)|^{q_1} \,\frac{\d t}{t} \bigg)^\frac{1}{q_1} &= \bigg(\int\limits_c^\infty t^{-(\frac{d}{p_1}-\frac{d}{p_0})q_1} |F(t,x)|^{q_1} \,\frac{\d t}{t} \bigg)^\frac{1}{q_1}\\
        &\leq \sup\limits_{x\in\IR^d}\sup\limits_{t>0} \Big(t^{\frac{d}{p_0}}|F(t,x)|\Big) \bigg(\int\limits_c^\infty t^{-\frac{dq_1}{p_1}} \,\frac{\d t}{t} \bigg)^\frac{1}{q_1}\\
        &\leq A_2c^{-\frac{d}{p_1}}
    \end{align*}
    for some generic constant $A_2>0$ which does not depend on $c$ and $f$. Plug both integral estimates back into \eqref{eq: H2} to obtain
    \begin{align}
    \label{eq: H3}
        &\bigg| \bigg\{x\in\IR^d: \bigg(\int\limits_0^\infty t^{-\beta_1q_1} |F(t,x)|^{q_1} \,\frac{\d t}{t} \bigg)^\frac{1}{q_1} >\lambda \bigg\} \bigg| \nonumber\\
        &\leq \bigg| \bigg\{x\in\IR^d: A_1 c^{-\beta_1}\sup\limits_{t>0}|F(t,x)| >\frac{\lambda}{2} \bigg\} \bigg|+\bigg| \bigg\{x\in\IR^d: A_2c^{-\frac{d}{p_1}} >\frac{\lambda}{2}  \bigg\} \bigg|
    \end{align}
    for all $\lambda,c>0$. Now, for a fixed $\lambda>0$, we choose $c = (\frac{\lambda}{2A_2})^{-\frac{p_1}{d}}$. Then, it follows
    \begin{align*}
        \bigg| \bigg\{x\in\IR^d: A_2c^{-\frac{d}{p_1}} >\frac{\lambda}{2}  \bigg\} \bigg| = 0,
    \end{align*}
    and we have
    \begin{align*}
        A_1c^{-\beta_1} = A_1\Big(\frac{\lambda}{2A_2}\Big)^{\frac{p_1\beta_1}{d}} = A\lambda^{1-\frac{p_1}{p_0}}
    \end{align*}
    for some new generic constant $A>0$. In this case, the inequality \eqref{eq: H3} reduces to
    \begin{align*}
        \bigg| \bigg\{x\in\IR^d: \bigg(\int\limits_0^\infty t^{-\beta_1q_1} |F(t,x)|^{q_1} \,\frac{\d t}{t} \bigg)^\frac{1}{q_1} >\lambda \bigg\} \bigg| &\leq \bigg| \bigg\{x\in\IR^d: A\lambda^{1-\frac{p_1}{p_0}}\sup\limits_{t>0}|F(t,x)| >\frac{\lambda}{2} \bigg\} \bigg|\\
        &=\bigg| \bigg\{x\in\IR^d: \sup\limits_{t>0}|F(t,x)| >\frac{\lambda^{\frac{p_1}{p_0}}}{2A} \bigg\} \bigg|.
    \end{align*}
    Using this estimate in the distributional description \eqref{eq: H}, we get
    \begin{align*}
        \|f\|^{p_1}_{\T^{p_1,q_1,r}_{\beta_1}} &\lesssim \int\limits_0^\infty \lambda^{p_1-1} \bigg| \bigg\{x\in\IR^d: \sup\limits_{t>0}|F(t,x)| >\frac{\lambda^{\frac{p_1}{p_0}}}{2A} \bigg\} \bigg| \,\d \lambda\\
        &\simeq \int\limits_0^\infty \mu^{p_0-1} \bigg| \bigg\{x\in\IR^d: \sup\limits_{t>0}|F(t,x)| >\mu \bigg\} \bigg| \,\d \mu\\
        &\simeq \|\sup\limits_{t>0}|F(t,\cdot)| \|_{\L^{p_0}}^{p_0}\\
        &= \|f\|^{p_0}_{\T^{p_0,\infty,r}_{0}}\\
        &=1.
    \end{align*}
    Since this estimate holds for all $f\in \T^{p_0,\infty,r}_0 $ with $\|f\|_{\T^{p_0,\infty,r}_0} =1$, we conclude with a homogeneity argument
    \begin{align*}
        \|f\|_{\T^{p_1,q_1,r}_{\beta_1}} \lesssim \|f\|_{\T^{p_0,\infty,r}_0}
    \end{align*}
    for all $f\in \T^{p_0,\infty,r}_0 $. This proves \eqref{eq: H0}, and therefore the embedding \eqref{eq: HSE} in the case $p_1<\infty$.\\

    \noindent \textbf{Step 2: We show \eqref{eq: HSE} for $p_1=\infty$.} 
    First, we assume $1<p_0<\infty$ and $1<q_0,q_1 ,r_0,r_1\leq \infty$. Then, $1<p_0'<\infty$ and $1\leq q_0',q_1' ,r_0',r_1' < \infty$ and
    \begin{align*}
        -\beta_1 - (-\beta_0) = \beta_0-\beta_1 = \frac{d}{p_0} = d\Big(1- \frac{1}{p_0'} \Big).
    \end{align*}
    Thus, by \eqref{eq: HSE}, we have
    \begin{align*}
        \T^{1,q_1',r_1'}_{-\beta_1} \hookrightarrow \T^{p_0',q_0',r_0'}_{-\beta_0}.
    \end{align*}
    Since all parameters $p_0',q_i',r_i'\geq  1$, we can use Proposition~\ref{prop: duality Banach range} to get
    \begin{align}
    \label{eq: HSE1}
        \T^{p_0,q_0,r_0}_{\beta_0} = (\T^{p_0',q_0',r_0'}_{-\beta_0})' \hookrightarrow (\T^{1,q_1',r_1'}_{-\beta_1})' = \T^{\infty,q_1,r_1}_{\beta_1}.
    \end{align}
    This proves \eqref{eq: HSE} for $1<p_0< p_1=\infty$ and $1<q_0,q_1 ,r_0,r_1\leq \infty$.\\
    Now, assume $0<p_0<\infty$, $0<q_0,q_1 ,r_0,r_1\leq \infty$. Choose $M>0$ such that
    \begin{align*}
        1<\frac{p_0}{M},\frac{q_0}{M},\frac{q_1}{M},\frac{r_0}{M},\frac{r_1}{M} \leq\infty.
    \end{align*}
    Due to Lemma~\ref{lem: convex tent space} in conjunction with \eqref{eq: HSE1}, we get
    \begin{align*}
        \|f\|_{\T^{\infty,q_1,r_1}_{\beta_1}} = \||f|^M\|^{\frac{1}{M}}_{\T^{\infty,\frac{q_1}{M},\frac{r_1}{M}}_{M\beta_1}} \lesssim \||f|^M\|^{\frac{1}{M}}_{\T^{\frac{p_0}{M},\frac{q_0}{M},\frac{r_0}{M}}_{M\beta_0}} = \|f\|_{{\T^{p,q_0,r_0}_{\beta_0}}},
    \end{align*}
    which proves \eqref{eq: HSE} in the full range of parameters.
\end{proof}

We finish this section by presenting mixed-type embeddings for tent and $\Z$-spaces. There are two of them. The first (Lemma~\ref{lem: embedding tent into Z}) is a mixed-type embedding in the parameter $q$ that resembles a nesting property. The second (Theorem~\ref{thm: mixed embedding}) is a mixed-type Hardy--Littlewood--Sobolev embedding. Its proof relies on the real interpolation results for tent spaces in Theorem~\ref{thm: real interpolation in p} and Proposition~\ref{prop: real interpolation tent spaces full range}, which will be proven in Section~\ref{sec: real interpolation} below.

\begin{lemma}
\label{lem: embedding tent into Z}
    For $0<p, q, r\leq \infty$ and $\beta\in\IR$ we have the continuous embeddings
    \begin{align}
    \label{eq: embedding tent into Z}
         \Z^{p,\min\{p,q\},r}_\beta\hookrightarrow \T^{p,q,r}_\beta \hookrightarrow \Z^{p,\max\{p,q\},r}_\beta.
    \end{align}
\end{lemma}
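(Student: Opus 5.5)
The plan is to reduce everything to discrete sequence norms and then apply Minkowski's inequality in the right order. Abbreviate $a_Q \coloneqq |Q|^{-\frac{\beta}{d}}\|f\|_{\L^r(\bar Q,\frac{\d y\d s}{s^{d+1}})}$.

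\textbf{Step 1: discretize the $\Z$-norm.} For $0<p<\infty$ I first show
\begin{align*}
\|f\|_{\Z^{p,q,r}_\beta}\simeq \Big(\sum_{k\in\IZ}\Big\|\sum_{Q\in\square_k}\mathbf{1}_Q a_Q\Big\|_{\L^p}^q\Big)^{\frac1q}.
\end{align*}
This uses the same covering arguments as in Proposition~\ref{prop: dyadic char}, applied scale by scale: split $\int_0^\infty \frac{\d t}{t}$ dyadically, and for $t\in(2^k,2^{k+1}]$ compare the Whitney averages over $(t/2,t]\times B(x,t)$ with $\bar Q$ for $Q$ of generation $k$ or $k+1$ containing $x$ (or neighbouring cubes). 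Neighbouring cubes are absorbed by translation invariance of $\L^p$, and the change of Whitney parameters is handled as in Proposition~\ref{prop: independent Whitney cube}, or more precisely its $\Z$-analogue \cite[Prop.\@ 3.7]{Auscher_Bechtel_Haardt}. By the same token,
\begin{align*}
\|f\|_{\T^{p,q,r}_\beta}\simeq\Big\|\Big(\sum_k \big(\textstyle\sum_{Q\in\square_k}\mathbf{1}_Q a_Q\big)^q\Big)^{\frac1q}\Big\|_{\L^p},
\end{align*}
which is Proposition~\ref{prop: dyadic char}. Writing $F_k\coloneqq\sum_{Q\in\square_k}\mathbf{1}_Qa_Q$, the claim for $p<\infty$ becomes
\begin{align*}
\|(F_k)\|_{\ell^{\min\{p,q\}}(\L^p)}\gtrsim\|(F_k)\|_{\L^p(\ell^q)}\gtrsim\|(F_k)\|_{\ell^{\max\{p,q\}}(\L^p)}.
\end{align*}

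\textbf{Step 2: mixed-norm inequalities for $p<\infty$.} First, $\ell^{\min\{p,q\}}\hookrightarrow\ell^q$ pointwise in $x$ gives $\|(F_k)\|_{\L^p(\ell^q)}\le\|(F_k)\|_{\L^p(\ell^{\min\{p,q\}})}$. Setting $u=\min\{p,q\}\le p$, Minkowski's inequality in $\L^{p/u}$ (which is a norm since $p/u\ge1$) gives $\|(F_k)\|_{\L^p(\ell^u)}\le\|(F_k)\|_{\ell^u(\L^p)}$. For the right embedding, set $v=\max\{p,q\}$. Then $\|(F_k)\|_{\L^p(\ell^q)}\ge\|(F_k)\|_{\L^p(\ell^v)}$, and the reversed Minkowski inequality (valid since $v/p\ge1$) gives $\|(F_k)\|_{\L^p(\ell^v)}\ge\|(F_k)\|_{\ell^v(\L^p)}$. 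Infinite $q$ or $r$ are handled with the usual modifications, for instance $\sup_k\|F_k\|_{\L^p}\le\|\sup_kF_k\|_{\L^p}$.

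\textbf{Step 3: the case $p=\infty$.} Now $\min\{p,q\}=q$ and $\max\{p,q\}=\infty$.

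For the right embedding, $\Z^{\infty,\infty,r}_\beta=\T^{\infty,\infty,r}_\beta$ by definition, so Lemma~\ref{lem: embedding in q} with $q_1=\infty$ gives $\T^{\infty,q,r}_\beta\hookrightarrow\T^{\infty,\infty,r}_\beta$.

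For the left embedding $\Z^{\infty,q,r}_\beta\hookrightarrow\T^{\infty,q,r}_\beta$, use Proposition~\ref{prop: dyadic char p=infty}. For fixed $P$,
\begin{align*}
\fint_P\sum_{Q\subset P}\mathbf{1}_Qa_Q^q\,\d x=\sum_{k\le \log_2\ell(P)}\fint_P F_k^q\,\d x\le\sum_k\|F_k\|_{\L^\infty}^q\simeq\|f\|_{\Z^{\infty,q,r}_\beta}^q,
\end{align*}
where the last equivalence is the discretization of Step 1 with $\L^\infty$ in place of $\L^p$.

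\textbf{Main obstacle.} The only genuine work is the discretization of the $\Z$-norm in Step 1, specifically the lower bound for $\|F_k\|_{\L^p}$ by continuous averages. As in Step 2 of Proposition~\ref{prop: dyadic char}, this is done by enlarging balls and then changing Whitney parameters. If a $\Z$-version of that parameter-change result is needed, I would quote \cite[Prop.\@ 3.7]{Auscher_Bechtel_Haardt}.
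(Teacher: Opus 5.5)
Your proof is correct, but it follows a different route from the paper's. The paper does not reprove the case $0<p<\infty$; it cites \cite[Lem.\@ 4.3]{Auscher_Bechtel_Haardt}. For $p=\infty$, its right embedding is exactly yours. Its left embedding is a direct continuous estimate: the average over $B(y,\tau)$ is bounded by the supremum over $x\in\IR^d$, and $\int_0^\tau$ is enlarged to $\int_0^\infty$. This gives $\|f\|_{\Z^{\infty,q,r}_\beta}$ without any dyadic characterization, discretized $\Z$-norm or change of Whitney parameters. Your Step~3 uses the same idea (average $\le$ supremum, in the form $\fint_P F_k^q\le\|F_k\|_{\L^\infty}^q$), but it has to pay for the discrete detour, including a Whitney-parameter change for $\Z$-spaces at $p=\infty$.

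The real gain of your approach is for $0<p<\infty$. Expressing both norms through the functions $F_k$, which are constant on cubes of generation $k$, reduces the lemma to the mixed-norm chain $\ell^{\min\{p,q\}}(\L^p)\hookrightarrow\L^p(\ell^q)\hookrightarrow\ell^{\max\{p,q\}}(\L^p)$. That gives a self-contained proof of the part the paper outsources. The discretization is genuinely needed exactly when an $\ell$-inclusion is used ($q>p$ on the left, $q<p$ on the right), since $\L^u(\frac{\d t}{t})\not\subset\L^v(\frac{\d t}{t})$ for $u<v$. In the remaining cases, Minkowski's inequality applied to the continuous norms already suffices.

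One formality: Propositions~\ref{prop: dyadic char} and~\ref{prop: dyadic char p=infty} are stated for $f\in\T^{p,q,r}_\beta$. However, the direction you need for the left embeddings is proved pointwise and holds for every measurable $f$, so there is no circularity.
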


\begin{proof}
    In the case $0<p<\infty$, the result was already proven in \cite[Lem.\@ 4.3]{Auscher_Bechtel_Haardt}. Hence, we focus on the case $p=\infty$. First, observe that Lemma~\ref{lem: embedding in q} implies
    \begin{align*}
         \T^{\infty,q,r}_\beta \hookrightarrow \T^{\infty,\infty,r}_\beta = \Z^{\infty,\infty,r}_\beta.
    \end{align*}
    This shows the second embedding of \eqref{eq: embedding tent into Z} since $\max\{p,q\} = \infty$. To show the first embedding of \eqref{eq: embedding tent into Z}, we estimate
    \begin{align*}
        \|f\|_{\T^{\infty,q,r}_\beta}&=\sup\limits_{\tau>0} \sup\limits_{y\in\IR^d} 
        \bigg(\int\limits_{0}^\tau  \fint\limits_{B(y,\tau)} \bigg(\fint\limits_{\frac{t}{2}}^t \fint\limits_{B(x,t)} |s^{-\beta}f(s,z)|^r \,\d z \d s\bigg)^\frac{q}{r}\,\frac{\d x\d t }{t}\bigg)^\frac{1}{q}\\
        &\leq \sup\limits_{\tau>0}  
        \bigg(\int\limits_{0}^\tau  \sup\limits_{x\in\IR^d} \bigg(\fint\limits_{\frac{t}{2}}^t \fint\limits_{B(x,t)} |s^{-\beta}f(s,z)|^r \,\d z \d s\bigg)^\frac{q}{r}\,\frac{\d t}{t}\bigg)^\frac{1}{q}\\
        &\leq  
        \bigg(\int\limits_{0}^\infty  \sup\limits_{x\in\IR^d} \bigg(\fint\limits_{\frac{t}{2}}^t \fint\limits_{B(x,t)} |s^{-\beta}f(s,z)|^r \,\d z \d s\bigg)^\frac{q}{r}\,\frac{\d t}{t}\bigg)^\frac{1}{q}\\
        &=\|f\|_{\Z^{\infty,q,r}_\beta},
    \end{align*}
    which concludes the claim.
\end{proof}

\begin{theorem}[Mixed-type embeddings]
\label{thm: mixed embedding}
    Let $0<p_0<p_1\leq \infty$, $0<q \leq \infty$, $0<r_1\leq r_0\leq \infty$ and $\beta_0,\beta_1\in \IR$ satisfying $\beta_0-\beta_1 = \frac{d}{p_0}-\frac{d}{p_1}$. Then, we have the continuous embedding
    \begin{align}
    \label{eq: mixed embed 1}
        \T^{p_0,q,r_0}_{\beta_0} \hookrightarrow \Z^{p_1,p_0,r_1}_{\beta_1},
    \end{align}
    as well as
    \begin{align}
    \label{eq: mixed embed 2}
        \Z^{p_0,p_1,r_0}_{\beta_0} \hookrightarrow \T^{p_1,q,r_1}_{\beta_1}.
    \end{align}
\end{theorem}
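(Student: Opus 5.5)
The plan is to follow the strategy of Jawerth and Franke for the corresponding Triebel--Lizorkin embeddings: obtain the mixed embeddings by real interpolation between two Hardy--Littlewood--Sobolev embeddings from Theorem~\ref{thm: Hardy-Sobolev embedding}. As in Step~1 of that proof, Jensen's inequality reduces to $r_0=r_1=r$. Lemma~\ref{lem: embedding in q} together with the nesting of $\Z$-spaces in $q$ lets us choose $q$ freely: for \eqref{eq: mixed embed 1} it suffices to take $q=\infty$, and for \eqref{eq: mixed embed 2} to take $q$ arbitrarily small.

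\textbf{Embedding \eqref{eq: mixed embed 1}.} Fix $p_0<p_1$ and pick auxiliary exponents $\tilde p_0<p_0<\tilde p_1$ with $\tilde p_1 < p_1$, or $\tilde p_1\leq \infty$ if $p_1=\infty$. Choose $\theta$ with $\frac1{p_0}=\frac{1-\theta}{\tilde p_0}+\frac{\theta}{\tilde p_1}$. By Theorem~\ref{thm: real interpolation in p},
\begin{align*}
\T^{p_0,q,r}_{\beta_0}=(\T^{\tilde p_0,q,r}_{\beta_0},\T^{\tilde p_1,q,r}_{\beta_0})_{\theta,p_0}.
\end{align*}
The key reformulation is to write $\T^{p_0,q,r}_{\beta_0}$ instead as an interpolation in which $p$ is varied jointly with $\beta$, so that Proposition~\ref{prop: real interpolation tent spaces full range} applies on the target side. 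Concretely, Theorem~\ref{thm: Hardy-Sobolev embedding} gives
\begin{align*}
\T^{\tilde p_i,q,r}_{\beta_0}\hookrightarrow \T^{p_1,q,r}_{\gamma_i},\qquad \gamma_i=\beta_0-\frac d{\tilde p_i}+\frac d{p_1},\quad i=0,1.
\end{align*}
Here $\gamma_0\neq\gamma_1$ and $(1-\theta)\gamma_0+\theta\gamma_1=\beta_1$. Interpolating these two embeddings with $(\cdot,\cdot)_{\theta,p_0}$ and using Proposition~\ref{prop: real interpolation tent spaces full range}, which identifies $(\T^{p_1,q,r}_{\gamma_0},\T^{p_1,q,r}_{\gamma_1})_{\theta,p_0}=\Z^{p_1,p_0,r}_{\beta_1}$, yields \eqref{eq: mixed embed 1}. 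When $p_1=\infty$, the identification in Proposition~\ref{prop: real interpolation tent spaces full range} must hold for $p=\infty$; that proposition is stated for $0<p\le\infty$, so this is fine. The constraint $\tilde p_0<p_0<\tilde p_1<p_1$ is needed only so that each auxiliary embedding is a genuine Hardy--Littlewood--Sobolev embedding with $\tilde p_i<p_1$.

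\textbf{Embedding \eqref{eq: mixed embed 2}.} Here the roles are dual. Write
\begin{align*}
\Z^{p_0,p_1,r}_{\beta_0}=(\T^{p_0,q,r}_{\gamma_0},\T^{p_0,q,r}_{\gamma_1})_{\theta,p_1}
\end{align*}
via Proposition~\ref{prop: real interpolation tent spaces full range}, with $\gamma_0\ne\gamma_1$ and average $\beta_0$. Map each endpoint by Theorem~\ref{thm: Hardy-Sobolev embedding} into $\T^{\tilde p_i,q,r}_{\beta_1}$, where $\frac d{\tilde p_i}=\frac d{p_0}-\gamma_i+\beta_1$. Choose the $\gamma_i$ so that $p_0<\tilde p_0<p_1<\tilde p_1\le\infty$ and $\frac1{p_1}=\frac{1-\theta}{\tilde p_0}+\frac\theta{\tilde p_1}$. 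Then Theorem~\ref{thm: real interpolation in p} gives $(\T^{\tilde p_0,q,r}_{\beta_1},\T^{\tilde p_1,q,r}_{\beta_1})_{\theta,p_1}=\T^{p_1,q,r}_{\beta_1}$. The case $p_1=\infty$ needs separate treatment, since $\tilde p_1>p_1$ is impossible. For $p_1=\infty$ I would instead reduce, via Lemma~\ref{lem: embedding tent into Z} and the $\Z$-space embedding \cite[Thm.\@ 3.14]{Auscher_Bechtel_Haardt}, to $\Z^{p_0,\infty,r}_{\beta_0}\hookrightarrow\Z^{\infty,\infty,r}_{\beta_1}$. This only gives $\T^{\infty,\infty,r}_{\beta_1}$, not $q<\infty$, so the real fix is duality. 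For Banach parameters, the first embedding with exponents $p_0'<p_1'$ dualized through Proposition~\ref{prop: duality Banach range} and \cite{Auscher_Bechtel_Haardt}'s $\Z$-duality gives the $p_1=\infty$ case. The convex reduction of Lemma~\ref{lem: convex tent space}, and its analogue for $\Z$-spaces, then removes the Banach restriction, exactly as in Step~2 of Theorem~\ref{thm: Hardy-Sobolev embedding}.

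\textbf{Main obstacle.} The delicate point is bookkeeping of the parameters so that all auxiliary exponents lie in the admissible ranges of Theorem~\ref{thm: real interpolation in p} and Proposition~\ref{prop: real interpolation tent spaces full range}, including quasi-Banach ones. The hardest part is the endpoint $p_1=\infty$ in \eqref{eq: mixed embed 2}, where interpolation in $p$ cannot straddle $\infty$ and the duality and convexification detour must be used.
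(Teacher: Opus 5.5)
Your proposal is correct. For the first embedding, and for the second one with $p_1<\infty$, it is essentially the paper's argument: interpolate two Hardy--Littlewood--Sobolev embeddings, in the spirit of Jawerth and Franke.

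In the first embedding you identify the interpolated target via Theorem~\ref{thm: Hardy-Sobolev embedding} into $\T^{p_1,q,r}_{\gamma_i}$ followed by Proposition~\ref{prop: real interpolation tent spaces full range}. The paper instead passes to $\Z^{\tilde p_i,\infty,r}_{\beta_0}$ by Lemma~\ref{lem: embedding tent into Z}, applies the $\Z$-space embedding \cite[Thm.\@ 3.14]{Auscher_Bechtel_Haardt}, and interpolates with \cite[Thm.\@ 4.1]{Auscher_Bechtel_Haardt}. Proposition~\ref{prop: real interpolation tent spaces full range} is itself proved from Lemma~\ref{lem: embedding tent into Z} and \cite[Thm.\@ 4.1]{Auscher_Bechtel_Haardt}, so the two routes are interchangeable.

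The genuine difference is the endpoint $p_1=\infty$ of the second embedding. The paper's Step~2 fixes $p_0<\bar p_0<p_1<\bar p_1<\infty$ with $\frac{1}{p_1}=\frac{1-\theta}{\bar p_0}+\frac{\theta}{\bar p_1}$. This choice is impossible when $p_1=\infty$, so that case is not actually covered by the paper's argument. Your duality-plus-convexification detour, modelled on Step~2 of the proof of Theorem~\ref{thm: Hardy-Sobolev embedding}, supplies it. When you write it out, keep the following points in mind.

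\begin{enumerate}
\item The instance of the first embedding you dualize has exponents $1=p_1'<p_0'$, not $p_0'<p_1'$. It is $\T^{1,q',r'}_{-\beta_1}\hookrightarrow\Z^{p_0',1,r'}_{-\beta_0}$; indeed $\beta_0-\beta_1=\frac{d}{p_0}=d-\frac{d}{p_0'}$. This lies in the already-proved case of a finite target exponent, so there is no circularity.
\item The Banach-range step needs $1<p_0<\infty$ and $1<q,r\leq\infty$. Then Proposition~\ref{prop: duality Banach range} applies to $\T^{1,q',r'}_{-\beta_1}$, and the Hölder pairing of $\Z^{p_0,\infty,r}_{\beta_0}$ with $\Z^{p_0',1,r'}_{-\beta_0}$ is available.
\item Membership $f\in\T^{\infty,q,r}_{\beta_1}$ follows as in the paper: represent the bounded functional $g\mapsto\langle f,g\rangle$ on $\T^{1,q',r'}_{-\beta_1}$ and test against compactly supported $\L^{r'}$-functions.
\item The convexification is harmless. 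The identity $\|f\|_{\Z^{p,q,r}_\beta}=\||f|^M\|^{1/M}_{\Z^{p/M,q/M,r/M}_{M\beta}}$ holds by definition, and the scaling condition becomes $M\beta_0-M\beta_1=\frac{d}{p_0/M}$.
\end{enumerate}
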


\begin{proof}
\noindent \textbf{Step 1: We show \eqref{eq: mixed embed 1}.}
    By Jensen's inequality and Lemma~\ref{lem: embedding in q}, the embedding \eqref{eq: mixed embed 1} can be reduced to show
    \begin{align*}
        \T^{p_0,\infty,r}_{\beta_0} \hookrightarrow \Z^{p_1,p_0,r}_{\beta_1}
    \end{align*}
    for any $0<r\leq \infty$. Now, by using Theorem~\ref{thm: real interpolation in p}, we obtain
    \begin{align}
    \label{eq: tent real int}
        \T^{p_0,\infty,r}_{\beta_0} = (\T^{\tilde p_0,\infty,r}_{\beta_0} ,\T^{\tilde p_1,\infty,r}_{\beta_0} )_{\theta,p_0},
    \end{align}
    for some $\theta\in(0,1)$ and $0<\tilde p_0<p_0<\tilde p_1<p_1<\infty$ such that $\frac{1}{p_0} = \frac{1-\theta}{\tilde p_0} + \frac{\theta}{\tilde p_1}$. Using the embedding of Lemma~\ref{lem: embedding tent into Z} and the weighted $\Z$-space embedding \cite[Thm.\@ 3.14]{Auscher_Bechtel_Haardt}, we get
    \begin{align*}
         \T^{\tilde p_0,\infty,r}_{\beta_0}\hookrightarrow \Z^{\tilde p_0,\infty,r}_{\beta_0} \hookrightarrow  \Z^{p_1,\infty,r}_{\tilde \beta_0}
    \end{align*}
    for $\tilde \beta_0 \coloneqq \beta_0+ \frac{d}{p_1} - \frac{d}{\tilde p_0} $, and 
    \begin{align*}
         \T^{\tilde p_1,\infty,r}_{\vphantom{\tilde \beta_1}\beta_0}\hookrightarrow \Z^{\tilde p_1,\infty,r}_{\vphantom{\tilde \beta_1}\beta_0} \hookrightarrow  \Z^{p_1,\infty,r}_{\tilde \beta_1}
    \end{align*}
    for $\tilde \beta_1= \beta_0+ \frac{d}{p_1} - \frac{d}{\tilde p_1} $. These embeddings, combined with \eqref{eq: tent real int} and the interpolation result for $\Z$-spaces in \cite[Thm.\@ 4.1]{Auscher_Bechtel_Haardt}, yield
    \begin{align*}
        \T^{p_0,\infty,r}_{\vphantom{\tilde \beta_1}\beta_0} = (\T^{\tilde p_0,\infty,r}_{\vphantom{\tilde \beta_1}\beta_0} ,\T^{\tilde p_1,\infty,r}_{\vphantom{\tilde \beta_1}\beta_0} )_{\theta,p_0}
        \hookrightarrow(\Z^{p_1,\infty,r}_{\tilde \beta_0} ,\Z^{p_1,\infty,r}_{\tilde \beta_1} )_{\theta,p_0}
        =\Z^{p_1,p_0,r}_{\vphantom{\tilde \beta_1}\beta_1},
    \end{align*}
    where we used
    \begin{align*}
        (1-\theta)\tilde \beta_0 + \theta\tilde \beta_1 &=(1-\theta) \Big(\beta_0+ \frac{d}{p_1}-\frac{d}{\tilde p_0} \Big) + \theta\Big(\beta_0+ \frac{d}{p_1}-\frac{d}{\tilde p_1} \Big)\\
        &= \beta_0 +\frac{d}{p_1}  -d\Big(\frac{1-\theta}{\tilde p_0} + \frac{\theta}{\tilde p_1}\Big) \\
        &= \beta_0+ \frac{d}{p_1}  -\frac{d}{p_0}\\
        &=\beta_1
    \end{align*}
    in the last line. This proves \eqref{eq: mixed embed 1}.\\
    
    \noindent \textbf{Step 2: We show \eqref{eq: mixed embed 2}.}
    By Jensen's inequality, the embedding \eqref{eq: mixed embed 1} can be reduced to
    \begin{align*}
        \Z^{p_0,p_1,r}_{\beta_0} \hookrightarrow \T^{p_1,q,r}_{\beta_1}
    \end{align*}
    for any $0<r\leq \infty$. Fix $0<\bar p_0<\bar p_1<\infty$ and $\theta\in (0,1)$ such that $0<p_0<\bar p_0<p_1< \bar p_1<\infty$ and
    \begin{align*}
        \frac{1}{p_1} = \frac{1-\theta}{\bar p_0} + \frac{\theta}{\bar p_1}.
    \end{align*}
    Moreover, we define
    \begin{align*}
        \bar \beta_0 \coloneqq \beta_1 + \frac{d}{p_0}- \frac{d}{\bar p_0}  \quad \text{and} \quad \bar \beta_1 \coloneqq \beta_1+ \frac{d}{p_0} - \frac{d}{\bar p_1} .
    \end{align*}
    Then, by Theorem~\ref{thm: Hardy-Sobolev embedding}, we have the embeddings
    \begin{align}
    \label{eq: mixed 1}
        \T^{p_0,q,r}_{\bar \beta_0} \hookrightarrow \T^{\bar p_0,q,r}_{\beta_1}\quad \text{and} \quad  \T^{p_0,q,r}_{\bar \beta_1} \hookrightarrow \T^{\bar p_1,q,r}_{\beta_1}.
    \end{align}
    Finally, notice that
    \begin{align*}
        (1-\theta)\bar \beta_0  + \theta \bar \beta_1 &=  \beta_1 + \frac{d}{p_0} - d\Big(\frac{1-\theta}{\bar p_0} + \frac{\theta}{\bar p_1}\Big)\\
        &=  \beta_1 + \frac{d}{p_0} - \frac{d}{p_1}\\
        &= \beta_0.
    \end{align*}
    Hence, the real interpolation results Theorem~\ref{thm: real interpolation in p} and Proposition~\ref{prop: real interpolation tent spaces full range}, together with \eqref{eq: mixed 1}, yield
    \begin{align*}
        \Z^{p_0,p_1,r}_{\beta_0} = (\T^{ p_0,q,r}_{\bar \beta_0} ,\T^{ p_0,q,r}_{\bar \beta_1} )_{\theta,p_1} \hookrightarrow(\T^{\bar p_0,q,r}_{\beta_1} ,\T^{\bar  p_1,q,r}_{ \beta_1} )_{\theta,p_1} = \T^{p_1,q,r}_{\beta_1},
    \end{align*}
    which finishes the proof
\end{proof}

\subsection{Duality: non-Banach range}
\label{sec: duality}

\noindent In this section, we develop a complete duality theory for the tent spaces $\T^{p,q,r}_\beta$. Specifically, we identify the dual spaces $(\T^{p,q,r}_\beta)'$ for parameters $p$ and/or $q$ in the non-Banach range. This complements Proposition~\ref{prop: duality Banach range}, where the parameters belong to the Banach range $1\leq p,q<\infty$. We begin with a duality theorem in which $1\leq p <\infty$ remains in the Banach range but $q$ does not. Recall the interpretation $q'=\infty$ in the case $0<q\leq 1$.

\begin{theorem}
\label{thm: duality p>1 but q<1}
    Let $1\leq p,r<\infty$, $0<q<\infty$ and $\beta\in\IR$. Then, we have
    \begin{align}
    \label{eq: dual estimate}
        \int\limits_0^\infty \int\limits_{\IR^d} |f(s,y)| \cdot |g(s,y)|\,\frac{\d y \d s}{s} 
        \lesssim\|f\|_{\T^{p',q',r'}_{-\beta} } \|g\|_{\T^{\vphantom{p',q',r'}p,q,r}_{\beta} }
    \end{align}
    for all measurable functions $f,g$. Moreover, we can identify $(\T^{p,q,r}_\beta)'\simeq \T^{\infty,q',r'}_{-\beta}$ with equivalent norms via the $\L^2$-duality pairing.
\end{theorem}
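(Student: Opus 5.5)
The plan is to reduce everything to Whitney boxes $\bar Q$ and the sequence spaces $\mathrm{\dot f}^\beta_{p,q}$ via Proposition~\ref{prop: dyadic char} and Proposition~\ref{prop: dyadic char p=infty}. I read the dual space as $\T^{p',q',r'}_{-\beta}$, in line with the introduction; for $p=1$ this is the stated $\T^{\infty,q',r'}_{-\beta}$. If $1\le q<\infty$, all parameters lie in the Banach range and Proposition~\ref{prop: duality Banach range} applies. So the new case is $0<q<1$, where $q'=\infty$.

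\textbf{Step 1: the estimate \eqref{eq: dual estimate}.} Split $\IR^{d+1}_+=\bigcup_{Q\in\square}\bar Q$. On $\bar Q$ we have $\frac{\d y\,\d s}{s}\simeq |Q|\,\frac{\d y\,\d s}{s^{d+1}}$, so Hölder's inequality on each box gives
\begin{align*}
\int\limits_0^\infty\int\limits_{\IR^d}|f||g|\,\frac{\d y\,\d s}{s}\lesssim\sum_{Q\in\square}|Q|\,a_Q b_Q=\int\limits_{\IR^d}\sum_{Q}\mathbf 1_Q(x)a_Qb_Q\,\d x,
\end{align*}
with
\begin{align*}
a_Q=|Q|^{\beta/d}\|f\|_{\L^{r'}(\bar Q,\frac{\d y\,\d s}{s^{d+1}})},\qquad b_Q=|Q|^{-\beta/d}\|g\|_{\L^{r}(\bar Q,\frac{\d y\,\d s}{s^{d+1}})}.
\end{align*}
Bound the integrand by $\sup_Q\mathbf 1_Q(x)a_Q\cdot\sum_Q\mathbf 1_Q(x)b_Q$. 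Since $q<1$, the embedding $\ell^q\subset\ell^1$ gives $\sum_Q\mathbf 1_Q(x)b_Q\le\big(\sum_Q\mathbf 1_Q(x)b_Q^q\big)^{1/q}$. Hölder's inequality in $x$ with exponents $p',p$ then yields $\|\sup_Q\mathbf 1_Q a_Q\|_{\L^{p'}}\,\|(\sum_Q\mathbf 1_Q b_Q^q)^{1/q}\|_{\L^p}$. By Proposition~\ref{prop: dyadic char}, the first factor is $\simeq\|f\|_{\T^{p',\infty,r'}_{-\beta}}$ when $p>1$, and the second is $\simeq\|g\|_{\T^{p,q,r}_\beta}$. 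For $p=1$ the first factor is $\sup_Q a_Q\simeq\|f\|_{\T^{\infty,\infty,r'}_{-\beta}}$, by Definition~\ref{def: tent spaces} and the same box covering. Consequently every $f\in\T^{p',\infty,r'}_{-\beta}$ defines a bounded functional through the pairing.

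\textbf{Step 2: surjectivity.} Let $\ell\in(\T^{p,q,r}_\beta)'$. By the last part of Proposition~\ref{prop: basic props}, $\ell$ restricted to functions supported in a compact $K$ is continuous on $\L^r(K,\frac{\d y\,\d s}{s})$. Since $1\le r<\infty$, Riesz representation gives $f\in\L^{r'}_{\loc}$ with $\ell(g)=\int fg\,\frac{\d y\,\d s}{s}$ for compactly supported $g\in\L^r$; these $g$ are dense in $\T^{p,q,r}_\beta$ by Proposition~\ref{prop: basic props}.

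To bound the norm of $f$, choose for each $Q$ a function $\varphi_Q$ supported in $\bar Q$ with $\|\varphi_Q\|_{\L^r(\bar Q,\frac{\d y\,\d s}{s^{d+1}})}=|Q|^{\beta/d}$ and phase chosen so that $\int f\varphi_Q\,\frac{\d y\,\d s}{s}\simeq|Q|\,a_Q$. For a finitely supported nonnegative sequence $(t_Q)_Q$, put $g=\sum_Q t_Q|Q|^{-1/2}\varphi_Q$. The $\bar Q$ are disjoint, so Proposition~\ref{prop: dyadic char} gives $\|g\|_{\T^{p,q,r}_\beta}\simeq\|(t_Q)_Q\|_{\mathrm{\dot f}^0_{p,q}}$ with the $\L^2$-normalized convention, while $\ell(g)\simeq\sum_Q t_Q|Q|^{1/2}a_Q$. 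Hence the sequence $(|Q|^{1/2}a_Q)_Q$ defines a bounded functional on $\mathrm{\dot f}^0_{p,q}$ with norm $\lesssim\|\ell\|$. By the Frazier--Jawerth duality $(\mathrm{\dot f}^0_{p,q})'=\mathrm{\dot f}^0_{p',\infty}$ for $1\le p<\infty$, $0<q<1$ \cite[Sec.\@ 5]{Frazier_Jawerth}, we obtain $\|(|Q|^{1/2}a_Q)_Q\|_{\mathrm{\dot f}^0_{p',\infty}}\lesssim\|\ell\|$. This is exactly $\|f\|_{\T^{p',\infty,r'}_{-\beta}}\lesssim\|\ell\|$, using Proposition~\ref{prop: dyadic char} for $p>1$ and Proposition~\ref{prop: dyadic char p=infty} for $p=1$.

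The main obstacle is this last step: matching normalizations precisely and confirming that the Frazier--Jawerth sequence-space duality is available for $q<1$, including the endpoint $p=1$, where the dual is $\mathrm{\dot f}^0_{\infty,\infty}$, i.e.\ a plain supremum. If that citation fails at $p=1$, I would argue directly: testing with a single $g=\varphi_Q$ already gives $\sup_Q a_Q\lesssim\|\ell\|$, because $\|\varphi_Q\|_{\T^{1,q,r}_\beta}\simeq|Q|$ by Proposition~\ref{prop: dyadic char}.
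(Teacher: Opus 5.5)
Your proof is correct. Your reading of the dual space as $\T^{p',q',r'}_{-\beta}$ (so $\T^{p',\infty,r'}_{-\beta}$ when $q<1$) is what the paper's proof actually establishes.

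Your Step~1 is the paper's argument written out by hand. The paper obtains \eqref{eq: dual estimate} by combining the nesting $\T^{p,q,r}_\beta\hookrightarrow\T^{p,1,r}_\beta$ (Lemma~\ref{lem: embedding in q}) with the Banach-range duality of Proposition~\ref{prop: duality Banach range}. That is the same H\"older and $\ell^q\subset\ell^1$ computation on Whitney boxes in compressed form.

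Step~2 is genuinely different. The paper proceeds as follows:
\begin{itemize}
\item it maps $\T^{p,q,r}_\beta$ into $\L^p(\ell^q)$ via the map $i$ of Lemma~\ref{lem: embedding into Lplq};
\item it extends $\Phi\circ i^{-1}$ by Hahn--Banach and represents the extension by some $(F_Q)_Q\in\L^{p'}(\ell^\infty)$;
\item it tests on single Whitney boxes to get $a_Q\lesssim\fint_Q|F_Q|$;
\item it concludes with $\sup_Q\mathbf 1_Q\fint_Q|F_Q|\le\cM(\sup_Q|F_Q|)$ and boundedness of $\cM$ on $\L^{p'}$.
\end{itemize}
That route needs only the classical duality $(\L^p(\ell^q))'=\L^{p'}(\ell^\infty)$. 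Its extension step, however, needs more care than the paper gives it. The map $i$ is not linear, since its entries are box norms. Moreover, $\L^p(\ell^q)$ with $q<1$ is not locally convex, so Hahn--Banach is not available off the shelf.

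Your construction needs no extension. The phase-aligned $\varphi_Q$ on the pairwise disjoint boxes $\bar Q$ give a linear map $J\colon(t_Q)_Q\mapsto\sum_Q t_Q|Q|^{-1/2}\varphi_Q$ with $\|Jt\|_{\T^{p,q,r}_\beta}\simeq\|t\|_{\mathrm{\dot f}^0_{p,q}}$. Hence $\ell\circ J$ is a bounded linear functional on finitely supported sequences. It extends to $\mathrm{\dot f}^0_{p,q}$ by density; you should state this step explicitly.

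The price of your route is importing $(\mathrm{\dot f}^0_{p,q})'=\mathrm{\dot f}^0_{p',\infty}$ for $q<1$. This does hold. If you prefer not to rely on the citation, you can verify it directly for $1<p<\infty$:
\begin{itemize}
\item Fix a finite family $\mathcal F$ of dyadic cubes and put $S_{\mathcal F}=\max_{P\in\mathcal F}\mathbf 1_Pa_P$.
\item Test with $t_Q=|Q|^{1/2}\sum 2^{k(p'-1)}$, where the sum runs over those $k$ for which $Q$ is maximal among the cubes $P\in\mathcal F$ with $a_P>2^k$.
\item Then $\ell(Jt)\gtrsim\sum_k 2^{kp'}|\{S_{\mathcal F}>2^k\}|\simeq\|S_{\mathcal F}\|_{\L^{p'}}^{p'}$.
\item Since $q<1$, the $\ell^q$-sum at each point collapses to a geometric series, which gives $\|t\|_{\mathrm{\dot f}^0_{p,q}}\lesssim\|S_{\mathcal F}\|_{\L^{p'}}^{p'-1}$.
\item Dividing yields $\|S_{\mathcal F}\|_{\L^{p'}}\lesssim\|\ell\|$ uniformly in $\mathcal F$, and monotone convergence finishes the argument.
\end{itemize}
Your single-box test for $p=1$ is already complete and elementary as it stands.
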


For the proof we need the following auxiliary lemma, which is a direct consequence of  Proposition~\ref{prop: dyadic char}.

\begin{lemma}
\label{lem: embedding into Lplq}
    Let $0<p,q,r<\infty$ and $\beta\in\IR$. Then, the map
    \begin{align*}
        i: \T^{p,q,r}_\beta \to \L^p(\ell^q), \quad f\mapsto  \mathbf{1}_{Q}(x)|Q|^{-\frac{\beta }{d}} \|f\|_{\L^r\big(\bar Q , \frac{\d y \d s}{s^{d+1}}\big)}
    \end{align*}
    satisfies $\|i(f)\|_{\L^p(\ell^q)}\simeq \|f\|_{{\T^{p,q,r}_\beta }}$. In particular, it is bounded, injective and has closed range.
\end{lemma}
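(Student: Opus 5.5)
The map $i$ sends $f$ to the family $(F_Q)_{Q\in\square}$ with $F_Q(x)\coloneqq \mathbf{1}_{Q}(x)|Q|^{-\frac{\beta}{d}}\|f\|_{\L^r(\bar Q,\frac{\d y\d s}{s^{d+1}})}$. Since $\square$ is countable, we enumerate it to view $i(f)$ as an element of $\L^p(\ell^q)$. Its norm is
\begin{align*}
    \|i(f)\|_{\L^p(\ell^q)} = \bigg\| \Big(\sum\limits_{Q\in\square} \mathbf{1}_{Q}(\cdot)|Q|^{-\frac{\beta q}{d}} \|f\|_{\L^r\big(\bar Q , \frac{\d y \d s}{s^{d+1}}\big)}^q \Big)^\frac{1}{q} \bigg\|_{\L^p},
\end{align*}
because $\mathbf{1}_Q^q=\mathbf{1}_Q$. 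Proposition~\ref{prop: dyadic char} then gives $\|i(f)\|_{\L^p(\ell^q)}\simeq\|f\|_{\T^{p,q,r}_\beta}$ directly, and boundedness is the upper bound in this equivalence. Each $F_Q$ is a constant times an indicator, hence Borel measurable, so $i(f)$ is a genuine element of $\L^0$.

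The map $i$ is not linear, since $f\mapsto\|f\|_{\L^r(\bar Q)}$ is only subadditive. We therefore read injectivity and closed range in the sense needed for the duality argument. For injectivity, suppose $i(f)=0$. Then the lower bound gives $\|f\|_{\T^{p,q,r}_\beta}=0$, so $f=0$ almost everywhere. The same conclusion also follows because the Whitney boxes $\bar Q$ partition $\IR^{d+1}_+$ up to null sets. If one prefers a linear map, we replace $i$ by $f\mapsto (\mathbf{1}_Q\,|Q|^{-\beta/d} f|_{\bar Q})_Q$ with values in $\L^p(\ell^q(\L^r))$. Its norm is identical, so linearity, injectivity and the norm equivalence all hold verbatim.

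For closed range, let $i(f_n)\to G$ in $\L^p(\ell^q)$. The two-sided estimate only compares norms of individual images, so with a nonlinear $i$ it does not show directly that $(f_n)$ is Cauchy. This is the one real subtlety, and I would handle it with the linear variant. For linear $i$ the estimate gives $\|f_n-f_m\|_{\T^{p,q,r}_\beta}\simeq\|i(f_n)-i(f_m)\|$, so $(f_n)$ is Cauchy. By completeness (Proposition~\ref{prop: basic props}) we get $f_n\to f$, hence $i(f_n)\to i(f)$ and $G=i(f)$. An isomorphism onto its range from a complete quasi-normed space always has closed range, and this is the standard fact I would invoke.
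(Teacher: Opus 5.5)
Your norm equivalence is exactly the paper's route. The paper gives no argument beyond calling the lemma a direct consequence of Proposition~\ref{prop: dyadic char}, and your observation that $\|i(f)\|_{\L^p(\ell^q)}$ is literally the right-hand side of that proposition is the whole content. You are also right that $i$, as written, is not linear; the paper does not address this.

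The gap is in your treatment of injectivity. For a nonlinear map, showing that $i(f)=0$ forces $f=0$ does not give injectivity, and here injectivity actually fails. We have $i(f)=i(-f)=i(|f|)$. More generally, $i(f)=i(g)$ whenever $\|f\|_{\L^r(\bar Q,\frac{\d y\d s}{s^{d+1}})}=\|g\|_{\L^r(\bar Q,\frac{\d y\d s}{s^{d+1}})}$ for every $Q\in\square$. So there is no ``sense needed for the duality argument'' in which the nonlinear $i$ qualifies. Step~2 of Theorem~\ref{thm: duality p>1 but q<1} uses $\Phi\circ i^{-1}$ as a linear functional on $\Rg(i)$, and this requires $i$ to be linear and genuinely injective. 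With the nonlinear $i$, the representation \eqref{eq: rep 4} would force $\Phi(-g)=\Phi(g)$.

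The ``in particular'' clause can therefore only be proved for your linear variant $f\mapsto(\mathbf{1}_Q|Q|^{-\beta/d}f|_{\bar Q})_Q$. Your argument for that variant is correct, but you should say plainly that it proves a modified lemma. The codomain becomes the vector-valued space $\L^p(\ell^q(\L^r))$, so any later use must work with the dual of that space rather than with $(\L^p(\ell^q))'\simeq\L^{p'}(\ell^\infty)$.

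Conversely, closed range is not the real subtlety for the map as written. Its range is the set of families $(c_Q\mathbf{1}_Q)_Q\in\L^p(\ell^q)$ with $c_Q\geq 0$. Every such family is attained by some $f=\sum_Q a_Q\mathbf{1}_{\bar Q}$, because the boxes $\bar Q$ partition $\IR^{d+1}_+$, and $f\in\T^{p,q,r}_\beta$ by the norm equivalence. This set is closed, because convergence in $\L^p(\ell^q)$ implies $\L^p$-convergence of each coordinate $c_Q^n\mathbf{1}_Q$.
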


\begin{proof}[Proof of Theorem~\ref{thm: duality p>1 but q<1}]
    According to Proposition~\ref{prop: duality Banach range}, it suffices to assume $0<q<1$. We will divide the proof into two steps. First, we show the inclusion $\T^{p',\infty,r'}_{-\beta} \subset(\T^{p,q,r}_\beta)'$. Here, we use the nesting property (Lemma~\ref{lem: embedding in q}) to embed the quasi-Banach tent space into a tent space in the Banach range. For the reverse inclusion $(\T^{p,q,r}_\beta)'\subset \T^{p',\infty,r'}_{-\beta} $, we will embed the tent space into a vector-valued Lebesgue space using Lemma~\ref{lem: embedding into Lplq}, for which duality results are already known.\\
    
    \noindent \textbf{Step 1: We show $\T^{p',\infty,r'}_{-\beta} \subset(\T^{p,q,r}_\beta)'$.}
    By Lemma~\ref{lem: embedding in q}, we have the continuous embedding
    \begin{align*}
        \T^{p,q,r}_\beta \subset \T^{p,1,r}_\beta.
    \end{align*}
    This, together with Proposition~\ref{prop: duality Banach range}, yields
    \begin{align*}
        \int\limits_0^\infty \int\limits_{\IR^d} |f(s,y)| \cdot |g(s,y)|\,\frac{\d y \d s}{s} \leq \|f\|_{\T^{p',\infty,r'}_{-\beta} } \|g\|_{\T^{\vphantom{p',q',r'}p,1,r}_{\beta} }\lesssim\|f\|_{\T^{p',\infty,r'}_{-\beta} }  \|g\|_{\T^{\vphantom{p',q',r'}p,q,r}_{\beta} }.
    \end{align*}
    This proves \eqref{eq: dual estimate} and the inclusion $\T^{p',\infty,r'}_{-\beta} \subset(\T^{p,q,r}_\beta)'$.\\

    \noindent \textbf{Step 2: We show $(\T^{p,q,r}_\beta)'\subset \T^{p',\infty,r'}_{-\beta} $.}
    Let $\Phi \in (\T^{p,q,r}_\beta)'$. Since compactly supported $\L^r$-functions embed continuously into $\T^{p,q,r}_\beta$, see Proposition~\ref{prop: basic props}, we observe that $\Phi \in \big(\L^r\big(K,\frac{\d y \d s}{s}\big)\big)'$ for each compact set $K\subset \IR^{d+1}_+$. Thus, we can find $f_K\in \L^{r'}\big(K,\frac{\d y \d s}{s}\big)$ such that
    \begin{align*}
        \Phi(g) = \iint\limits_{K} f_K(s,y) \, \overline{g(s,y)} \, \frac{\d y \d s}{s}, 
    \end{align*}
    for every $g\in \L^r\big(K,\frac{\d y \d s}{s}\big)$. Exhausting the whole $\IR^{d+1}_+$ by compact sets, we may find a global function $f\in \L^{r'}_{\loc}\big(\IR^{d+1}_+,\frac{\d y \d s}{s}\big)$ with the property
    \begin{align}
    \label{eq: rep 0}
        \Phi(g) = \iint\limits_{\IR^{d+1}_+} f(s,y) \, \overline{g(s,y)} \, \frac{\d y \d s}{s}
    \end{align}
    for all compactly supported $g\in \L^r\big(\IR^{d+1}_+,\frac{\d y \d s}{s}\big)$. Hence, by density (see Proposition~\ref{prop: basic props}), it suffices to show $f\in \T^{p',\infty,r'}_{-\beta}$. To do so, we first use Lemma~\ref{lem: embedding into Lplq} to see that $\Phi\circ i^{-1}$ is a bounded linear functional on $\Rg(i)$. Then, by the Hahn--Banach theorem, we can find an extension $\tilde \Phi \in (\L^p(\ell^q))'$ with $\|\tilde \Phi\| = \|\Phi\circ i^{-1}\|$ and $\tilde \Phi = \Phi\circ i^{-1}$ on $\Rg(i)$, or equivalently,
    \begin{align}
    \label{eq: rep 1}
        \tilde \Phi\circ i = \Phi \quad \text{on} \quad \T^{p,q,r}_\beta.
    \end{align}
    Due to the duality result \cite[Prop.\@ 2.11.1]{Triebel1}, we may identify $(\L^p(\ell^q))' \simeq \L^{p'}(\ell^\infty)$. Hence, we find a sequence $(F_Q)_{Q\in\square}\in \L^{p'}(\ell^{\infty})$ such that $\|F_Q\|_{\L^{p'}(\ell^{\infty})}=\|\tilde \Phi\|$ and $\tilde \Phi$ can be represented uniquely as
    \begin{align}
    \label{eq: rep 2}
        \tilde \Phi ((g_Q)_{Q}) = \int\limits_{\IR^d} \sum\limits_{Q\in\square} F_Q(x) \, \overline{g_Q(x)}\,\d x
    \end{align}
    for all $(g_Q)_{Q\in\square}\in \L^p(\ell^q)$. Combining \eqref{eq: rep 0}, \eqref{eq: rep 1} and \eqref{eq: rep 2}, we get
    \begin{align}
    \label{eq: rep 4}
        \iint\limits_{\IR^{d+1}_+} f(s,y) \, \overline{g(s,y)} \, \frac{\d y \d s}{s} = \int\limits_{\IR^d} \sum\limits_{Q\in\square} F_Q(x) \, \mathbf{1}_{Q}(x)|Q|^{-\frac{\beta }{d}} \|g\|_{\L^r\big(\bar Q , \frac{\d y \d s}{s^{d+1}}\big)}\,\d x
    \end{align}
    for all compactly supported $g\in \L^r\big(\IR^{d+1}_+,\frac{\d y \d s}{s}\big)$.
    Next, fix $Q\in\square$ arbitrary and $g\in \L^{r}\big(\bar Q, \frac{\d y \d s}{s}\big)$ with norm $\|g\|_{\L^{r}(\bar Q, \frac{\d y \d s}{s})} \leq 1 $. With \eqref{eq: rep 4} it follows
    \begin{align}
    \label{eq: id D}
       \bigg|\iint\limits_{\bar Q} f(s,y) \, \overline{g(s,y)} \, \frac{\d y \d s}{s}\bigg|&=\bigg|\int\limits_{Q}  F_Q(x) \, |Q|^{-\frac{\beta }{d}} \| g\|_{\L^{r}\big(\bar Q , \frac{\d y \d s}{s^{d+1}}\big)}\,\d x\bigg|\nonumber \\
       &\lesssim \int\limits_{Q}  |F_Q(x)| \,|Q|^{-\frac{1}{r}-\frac{\beta }{d}} \| g\|_{\L^{r}\big(\bar Q , \frac{\d y \d s}{s}\big)}\,\d x\\
       &\leq |Q|^{-\frac{1}{r}-\frac{\beta }{d}}\int\limits_{Q}  |F_Q(x)| \,\d x.\nonumber
    \end{align}
    Since $g$ was arbitrary, we conclude, by taking the supremum over all such $g$, that
    \begin{align}
    \label{eq: est D}
        |Q|^{\frac{\beta}{d}}\|f\|_{\L^{r'}\big(\bar Q , \frac{\d y \d s}{s^{d+1}}\big)}\lesssim |Q|^{\frac{\beta}{d}-\frac{1}{r'}}\|f\|_{\L^{r'}\big(\bar Q , \frac{\d y \d s}{s}\big)}
        \lesssim  \fint\limits_{Q}  |F_Q(x)| \,\d x.
    \end{align}
    Since $Q\in\square$ was arbitrary, \eqref{eq: est D} in conjunction with Proposition~\ref{prop: dyadic char}   yields
    \begin{align*} 
        \|f\|_{\T^{p',\infty,r'}_{-\beta}} &\simeq \bigg\|\sup\limits_{Q\in\square} \mathbf{1}_{Q}(\cdot)|Q|^{\frac{\beta}{d}}\|f\|_{\L^{r'}\big(\bar Q , \frac{\d y \d s}{s^{d+1}}\big)} \bigg\|_{\L^{p'}}\\
        &\lesssim  \bigg\|\sup\limits_{Q\in\square} \mathbf{1}_{Q}(\cdot)\fint\limits_{Q}  |F_Q(y)| \,\d y \bigg\|_{\L^{p'}}\\
        &\leq  \bigg\|\sup\limits_{Q\in\square} \mathcal{M}\big( |F_Q|\big)(\cdot) \bigg\|_{\L^{p'}}.
    \end{align*}
    Because $1<p'\leq \infty$, we can apply Fefferman--Stein's inequality, see \cite[Thm.\@ 2.1]{Ullrich1}, to obtain
    \begin{align*}
        \|f\|_{\T^{p',\infty,r'}_{-\beta}} &\lesssim\Big\|\sup\limits_{Q\in\square} |F_Q(\cdot)\big| \Big\|_{\L^{p'}} = \|(F_Q)_{Q\in\square}\|_{\L^{p'}(\ell^\infty)}= \|\tilde\Phi\|\leq \|\Phi\|.
    \end{align*}
    Hence, $f\in \T^{p',\infty,r'}_{-\beta}$, which proves the claim.
\end{proof}

Finally, we complete the duality theory for $\T^{p,q,r}_\beta$ by treating the case $0<p<1$. In that case, we can identify $(\T^{p,q,r}_\beta)'$ with a $\Z$-space from Definition~\ref{def: Z-space}. This is in line with the Besov--Triebel--Lizorkin duality theory, see Section~\ref{sec: introduction}.

\begin{theorem}
\label{thm: duality p<1}
    Let $0<p<1$, $0<q<\infty$, $1\leq r<\infty$ and $\beta\in\IR$. Then, we have
    \begin{align}
    \label{eq: duality}
        \int\limits_0^\infty \int\limits_{\IR^d} |f(s,y)| \cdot |g(s,y)|\,\frac{\d y \d s}{s} 
        \lesssim\|f\|_{\Z^{\infty,\infty,r'}_{-\beta+d(\frac{1}{p}-1)} } \|g\|_{\T^{\vphantom{\infty,\infty,r'}p,q,r}_{\beta} }
    \end{align}
    for all measurable functions $f,g$. Moreover, we can identify $(\T^{p,q,r}_\beta)'\simeq \Z^{\infty,\infty,r'}_{-\beta+d(\frac{1}{p}-1)}$ with equivalent norms via the $\L^2$-duality pairing.
\end{theorem}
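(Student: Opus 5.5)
The plan mirrors the proof of Theorem~\ref{thm: duality p>1 but q<1}: first prove the pairing estimate \eqref{eq: duality} by embedding $\T^{p,q,r}_\beta$ into a Banach-range space whose dual is known, then represent an arbitrary functional by a locally integrable function and control its $\Z^{\infty,\infty,r'}$-norm by testing against functions supported in single Whitney boxes.

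\textbf{Step 1: the estimate \eqref{eq: duality}.} Set $\gamma\coloneqq\beta-d(\frac1p-1)$. By Theorem~\ref{thm: Hardy-Sobolev embedding} with $p_0=p<1=p_1$, $r_1=r_0=r$ and $q_1=1$, we have $\T^{p,q,r}_\beta\hookrightarrow \T^{1,1,r}_{\gamma}$, since $\beta-\gamma=\frac dp-d$. By Lemma~\ref{lem: embedding tent into Z} with $p=q=1$, the space $\T^{1,1,r}_\gamma$ coincides with $\Z^{1,1,r}_\gamma$. Alternatively, one can go directly through Theorem~\ref{thm: mixed embedding}, \eqref{eq: mixed embed 1}, to get $\T^{p,q,r}_\beta\hookrightarrow\Z^{1,p,r}_\gamma\hookrightarrow \Z^{1,1,r}_\gamma$ via the $\ell^p\subset\ell^1$ nesting in the outer parameter. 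It then remains to show the elementary Hölder estimate
\[
\iint |fg|\,\frac{\d y\d s}{s}\lesssim \|f\|_{\Z^{\infty,\infty,r'}_{-\gamma}}\|g\|_{\Z^{1,1,r}_{\gamma}},
\]
which is the $\Z$-space Banach duality from \cite{Auscher_Bechtel_Haardt}. If needed, prove it directly: write the integral via an averaging trick as $\int_0^\infty\int_{\IR^d}\fint_{t/2}^t\fint_{B(x,t)}|fg|\,\d y\,\d s\,\d x\,\frac{\d t}{t}$, up to constants. Then apply Hölder in the Whitney box with exponents $r,r'$, the weights $s^{\gamma}s^{-\gamma}$, and take the $\sup_x$ of the $f$-factor. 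Note that $-\gamma=-\beta+d(\frac1p-1)$, as claimed.

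\textbf{Step 2: representation.} Let $\Phi\in(\T^{p,q,r}_\beta)'$. Exactly as in Step~2 of the proof of Theorem~\ref{thm: duality p>1 but q<1}, Proposition~\ref{prop: basic props} gives boundedness of $\Phi$ on $\L^r(K,\frac{\d y\d s}{s})$ for each compact $K$. This yields $f\in\L^{r'}_{\loc}$ representing $\Phi$ on compactly supported $\L^r$ functions, hence on all of $\T^{p,q,r}_\beta$ by density, since $p,q,r<\infty$. Note that Hahn--Banach is not available for $p<1$, so we must avoid it. Instead, fix $Q\in\square$ and test with $g$ supported in $\bar Q$. By Proposition~\ref{prop: dyadic char}, only the cubes $Q'$ with $\bar{Q'}=\bar Q$ contribute, so
\[
\|g\|_{\T^{p,q,r}_\beta}\simeq |Q|^{\frac1p-\frac\beta d}\|g\|_{\L^r(\bar Q,\frac{\d y\d s}{s^{d+1}})}.
\]
Taking the supremum over such $g$ gives $|Q|^{\frac\beta d-\frac1p}\,|Q|^{\dots}\|f\|_{\L^{r'}(\bar Q,\d y\d s/s^{d+1})}\lesssim\|\Phi\|$, after converting the measures, which carry a factor $|Q|^{\pm 1/r}$. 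The power bookkeeping should produce exactly
\[
\sup_Q |Q|^{\frac{\beta}{d}-(\frac1p-1)}\|f\|_{\L^{r'}(\bar Q,\frac{\d y\d s}{s^{d+1}})}\lesssim\|\Phi\|.
\]

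\textbf{Step 3: conclusion.} It remains to identify the left-hand side of the last display with $\|f\|_{\Z^{\infty,\infty,r'}_{-\beta+d(\frac1p-1)}}=\|f\|_{\T^{\infty,\infty,r'}_{-\gamma}}$. This follows from Proposition~\ref{prop: dyadic char p=infty} with $q=\infty$, since there the supremum over $P$ of the sum collapses to a supremum over single Whitney boxes. The main obstacle I expect is precisely this exponent bookkeeping in Step~2 (the $s^{-d-1}$ versus $s^{-1}$ normalizations and $|Q|^{1/p}$ from the $\L^p$ norm of $\mathbf 1_Q$), together with making sure that a single-box test function is legitimately controlled by Proposition~\ref{prop: dyadic char}. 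This second point is unproblematic because distinct dyadic cubes have disjoint Whitney boxes.
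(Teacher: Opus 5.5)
Your proposal is correct. The pairing estimate (Step~1) is the same as the paper's, but you prove the inclusion $(\T^{p,q,r}_\beta)'\subset\Z^{\infty,\infty,r'}_{-\beta+d(\frac1p-1)}$ by a different route.

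\textbf{Step~1.} Like the paper, you use the embedding $\T^{p,q,r}_\beta\hookrightarrow\T^{1,1,r}_{\beta-d(\frac1p-1)}$ from Theorem~\ref{thm: Hardy-Sobolev embedding}, followed by H\"older's inequality on Whitney boxes.

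\textbf{The paper's route for the inclusion.} The paper never represents a functional explicitly. It uses the sandwich
\[
\Z^{p,\min\{p,q\},r}_\beta\subset\T^{p,q,r}_\beta\subset\T^{1,1,r}_{\beta-d(\frac1p-1)},
\]
from Lemma~\ref{lem: embedding tent into Z} and Theorem~\ref{thm: Hardy-Sobolev embedding}. It then notes that both outer spaces have the same dual $\Z^{\infty,\infty,r'}_{-\beta+d(\frac1p-1)}$. For the right-hand space this is Proposition~\ref{prop: duality Banach range}; for the left-hand one it is the quasi-Banach $\Z$-space duality \cite[Thm.\@ 3.16]{Auscher_Bechtel_Haardt}. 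The dual of the middle space is therefore squeezed between them. This tacitly uses density of compactly supported $\L^r$ functions, so that restricting functionals to $\Z^{p,\min\{p,q\},r}_\beta$ is injective.

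\textbf{Your route.} Your argument is self-contained and uses only:
\begin{itemize}
\item local Riesz representation via Proposition~\ref{prop: basic props};
\item testing against functions supported in one Whitney box, whose norm is $|Q|^{\frac1p-\frac\beta d}\|g\|_{\L^r(\bar Q,\frac{\d y\d s}{s^{d+1}})}$ by Proposition~\ref{prop: dyadic char} and disjointness of the boxes $\bar Q$;
\item Proposition~\ref{prop: dyadic char p=infty} with $q=\infty$.
\end{itemize}
Your exponent bookkeeping is right. Since $s^d\simeq|Q|$ on $\bar Q$, the supremum of $|\iint_{\bar Q}f\bar g\,\frac{\d y\d s}{s}|$ over $\|g\|_{\L^r(\bar Q,\frac{\d y\d s}{s^{d+1}})}\le1$ is comparable to $|Q|\,\|f\|_{\L^{r'}(\bar Q,\frac{\d y\d s}{s^{d+1}})}$. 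This gives exactly $|Q|^{\frac\beta d-(\frac1p-1)}\|f\|_{\L^{r'}(\bar Q,\frac{\d y\d s}{s^{d+1}})}\lesssim\|\Phi\|$. Only the pointwise direction of Proposition~\ref{prop: dyadic char p=infty} is used, so you do not need $f\in\T^{\infty,\infty,r'}$ in advance.

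\textbf{Comparison.} The paper's version is shorter once the $\Z$-space duality is taken as a black box. Yours does not depend on that theorem. It also shows directly why $q$ drops out of the dual: functions living in a single Whitney box do not see $q$.

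\textbf{One ordering point.} In Step~2 you extend the integral representation from compactly supported functions to all of $\T^{p,q,r}_\beta$ by density. That extension should only be made after Step~3. It requires the pairing to be continuous on $\T^{p,q,r}_\beta$, which follows from Step~1 only once you know $f\in\Z^{\infty,\infty,r'}_{-\beta+d(\frac1p-1)}$.
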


\begin{proof}
    First, we observe with Theorem~\ref{thm: Hardy-Sobolev embedding} and Lemma~\ref{lem: embedding tent into Z} that
    \begin{align}
    \label{eq: DZ}
        \Z^{p,\min\{p,q\},r}_\beta \subset \T^{p,q,r}_{\beta} \subset \T^{1,1,r}_{\beta-d(\frac{1}{p}-1)}.
    \end{align}
    This, in conjunction with the duality theory of $\Z$-spaces (see \cite[Thm.\@ 3.16]{Auscher_Bechtel_Haardt}) and Proposition~\ref{prop: duality Banach range}, we get
    \begin{align*}
        \Z^{\infty,\infty,r'}_{-\beta+d(\frac{1}{p}-1)} =\T^{\infty,\infty,r'}_{-\beta+d(\frac{1}{p}-1)} &= (\T^{1,1,r}_{\beta-d(\frac{1}{p}-1)})' \\
        &\subset (\T^{p,q,r}_{\beta})'\\
        &\subset (\Z^{p,\min\{p,q\},r}_\beta)' =\Z^{\infty,\infty,r'}_{-\beta+d(\frac{1}{p}-1)}.
    \end{align*}
    Hence, we can identify $(\T^{p,q,r}_\beta)'$ with $\Z^{\infty,\infty,r'}_{-\beta+d(\frac{1}{p}-1)}$ via the $\L^2$-duality pairing. To show \eqref{eq: duality}, we introduce an extra average, use Fubini's theorem and Hölder's inequality, to get
    \begin{align*}
        &\int\limits_0^\infty \int\limits_{\IR^d} |f(s,y)| \cdot |g(s,y)|\,\frac{\d y \d s}{s} = \int\limits_0^\infty \int\limits_{\IR^d} \fint\limits_{s}^{2s} \fint\limits_{B(y,s)} |f(s,y)| \cdot |g(s,y)|\,\d x \d t \,\frac{\d y \d s}{s}\\
        &\lesssim \int\limits_0^\infty \int\limits_{\IR^d} \fint\limits_{\frac{t}{2}}^t \fint\limits_{B(x,t)} |f(s,y)| \cdot |g(s,y)|\,\d y \d s \,\frac{\d x \d t}{t}\\
        &\leq \int\limits_0^\infty \int\limits_{\IR^d} \bigg(\fint\limits_{\frac{t}{2}}^t \fint\limits_{B(x,t)} |s^{\beta-d(\frac{1}{p}-1)}f(s,y)|^{r'}\,\d y \d s\bigg)^\frac{1}{r'} \bigg(\fint\limits_{\frac{t}{2}}^t \fint\limits_{B(x,t)} |s^{-\beta+d(\frac{1}{p}-1)}g(s,y)|^{r}\,\d y \d s\bigg)^\frac{1}{r} \,\frac{\d x \d t}{t}\\
        &\leq \|f\|_{\Z^{\infty,\infty,r'}_{-\beta+d(\frac{1}{p}-1)}} \|g\|_{\T^{1,1,r}_{\beta-d(\frac{1}{p}-1)}}.
    \end{align*}
    Finally, by the second embedding of \eqref{eq: DZ}, we get
    \begin{align*}
        \int\limits_0^\infty \int\limits_{\IR^d} |f(s,y)| \cdot |g(s,y)|\,\frac{\d y \d s}{s}\leq \|f\|_{\Z^{\infty,\infty,r'}_{-\beta+d(\frac{1}{p}-1)}} \|g\|_{\T^{\vphantom{\infty,\infty,r'}1,1,r}_{\beta-d(\frac{1}{p}-1)}}\lesssim \|f\|_{\Z^{\infty,\infty,r'}_{-\beta+d(\frac{1}{p}-1)}}\ \|g\|_{\T^{\vphantom{\infty,\infty,r'}p,q,r}_{\beta}},
    \end{align*}
    which proves the claim.
\end{proof}

\begin{remark}
    It is known that the dual space of tent spaces $\T^{p,2}_\beta$ for $0<p<1$ can be identified with a tent space \enquote{beyond infinity}, see for example \cites{Amenta2, Auscher_Egert}. For $0<q\leq \infty$ and $\alpha,\beta\in \IR$, these spaces are denoted by $\T^{\infty,q}_{\beta;\alpha}$ and consist of all measurable functions $f$ such that
    \begin{align*}
        \|f\|_{\T^{\infty,q}_{\beta;\alpha}} \coloneqq \sup\limits_{t>0}\sup\limits_{x\in\IR^d}\frac{1}{t^\alpha} \bigg(\int\limits_0^t \fint\limits_{B(x,t)} |s^{-\beta}f(s,y)|^q\,\frac{\d y \d s}{s}\bigg)^\frac{1}{q}<\infty 
    \end{align*}
    with the usual modification in the infinite case.
    On the one hand, it was proven in \cite[Thm.\@ 1.11]{Amenta2} that, for $0<p<1$, we can identify $(\T^{p,2}_\beta)' \simeq \T^{\infty,2}_{-\beta;d(\frac{1}{p}-1)}$ with equivalence of norms. On the other hand, Theorem~\ref{thm: duality p<1} suggests that the dual space of $\T^{p,2}_\beta$ can be identified with a $\Z$-space. Indeed, for $q=r=2$, we have $\T^{p,q,r}_\beta = \T^{p,2}_\beta$, see \cite[Rem.\@ 3.2]{Auscher_Bechtel_Haardt}, such that
    \begin{align*}
        \T^{\infty,2}_{-\beta;d(\frac{1}{p}-1)}=(\T^{p,2}_\beta)' = (\T^{p,2,2}_\beta)' = \Z^{\infty,\infty,2}_{-\beta+d(\frac{1}{p}-1)}.
    \end{align*}
    This shows, that tent spaces \enquote{beyond infinity} are equivalent to certain $\Z$-spaces. This observation will be recorded in the next proposition. Additionally, we provide a direct proof of this phenomenon including a broader range of parameters.
\end{remark}

\begin{proposition}
    For $0<q\leq \infty$, $\alpha>0$ and $\beta\in \IR$ we have $\Z^{\infty,\infty,q}_{\beta+\alpha} = \T^{\infty,q}_{\beta;\alpha}$ with equivalent norms.
\end{proposition}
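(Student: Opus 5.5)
We prove the two inequalities $\|f\|_{\Z^{\infty,\infty,q}_{\beta+\alpha}}\lesssim\|f\|_{\T^{\infty,q}_{\beta;\alpha}}$ and its converse directly from the definitions; no interpolation or duality is needed. Write
\begin{align*}
    A(t,x)\coloneqq\bigg(\fint\limits_{\frac{t}{2}}^t\fint\limits_{B(x,t)}|s^{-\beta}f(s,y)|^q\,\d y\d s\bigg)^\frac{1}{q},
\end{align*}
so that $\|f\|_{\Z^{\infty,\infty,q}_{\beta+\alpha}}\simeq\sup_{t,x}t^{-\alpha}A(t,x)$. Here we use that $s\simeq t$ on the Whitney box $(\frac t2,t]\times B(x,t)$, so the extra weight $s^{-\alpha}$ may be replaced by $t^{-\alpha}$. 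We only treat $q<\infty$; the case $q=\infty$ is identical with essential suprema.

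\textbf{Step 1: $\Z\lesssim\T$.} Fix $(t,x)$. The Whitney box $(\frac t2,t]\times B(x,t)$ lies in the Carleson box $(0,t]\times B(x,t)$, and on it $\frac{\d s}{s}\simeq \frac{\d s}{t}$. Therefore
\begin{align*}
    t^{-\alpha}A(t,x)\lesssim t^{-\alpha}\bigg(\int\limits_0^t\fint\limits_{B(x,t)}|s^{-\beta}f|^q\,\frac{\d y\d s}{s}\bigg)^\frac{1}{q}\le\|f\|_{\T^{\infty,q}_{\beta;\alpha}}.
\end{align*}
This direction is immediate.

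\textbf{Step 2: $\T\lesssim\Z$.} Fix $(t,x)$ and decompose $(0,t]=\bigcup_{k\ge0}(2^{-k-1}t,2^{-k}t]$. At scale $t_k=2^{-k}t$, cover $B(x,t)$ by $N_k\lesssim 2^{kd}$ balls $B(x_j,t_k)$ with centres $x_j\in B(x,t)$. For each $k$ this gives
\begin{align*}
    \int\limits_{t_k/2}^{t_k}\fint\limits_{B(x,t)}|s^{-\beta}f|^q\,\frac{\d y\d s}{s}\lesssim\frac{1}{|B(x,t)|}\sum_j |B(x_j,t_k)|\,A(t_k,x_j)^q\lesssim\sup_{z}A(t_k,z)^q,
\end{align*}
because $N_k|B(x_j,t_k)|\lesssim|B(x,t)|$. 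Bounding $A(t_k,z)\le t_k^{\alpha}\|f\|_{\Z^{\infty,\infty,q}_{\beta+\alpha}}$ and summing over $k$ yields
\begin{align*}
    \int\limits_0^t\fint\limits_{B(x,t)}|s^{-\beta}f|^q\,\frac{\d y\d s}{s}\lesssim\|f\|^q_{\Z^{\infty,\infty,q}_{\beta+\alpha}}\sum_{k\ge0}(2^{-k}t)^{\alpha q}\simeq t^{\alpha q}\|f\|^q_{\Z^{\infty,\infty,q}_{\beta+\alpha}}.
\end{align*}
Taking $q$-th roots, multiplying by $t^{-\alpha}$ and taking the supremum over $(t,x)$ finishes the proof.

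The only real point is Step 2. The covering must have bounded overlap, so that the ball-counting constant does not depend on $k$. The geometric series converges precisely because $\alpha>0$, which explains that hypothesis: for $\alpha\le0$ the sum diverges, and one expects the identity to fail.
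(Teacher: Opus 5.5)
Your proof is correct and is essentially the paper's argument. Step 1 is the same direct comparison of the Whitney box with the Carleson box, and Step 2 uses the same dyadic splitting of $(0,t]$, covering of $B(x,t)$ by $\lesssim 2^{kd}$ balls of radius $2^{-k}t$, and geometric series that converges because $\alpha>0$; the only thing the covering step needs is the cardinality bound $N_k\lesssim 2^{kd}$, not bounded overlap.
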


\begin{proof}
    We will show the assertion for $0<q<\infty$ since the case $q=\infty$ follows directly by definition of the spaces.\\
    
    \noindent \textbf{Step 1: $\T^{\infty,q}_{\beta;\alpha}\subset \Z^{\infty,\infty,q}_{\beta+\alpha}$.}
    Let $f\in \T^{\infty,q}_{\beta;\alpha}$ and fix $x\in \IR^d$ and $t>0$. Then, we get by a direct computation
    \begin{align*}
        \bigg(\fint\limits_{\frac{t}{2}}^t \fint\limits_{B(x,t)} |s^{-(\beta+\alpha)}f(s,y)|^q\,\d y \d s\bigg)^\frac{1}{q} \lesssim\frac{1}{t^\alpha} \bigg(\int\limits_0^t \fint\limits_{B(x,t)} |s^{-\beta}f(s,y)|^q\,\frac{\d y \d s}{s}\bigg)^\frac{1}{q}.
    \end{align*}
    Hence, taking suprema in $x$ and $t$ on both sides yields $\|f\|_{\Z^{\infty,\infty,q}_{\beta+\alpha}}\lesssim \|f\|_{\T^{\infty,q}_{\beta;\alpha}}$.\\

    \noindent \textbf{Step 2: $\Z^{\infty,\infty,q}_{\beta+\alpha} \subset \T^{\infty,q}_{\beta;\alpha}$.}
    Let $f\in \Z^{\infty,\infty,q}_{\beta+\alpha}$ and fix  $x\in \IR^d$ and $t>0$. Using a dyadic splitting gives us
    \begin{align}
    \label{eq: E1}
        \frac{1}{t^\alpha} \bigg(\int\limits_0^t \fint\limits_{B(x,t)} |s^{-\beta}f(s,y)|^q\,\frac{\d y \d s}{s}\bigg)^\frac{1}{q} &=\frac{1}{t^\alpha} \bigg(\sum\limits_{k=0}^\infty \,\int\limits_{2^{-k-1}t}^{2^{-k}t}  \fint\limits_{B(x,t)} |s^{-\beta}f(s,y)|^q\,\frac{\d y \d s}{s}\bigg)^\frac{1}{q}.
    \end{align}
    Now, for each $k\in\IN_0$ there exist $J_k\subset \IN$ and $(x_j)_{j\in J_k}\subset \IR^d$ such that
    \begin{align*}
        B(x, t) \subset \bigcup_{j\in J_k} B(x_j, 2^{-k}t) \quad \text{and} \quad |J_k| \leq C2^{kd}
    \end{align*}
    for some constant $C>0$ depending only on $d$. This covering allows us to estimate \eqref{eq: E1} by
    \begin{align*}
       \frac{1}{t^\alpha} \bigg(\int\limits_0^t \fint\limits_{B(x,t)} |s^{-\beta}f(s,y)|^q\,\frac{\d y \d s}{s}\bigg)^\frac{1}{q} &\lesssim \frac{1}{t^\alpha} \bigg(\sum\limits_{k=0}^\infty \, \int\limits_{2^{-k-1}t}^{2^{-k}t} \sum\limits_{j\in J_k} t^{-d}\int\limits_{B(x_j,2^{-k}t)} |s^{-\beta}f(s,y)|^q\,\frac{\d y \d s}{s}\bigg)^\frac{1}{q}.
    \end{align*}
    Since $s\simeq 2^{-k}t$, we can introduce average integrals to estimate the term on the right-hand side by
    \begin{align*}
        &\frac{1}{t^\alpha} \bigg(\sum\limits_{k=0}^\infty \, \int\limits_{2^{-k-1}t}^{2^{-k}t} \sum\limits_{j\in J_k} t^{-d}\int\limits_{B(x_j,2^{-k}t)} |s^{-\beta}f(s,y)|^q\,\frac{\d y \d s}{s}\bigg)^\frac{1}{q}\\
        & \lesssim  \bigg(\sum\limits_{k=0}^\infty 2^{-k(d+q\alpha)} \sum\limits_{j\in J_k}\frac{1}{(2^{-k}t)^{\alpha q}}\fint\limits_{2^{-k-1}t}^{2^{-k}t}  \fint\limits_{B(x_j,2^{-k}t)} |s^{-\beta}f(s,y)|^q\,\d y \d s\bigg)^\frac{1}{q}\\
        &\lesssim \bigg(\sum\limits_{k=0}^\infty 2^{-k(d+q\alpha)} \sum\limits_{j\in J_k} \,\fint\limits_{2^{-k-1}t}^{2^{-k}t}  \fint\limits_{B(x_j,2^{-k}t)} |s^{-(\beta+\alpha)}f(s,y)|^q\,\d y \d s\bigg)^\frac{1}{q}.
    \intertext{Taking suprema with respect to the average integrals and using the property $|J_k| \leq C2^{kd}$ implies}
        &\leq \bigg(\sum\limits_{k=0}^\infty 2^{-k(d+q\alpha)} \sum\limits_{j\in J_k} \sup\limits_{\tau>0}\sup\limits_{z \in\IR^d}\bigg(\fint\limits_{\frac{\tau}{2}}^{\tau}  \fint\limits_{B(z,\tau)} |s^{-(\beta+\alpha)}f(s,y)|^q\,\d y \d s\bigg)\bigg)^\frac{1}{q}\\
        &\lesssim \bigg(\sum\limits_{k=0}^\infty 2^{-kq\alpha} \|f\|^q_{\Z^{\infty,\infty,q}_{\beta+\alpha}}\bigg)^\frac{1}{q}\\
        &\lesssim \|f\|_{\Z^{\infty,\infty,q}_{\beta+\alpha}},
    \end{align*}
    where we used the fact that $\alpha>0$ in the last step. Thus, we have shown that
    \begin{align*}
         \frac{1}{t^\alpha} \bigg(\int\limits_0^t \fint\limits_{B(x,t)} |s^{-\beta}f(s,y)|^q\,\frac{\d y \d s}{s}\bigg)^\frac{1}{q}\lesssim \|f\|_{\Z^{\infty,\infty,q}_{\beta+\alpha}}
    \end{align*}
    holds for all $x\in\IR^d$ and $t>0$. Taking suprema in $x$ and $t$ concludes $\Z^{\infty,\infty,q}_{\beta+\alpha} \subset \T^{\infty,q}_{\beta;\alpha}$.
\end{proof}

\subsection{Real interpolation of tent spaces}
\label{sec: real interpolation}

\noindent In this section, we present a real interpolation theory for $\T^{p,q,r}_\beta$-spaces. First, we show that $\T^{p,q,r}_\beta$ forms a real interpolation scale in the parameter $p$, see Theorem~\ref{thm: real interpolation in p}. Its proof is based on the characterization via discrete local means (Proposition~\ref{prop: char with m}) and is inspired by \cite[Thm.\@ 6.4 + Cor.\@ 6.7]{Frazier_Jawerth}, where the authors show analogous results for Triebel–Lizorkin spaces. Second, it was already shown in \cite[Prop.\@ 4.4]{Auscher_Bechtel_Haardt} that $\Z^{p,q,r}_\beta$-spaces for $p<\infty$ can be obtained by real interpolation of $\T^{p,q,r}_\beta$-spaces, in the same way as Besov spaces can be derived from Triebel--Lizorkin spaces, see Section~\ref{sec: introduction}. However, there does not exist a corresponding characterization of the endpoint space $\Z^{\infty,q,r}_\beta$. We will treat the missing endpoint characterization in Proposition~\ref{prop: real interpolation tent spaces full range}.

We start with a brief introduction to real interpolation theory and refer to the monograph \cite[Chp.\@ 3]{Berg_Löfström} for a detailed exposition.  Let $(X_0,X_1)$ be an interpolation couple, that is, a pair of (quasi-)Banach spaces $X_0$ and $X_1$ that are continuous embedded in a Hausdorff topological vector space. Then we define the (quasi-)Banach space $X_0+X_1 = \{x_0+x_1: x_0\in X_0, x_1\in X_1\}$  equipped with the (quasi-)norm
\begin{align*}
    \|x\|_{X_0+X_1} \coloneqq \inf\limits_{\substack{x_0\in X_0, x_1\in X_1\\ x=x_0+x_1}} \|x_0\|_{X_0} + \|x_1\|_{X_1}.
\end{align*}
For every $x\in X_0 + X_1$ and $t>0$ we define the $K$-functional
\begin{align*}
    K(t,x,X_0,X_1) \coloneqq \inf\limits_{\substack{x_0\in X_0, x_1\in X_1\\ x=x_0+x_1}} \|x_0\|_{X_0} + t\|x_1\|_{X_1}.
\end{align*}
Moreover, we define the $K_\infty$-functional 
\begin{align*}
    K_\infty(t,x,X_0,X_1) \coloneqq \inf\limits_{\substack{x_0\in X_0, x_1\in X_1\\ x=x_0+x_1}}\max\Big\{ \|x_0\|_{X_0} , t\|x_1\|_{X_1}\Big\}.
\end{align*}
It is easy to see that $K_\infty(t,x,X_0,X_1) \simeq K(t,x,X_0,X_1)$. Lastly, we define the $E$-functional
\begin{align*}
    E(t,x,X_0,X_1) \coloneqq  \inf\limits_{\substack{x_1\in X_1\\ \|x_1\|_{X_1}\leq t}} \|x-x_1\|_{X_0}.
\end{align*}
It was observed in \cite[Sec.\@ 6]{Frazier_Jawerth}, that $K_\infty$ is the right inverse of $E$ and that, consequently, one has
\begin{align}
\label{eq: K E}
    K_\infty(t,x,X_0,X_1)\leq C K_\infty(t,y,Y_0,Y_1)
\end{align}
for all $t>0$ if and only if
\begin{align}
\label{eq: E K}
    E(Ct,x,X_0,X_1)\leq C E(t,y,Y_0,Y_1)
\end{align}
for all $t>0$.
Finally, for $0<\theta<1$ and $0<q\leq \infty$, we define the real interpolation space $(X_0,X_1)_{\theta,q}$ as the set of all $x\in X_0+X_1$ such that
\begin{align}
\label{eq: def real int space}
    \|x\|_{\theta,q} \coloneqq \bigg(\int\limits_0^\infty t^{-\theta q}  \big[K(t,x,X_0,X_1)\big]^q \, \frac{\d t}{t}\bigg)^\frac{1}{q}<\infty.
\end{align}
It is known that $(X_0,X_1)_{\theta,q}$ is a (quasi-)Banach space equipped with the (quasi-)norm $\|\cdot\|_{\theta,q}$. Our main result reads as follows.

\begin{theorem}
\label{thm: real interpolation in p}
    Let $0< p_0<p_1\leq \infty$ and $0<q,r\leq \infty$. Furthermore, let $\beta\in\IR$ and $\theta\in (0,1)$. Then, we have
    \begin{align*}
        (\T^{p_0,q,r}_{\beta} , \T^{p_1,q,r}_{\beta} )_{\theta, p} = \T^{p,q,r}_\beta,
    \end{align*}
    where $\frac{1}{p} = \frac{1-\theta}{p_0} + \frac{\theta}{p_1}$.
\end{theorem}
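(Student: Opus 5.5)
The plan is to transfer the problem to the sequence-space level and then follow the Frazier--Jawerth argument \cite[Thm.\@ 6.4 + Cor.\@ 6.7]{Frazier_Jawerth}. Set $s_Q \coloneqq |Q|^{\frac12}\|f\|_{\L^r(\bar Q,\frac{\d y\d s}{s^{d+1}})}$. By Propositions~\ref{prop: dyadic char} and~\ref{prop: dyadic char p=infty}, the map $J\colon f\mapsto (s_Q)_Q$ satisfies $\|f\|_{\T^{p,q,r}_\beta}\simeq\|J f\|_{\mathrm{\dot f}^{\beta}_{p,q}}$ for every $0<p\le\infty$. However, $J$ is not linear, so the interpolation cannot simply be pulled back. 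Instead, I will work with the discrete local means of Proposition~\ref{prop: char with m}, namely $\|f\|_{\T^{p,q,r}_\beta}\simeq\|m^{q,r}_{\beta,c}(f)\|_{\L^p}$, and compare the $K$-functional of $f$ in the tent couple with the $K$-functional of $m^{q,r}_{\beta,c}(f)$ in the couple $(\L^{p_0},\L^{p_1})$. Once
\begin{align*}
K(t,f,\T^{p_0,q,r}_\beta,\T^{p_1,q,r}_\beta)\simeq K\big(t,m^{q,r}_{\beta,c}(f),\L^{p_0},\L^{p_1}\big)
\end{align*}
holds (possibly with different values of $c$ on the two sides), the theorem follows from $(\L^{p_0},\L^{p_1})_{\theta,p}=\L^p$ and the definition \eqref{eq: def real int space}.

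\textbf{Upper bound.} We need $K$ for $f$ to be at most a constant times $K$ for $m(f)$. By \eqref{eq: K E}--\eqref{eq: E K} it suffices to estimate the $E$-functional. Given $\lambda>0$, split $f=f\mathbf{1}_{\Omega_\lambda}+f\mathbf{1}_{\Omega_\lambda^c}$, where $\Omega_\lambda$ is the union of the Whitney boxes $\bar Q$ of those cubes $Q$ with $Q\subset\{m^{q,r}_{\beta,c}(f)>\lambda\}$ (equivalently, the $Q$ for which some dyadic ancestor $P\supset Q$ has $m_{\beta,P,c}>\lambda$). Since the Whitney boxes partition $\IR^{d+1}_+$, both pieces are obtained by restricting the sequence $(s_Q)$ to a subfamily of cubes. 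The claims are the following.
\begin{enumerate}
\item The piece $f\mathbf{1}_{\Omega_\lambda^c}$ satisfies $m(f\mathbf{1}_{\Omega_\lambda^c})\lesssim\min\{m(f),\lambda\}$ pointwise, with a possibly different $c$.
\item The piece $f\mathbf{1}_{\Omega_\lambda}$ satisfies $m(f\mathbf{1}_{\Omega_\lambda})\lesssim m(f)\mathbf{1}_{\{m(f)>\lambda\}}$.
\end{enumerate}
Both estimates are exactly the sequence-space lemmas of \cite[Sec.\@ 6]{Frazier_Jawerth}, and they transfer through \eqref{eq: equiv mf = ms}. With these in hand, Proposition~\ref{prop: char with m} gives $\|f\mathbf{1}_{\Omega_\lambda}\|_{\T^{p_0}}\lesssim\|m(f)\mathbf{1}_{\{m(f)>\lambda\}}\|_{\L^{p_0}}$ and $\|f\mathbf{1}_{\Omega_\lambda^c}\|_{\T^{p_1}}\lesssim\|\min\{m(f),\lambda\}\|_{\L^{p_1}}$. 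This is the standard truncation decomposition that computes $K(t,m(f),\L^{p_0},\L^{p_1})$ up to constants, by the Holmstedt-type formula. The endpoint $p_1=\infty$ requires no special treatment here, because Proposition~\ref{prop: char with m} covers $p=\infty$.

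\textbf{Lower bound.} Given any decomposition $f=f_0+f_1$, we need $m(f)\lesssim m(f_0)+m(f_1)$ in an appropriate sense. Because $\|\cdot\|_{\L^r(\bar Q)}$ is a quasi-norm, $G_P(f)\lesssim G_P(f_0)+G_P(f_1)$, and for medians this yields $m_{P,c}(f)\lesssim m_{P,c/2}(f_0)+m_{P,c/2}(f_1)$. Proposition~\ref{prop: char with m} holds for every $c\in(0,1)$, so $\|m_{c/2}(f_i)\|_{\L^{p_i}}\simeq\|f_i\|_{\T^{p_i}}$. Taking the infimum over decompositions then gives $K(t,m_c(f))\lesssim K(t,f)$. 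Since these are functions rather than abstract sequences, one must check that the tent spaces form a compatible couple, for instance inside $\L^r_{\loc}(\IR^{d+1}_+)$ via Proposition~\ref{prop: basic props}.

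\textbf{Main obstacle.} I expect the pointwise control of the truncated pieces in the upper bound to be the hard step, especially the bound $m(f\mathbf{1}_{\Omega_\lambda^c})\lesssim\lambda$. It requires the careful choice of $\Omega_\lambda$ through the median sets and the comparison of medians at different values of $c$; this is where \cite[Lem.\@ 6.1--6.3]{Frazier_Jawerth} is used. My first approach would be to verify that those lemmas depend only on the values $|s_Q|$ and on the dyadic geometry, so that they apply verbatim via \eqref{eq: equiv Gf = Gs}.
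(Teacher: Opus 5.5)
Your lower bound matches the paper's Step~1, and the overall plan can be made to work. However, claim (2) is false as stated, so the upper bound has a gap. The gap sits in the $\L^{p_0}$-piece, not where you expected it.

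Write $U_\lambda\coloneqq\{m^{q,r}_{\beta,c}(f)>\lambda\}$ and drop $q,r,\beta$ from the notation. The function $G_P(f\mathbf 1_{\Omega_\lambda})$ lives on $P\cap U_\lambda$. So a cube $P$ that meets $U_\lambda^c$ but is largely filled by $U_\lambda$ can have $m_{P,c'}(f\mathbf 1_{\Omega_\lambda})>0$. Hence $m_{c'}(f\mathbf 1_{\Omega_\lambda})$ need not vanish off $U_\lambda$.

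Concretely, let $d=1$ and $c=\frac12$, and split $P=[0,1)$ into $N$ dyadic intervals $R_1,\dots,R_N$. Let $S_i$ be the left half of $R_i$. Let $f$ live on the Whitney boxes of $R_i$ and $S_i$ ($i<N$), with $G_{R_i}(f)=A$ on $S_i$ and $G_{R_i}(f)=\varepsilon$ on $R_i\setminus S_i$, where $\varepsilon<\lambda<A$. For other $c$, replace $S_i$ by a union of dyadic subintervals with $|S_i|=c|R_i|$. Then $U_\lambda=P\setminus R_N$ and $f\mathbf 1_{\Omega_\lambda}=f$. But $m_{P,c'}(f)\ge\varepsilon$ whenever $c'\le 1-\frac1N$, so $m_{c'}(f\mathbf 1_{\Omega_\lambda})\ge\varepsilon$ on $R_N$, where $m_c(f)\mathbf 1_{U_\lambda}=0$. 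Since $N$ is arbitrary, no choice of median parameter rescues (2), so it cannot be imported from \cite{Frazier_Jawerth}.

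What you actually need is only the norm bound $\|f\mathbf 1_{\Omega_\lambda}\|_{\T^{p_0,q,r}_\beta}\lesssim\|m_c(f)\mathbf 1_{U_\lambda}\|_{\L^{p_0}}$, and that is true. There are two ways to get it.

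\emph{The paper's device.} Apply Lemma~\ref{lem: dyadic char with subset} with $E_Q=\{x\in Q: G_Q(f)(x)\le m_c(f)(x)\}$, which satisfies $|E_Q|\ge(1-c)|Q|$, together with \cite[(5.10)]{Frazier_Jawerth}. For the surviving cubes $Q\subset U_\lambda$ one has $E_Q\subset U_\lambda$. So the restricted square function is at most $m_c(f)\mathbf 1_{U_\lambda}$ pointwise.

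\emph{Staying with medians.} Monotonicity gives $m_c(f\mathbf 1_{\Omega_\lambda})\le m_c(f)$ on $U_\lambda$. For $P\not\subset U_\lambda$ one has $m_{P,c}(f\mathbf 1_{\Omega_\lambda})\le m_{P,c}(f)\le\lambda$, and this median vanishes unless $|P\cap U_\lambda|\ge c|P|$. Let $V_\lambda$ be the union of such cubes, so that $|V_\lambda|\le c^{-1}|U_\lambda|$ by passing to maximal cubes. Then $m_c(f\mathbf 1_{\Omega_\lambda})\le m_c(f)\mathbf 1_{U_\lambda}+\lambda\mathbf 1_{V_\lambda}$. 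Since $\lambda|U_\lambda|^{1/p_0}\le\|m_c(f)\mathbf 1_{U_\lambda}\|_{\L^{p_0}}$, the norm bound follows.

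By contrast, claim (1), which you flagged as the main obstacle, is immediate with the same $c$ and constant $1$. If $P\subset U_\lambda$, then $G_P(f\mathbf 1_{\Omega_\lambda^c})\equiv0$. Otherwise $m_{P,c}(f\mathbf 1_{\Omega_\lambda^c})\le m_{P,c}(f)\le\lambda$.

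With this repair your route is complete and genuinely differs from the paper's. Your truncation step, via Holmstedt's formula with $\lambda=g^*(t^\alpha)$, $g=m_c(f)$ and $\frac1\alpha=\frac1{p_0}-\frac1{p_1}$, handles all $p_1\le\infty$ at once without reiteration. The paper proves the $K$-functional equivalence only for $p_1=\infty$. It does so via the $E$-functional and the Frazier--Jawerth split according to whether $|Q\cap\{m>t\}|>\frac{|Q|}{2}$, controlling both pieces through the sets $E_Q^\pm$. It then reaches $p_1<\infty$ by quasi-Banach reiteration.
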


The strategy to prove this theorem is similar to the corresponding result for Triebel--Lizorkin spaces, see \cite[Thm.\@ 6.4]{Frazier_Jawerth}. There, one uses the so-called $\varphi$-transform to reduce matters to sequence spaces and standard interpolation theory for $\L^p$-spaces. In our situation, we will use the discrete characterization from Section~\ref{sec: dyadic char} to use their derived identities and imitate their procedure. However, the reader is advised to keep a copy of the mentioned reference at hand. We start with the following key result.

\begin{theorem}
\label{thm: intermediate result}
    Let $0< p_0<\infty$, $0<q,r\leq \infty$ and $\beta\in\IR$. Then, for every $f \in \T^{p_0,q,r}_{\beta} + \T^{\infty,q,r}_{\beta}$ we have
    \begin{align*}
        K(t,f,\T^{p_0,q,r}_{\beta} , \T^{\infty,q,r}_{\beta}) \simeq K(t,m^{q,r}_{\beta,\frac{1}{4}}(f),\L^{p_0} , \L^{\infty}).
    \end{align*}
\end{theorem}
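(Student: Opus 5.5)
The plan is to transfer the statement to sequence spaces and quote the corresponding result of Frazier--Jawerth. For $f\in\L^0(\IR^{d+1}_+)$ set $s(f)_Q\coloneqq |Q|^{\frac12}\|f\|_{\L^r(\bar Q,\frac{\d y\d s}{s^{d+1}})}$. By Propositions~\ref{prop: dyadic char} and~\ref{prop: dyadic char p=infty} we have $\|f\|_{\T^{p,q,r}_\beta}\simeq \|s(f)\|_{\mathrm{\dot f}^{\beta}_{p,q}}$ for all $0<p\leq\infty$. By \eqref{eq: equiv mf = ms}, $m^{q,r}_{\beta,\frac14}(f)=m^{\beta q}_{\frac14}(s(f))$. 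Frazier--Jawerth prove, in \cite[Thm.\@ 6.4]{Frazier_Jawerth}, the sequence space analogue $K(t,s,\mathrm{\dot f}^\beta_{p_0,q},\mathrm{\dot f}^\beta_{\infty,q})\simeq K(t,m_{1/4}^{\beta q}(s),\L^{p_0},\L^\infty)$, via the $E$-functional and the equivalence \eqref{eq: K E}$\Leftrightarrow$\eqref{eq: E K}. The map $f\mapsto s(f)$ is not linear, however, so the $K$-functionals cannot simply be identified. I would therefore redo the two inequalities directly, mirroring their argument.

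\textbf{Upper bound} ($K$ for tent spaces $\lesssim$ $K$ for $\L^p$). Given $t$, use the $E$-functional formulation and write $\lambda$ for the cutoff level attached to a near-optimal decomposition of $m\coloneqq m^{q,r}_{\beta,\frac14}(f)$ in $(\L^{p_0},\L^\infty)$. Frazier--Jawerth decompose $s$ by splitting the index set of cubes: $s=s\mathbf 1_{\{Q:\,Q\subset\Omega_\lambda\}}+s\mathbf 1_{\{Q:\,Q\not\subset\Omega_\lambda\}}$, where $\Omega_\lambda=\{m>\lambda\}$. They then show
\begin{align*}
\|s\mathbf 1_{\{Q\not\subset\Omega_\lambda\}}\|_{\mathrm{\dot f}_{\infty,q}^\beta}\lesssim\lambda,\qquad \|s\mathbf 1_{\{Q\subset\Omega_\lambda\}}\|_{\mathrm{\dot f}_{p_0,q}^\beta}\lesssim \|m\mathbf 1_{\Omega_\lambda}\|_{\L^{p_0}}.
\end{align*}
I would copy this at the level of functions: $f_0\coloneqq f\mathbf 1_{\bigcup_{Q\subset\Omega_\lambda}\bar Q}$ and $f_1\coloneqq f-f_0$. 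The Whitney boxes $\bar Q$ are pairwise disjoint and tile $\IR^{d+1}_+$, so $s(f_0)=s(f)\mathbf 1_{\{Q\subset\Omega_\lambda\}}$ and $s(f_1)=s(f)\mathbf 1_{\{Q\not\subset\Omega_\lambda\}}$ exactly. Hence the Frazier--Jawerth norm bounds transfer verbatim to $\T^{p_0,q,r}_\beta$ and $\T^{\infty,q,r}_\beta$. Integrating over the level parameter, as in their proof, then yields the $E$-estimate \eqref{eq: E K}, and hence the $K$-estimate.

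\textbf{Lower bound.} If $f=f_0+f_1$, then by the quasi-triangle inequality in $\L^r(\bar Q)$ we get $s(f)_Q\lesssim s(f_0)_Q+s(f_1)_Q$. This gives $G^{q,r}_{\beta,P}(f)\lesssim G^{q,r}_{\beta,P}(f_0)+G^{q,r}_{\beta,P}(f_1)$, and the median satisfies $m_{P,1/4}(a+b)\leq m_{P,1/8}(a)+m_{P,1/8}(b)$. Therefore $m_{1/4}(f)\lesssim m_{1/8}(f_0)+m_{1/8}(f_1)$. By Proposition~\ref{prop: char with m} (valid for every $c$) we have $\|m_{1/8}(f_i)\|_{\L^{p_i}}\simeq\|f_i\|_{\T^{p_i,q,r}_\beta}$ with $p_1=\infty$. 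Taking the infimum over decompositions gives $K(t,m,\L^{p_0},\L^\infty)\lesssim K(t,f,\T^{p_0,q,r}_\beta,\T^{\infty,q,r}_\beta)$.

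The main obstacle is the upper bound. One has to check that the sublevel splitting in \cite[Sec.\@ 6]{Frazier_Jawerth} is a splitting of the index set only, so that the disjointness of the Whitney boxes makes it realizable by the cut-offs $f_0,f_1$. One must also check that their $\mathrm{\dot f}_{\infty,q}$-bound for the "good" part uses only the $m$-function, which we control through \eqref{eq: equiv mf = ms}.
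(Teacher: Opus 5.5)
Your proposal is correct and follows the paper's overall plan, with one substantive difference in how the upper bound splits $f$.

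\textbf{Lower bound.} This is the paper's Step~1: the quasi-triangle inequality for $G^{q,r}_{\beta,P}$, the $\frac14$/$\frac18$ median subadditivity, and Proposition~\ref{prop: char with m}. Your phrase ``taking the infimum'' skips a small step. A pointwise bound $m\lesssim a+b$ is not yet a decomposition. The paper supplies one with $g_0=\min\{m^{q,r}_{\beta,\frac14}(f),C_q m^{q,r}_{\beta,\frac18}(f_0)\}$ and $g_1=m^{q,r}_{\beta,\frac14}(f)-g_0$.

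\textbf{Upper bound.} As in the paper, you estimate the $E$-functional of a Whitney-box cut-off of $f$. The difference is the splitting rule; below $m=m^{q,r}_{\beta,\frac14}(f)$.

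The paper puts $Q$ into $f_0$ when $|Q\cap\{m>t\}|>\frac{|Q|}{2}$, and into $f_1$ otherwise. This majority rule lets it treat both parts the same way. It applies Lemma~\ref{lem: dyadic char with subset} with $E^\pm_Q=\{x\in Q: G^{q,r}_{\beta,Q}(f)(x)\leq m(x)\}\cap Q^\pm_t$, which has measure at least $\frac{|Q|}{4}$. It then uses the pointwise bound (5.10) of \cite{Frazier_Jawerth}.

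Your containment rule $Q\subset\Omega_\lambda$ also works. However, the two norm bounds are then not a verbatim quotation of Frazier--Jawerth, and each needs its own short proof.

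For $f_0$ your rule is actually simpler. The subset lemma with $E_Q=\{x\in Q: G^{q,r}_{\beta,Q}(f)(x)\leq m(x)\}$ applies directly, since $E_Q\subset Q\subset\Omega_\lambda$ automatically.

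For $f_1$ the subset argument fails, because $|Q\setminus\Omega_\lambda|$ can be an arbitrarily small fraction of $|Q|$. Instead use Proposition~\ref{prop: char with m} with $p=\infty$:
\begin{itemize}
\item If $P\subset\Omega_\lambda$, then $G^{q,r}_{\beta,P}(f_1)\equiv 0$.
\item If $P\not\subset\Omega_\lambda$, pick $x_0\in P$ with $m(x_0)\leq\lambda$. Since $G^{q,r}_{\beta,P}(f_1)\leq G^{q,r}_{\beta,P}(f)$, you get $m^{q,r}_{\beta,P,\frac14}(f_1)\leq m^{q,r}_{\beta,P,\frac14}(f)\leq m(x_0)\leq\lambda$.
\end{itemize}
Hence $\|f_1\|_{\T^{\infty,q,r}_\beta}\lesssim\lambda$.

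So the paper's rule buys one uniform mechanism for both parts. Yours makes the $\T^{p_0}$-part trivial but needs the median characterization for the $\T^{\infty}$-part.

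\textbf{Closing the estimate.} No integration over a level parameter is needed. The bound $\|m\mathbf{1}_{\{m>\lambda\}}\|_{\L^{p_0}}\leq 2E(\frac{\lambda}{2},m,\L^{p_0},\L^\infty)$ closes the $E$-estimate directly, as in the paper.
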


\begin{proof}
    For simplicity, we present the proof only for finite exponents $q$ and $r$. The case of infinite exponents follows with the usual modifications. We divide the proof into two steps.\\

    \noindent \textbf{Step 1: $ K(t,m^{q,r}_{\beta,\frac{1}{4}}(f),\L^{p_0} , \L^{\infty})\lesssim K(t,f,\T^{p_0,q,r}_{\beta} , \T^{\infty,q,r}_{\beta}) $}.
    Let $f= f_0 + f_1 \in \T^{p_0,q,r}_{\beta} + \T^{\infty,q,r}_{\beta}$ be an arbitrary but fixed decomposition. Recall the definitions of $m^{q,r}_{\beta,Q,\frac{1}{4}}(f)$ and $m^{q,r}_{\beta,\frac{1}{4}}(f)$ in \eqref{eq: def of m1/4} and \eqref{eq: def of m}. Then, by subadditivity, we get
    \begin{align*}
        m^{q,r}_{\beta,Q,\frac{1}{4}}(f) \leq C_q \big(m^{q,r}_{\beta,Q,\frac{1}{8}}(f_0) + m^{q,r}_{\beta,Q,\frac{1}{8}}(f_1) \big)
    \end{align*}
    for some constant $C_q>0$ depending only on $q$, which implies
    \begin{align}
    \label{eq: m subadd}
        m^{q,r}_{\beta,\frac{1}{4}}(f)(x) \leq C_q \big(m^{q,r}_{\beta,\frac{1}{8}}(f_0)(x) + m^{q,r}_{\beta,\frac{1}{8}}(f_1)(x) \big)
    \end{align}
    for all $x\in\IR^d$. Next, define the functions
    \begin{align*}
        g_0(x)\coloneqq \min\{m^{q,r}_{\beta,\frac{1}{4}}(f)(x) , C_q m^{q,r}_{\beta,\frac{1}{8}}(f_0)(x) \}
        \quad \text{and} \quad g_1(x) = m^{q,r}_{\beta,\frac{1}{4}}(f)(x)- g_0(x).
    \end{align*}
    First, notice that $m^{q,r}_{\beta,\frac{1}{4}}(f) = g_0 + g_1$. Second, by Proposition~\ref{prop: char with m}, we have
    \begin{align*}
        \|g_0\|_{\L^{p_0}} \leq C_q\|  m^{q,r}_{\beta,\frac{1}{8}}(f_0)\|_{\L^{p_0}}\simeq\|f_0\|_{ \T^{p_0,q,r}_{\beta}}.
    \end{align*}
    Third, by a case distinction for $g_0$, and \eqref{eq: m subadd}, we have
    \begin{align*}
        |g_1(x)|\leq |m^{q,r}_{\beta,\frac{1}{4}}(f)(x)-   C_q m^{q,r}_{\beta,\frac{1}{8}}(f_0)(x) |\leq C_q |m^{q,r}_{\beta,\frac{1}{8}}(f_1)(x)|
    \end{align*}
    for all $x\in\IR^d$. Thus, by Proposition~\ref{prop: char with m}, it follows
    \begin{align*}
        \|g_1\|_{\L^{\infty}} \leq   C_q \|m^{q,r}_{\beta,\frac{1}{8}}(f_1)\|_{\L^{\infty}} \simeq\|f_1\|_{ \T^{\infty,q,r}_{\beta}}.
    \end{align*}
    Using both norm estimates of $g_0$ and $g_1$ yields
    \begin{align*}
         K(t,m^{q,r}_{\beta,\frac{1}{4}}(f),\L^{p_0} , \L^{\infty}) &\leq \|g_0\|_{\L^{p_0}}  + t\|g_1\|_{\L^{\infty}}\lesssim \|f_0\|_{ \T^{p_0,q,r}_{\beta}}  + t\|f_1\|_{ \T^{\infty,q,r}_{\beta}}.
    \end{align*}
    Since this holds for every decomposition $f= f_0 + f_1 \in \T^{p_0,q,r}_{\beta} + \T^{\infty,q,r}_{\beta}$, we get
    \begin{align*}
        K(t,m^{q,r}_{\beta,\frac{1}{4}}(f),\L^{p_0} , \L^{\infty})\lesssim K(t,f,\T^{p_0,q,r}_{\beta} , \T^{\infty,q,r}_{\beta}),
    \end{align*}
    which proves the first direction.\\
    
    \noindent \textbf{Step 2: $ K(t,f,\T^{p_0,q,r}_{\beta} , \T^{\infty,q,r}_{\beta}) \lesssim K(t,m^{q,r}_{\beta,\frac{1}{4}}(f),\L^{p_0} , \L^{\infty})$.}
    Due to the equivalence of \eqref{eq: K E} and \eqref{eq: E K}, as well as the equivalence of the $K$ and $K_\infty$-functionals, it suffices to show that
    \begin{align}
    \label{eq: E ineq}
         E(Ct,f,\T^{p_0,q,r}_{\beta} , \T^{\infty,q,r}_{\beta}) \leq C E(t,m^{q,r}_{\beta,\frac{1}{4}}(f),\L^{p_0} , \L^{\infty})
    \end{align}
    for all $t>0$ and some $C>0$.
    We start by defining for every $Q\in\square$ and $t>0$ the subsets
    \begin{align}
    \label{def: Q_t}
        Q_t^+ \coloneqq Q\cap \{x\in\IR^d: m^{q,r}_{\beta,\frac{1}{4}}(f)(x)>t\} \quad \text{and} \quad Q_t^-\coloneqq Q\setminus Q_t^+.
    \end{align}
    Furthermore, we define
    \begin{align*}
        A^+_t\coloneqq \Big\{Q\in\square: |Q_t^+|>\frac{|Q|}{2}\Big\}\quad \text{and} \quad A^-_t\coloneqq \Big\{Q\in\square: |Q_t^-|\geq\frac{|Q|}{2}\Big\}.
    \end{align*}
    Observe that $\square = A^+_t \cup A^-_t$. Thus,
    \begin{align*}
        f_0\coloneqq \sum\limits_{Q\in  A^+_t} \mathbf{1}_{\bar Q}f \quad \text{and} \quad f_1\coloneqq \sum\limits_{Q\in  A^-_t} \mathbf{1}_{\bar Q}f
    \end{align*}
    satisfy $f=f_0 + f_1$. Next, we introduce the subsets
    \begin{align*}
        E^+_Q \coloneq \big\{x\in Q: G^{q,r}_{\beta,Q}(f)(x)\leq m^{q,r}_{\beta,\frac{1}{4}}(f)(x) \big\}\cap Q^+_t
    \end{align*}
    if $Q\in A_t^+$, and 
    \begin{align*}
        E^-_Q \coloneq \big\{x\in Q: G^{q,r}_{\beta,Q}(f)(x)\leq m^{q,r}_{\beta,\frac{1}{4}}(f)(x) \big\}\cap Q^-_t
    \end{align*}
    if $Q\in A_t^-$. By definition of $m^{q,r}_{\beta,\frac{1}{4}}(f)$ in \eqref{eq: def of m}, we observe that
    \begin{align*}
        \big| \big\{x\in Q: G^{q,r}_{\beta,Q}(f)(x)\leq m^{q,r}_{\beta,\frac{1}{4}}(f)(x) \big\} \big| &= |Q|- \big| \big\{x\in Q: G^{q,r}_{\beta,Q}(f)(x)> m^{q,r}_{\beta,\frac{1}{4}}(f)(x) \big\} \big|\\
        &\geq |Q|-\frac{1}{4}|Q|\\
        &\geq \frac{3}{4}|Q|.
    \end{align*}
    Since $|Q^\pm_t|\geq \frac{|Q|}{2}$, we conclude by a set-theoretical argument that
    \begin{align*}
        |E_Q^{\pm}| = \big|\big\{x\in Q: G^{q,r}_{\beta,Q}(f)(x)\leq m^{q,r}_{\beta,\frac{1}{4}}(f)(x) \big\}\cap Q^\pm_t \big|\geq \frac{|Q|}{4}.
    \end{align*}
    Thus, we can apply Lemma~\ref{lem: dyadic char with subset} to $E^+_Q$ in place of $E_Q$ to obtain
    \begin{align*}
        \|f_0\|_{\T^{p_0,q,r}_{\beta}}^{p_0} &\simeq  \int\limits_{\IR^d} \Big(\sum\limits_{Q\in\square} \mathbf{1}_{E^+_Q}(x)|Q|^{-\frac{\beta q}{d}} \|f_0\|_{\L^r\big(\bar Q , \frac{\d y \d s}{s^{d+1}}\big)}^q \Big)^\frac{p_0}{q}\,\d x.
    \end{align*}
    Using the definitions of $f_0$ and $Q_t^+$, with the support properties of Whitney boxes $\bar Q$, we get
    \begin{align}
    \label{eq: f0 bound}
        \|f_0\|_{\T^{p_0,q,r}_{\beta}}^{p_0}&\simeq \int\limits_{\IR^d} \Big(\sum\limits_{Q\in A^+_t} \mathbf{1}_{E^+_Q}(x)|Q|^{-\frac{\beta q}{d}} \|f\|_{\L^r\big(\bar Q , \frac{\d y \d s}{s^{d+1}}\big)}^q \Big)^\frac{p_0}{q}\,\d x \nonumber\\
        &= \int\limits_{\big\{x\in\IR^d: m^{q,r}_{\beta,\frac{1}{4}}(f)(x)>t \big\}} \Big(\sum\limits_{Q\in A^+_t} \mathbf{1}_{E^+_Q}(x)|Q|^{-\frac{\beta q}{d}} \|f\|_{\L^r\big(\bar Q , \frac{\d y \d s}{s^{d+1}}\big)}^q \Big)^\frac{p_0}{q}\,\d x.
    \end{align}
    Similarly, using Lemma~\ref{lem: dyadic char with subset} to $E^-_Q$ in place of $E_Q$, we derive
    \begin{align*}
        \|f_1\|_{\T^{\infty,q,r}_{\beta}} &\simeq \sup\limits_{P\in\square} \bigg(\fint\limits_{P}\sum\limits_{Q\subset P}  \mathbf{1}_{E^-_Q}(x)|Q|^{-\frac{\beta q}{d}} \|f_1\|_{\L^r\big(\bar Q , \frac{\d y \d s}{s^{d+1}}\big)}^q \,\d x \bigg)^\frac{1}{q}.
    \end{align*}Using the definitions of $f_1$ and $Q_t^-$, and the support properties of Whitney boxes $\bar Q$, we obtain
    \begin{align}
    \label{eq: f1 bound}
        \|f_1\|_{\T^{\infty,q,r}_{\beta}} &\simeq\sup\limits_{P\in\square} \bigg(\fint\limits_{P} \sum\limits_{\substack{Q\subset P^-_t \\ Q\in A_t^-}} \mathbf{1}_{E^{-}_Q}(x)|Q|^{-\frac{\beta q}{d}} \|f\|_{\L^r\big(\bar Q , \frac{\d y \d s}{s^{d+1}}\big)}^q \,\d x \bigg)^\frac{1}{q} \nonumber\nonumber \\
        &\leq\sup\limits_{P\in\square} \bigg(\frac{1}{|P|}\int\limits_{P_t^-} \sum\limits_{Q\in A_t^-} \mathbf{1}_{E^{-}_Q}(x)|Q|^{-\frac{\beta q}{d}} \|f\|_{\L^r\big(\bar Q , \frac{\d y \d s}{s^{d+1}}\big)}^q \,\d x \bigg)^\frac{1}{q},
    \end{align}
    where we used $E_Q^-\subset Q_t^-\subset P_t^-$ in the last step.
    Due to the observations \eqref{eq: equiv Gf = Gs} and \eqref{eq: equiv mf = ms} and the definitions of $E_Q^\pm$, we can utilize the inequality (5.10) of \cite{Frazier_Jawerth}, which reads in our situation as follows:
    \begin{align*}
        \Big(\sum\limits_{Q\in\square} \mathbf{1}_{E^\pm_Q}(x)|Q|^{-\frac{\beta q}{d}} \|f\|_{\L^r\big(\bar Q , \frac{\d y \d s}{s^{d+1}}\big)}^q \Big)^\frac{1}{q} \lesssim  m^{q,r}_{\beta,\frac{1}{4}}(f)(x)
    \end{align*}
    for every $x\in\IR^d$. Applying this estimate in \eqref{eq: f0 bound} and \eqref{eq: f1 bound} yields
    \begin{align*}
          \|f_0\|_{\T^{p_0,q,r}_{\beta}}^{p_0}\leq C_0\int\limits_{\big\{x\in\IR^d: m^{q,r}_{\beta,\frac{1}{4}}(f)(x)>t \big\}} |(m^{q,r}_{\beta}(f)(x)|^{p_0}\,\d x
    \end{align*}
    for some generic constant $C_0>0$, and
    \begin{align*}
        \|f_1\|_{\T^{\infty,q,r}_{\beta}} \leq C_1\sup\limits_{P\in\square} \bigg(\frac{1}{|P|}\int\limits_{P_t^-} | m^{q,r}_{\beta,\frac{1}{4}}(f)(x)|^q \,\d x \bigg)^\frac{1}{q}
        \leq C_1\sup\limits_{P\in\square} \bigg(\frac{1}{|P|}\int\limits_{P_t^-} t^q \,\d x\bigg)^\frac{1}{q}
        \leq C_1t,
    \end{align*}
    for some generic constant $C_1>0$, where we used the definition of $P_t^-$ from \eqref{def: Q_t} in the second step. Both estimates of $f_0$ and $f_1$ imply the following bound on the $E$-functional
    \begin{align}
    \label{eq: E int}
        E(C_1t, f, \T^{p_0,q,r}_{\beta} , \T^{\infty,q,r}_{\beta})\leq C_0 \bigg(\int\limits_{\{x\in\IR^d:  m^{q,r}_{\beta,\frac{1}{4}}(f)(x)>t\}} \big|m^{q,r}_{\beta,\frac{1}{4}}(f)(x) \big|^{p_0}\,\d x\bigg)^\frac{1}{p_0}.
    \end{align}
    Finally, we claim that the right-hand side of this inequality can be estimated by\\
    $ E(t,m^{q,r}_{\beta,\frac{1}{4}}(f),\L^{p_0} , \L^{\infty})$. Indeed, if $g\in \L^\infty(\IR^d)$ with $\|g\|_{\L^\infty}\leq  t/2$, then
    \begin{align*}
        \int\limits_{\{x\in\IR^d:  m^{q,r}_{\beta,\frac{1}{4}}(f)(x)>t\}} \big|m^{q,r}_{\beta,\frac{1}{4}}(f)(x) \big|^{p_0}\,\d x&=\int\limits_{\{x\in\IR^d:  m^{q,r}_{\beta,\frac{1}{4}}(f)(x)>t\}} \big|2m^{q,r}_{\beta,\frac{1}{4}}(f)(x) -m^{q,r}_{\beta,\frac{1}{4}}(f)(x) \big|^{p_0}\,\d x\\
        &\leq\int\limits_{\{x\in\IR^d:  m^{q,r}_{\beta,\frac{1}{4}}(f)(x)>t\}} \big|2m^{q,r}_{\beta,\frac{1}{4}}(f)(x)- 2g(x) \big|^{p_0}\,\d x\\
        &\leq 2^{p_0}\|m^{q,r}_{\beta,\frac{1}{4}}(f)- g\|^{p_0}_{\L^{p_0}}
    \end{align*}
    Since this holds for all $g\in \L^\infty(\IR^d)$ with $\|g\|_{\L^\infty}\leq  t/2$, we conclude
    \begin{align*}
         \bigg(\int\limits_{\{x\in\IR^d:  m^{q,r}_{\beta,\frac{1}{4}}(f)(x)>t\}} \big|m^{q,r}_{\beta,\frac{1}{4}}(f)(x) \big|^{p_0}\,\d x\bigg)^\frac{1}{p_0} &\leq 2 E\Big(\frac{t}{2},m^{q,r}_{\beta,\frac{1}{4}}(f),\L^{p_0} , \L^{\infty}\Big).
    \end{align*}
    Plugging this estimate into \eqref{eq: E int} yields 
    \begin{align*}
        E(C_1t, f, \T^{p_0,q,r}_{\beta} , \T^{\infty,q,r}_{\beta})\leq 2C_0 E\Big(\frac{t}{2},m^{q,r}_{\beta,\frac{1}{4}}(f),\L^{p_0} , \L^{\infty}\Big)
    \end{align*}
    for  all $t>0$. Finally, a change of variables $t=2t'$ and a case-by-case analysis of the resulting constants leads to the estimate
    \begin{align*}
        E(Ct', f, \T^{p_0,q,r}_{\beta} , \T^{\infty,q,r}_{\beta})\leq C E(t',m^{q,r}_{\beta,\frac{1}{4}}(f),\L^{p_0}\L^{\infty})
    \end{align*}
    for some generic constant $C>0$ and all $t'>0$. This proves the \eqref{eq: E ineq}, and thus, completes the proof.
\end{proof}

With Theorem~\ref{thm: intermediate result} at hand, we can reduce Theorem~\ref{thm: real interpolation in p} to the well-known real interpolation of $\L^p$-spaces combined with the reiteration theorem for real interpolation spaces.

\begin{proof}[Proof of Theorem~\ref{thm: real interpolation in p}]
    We first prove the theorem in the endpoint case $0<p_0<p_1=\infty$. Afterwards, we conclude by a reiteration argument.\\
    
    \noindent\textbf{Case 1: $0<p_0<p_1=\infty$}. Let $f\in \T^{p_0,q,r}_{\beta} + \T^{\infty,q,r}_{\beta} $. Then, by definition of real interpolation spaces \eqref{eq: def real int space} and Theorem~\ref{thm: intermediate result}, we have
    \begin{align*}
        \|f\|_{(\T^{p_0,q,r}_{\beta} , \T^{\infty,q,r}_{\beta} )_{\theta, p}} &= \bigg(\int\limits_0^\infty t^{-\theta q}  \big[K(t,f,\T^{p_0,q,r}_{\beta} , \T^{\infty,q,r}_{\beta})\big]^q \, \frac{\d t}{t}\bigg)^\frac{1}{q}\\
        &\simeq\bigg(\int\limits_0^\infty t^{-\theta q}  \big[K(t,m^{q,r}_{\beta,\frac{1}{4}}(f),\L^{p_0} , \L^{\infty})\big]^q \, \frac{\d t}{t}\bigg)^\frac{1}{q}\\
        &=\|m^{q,r}_{\beta,\frac{1}{4}}(f)\|_{(\L^{p_0} , \L^{\infty} )_{\theta, p}}.
    \end{align*}
    It is well-known that $(\L^{p_0} , \L^{\infty} )_{\theta, p}= \L^p$ if $\frac{1}{p} = \frac{1-\theta}{p_0}$. Hence,
    \begin{align*}
        \|f\|_{(\T^{p_0,q,r}_{\beta} , \T^{\infty,q,r}_{\beta} )_{\theta, p}} \simeq \|m^{q,r}_{\beta,\frac{1}{4}}(f)\|_{\L^p} \simeq \|f\|_{\T^{p,q,r}_\beta},
    \end{align*}
    where we used Proposition~\ref{prop: char with m} in the last step. \\

    \noindent \textbf{Case 2: $0<p_0<p_1<\infty$}. Let $0<\tilde p<p_0$ and $\theta_0,\theta_1\in(0,1)$ such that
    \begin{align*}
         \frac{1}{p_0} = \frac{1-\theta_0}{\tilde p} \quad \text{and} \quad \frac{1}{p_1} = \frac{1-\theta_1}{ \tilde p}.
    \end{align*}
    Then, Case~1, together with the reiteration theorem for quasi-Banach spaces (\cite[Thm.\@ 3.11.5]{Berg_Löfström}), imply
    \begin{align*}
        (\T^{p_0,q,r}_{\beta} , \T^{p_1,q,r}_{\beta} )_{\theta, p} &= \Big((\T^{\tilde p,q,r}_{\beta} , \T^{\infty,q,r}_{\beta} )_{\theta_0, p_0} , (\T^{\tilde p,q,r}_{\beta} , \T^{\infty,q,r}_{\beta} )_{\theta_1, p_1}\Big)_{\theta, p}\\
        &= (\T^{\tilde p,q,r}_{\beta} , \T^{\infty,q,r}_{\beta} )_{(1-\theta)\theta_0+ \theta \theta_1, p}.
    \end{align*}
    Observe that
    \begin{align*}
        \frac{1}{p} = \frac{1-\theta}{p_0} + \frac{\theta}{p_1} = \frac{(1-\theta)(1-\theta_0)}{\tilde p} + \frac{\theta(1-\theta_1)}{\tilde p} = \frac{1-[(1-\theta)\theta_0+ \theta \theta_1]}{\tilde p}.
    \end{align*}
    Hence, we conclude 
    \begin{align*}
         (\T^{p_0,q,r}_{\beta} , \T^{p_1,q,r}_{\beta} )_{\theta, p} = (\T^{\tilde p,q,r}_{\beta} , \T^{\infty,q,r}_{\beta} )_{(1-\theta)\theta_0+ \theta \theta_1, p} = \T^{p,q,r}_\beta,
    \end{align*}
    where we used Case~1 in the last step again.
\end{proof}

We finish this section by presenting a real interpolation result for $\T^{p,q,r}_\beta$-spaces in the parameter $q$. It is known from \cite[Prop.\@ 4.4]{Auscher_Bechtel_Haardt} that the $\Z^{p,q,r}_\beta$-spaces can be derived from real interpolation of $\T^{p,q,r}_\beta$-spaces. However, a corresponding characterization of the endpoint space $\Z^{\infty,q,r}_\beta$ was still missing. The subsequent statement provides this missing endpoint.

\begin{proposition}    
\label{prop: real interpolation tent spaces full range}
    Let $0<p\leq \infty$ and $0< q, q_0,q_1, r\leq \infty$. Furthermore, let $\beta_0,\beta_1\in\IR$ with $\beta_0\neq \beta_1$ and $\theta\in (0,1)$. Then, we have
    \begin{align*}
        (\T^{p,q_0,r}_{\beta_0} , \T^{p,q_1,r}_{\beta_1} )_{\theta, q} = \Z^{p,q,r}_\beta,
    \end{align*}
    where $\beta = (1-\theta)\beta_0 + \theta \beta_1$.
\end{proposition}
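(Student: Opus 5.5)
We sandwich the tent spaces between $\Z$-spaces and reduce to the real interpolation of $\Z$-spaces. This follows the procedure of \cite[Prop.\@ 4.4]{Auscher_Bechtel_Haardt}, and the only new input is the endpoint $p=\infty$. For $0<p<\infty$ the statement is exactly \cite[Prop.\@ 4.4]{Auscher_Bechtel_Haardt}, so we concentrate on $p=\infty$. The key tool is the nesting property of Lemma~\ref{lem: embedding tent into Z}, which for $p=\infty$ reads
\begin{align*}
    \Z^{\infty,q_i,r}_{\beta_i}\hookrightarrow \T^{\infty,q_i,r}_{\beta_i}\hookrightarrow \Z^{\infty,\infty,r}_{\beta_i}, \qquad i=0,1,
\end{align*}
since $\min\{\infty,q_i\}=q_i$ and $\max\{\infty,q_i\}=\infty$.

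Since real interpolation is monotone with respect to continuous embeddings of couples, we will obtain
\begin{align*}
    (\Z^{\infty,q_0,r}_{\beta_0},\Z^{\infty,q_1,r}_{\beta_1})_{\theta,q}\hookrightarrow(\T^{\infty,q_0,r}_{\beta_0},\T^{\infty,q_1,r}_{\beta_1})_{\theta,q}\hookrightarrow(\Z^{\infty,\infty,r}_{\beta_0},\Z^{\infty,\infty,r}_{\beta_1})_{\theta,q}.
\end{align*}
It therefore suffices to show that both outer spaces coincide with $\Z^{\infty,q,r}_\beta$ whenever $\beta_0\neq\beta_1$, independently of $q_0,q_1$. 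This is the $\Z$-space analogue of $(\dot\B^{\beta_0}_{p,q_0},\dot\B^{\beta_1}_{p,q_1})_{\theta,q}=\dot\B^{\beta}_{p,q}$. I expect it to be available in \cite[Thm.\@ 4.1]{Auscher_Bechtel_Haardt} for the full range $0<p\leq\infty$, and I would cite it there (as already used in the proof of Theorem~\ref{thm: mixed embedding}).

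If that theorem is not stated for $p=\infty$, the main obstacle is to prove this identity directly. For that I would realize $\Z^{\infty,q,r}_\beta$ as a retract of a weighted sequence space $\ell^q_{\beta}(\IZ;X)$. Here $X$ is the space of functions on a dyadic layer $(1/2,1]\times\IR^d$, rescaled, with the quasi-norm $\sup_x\big(\fint\fint|\cdot|^r\big)^{1/r}$. Concretely, I would discretize $t\in(2^{-k-1},2^{-k}]$ and use Proposition~\ref{prop: independent Whitney cube} (for $\Z$-spaces, \cite[Prop.\@ 3.7]{Auscher_Bechtel_Haardt}) to replace the continuous $t$-integral by a sum over $k$ of suprema over dyadic-layer Whitney averages. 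The retraction is simply the restriction to layers, and the co-retraction is reassembly. The classical identity $(\ell^{q_0}_{\beta_0}(X),\ell^{q_1}_{\beta_1}(X))_{\theta,q}=\ell^q_\beta(X)$ for $\beta_0\neq\beta_1$ (\cite[Thm.\@ 5.6.1]{Berg_Löfström}, valid for quasi-Banach $X$) then yields the claim. The one technical care point is that the layer norm involves Whitney boxes straddling adjacent dyadic layers. I would handle this by splitting the function into even and odd layers, or by a finite overlap argument, with constants depending only on $d$.

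With the identity in hand, both outer spaces equal $\Z^{\infty,q,r}_\beta$ and the sandwich closes. This proves the proposition for $p=\infty$, and together with \cite[Prop.\@ 4.4]{Auscher_Bechtel_Haardt} for the full range $0<p\leq\infty$.
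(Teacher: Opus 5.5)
Your proposal is correct and is essentially the paper's proof. You sandwich the couple between $\Z$-space couples using Lemma~\ref{lem: embedding tent into Z} and then invoke \cite[Thm.\@ 4.1]{Auscher_Bechtel_Haardt}, which the paper relies on for the full range $0<p\leq\infty$. The paper simply runs this sandwich uniformly in $p$, with $\min\{p,q_i\}$ and $\max\{p,q_i\}$, instead of treating $p<\infty$ separately, so your fallback retraction argument is not needed.
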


\begin{proof}
    By the real interpolation result \cite[Thm.\@ 4.1]{Auscher_Bechtel_Haardt} for $\Z$-spaces and the nesting property of Lemma~\ref{lem: embedding tent into Z}, we obtain
    \begin{align*}
         \Z^{p,q,r}_\beta &= (\Z^{p,\min\{p,q_0\},r}_{\beta_0} , \Z^{p,\min\{p,q_1\},r}_{\beta_1} )_{\theta, q}\\
         &\subset (\T^{p,q_0,r}_{\beta_0} , \T^{p,q_1,r}_{\beta_1} )_{\theta, q}\\
         &\subset  (\Z^{p,\max\{p,q_0\},r}_{\beta_0} , \Z^{p,\max\{p,q_1\},r}_{\beta_1} )_{\theta, q} =  \Z^{p,q,r}_\beta,
    \end{align*}
    which proves the claim.
\end{proof}

\end{document}